\DeclareMathOperator{\depth}{depth}
\begin{document}
\newtheorem{theorem}{Theorem}[section]
\newtheorem{prop}[theorem]{Proposition}
\newtheorem{lemma}[theorem]{Lemma}
\newtheorem{cor}[theorem]{Corollary}

\newtheorem{conj}[theorem]{Conjecture}
\newtheorem{claim}[theorem]{Claim}
\newtheorem{qn}[theorem]{Question}

\newtheorem{defn}[theorem]{Definition}

\newtheorem{defth}[theorem]{Definition-Theorem}
\newtheorem{hyp}[theorem]{Hypothesis}
\newtheorem{obs}[theorem]{Observation}
\newtheorem{rmk}[theorem]{Remark}
\newtheorem{ans}[theorem]{Answers}
\newtheorem{slogan}[theorem]{Slogan}
\newtheorem{assume}[theorem]{Assumption}
\newtheorem{conv}[theorem]{Convention}

\newtheorem*{theorem*}{Theorem}
\newtheorem*{rmk*}{Remark}

\newtheorem{eg}[theorem]{Example}
\newtheorem*{eg*}{Example}



\newcommand{\ds}{\displaystyle}
\newcommand{\define}{\textbf}
\newcommand{\inclusion}{\hookrightarrow}
\newcommand{\lap}{\Delta}
\newcommand{\mc}{\mathcal}
\newcommand{\mf}{\mathfrak}
\newcommand{\mr}{\mathscr}
\newcommand{\noin}{\noindent}
\newcommand{\one}{\mathbb 1}
\newcommand{\ra}{\Rightarrow}
\newcommand{\vp}{\varphi}
\newcommand{\wap}{\todo{Write a Proof!}}

\newcommand{\ab}[1]{\langle#1\rangle} 
\newcommand{\lrb}[1]{\left[#1\right]}
\newcommand{\lrbigp}[1]{\big(#1\big)}
\newcommand{\lrBigp}[1]{\Big(#1\Big)}
\newcommand{\lrp}[1]{\left(#1\right)}
\newcommand{\lrmod}[1]{\left|#1\right|}
\newcommand{\lrnorm}[1]{\left\|#1\right\|}
\newcommand{\lrset}[1]{\left\{#1\right\}}
\newcommand{\norm}[1]{\|#1\|}
\newcommand{\set}[1]{\{#1\}}
\newcommand{\suppressthis}[1]{}
\newcommand{\todo}[1]{\textcolor{red}{#1}}

\newcommand{\reason}[2]{\stackrel{\text{#1}}{#2}}

\newcommand{\twopartdef}[4]
{
	\left\{
		\begin{array}{ll}
			#1 & \mbox{if } #2 \\
			#3 & \mbox{if } #4
		\end{array}
	\right.
}
\newcommand{\De}{\Delta}
\newcommand{\de}{\delta}
\newcommand{\md}{{-\delta}}
\newcommand{\Deo}{\Delta^{opp}}
\newcommand{\R}{\mathbb R}
\newcommand{\N}{\mathbb N}
\newcommand{\Q}{\mathbb Q}
\newcommand{\Z}{\mathbb Z}
\renewcommand{\injlim}{\varinjlim}

\newcommand{\boundary}{\partial}
\newcommand{\hhat}{\widehat}
\newcommand{\C}{{\mathbb C}}
\newcommand{\Ga}{{\Gamma}}
\newcommand{\G}{{\Gamma}}
\newcommand{\s}{{\Sigma}}
\newcommand{\PSL}{{PSL_2 (\mathbb{C})}}
\newcommand{\pslc}{{PSL_2 (\mathbb{C})}}
\newcommand{\pslr}{{PSL_2 (\mathbb{R})}}
\newcommand{\Gr}{{\mathcal G}}
\newcommand{\integers}{{\mathbb Z}}
\newcommand{\natls}{{\mathbb N}}
\newcommand{\ratls}{{\mathbb Q}}
\newcommand{\reals}{{\mathbb R}}
\newcommand{\proj}{{\mathbb P}}
\newcommand{\lhp}{{\mathbb L}}
\newcommand{\tube}{{\mathbb T}}
\newcommand{\cusp}{{\mathbb P}}
\newcommand\AAA{{\mathcal A}}
\newcommand\HHH{{\mathbb H}}
\newcommand\BB{{\mathcal B}}
\newcommand\CC{{\mathscr{C}}}
\newcommand\DD{{\mathcal D}}
\newcommand\EE{{\mathcal E}}
\newcommand\FF{{\mathcal F}}
\newcommand\GG{{\mathcal G}}
\newcommand\HH{{\mathcal H}}
\newcommand\II{{\mathcal I}}
\newcommand\JJ{{\mathcal J}}
\newcommand\KK{{\mathcal K}}
\newcommand\LL{{\mathcal L}}
\newcommand\MM{{\mathcal M}}
\newcommand\NN{{\mathcal N}}
\newcommand\OO{{\mathcal O}}
\newcommand\PP{{\mathcal P}}
\newcommand\QQ{{\mathcal Q}}
\newcommand\RR{{\mathcal R}}
\newcommand\SSS{{\mathcal S}}
\newcommand\TT{{\mathcal T}}
\newcommand\UU{{\mathcal U}}
\newcommand\VV{{\mathcal V}}
\newcommand\WW{{\mathcal W}}
\newcommand\XX{{\mathcal X}}
\newcommand\YY{{\mathcal Y}}
\newcommand\ZZ{{\mathcal Z}}
\newcommand\CH{{\CC\Hyp}}
\newcommand{\Chat}{{\hat {\mathbb C}}}
\newcommand\MF{{\MM\FF}}
\newcommand\PMF{{\PP\kern-2pt\MM\FF}}
\newcommand\ML{{\MM\LL}}
\newcommand\PML{{\PP\kern-2pt\MM\LL}}
\newcommand\GL{{\GG\LL}}
\newcommand\Pol{{\mathcal P}}
\newcommand\half{{\textstyle{\frac12}}}
\newcommand\Half{{\frac12}}
\newcommand\Mod{\operatorname{Mod}}
\newcommand\diag{\operatorname{diag}}
\newcommand\Area{\operatorname{Area}}
\newcommand\Maps{\operatorname{Maps}}
\newcommand\ep{\varepsilon}
\newcommand\Hypat{\widehat}
\newcommand\Proj{{\mathbf P}}
\newcommand\htop{{\mathrm{hTop}}}
\newcommand\hsset{{\mathrm{hSSet}}}
\newcommand\sset{{\mathrm{SSet}}}
\newcommand\str{{\mathrm{Str}}}
\renewcommand\top{{\mathrm{Top}}}
\newcommand\U{{\mathbf U}}
\newcommand\Hyp{{\mathbf H}}
\newcommand\D{{\mathbf D}}
\renewcommand\set{{\mathrm{Set}}}
\newcommand\E{{\mathbb E}}
\newcommand\EXH{{ \EE (X, \HH_X )}}
\newcommand\EYH{{ \EE (Y, \HH_Y )}}
\newcommand\GXH{{ \GG (X, \HH_X )}}
\newcommand\GYH{{ \GG (Y, \HH_Y )}}
\newcommand\ATF{{ \AAA \TT \FF }}
\newcommand\PEX{{\PP\EE  (X, \HH , \GG , \LL )}}
\newcommand{\lct}{\Lambda_{CT}}
\newcommand{\lel}{\Lambda_{EL}}
\newcommand{\lgel}{\Lambda_{GEL}}
\newcommand{\lre}{\Lambda_{\mathbb{R}}}
\newcommand{\p}{\raisebox{0.1mm}{p}}
\renewcommand{\ss}{\raisebox{0.1mm}{s}}
\renewcommand\epsilon{\varepsilon}

\newcommand\til{\widetilde}
\newcommand\length{\operatorname{length}}
\newcommand\diff{\operatorname{Diff}}
\newcommand\sdiff{\operatorname{StratDiff}}
\newcommand\sj{\operatorname{Strat_0\,\mathcal{J}}}
\newcommand\sjr{\operatorname{Strat\,\mathcal{J}}}
\newcommand\diffc{\diff_c}
\newcommand\tr{\operatorname{tr}}
\newcommand\gesim{\succ}
\newcommand\lesim{\prec}
\newcommand\simle{\lesim}
\newcommand\simge{\gesim}
\newcommand{\simmult}{\asymp}
\newcommand{\simadd}{\mathrel{\overset{\text{\tiny $+$}}{\sim}}}
\newcommand{\ssm}{\setminus}
\newcommand{\diam}{\operatorname{diam}}
\newcommand{\pair}[1]{\langle #1\rangle}
\newcommand{\T}{{\mathbf T}}
\newcommand{\I}{{\mathbf I}}

\newcommand{\height}{\operatorname{height}}
\newcommand{\base}{\operatorname{base}}
\newcommand{\trans}{\operatorname{trans}}
\newcommand{\rest}{|_}
\newcommand{\bbar}{\overline}
\newcommand{\bH}{\overline{\HH}}
\newcommand{\ho}{{\HH\OO}}
\newcommand{\UML}{\operatorname{\UU\MM\LL}}
\newcommand{\EL}{\mathcal{EL}}
\newcommand{\qle}{\lesssim}
\newcommand{\rank}{\operatorname{rank}}
\newcommand{\hofib}{\operatorname{hofib}}
\newcommand{\res}{\operatorname{res}}
\newcommand{\maps}{\operatorname{Maps}}
\newcommand{\imm}{\operatorname{Imm}}
\newcommand{\op}{\operatorname{Op}}

\newcommand\Gomega{\Omega_\Gamma}
\newcommand\nomega{\omega_\nu}
\newcommand\cgh{compactly generated Hausdorff \,}

\newcommand{\xra}[1]{\!\!\!\!\xrightarrow{\quad#1\quad}\!\!\!\!}
\newcommand{\xda}[1]{\left\downarrow{\scriptstyle#1}\vphantom{\displaystyle\int_0^1}\right.}
\newcommand{\xla}[1]{\!\!\!\!\xleftarrow{\quad#1\quad}\!\!\!\!}
\newcommand{\xua}[1]{\left\uparrow{\scriptstyle#1}\vphantom{\displaystyle\int_0^1}\right.}

\newcommand{\bcomment}[1]{\textcolor{blue}{#1}}
\newcommand{\mcomment}[1]{\textcolor{magenta}{#1}}

\title{$h$-principle for stratified spaces}

\author{Mahan Mj}
\address{School
	of Mathematics, Tata Institute of Fundamental Research. 1, Homi Bhabha Road, Mumbai-400005, India}

\email{mahan@math.tifr.res.in}

\author{Balarka Sen}
\address{School
	of Mathematics, Tata Institute of Fundamental Research. 1, Homi Bhabha Road, Mumbai-400005, India} \email{balarka2000@gmail.com}

\email{balarka@math.tifr.res.in}

\thanks{Both authors are supported by the Department of Atomic Energy, Government of India, under project no.12-R\&D-TFR-5.01-0500. MM is supported
	 in part by a DST JC Bose Fellowship,  and an endowment from the Infosys Foundation. MM also thanks  CNRS for support during a visit to Institut Henri Poincar\'e in the summer of 2022.
}
\subjclass[2010]{53D99, 58A35 (Primary), 58E99 }
\keywords{stratified spaces, h-principle, holonomic approximation, stratified sheaf, stratified bundle}

\date{\today}

\begin{abstract}   We extend Gromov and Eliashberg-Mishachev's $h-$principle on manifolds to stratified spaces. This is done in both  the sheaf-theoretic framework of Gromov and the smooth jets framework of Eliashberg-Mishachev. The generalization involves developing 
	\begin{enumerate}
	\item 	the notion of stratified continuous sheaves to extend Gromov's theory,
	\item the notion of smooth stratified  bundles to extend Eliashberg-Mishachev's theory.
	\end{enumerate}
A new feature is the role played by homotopy fiber sheaves. We show, in particular,  that   stratumwise flexibility of stratified continuous sheaves
along with flexibility of homotopy fiber sheaves furnishes the parametric $h-$principle. We  extend the Eliashberg-Mishachev holonomic approximation theorem to stratified spaces. We also prove a stratified analog of the Smale-Hirsch immersion theorem.
\end{abstract}

\maketitle

\tableofcontents

\section{Introduction}\label{sec-intro} Gromov developed the $h-$principle \cite{Gromov_PDR} as a soft topological approach to finding solutions to partial differential relations, and this was refined subsequently by several others, notably Eliashberg-Mishachev \cite{em-book}.  These two  references \cite{Gromov_PDR,em-book} emphasize somewhat different points of view. While \cite{em-book} uses the standard terminology of differential topology in terms of jets,
\cite{Gromov_PDR} uses a more abstract, formal sheaf-theoretic framework. The main applications of both these approaches is to solve partial differential relations on \emph{smooth manifolds}. The aim of this paper is to extend the domain of applicability of the $h-$principle to \emph{smooth stratified spaces} (cf.\ \cite{GM_SMT}). 

There is one immediate difficulty that we face. Let $X$ be a smooth stratified space. Then, any natural notion of a tangent bundle $TX$ (cf.\ Definition \ref{str-tbl}) to $X$ gives a structure that is not a bundle in the usual sense of a topological bundle.
This leads us to the notion of \emph{stratified bundles} (Definition \ref{def-strbdl}) which consist of bundles along strata of $X$ with appropriate gluing conditions across strata. It is precisely this {``gluing data"} that distinguishes the manifold framework from the stratified spaces framework.
A prototypical example of a stratified bundle
to keep in mind is that of $P: M\longrightarrow M/G$, where $M$ is a smooth manifold, and $G$ is a compact Lie group 
with a not necessarily free smooth action on $M$. Then $M$ admits a stratification by orbit type, where the strata $M_{(H)}$ are indexed by closed subgroups $H \leq G$, consisting of points with isotropy group conjugate to $H$. This stratification descends  to a stratification of $M/G$, and $P : M \to M/G$ becomes a stratified fiber bundle. Let $x \in M$, $G_x$ be the isotropy group, and $[x]=P(x)$. Then the fiber of $P$ over $[x]$ is $G/G_x$.

We found the sheaf-theoretic formalism of Gromov \cite[Ch. 2]{Gromov_PDR} more convenient  to address the algebraic topology issues. We refer the reader to \cite[Ch. 2]{Gromov_PDR} for details on sheaves of quasitopological spaces, i.e.\ continuous sheaves.
Note however that the sheaf of sections of a stratified fiber bundle $E$ over a stratified space $X$ forms something more involved than just a sheaf over a topological space, as one may restrict $E$ to any stratum-closure $\bbar{L} \subset X$ and take sections thereof. We therefore extend 
the sheaf-theoretic formalism of Gromov to \emph{stratified sites} (Definition \ref{def-stratfdsite}) and \emph{stratified sheaves} (Definition \ref{def-sssheaf}) over these. In cases of interest in this paper, a stratified sheaf typically assigns a topological space to an open subset $U$ of a stratum closure $\bbar L$. Here, $L$  ranges over
 strata of $X$. 
 Following Gromov \cite{Gromov_PDR}, we call such a stratified sheaf a \emph{stratified continuous sheaf}. Thus, a 
stratified continuous sheaf is a collection of continuous sheaves $\{\FF_{\bbar{L}}\}$, one for every stratum $L$ of $X$. In this sense, stratified continuous sheaves are analogous to constructible sheaves {(Remark \ref{rmk-stratshcnstrble})}.

Next, for every pair $S<L$, the data of a stratified continuous sheaf gives a restriction map $$\res^L_S: i_{\bbar{S}}^* \FF_{\bbar{L}} \to \FF_{\bbar{S}},$$ where $ i_{\bbar{S}}^* \FF_{\bbar{L}}$ denotes the pullback of $ \FF_{\bbar{L}}$ to ${\bbar{S}}$. A homotopy theoretic construction that arises naturally is that of the  homotopy fiber ($\hofib$): $$\bH^L_S= \hofib (\res^L_S: i_{\bbar{S}}^* \FF_{\bbar{L}} \to \FF_{\bbar{S}}).$$ Set $\HH^L_S= \bH^L_S|S$, the restriction of $\bH^L_S$ to the stratum $S$ of  ${\bbar{S}} \subset X$. Thus, 
{$$\HH^L_S= \hofib (\res^L_S:  i_S^*\FF_{\bbar{L}} \to \FF_{{S}}).$$} 
We shall refer to $\bH^L_S$
and $\HH^L_S$ as the closed and open homotopy fiber sheaves respectively for the pair of strata $S<L$.
One of the aims of this paper is to find conditions on the homotopy fiber sheaves that guarantee the $h-$principle for the stratified sheaf $\FF$.

The sheaf of sections of a bundle comes naturally equipped with the compact-open topology, and therefore constitutes a continuous sheaf. A key  difference between Gromov's $h-$principle \cite[Section 2.2]{Gromov_PDR} and the present paper stems from the fact that a stratified sheaf is a collection of sheaves, and not a single sheaf. The difference already shows up when the base space is a simplex equipped with its natural stratification.
This change in setup  was motivated by a question due to Sullivan. 
In fact, this paper and its companion \cite{ms-gt} were born in part by trying to address the following two questions due to Sullivan \cite{sullivan-pihes-formal} and Gromov \cite{Gromov_PDR}. After discussing smooth forms on simplices in \cite{sullivan-pihes-formal}, Sullivan suggests the following example/problem/test-question.

\begin{qn}[stratified spaces] \cite[p. 298]{sullivan-pihes-formal}\label{qn-sullivan0}
	``In the $\cdots$ cell-space abstraction we didn't require that cells be contractible.
	Thus these notions can be extended to stratified sets -- thought of inductively as obtained
	by attaching manifolds with boundary with a careful statement about the geometry of
	the attaching map.
	
	It would be interesting to carry this out in detail -- the basic idea being that a
	form should have values only on multivectors tangent to the strata."
\end{qn}

We interpret Question \ref{qn-sullivan0} as follows:
\begin{qn}\label{qn-sullivan}
Provide an inductive description of forms over stratified spaces.
\end{qn}

Another source of inspiration for this paper comes from the following question due to Gromov.

\begin{qn}\cite[p. 343]{Gromov_PDR}\label{qn-gromov}
	Can one define singular symplectic (sub) varieties? 
\end{qn}

In this paper, we address Question \ref{qn-sullivan}, by providing an inductive description of sheaves of smooth forms over stratified spaces. In fact, we set up the more general framework of a stratified bundle over a stratified space,
and provide an inductive description of the sheaf of sections of a stratified bundle over a stratified space. We postpone a full treatment of Question \ref{qn-gromov} to the companion paper \cite{ms-gt}, but develop a general conceptual framework in this paper.

Having fixed the framework in terms of stratified sheaves over stratified spaces, the main aim of the paper then boils down to extending some of the basic notions introduced by Gromov in \cite[Ch. 2]{Gromov_PDR} to the stratified context, and proving the stratified $h-$principle using these generalizations. Two crucial concepts were important in \cite[Ch. 2]{Gromov_PDR} for  a sheaf $\FF$ (of topological spaces) over a manifold $V$:

\begin{enumerate}
	\item Flexibility of $\FF$: This means that for every {pair of compact subsets $K\subset K'$ of $V$}, $\FF(K') \to \FF(K)$ is a (Serre) fibration.
	\item $\diff(V)-$invariance of $\FF$: This means that the action of the pseudogroup $\diff(V)$ of partially defined diffeomorphisms of $V$ lifts to an action on $\FF$.
\end{enumerate}

Flexibility of sheaves is generalized to flexibility of stratified  sheaves by demanding
two kinds of conditions:
\begin{enumerate}
	\item \emph{Stratumwise flexibility:} For every stratum $S <X$, the restricted sheaf $\FF|S$ (assigning $\FF(U)$ only to open subsets $U$ of $S$) is flexible as a sheaf over the manifold $S$. Using Gromov's work \cite{Gromov_PDR}, this hypothesis allows us to conclude 
	the $h-$principle for the restrictions of $\FF$ to \emph{open strata}.
	\item \emph{Flexibility across strata}: For every stratum $S < X$, the open homotopy fiber sheaf $\HH^L_S$ is flexible on the open stratum $S$.
	As pointed out before,
	for any pair of strata $S<L$ of $X$, there exist closed and open homotopy fiber sheaves $\bH^L_S$ and $\HH^L_S$ respectively. Of these, the open homotopy fiber sheaf $\HH^L_S$ is a more tractable object and is defined on the open (manifold) stratum $S$. \emph{ $\HH^L_S$ is the key new player introduced in this paper in the context of stratified spaces.} No analog
		exists in the smooth manifold context. It is the sheaf $\HH^L_S$ that encodes {``gluing data"} across the pair of strata $S, L$.
\end{enumerate}

We establish (Theorem \ref{thm-hofibsflexg}) that if $(1)$ and $(2)$ holds, then $\FF$ satisfies the $h-$principle.\\

The condition of $\diff(V)-$invariance in \cite{Gromov_PDR} is generalized to invariance of the stratified sheaf $\FF$ under stratified diffeomorphisms (Definition \ref{def-stratdiff}). Thus, $\FF$ is $\sdiff-$invariant if for every pair  of strata $S<L$ of $X$, $\FF|S$, $i_S^* \FF_{\bbar{L}}$ and the restriction map $\res^L_S$ are naturally
$\diff(S)-$invariant. \\

\noindent {\bf Terminology, examples and non-examples:}  What we have called ``stratified bundles" have their origins in work of Thom \cite{Thom_stratmaps}.  Mather \cite[p. 500]{mather-notes} defines the notion of a ``Thom map". In this paper  a stratified bundle map is a Thom map satisfying an extra  hypothesis  (Definition \ref{def-strbdl}, condition (iii))  making the notion closer to a bundle. 
\begin{enumerate}
\item The main class of examples of stratified bundles over stratified spaces, as mentioned earlier, consists of $P: M \to M/G$, where $M$  is a smooth manifold, and $G$ is a compact Lie group. This includes symplectic reductions \cite{SL_stratsympred} \cite[Theorems 1.4.2, 2.4.2]{marsden-lnm}. In fact, if $G_1, G_2$ admit commuting actions on $M$, e.g.\ if the $G_1$ action is on  the left, and 
the $G_2$ action is on  the right, then there exists a stratified bundle $P_1: G_1\backslash M \to G_1\backslash M /G_2$, where $G_1\backslash M$  itself is allowed to be a stratified space. Thus, there are natural examples of stratified bundles of the form $P: X \to X/G$, where $X$  is itself a stratified space.
\item \emph{Caveat:} The reader should be warned that in the context of complex analytic spaces, a 
genuine topological bundle over a stratified complex analytic space with holomorphic total space, and fiber a complex manifold is sometimes referred to as a stratified bundle  (cf.\ \cite{forstneric}).  We use  stratified bundle  in a much more general sense than this. In particular, the fibers over different strata need not be homeomorphic in our context.
\item Let $D \subset V$ be  a  singular divisor in a smooth complex variety $V$. Let
$N_\ep (D)$ be a regular neighborhood, and  $\partial N_\ep (D)$ its boundary.
Let $r: N_\ep (D) \to D$ be the retraction map to the divisor.
Then the restriction $r| \partial N_\ep (D): \partial N_\ep (D) \to D$  is \emph{not} an example of 
 a stratified bundle in our sense. This is because lower  dimensional strata in $D$ have higher dimensional fibers under $r| \partial N_\ep (D): \partial N_\ep (D) \to D$. For a  stratified bundle in our sense,
 the opposite happens: for instance, lower  dimensional strata in $M/G$ have lower dimensional 
  fibers under $P: M \to M/G$.
\end{enumerate}

\subsection{Statement of results}
We are now in a position to state the first main theorem of our paper
(referring to 
Theorem \ref{thm-hofibsflexg} for a more precise statement).
\begin{theorem}\label{thm-sheafh-intro}
	Let $\FF=\{ \FF_{\bbar{L}}: L < X \text{ stratum}\}$ be a stratified continuous sheaf over a stratified space $X$,
	such that $\FF$  is  stratumwise flexible, i.e.\  $\FF_{\bbar{L}}|L$ for each $L < X$ is
 flexible.
	Further, suppose that $\FF$ is infinitesimally flexible across strata, i.e.\ each open homotopy fiber sheaf $\HH^L_S$  is
	flexible. 
	Then  $\FF$ satisfies the parametric $h-$principle.
\end{theorem}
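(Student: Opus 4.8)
The plan is to argue by induction on the depth of the stratification, localizing the problem to small neighborhoods and assembling global sections via a partition-of-unity / sheaf-gluing argument, exactly as in Gromov's proof of the parametric $h$-principle for flexible sheaves over a manifold, but now with the strata handled one skeleton at a time. Since the parametric $h$-principle is equivalent to the statement that the map of sections $\FF(X) \to \FF_0(X)$ into the formal sections is a weak homotopy equivalence, and since both $\FF$ and $\FF_0$ are stratified continuous sheaves, it suffices (by Gromov's microflexibility/flexibility localization machinery, which we will have adapted to stratified sites earlier in the paper) to check the statement locally on $X$, i.e.\ on distinguished neighborhoods of points in each stratum. Concretely, if $x$ lies in the open stratum $S < X$, a neighborhood of $x$ in $X$ looks like $U \times C(\mathrm{Link})$ where $C(\mathrm{Link})$ is the cone on the link; the strata of the link correspond to strata $L > S$, and the restriction data of $\FF$ over this neighborhood is governed precisely by the maps $\res^L_S \colon i_S^*\FF_{\bbar L} \to \FF_S$ and hence by the open homotopy fiber sheaves $\HH^L_S$.

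First I would set up the induction: let $X^{(k)}$ denote the union of strata of depth $\le k$ (so $X^{(0)}$ is the union of open strata = the regular locus), and prove by downward induction on $k$ that $\FF$ restricted to (a neighborhood of) $X^{(k)}$ satisfies the parametric $h$-principle. The base case is the regular part, where each open stratum $S$ carries the flexible sheaf $\FF_{\bbar L}|L = \FF_S$ over the manifold $S$; stratumwise flexibility plus Gromov's theorem \cite[Ch.~2]{Gromov_PDR} gives the $h$-principle there. For the inductive step, suppose the $h$-principle holds over $X^{(k-1)}$; we must push it across the next layer of strata $S$ of depth $k$. Over such an $S$, a section of $\FF$ near $S$ consists of (a) a section of $\FF_S$ on $S$, (b) compatible germs of sections of the $\FF_{\bbar L}$ for the finitely many $L > S$ adjacent to $S$, already controlled over $X^{(k-1)}$ by the inductive hypothesis, and (c) the homotopy coherence data identifying the restrictions of (b) with (a) — which is exactly a section of the (iterated) homotopy fiber sheaves $\HH^L_S$ and their higher compatibilities over $S$. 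The sheaf of such data over the manifold $S$ is, up to weak equivalence, a homotopy limit of flexible sheaves: $\FF_S$ is flexible by (1), each $\HH^L_S$ is flexible by (2), and homotopy fibers / homotopy limits of (Serre) fibrations are again fibrations, so the assembled sheaf over $S$ is flexible as a sheaf over the manifold $S$. Applying Gromov's parametric $h$-principle once more on $S$ upgrades the formal solution to a genuine one over a neighborhood of $S$, compatibly with what was already achieved over $X^{(k-1)}$; iterating over all strata of depth $k$ (using the standard Gromov trick of ordering the pieces and extending over one relatively compact piece at a time, with the fibration property supplying the needed lifts in families) completes the inductive step.

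The main obstacle — and the place where the genuinely new content sits — is step (c): making precise the claim that a section of $\FF$ in a neighborhood of a depth-$k$ stratum $S$ is the same, up to weak homotopy equivalence, as a section over $S$ of an explicit homotopy-limit sheaf built from $\FF_S$ and the $\HH^L_S$, and verifying that this identification is natural enough to be compatible with the inductive data coming from $X^{(k-1)}$ and with the parameter spaces of the parametric $h$-principle. This requires the cone-neighborhood description of stratified spaces, a Mayer–Vietoris / homotopy-descent argument for stratified continuous sheaves along the decomposition of a neighborhood into its intersections with the various $\bbar L$, and care that the homotopy fibers are taken in a model where "flexible" is preserved under the relevant homotopy limits. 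Once this local structural statement is in hand, the gluing is routine Gromov-style bookkeeping: cover $X$ stratum by stratum in order of increasing depth, use flexibility to extend sections (and families of sections) across each piece, and conclude that $\FF(X) \to \FF_0(X)$ is a weak equivalence, i.e.\ the parametric $h$-principle holds. The details of all of the above are carried out in the proof of Theorem \ref{thm-hofibsflexg}, to which we refer.
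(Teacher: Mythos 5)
Your outline captures the right ingredients (stratumwise flexibility of $\FF_S$, flexibility of $\HH^L_S$, some induction over strata), but the mechanism you propose has a genuine gap at exactly the point you flag as the ``main obstacle,'' and the gap is not the kind that disappears after filling in details.

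The first problem is the reduction to a local statement. You appeal to ``Gromov's microflexibility/flexibility localization machinery,'' but Gromov's local-to-global theorem (Theorem B) applies only to morphisms of \emph{flexible} sheaves, and the whole difficulty here is that $\FF_{\bbar L}$ is \emph{not} flexible --- this is the content of Example \ref{eg-stratflexnotflex}. So the $h$-principle for $\FF_{\bbar L}$ cannot be a priori reduced to a statement on cone neighborhoods. The paper never localizes in this sense; it works with the global sections $\FF_{\bbar L}(U)$ directly.

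The second, more structural problem is the claim that ``the assembled sheaf over $S$ is flexible'' because it is a homotopy limit of flexible sheaves, after which you would ``upgrade the formal solution'' by Gromov and then glue ``routinely'' across strata. This has the logic backwards. In the homotopy fiber sequence $\HH^L_S \to i_S^*\FF_{\bbar L} \to \FF_S$ the sheaf $i_S^*\FF_{\bbar L}$ sits in the \emph{middle}, as the total space, not as the fiber or as a limit. Flexibility of fiber and base does not give flexibility of the total space, and indeed the paper never asserts flexibility of $i_S^*\FF_{\bbar L}$. What one can conclude from the fiber sequence is only the \emph{parametric $h$-principle} for $i_S^*\FF_{\bbar L}$, via Lemma \ref{lem-sessheafflex}: if two terms of a homotopy fiber sequence of sheaves satisfy the $h$-principle, so does the third. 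This distinction matters precisely because the $h$-principle, unlike flexibility, is preserved under weak homotopy equivalence (Remark \ref{rmk-hprinvsflex}). Having only the $h$-principle and not flexibility for $i_S^*\FF_{\bbar L}$ is then why the ``routine Gromov-style bookkeeping'' step is actually the crux: you cannot extend sections across strata using a flexibility fibration you do not have. The paper replaces this with a coglueing argument: $\FF_{\bbar L}(U)$ and $\FF^\bullet_{\bbar L}(U)$ are decomposed as fiber products over a deleted germinal neighborhood of $U\cap S$ (Lemma \ref{lem-pastedeld}), the three maps away from the corner are shown to be weak equivalences, the vertical restriction maps into the deleted germ are shown to be fibrations using flexibility of $\FF_L$ and of $\FF^\bullet$ (Lemma \ref{lem-topstrat}), and then the Brown--Heath coglueing theorem (Theorem \ref{thm-bh}) produces the weak equivalence $\FF_{\bbar L}(U)\to\FF^\bullet_{\bbar L}(U)$. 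This Mayer--Vietoris-for-weak-equivalences step, plus the two-stage induction (first totally ordered strata, using Lemma \ref{lem-bendright} to propagate the $h$-principle for the closed homotopy fiber sheaves $\bH^L_S$; then the general poset via the consolidated sheaf and Corollary \ref{cor-3glue}), is the real content of the proof and is what your sketch defers away.
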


Gromov  deduces the $h-$principle from the homotopy-theoretic condition of flexibility 
for sheaves, notably over manifolds \cite[p. 76]{Gromov_PDR}.  Theorem \ref{thm-sheafh-intro} is the stratified analog of Gromov's theorem: the underlying space is replaced by a stratified space, and sheaves are replaced by stratified sheaves.

The following theorem of Gromov  connects flexibility and microflexibility  \cite[Ch. 2.2]{Gromov_PDR}.

\begin{theorem}\cite[p. 78]{Gromov_PDR}\label{thm-grmicro2flexintro}
	Let $Y = V \times  \mathbb{R}$ and let $\Pi: Y \to V$ denote the projection
	onto the first factor. Let $\FF$ be a microflexible continuous sheaf over $Y$ invariant under $\diff( V, \Pi)$.
	Then the restriction $\FF \vert V \times \{0\}$ is a flexible sheaf over $V (= V \times \{0\})$.
	
	Let $\FF$ be a microflexible $\diff(V)-$invariant continuous sheaf
	over a manifold $V$. Then the restriction to an arbitrary piecewise smooth polyhedron
	$K \subset V$ of positive codimension, $\FF|K$, is a flexible sheaf over $K$.
\end{theorem}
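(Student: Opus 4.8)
The plan is to check directly that each restriction map $\FF|_{V\times\{0\}}(K') \to \FF|_{V\times\{0\}}(K)$ is a Serre fibration by verifying the homotopy lifting property against disks, deducing the polyhedral statement from the product statement afterwards. First I would invoke the standard reductions for continuous sheaves over manifolds from \cite[Ch. 2.2]{Gromov_PDR}: flexibility of $\FF|_{V\times\{0\}}$ is equivalent to the maps $\FF|_{V\times\{0\}}(K') \to \FF|_{V\times\{0\}}(K)$ having the homotopy lifting property for the disks $D^n$, for all compact $K \subset K' \subset V$; since this property is local in $V$ and behaves well under Mayer--Vietoris, it suffices to treat pairs $K \subset K'$ of concentric cubes inside a single chart, and to prove the associated \emph{relative} extension property (lifts already prescribed over a face of the parameter disk, or over a subcube, being kept fixed). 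All of this is carried out parametrically, which is legitimate because microflexibility is assumed in its parametric form.

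\emph{The product case.} Unwinding germs, a lifting problem for $\FF|_{V\times\{0\}}$ is the following: a $D^n$-family of sections $g_z$ of $\FF$ defined near $K' \times \{0\} \subset Y$, together with a $D^n$-family of homotopies $t \mapsto h_{z,t}$ ($t\in[0,1]$) of sections defined near $K\times\{0\}$ with $h_{z,0} = g_z$ near $K\times\{0\}$; one must produce a $D^n$-family of homotopies of sections near $K'\times\{0\}$ lifting $h$ and beginning at $g$. Using the $\diff(V,\Pi)$-action to translate sections in the $\mathbb{R}$-direction, I would subdivide $[0,1]$ into $N$ subintervals each shorter than the microflexibility time and lift one subinterval at a time. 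On each subinterval, microflexibility yields a lift, but only over some a priori thinner neighborhood of $K'\times\{0\}$ in $Y$; the crucial move is then to apply a vertical diffeotopy of $Y = V\times\mathbb{R}$ (an element of $\diff(V,\Pi)$ fixing $V\times\{0\}$) that re-inflates this thin tube to a neighborhood of a fixed size, so that the next microflexible step starts from data of comparable size and the construction closes up after $N$ steps. Here is exactly where the positive codimension of $V\times\{0\}$ in $Y$ is used: a thin tube about a positive-codimension submanifold can be diffeomorphically expanded to a fat tube rel that submanifold, whereas no such room exists about a codimension-zero (open) subset. One checks that these re-inflating diffeotopies are compatible with the prescribed homotopy $h$ over $K\times\{0\}$, so that concatenating the lifts over the $N$ subintervals solves the original problem.

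\emph{The polyhedral case.} For a general piecewise-smooth polyhedron $K\subset V$ of positive codimension, I would reduce to the product case by induction over the simplices (equivalently, over a handle decomposition) of $K$. Near the interior of a top simplex $\sigma$ of $K$, the manifold $V$ is identified with $\sigma\times\mathbb{R}^k$ where $k = \codim\sigma \ge 1$ -- this is the situation of the product case (iterated $k$ times, or handled directly via fiberwise scalings of $\mathbb{R}^k$) -- and the lower-dimensional simplices are absorbed using the relative form of the statement together with a diffeotopy of a neighborhood of $K$ that compresses it toward $K$, which exists because $\codim K \ge 1$. Patching the resulting local lifts by the Mayer--Vietoris reduction above yields flexibility of $\FF|_K$.

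\emph{Main obstacle.} The delicate point, and the step I expect to absorb most of the work, is the compression trick in the product case: ensuring that the re-inflating vertical diffeotopies interact correctly with the germ-level data -- both the fixed homotopy $h$ over $K$ and, in the relative version, the lifts already prescribed on subsets -- and that the iteration terminates after a number of steps bounded uniformly in the parameter $z\in D^n$. The remaining ingredients are the by-now-routine sheaf-theoretic bookkeeping of \cite[Ch. 2.2]{Gromov_PDR}.
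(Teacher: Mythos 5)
The paper does not prove this statement: in both Theorem~\ref{thm-grmicro2flexintro} and its restatements (Theorems~\ref{thm-gromovmicro2flex} and~\ref{thm-gromovmicro2flex2}) it is cited directly to \cite[p.~78--79]{Gromov_PDR} and then used as a black box in Section~\ref{sec-micro}. So there is no ``paper's own proof'' to compare against; what you have produced is a reconstruction of Gromov's argument, and it should be judged on those terms.

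Your reconstruction identifies the right mechanism. The reduction of flexibility to homotopy lifting against cubes via locality and Mayer--Vietoris, the translation of germ-level lifting problems into genuine section data on shrinking tubes about $K'\times\{0\}$, the use of microflexibility to lift a short time, the re-inflation via a fiberwise diffeotopy in $\diff(V,\Pi)$ fixing $V\times\{0\}$, the observation that positive codimension is what creates the room to compress, and the induction over the skeleta of $K$ for the polyhedral statement --- all of these are the genuine ingredients of Gromov's proof, and you have them in the right order. You also correctly note that the polyhedral case is where full $\diff(V)$-invariance (rather than merely $\diff(V,\Pi)$-invariance) gets used, to straighten the polyhedron near each simplex.

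Where the outline is too loose is exactly the place you yourself flag as the main obstacle: the claim that one can ``subdivide $[0,1]$ into $N$ subintervals each shorter than the microflexibility time'' is not available a priori, since the $\varepsilon$ in Definition~\ref{def-micfibn} depends on the input data, and a naive iteration can produce a decreasing sequence $\varepsilon_1,\varepsilon_2,\dots$ whose sum fails to reach $1$. Gromov's actual argument is not a fixed finite subdivision but a continuity/open-closed argument on the set of times $T\subset[0,1]$ for which the lift exists: openness of $T$ is immediate from microflexibility, and the compression trick is what proves closedness, by normalizing the data at time $\sup T$ back to a tube of fixed thickness so that microflexibility can be applied uniformly across the $D^n$-parameter. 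Since you explicitly name this as the delicate point and describe the compression diffeotopy precisely as the tool meant to address it, I would call this an honest gap in a correct sketch rather than a wrong turn. Filling it in requires the care Gromov spends on it in \cite[\S2.2]{Gromov_PDR}, but no new idea beyond what you have already put on the table.
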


The stratified analog of Theorem \ref{thm-grmicro2flexintro} is then furnished by the following (see
Theorems \ref{thm-micro2flexs} and \ref{thm-micro2flexs2}):

\begin{theorem}
	Let $\FF=\{ \FF_{\bbar{L}}: L < X \text{ stratum}\}$ be a stratified continuous sheaf over a stratified space $X$,
such that $\FF$ is $\mathrm{StratDiff}$-invariant. Further, suppose for each stratum $L < X$, $\FF_{\bbar{L}}|L$ is microflexible and for each pair of strata $S < L < X$, $\HH^L_S$ is microflexible. Then the restriction $\FF|K$ to a stratified subspace $K \subset X$ of stratumwise positive codimension satisfies the parametric $h-$principle.
\end{theorem}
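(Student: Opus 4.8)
The plan is to reduce the stratified statement to two applications of the non-stratified Gromov dichotomy (Theorem \ref{thm-grmicro2flexintro}), one stratum at a time, and then invoke the stratified flexibility-implies-$h$-principle result (Theorem \ref{thm-sheafh-intro}) for the restricted sheaf $\FF|K$. Concretely, I would first set up the induction on the depth (or the number of strata) of $X$, ordering the strata $L_0 < L_1 < \dots$ by dimension, and fix the stratified subspace $K \subset X$ with $K_L := K \cap L$ of positive codimension in $L$ for every stratum $L$. The target is to verify, for $\FF|K$, the hypotheses of Theorem \ref{thm-sheafh-intro}: (a) stratumwise flexibility, i.e.\ $(\FF|K)_{\bbar{L \cap K}}|(L\cap K)$ is flexible for each stratum of $K$, and (b) infinitesimal flexibility across strata of $K$, i.e.\ each open homotopy fiber sheaf $\HH^{L}_{S}$ of $\FF|K$, for $S < L$ strata of $K$, is flexible.

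For part (a), the point is that $\FF_{\bbar L}|L$ is microflexible and $\diff(L)$-invariant by the $\sdiff$-invariance hypothesis, so the second half of Theorem \ref{thm-grmicro2flexintro} applies directly on the manifold $L$: its restriction to the positive-codimension piecewise-smooth polyhedron $K_L \subset L$ is flexible. Here I would need to check that $K_L$ is indeed (piecewise-smoothly) a polyhedron of positive codimension inside the manifold $L$ — this is where the ``stratumwise positive codimension'' hypothesis is used, and it should follow from the definition of stratified subspace once one intersects with a single stratum. For part (b), the analogous input is that $\HH^L_S$ is microflexible by hypothesis; to apply Theorem \ref{thm-grmicro2flexintro} to it I must know that $\HH^L_S$ is $\diff(S)$-invariant, which follows from the $\sdiff$-invariance of $\FF$ since $\HH^L_S = \hofib(\res^L_S : i_S^*\FF_{\bbar L} \to \FF_S)$ is built functorially from $\FF_S$, $i_S^*\FF_{\bbar L}$ and $\res^L_S$, all of which are $\diff(S)$-invariant; $\hofib$ of a $\diff(S)$-equivariant map of $\diff(S)$-invariant sheaves is again $\diff(S)$-invariant. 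Then again by Theorem \ref{thm-grmicro2flexintro}, $\HH^L_S|K_S$ is flexible on $K_S \subset S$.

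The remaining—and slightly delicate—step is to identify the homotopy fiber sheaves of the \emph{restricted} stratified sheaf $\FF|K$ with the restrictions of the homotopy fiber sheaves of $\FF$. That is, for strata $S < L$ of $K$ one must show $\HH^L_S(\FF|K) \simeq \HH^L_S(\FF)|K_S$, or at least that the former is flexible given the latter is. This should hold because restriction to $K$ commutes with pullback $i_S^*$ and with the formation of $\hofib$ (both are pointwise/homotopy-theoretic constructions), and restriction of a flexible sheaf along a further subspace inclusion of a manifold stratum is still flexible—but care is needed because $\res^L_S$ for $\FF|K$ is the restriction of $\res^L_S$ for $\FF$, and one must ensure the identification of the stratum closures $\bbar{L\cap K}$ inside $K$ behaves well. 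I expect this compatibility-of-$\hofib$-with-restriction bookkeeping to be the main obstacle; once it is settled, parts (a) and (b) give exactly the hypotheses of Theorem \ref{thm-sheafh-intro} applied to $\FF|K$, and the parametric $h$-principle for $\FF|K$ follows. (A variant, matching how Gromov organizes the manifold case, would be to first prove the $Y = V\times\mathbb R$ version—Theorem \ref{thm-micro2flexs}—stratumwise and then bootstrap to general positive-codimension $K$ by thickening $K$ to a tubular-neighborhood-like stratified subspace; I would keep this as a fallback if the direct identification of homotopy fiber sheaves on $K$ proves awkward.)
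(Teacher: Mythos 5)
Your proposal is correct, and it does differ from the paper's argument in a meaningful but ultimately cosmetic way. The paper first proves the cylinder version (Theorem \ref{thm-micro2flexs}, for $Y = X \times \mathbb R$), where for each stratum $L$ of $Y$, the first half of Theorem \ref{thm-grmicro2flexintro} gives flexibility of $\FF_L\vert L_X \times \{0\}$ and $\HH^L_S\vert S_X \times \{0\}$; Theorem \ref{thm-hofibsflexg} then yields the $h$-principle. For general positive-codimension $K$, the paper then charts every $k \in K$ into $U_k \times (-1,1) \hookrightarrow X$ and invokes Gromov's localization lemma to globalize. You instead apply the \emph{second} half of Theorem \ref{thm-grmicro2flexintro} (the polyhedron version) stratum by stratum: to $\FF_{\bbar L}|L$ restricted to $K_L \subset L$ for stratumwise flexibility, and to $\HH^L_S$ restricted to $K_S \subset S$ for the homotopy fibers. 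Since Gromov's polyhedron theorem is itself derived from the cylinder theorem plus localization, the two routes have the same content; yours is marginally more direct in that the localization is absorbed into the cited manifold theorem rather than performed by hand. The variant you describe as a ``fallback'' is precisely the paper's argument.

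You correctly flag the one place that needs care: identifying the open homotopy fiber sheaves of $\FF|K$ with restrictions of those of $\FF$. This does hold, and the bookkeeping is shorter than you fear. Writing $(\FF|K)_{\bbar{K_L}} = i^*_{\bbar{K_L}}\FF_{\bbar L}$, one computes for $K_S < K_L$:
\begin{align*}
\HH^{K_L}_{K_S}(\FF|K) &= \hofib\bigl(i^*_{K_S}(\FF|K)_{\bbar{K_L}} \to (\FF|K)_{K_S}\bigr) \\
&= \hofib\bigl(i^*_{K_S}\FF_{\bbar L} \to i^*_{K_S}\FF_{\bbar S}\bigr) \\
&= i^*_{K_S}\hofib\bigl(i^*_S\FF_{\bbar L} \to \FF_S\bigr) = \HH^L_S(\FF)\big|_{K_S},
\end{align*}
the key being that $\hofib$ (a pointwise path-space construction) commutes with pullback along $K_S \hookrightarrow S$. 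The paper glosses over this compatibility, so you are holding yourself to a higher standard than the source does. One small point you should make explicit is that $K_L = K \cap L$ is indeed a piecewise-smooth polyhedron of positive codimension in the manifold $L$; this follows from triangulability of stratified subspaces (Theorem \ref{goresky-triangulation}) together with the ``stratumwise positive codimension'' hypothesis, but it is not a pure formality. You also correctly derive the $\diff(S)$-invariance of $\HH^L_S$ from $\sdiff$-invariance of $\FF$; this is Lemma \ref{lem-trivialhofibbdl} in the paper. In short: valid proof, essentially equivalent in content to the paper's, with the localization step outsourced to the cited manifold theorem rather than spelled out.
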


In Section \ref{sec-formalfnstrat}, we address Sullivan's Question \ref{qn-sullivan} by developing a flag-like structure for jets on stratified spaces. The main results are given by   Propositions \ref{prop-formalfnmflds}, \ref{prop-decompcones},  and  \ref{prop-formalfnnbhdstrat2}. These results give a stratified analog 
 of the sheaf of formal $r-$jets in \cite{em-book,em-expo}  (see  Definition \ref{def-sjr} giving the corresponding sheaf $\sjr^r$). In a sense, Section \ref{sec-formalfnstrat} interpolates between the algebraic topology of Section \ref{sec-hprin} and the differential topology of Section 
\ref{sec-hat}.

In Section \ref{sec-hat}, we return to the differential topological setting of jets and jet bundles as a concrete example to which the above sheaf-theoretic theorems may be applied.
Let $p:E \to X$  be a smooth stratified bundle over a smooth stratified space $X$. Then the sheaves of sections of $p:E \to X$  and their jets come with natural control conditions. Let  $\FF$ denote the stratified sheaf of controlled sections of $E$ over $X$.  Then we have the following
(see Theorem \ref{thm-diffrlnflex}).
\begin{theorem}\label{thm-diffrlnflexintro}
	$\FF$   is flexible, in particular it satisfies the parametric $h-$principle.
\end{theorem}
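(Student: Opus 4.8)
The plan is to verify the two hypotheses of Theorem~\ref{thm-sheafh-intro} for the stratified sheaf $\FF$ of controlled sections of $p : E \to X$, namely stratumwise flexibility and infinitesimal flexibility across strata; once both are in place, Theorem~\ref{thm-sheafh-intro} immediately yields the parametric $h$-principle. For stratumwise flexibility, I would observe that for each stratum $L < X$ the restriction $\FF_{\bbar L}|L$ is the sheaf of smooth sections of the ordinary smooth fiber bundle $E|_L \to L$ over the manifold $L$ (the control conditions being vacuous ``in the interior'' of a single stratum), equipped with the compact-open ($C^\infty$) topology. This is a classical example of a flexible sheaf on a manifold: given compact $K \subset K'$ in $L$, the restriction map on section spaces is a Serre fibration because one can extend sections and section-homotopies from a neighborhood of $K$ to a neighborhood of $K'$ using a partition of unity and a tubular-neighborhood / bundle-chart argument — this is exactly the content of Gromov's discussion of sheaves of sections of bundles in \cite[Section 2.2]{Gromov_PDR}.

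The substantive part is the second hypothesis: for each pair $S < L$, the open homotopy fiber sheaf $\HH^L_S = \hofib(\res^L_S : i_S^* \FF_{\bbar L} \to \FF_S)$ must be flexible on the open stratum $S$. Here I would unwind the definition of a smooth stratified bundle (Definition~\ref{def-strbdl}), whose condition (iii) is precisely the extra ``bundle-like'' hypothesis beyond a Thom map: near $S$, the total space $E$ and its restriction to $\bbar L$ fit into a local model — over a tubular/control neighborhood of a point $s \in S$, one has $E_{\bbar L}$ locally a product of a bundle over $S$ with a cone-like fiber over the normal directions into $L$, together with a retraction/control data compatible with $E_S$. Consequently $\res^L_S$ is, locally over $S$, a \emph{fibration of sheaves} in the relevant sense: the germs-along-$S$ of controlled sections of $E_{\bbar L}$ map onto sections of $E_S$ with the homotopy fiber being the sheaf of sections of an explicit auxiliary fibered object over $S$ (the ``space of ways to extend a section over $S$ into the $L$-directions, compatibly with the control''). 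I would make this precise by producing a local trivialization of $\res^L_S$ over coordinate charts of $S$, deduce that $\HH^L_S$ is locally the sheaf of sections of a (possibly infinite-dimensional, but still locally trivial) bundle over $S$, and then invoke the same classical argument as above — sheaves of sections of locally trivial bundles over a manifold are flexible — together with the naturality of $\hofib$ under restriction to show the pieces glue. The $\sdiff$-invariance built into the controlled-section setup is what guarantees these local trivializations can be chosen compatibly, i.e.\ that the $\diff(S)$-action on $\FF_S$, $i_S^*\FF_{\bbar L}$ and $\res^L_S$ is honored.

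I expect the main obstacle to be exactly this local analysis of $\res^L_S$ and its homotopy fiber: one must check that the control conditions defining $\FF$ interact well with $\hofib$, i.e.\ that taking the homotopy fiber of the restriction map does not destroy flexibility because the control data near $S$ is itself ``flexible'' — sections of $E$ controlled near $S$ should extend/deform relative to closed sets in $S$ without obstruction. Concretely, the delicate point is that the cone-like structure of the fibers of $E_{\bbar L}$ over the normal bundle of $S$ must be such that the relevant mapping spaces (ends of controlled sections) form a sheaf on $S$ that is not merely microflexible but genuinely flexible; this is where the precise form of Definition~\ref{def-strbdl}(iii) — distinguishing a stratified bundle from a general Thom map — does the work, ruling out pathologies like the divisor-neighborhood non-example in the introduction. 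Once flexibility of $\HH^L_S$ is established on each $S$, the rest is a formal appeal to Theorem~\ref{thm-sheafh-intro}, which packages the inductive passage from open strata up through the stratification into the parametric $h$-principle for $\FF$.
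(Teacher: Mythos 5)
Your proposal addresses only half the statement. The theorem asserts two things: (a) $\FF$ is flexible as a stratified sheaf, i.e.\ each $\FF_{\bbar L}$ is flexible on $\bbar L$; and (b) the parametric $h$-principle. Your route — verifying stratumwise flexibility of $\FF_S$ and flexibility of $\HH^L_S$, then invoking Theorem~\ref{thm-sheafh-intro} — would at best establish (b). It cannot establish (a): stratumwise flexibility plus flexibility of the homotopy fiber sheaves does \emph{not} imply flexibility of $\FF_{\bbar L}$. The paper's Example~\ref{eg-stratflexnotflex} is precisely a stratumwise flexible stratified sheaf that is not flexible, so these hypotheses are strictly weaker than the conclusion claimed here. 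Omitting (a) is a genuine gap, not a stylistic choice.

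The paper's actual proof (Theorem~\ref{thm-diffrlnflex}) is a direct argument. Around any point $x$ in a stratum $S$, the local structure theorem (Lemma~\ref{lem-strbdl-trivialization}, Corollary~\ref{cor-trivialzn}) gives a product chart $W \cong V \times cA$ in which a section splits as $s(v,a) = (t(v), f(v,a))$ with $t$ a section of $E_S$ over $V$ and $f : V \to \Gamma(cA,cB)$. This yields an isomorphism of sheaves $i^*_V \FF \cong i^*_V \maps(-,\Gamma(cA,E)) \times i^*_W \FF_S$. The first factor is flexible by Lemma~\ref{lem-fixedF} (constant target), the second by Corollary~\ref{cor-jetsflex}; products and restrictions of flexible sheaves are flexible (Lemma~\ref{lem-flexpdkt}); Gromov's localization lemma promotes local flexibility to flexibility of each $\FF_{\bbar L}$. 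Once (a) is in hand, (b) follows from the simpler Theorem~\ref{thm-flex2shprin}, not the harder Theorem~\ref{thm-hofibsflexg}.

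A side remark: the local product analysis you gesture at for $\HH^L_S$ is correct in outline — the paper does prove $\HH^L_S \cong \maps(-,\Gamma_c(cA^L_S, cB_{S'})) \times \GG$ and its flexibility in Proposition~\ref{prop-hlsflex}, using the same local structure — but that computation is used for the immersion theorem, not here. If you want to rescue your approach, you would still need to supply the direct argument for (a), at which point you would be reproducing the paper's argument anyway and the appeal to Theorem~\ref{thm-sheafh-intro} becomes redundant.
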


A caveat is in order. The continuous sheaf $\FF$ is strictly smaller than the sheaf  of \emph{all} sections. It  is rather easy to see that the 
sheaf of  all sections satisfies the parametric $h-$principle. Theorem \ref{thm-diffrlnflexintro} says that this continues to hold in the presence of control conditions. In fact, $\FF$ can be identified with the sheaf of holonomic stratified $r-$jets (see Definition \ref{def-sjr} for the sheaf $\sjr^r$ of formal stratified $r-$jets), and hence  Theorem \ref{thm-diffrlnflexintro} is also true for the sheaf of holonomic stratified jets.

 We establish the following stratified holonomic approximation theorem (see Theorem \ref{em-hats}), generalizing Eliashberg-Mishachev's 
holonomic approximation theorem \cite[Theorem 1.2.1]{em-expo} for manifolds (Theorem \ref{em-hat}).
\begin{theorem}\label{em-hatsintro}
		Let $X$ be {an abstractly stratified space} equipped with a compatible metric, $E \to X$ be a stratified bundle, and $K \subset X$ be a relatively compact
	stratified subspace of positive codimension.
	Let {$f \in \sjr^r_E(\op K)$} be a $C^r-$regular formal section. Then for arbitrarily small $\ep > 0$, 
	 $\delta > 0$, there exist a stratified
	diffeomorphism $h : X \to X$ with $$||h-{\mathrm{Id}}||_{C^0} < \delta,$$ and a stratified holonomic section {$\til{f} \in \sjr^r_E(\op K)$} such
	that 
	\begin{enumerate}
		\item the image $h(K)$ is contained in the domain of  definition of the section $f$, 
		\item $||\til{f}- f|\op\, h(K) ||_{C^0} < \ep$.
		\item $\til{f}, f|\op\, h(K)$ are normally $\varepsilon$ $C^r$-close.
	\end{enumerate}
\end{theorem}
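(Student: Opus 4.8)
The plan is to run an induction over the strata of $X$ ordered by the partial order $<$ (equivalently, by dimension), at each stage applying the manifold holonomic approximation theorem (Theorem \ref{em-hat}) to the open stratum currently under consideration, and then patching the resulting holonomic approximation across the adjacent higher strata using the control conditions built into $\sjr^r_E$ and the stratified diffeomorphism $h$. More precisely, let $S_0 < S_1 < \cdots$ enumerate the strata of $X$ compatibly with the partial order, so that $\bbar{S_i}$ only involves strata $S_j$ with $j \le i$. Write $K_i = K \cap \bbar{S_i}$. The inductive hypothesis at stage $i$ will be that we have produced a stratified diffeomorphism $h_i$ of $\bbar{S_i}$, $C^0$-close to the identity, and a holonomic section $\til f_i \in \sjr^r_E(\op K_i)$ (defined on a neighborhood of $h_i(K_i)$ inside $\bbar{S_i}$) which is $\ep$-$C^0$-close and normally $\ep$-$C^r$-close to $f|\op\, h_i(K_i)$, and moreover is compatible with the control data, i.e.\ near each lower stratum $S_j < S_i$ it restricts (in the sense of the restriction maps $\res^{S_i}_{S_j}$ of the stratified structure, combined with the tubular-neighborhood control of $E\to X$) to the previously constructed $\til f_j$.

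The base case is the lowest stratum $S_0$, where $K_0 = K\cap \bbar{S_0} = K\cap S_0$ is a relatively compact polyhedron of positive codimension in the manifold $S_0$, and Theorem \ref{em-hat} applied to the (ordinary, manifold) jet bundle $j^r E|_{S_0} \to S_0$ yields $h_0$ and $\til f_0$ directly. For the inductive step, one is handed the data over $\bbar{S_{i-1}}$ and must extend it across the open stratum $S_i$. Here one first uses the control structure of the abstract stratification: a relatively compact neighborhood of $K_{i-1}$ in $\bbar{S_i}$ is a union of $\bbar{S_{i-1}}$ together with a tubular/mapping-cylinder neighborhood $T$ of it, on which everything is pulled back from $\bbar{S_{i-1}}$ by the control retraction; on $T$ one simply declares $\til f_i$ to be the control-theoretic pullback of $\til f_{i-1}$, which is automatically holonomic and $\ep$-close there because $\til f_{i-1}$ was. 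It remains to extend over the part of $K_i \cap S_i$ lying outside $T$. This is a relatively compact polyhedron of positive codimension in the open manifold $S_i \setminus (\text{smaller tube})$, with a prescribed holonomic germ already fixed on the collar $\partial T$; one invokes the relative/parametric form of the manifold holonomic approximation theorem (Theorem \ref{em-hat}, which one uses in its version allowing a fixed holonomic section near a closed set, obtained by the standard trick of absorbing the collar into the set $K$) to produce $h_i$ (equal to $h_{i-1}$ on $\bbar{S_{i-1}}$, and supported away from the smaller tube) and $\til f_i$. The normal $C^r$-closeness clause is handled by carrying, at every stage, not just the section but its full jet in the normal directions, which is legitimate because $\sjr^r_E$ is precisely the sheaf of formal stratified $r$-jets and its holonomic sections record exactly this normal data; the manifold holonomic approximation theorem applied to the appropriate jet bundle over $S_i$ controls it.

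The main obstacle, and the place where the stratified setting genuinely departs from a routine induction, is ensuring that the local diffeomorphisms $h_i$ and the local holonomic sections $\til f_i$ produced stratum by stratum actually assemble into a \emph{stratified} diffeomorphism $h$ of $X$ and a genuine element of $\sjr^r_E(\op K)$ — that is, that they respect the gluing data across strata encoded (in the language of Sections \ref{sec-hprin}--\ref{sec-formalfnstrat}) by the restriction maps $\res^L_S$ and the homotopy-fiber sheaves $\HH^L_S$. One must arrange the approximation on $S_i$ to be $C^0$-small \emph{relative to the geometry of the tubular neighborhood} (the compatible metric on $X$ is exactly what makes this quantifiable), so that pushing forward by $h_i$ does not violate the control conditions of $E\to X$ near the strata $S_j<S_i$, and so that the $\ep$'s accumulated over the finitely many strata (or, in the non-compact case, over a locally finite exhaustion) can be summed to the prescribed global $\ep,\delta$. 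Concretely this requires choosing at stage $i$ a sufficiently shrunk tube and a sufficiently small perturbation that the already-constructed data on $\bbar{S_{i-1}}$ is left literally untouched, which is possible because $K$ has positive codimension in every stratum and hence the polyhedra $K_i\cap S_i$ can be pushed off the smaller tubes by an arbitrarily small ambient isotopy — this is the stratified analogue of the codimension hypothesis in Theorem \ref{em-hat}, and it is what the hypothesis ``$K$ of positive codimension'' in the statement is doing. Once the compatibility of the tube-pullback description with the sheaf-restriction description is checked (a routine but notationally heavy verification, using the definition of $\sjr^r$ in Definition \ref{def-sjr} and the fact established in Theorem \ref{thm-diffrlnflexintro} that the controlled-section sheaf is the holonomic stratified jet sheaf), the pieces $h_i,\til f_i$ glue to the desired $h,\til f$ and conclusions (1)--(3) follow by construction.
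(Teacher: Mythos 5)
Your overall scheme — start at the deepest strata, apply the manifold holonomic approximation theorem there, extend normally, then work outward, gluing with a $C^0$-small correcting isotopy — is the same as the paper's, which inducts on the depth of $X$ by applying Theorem \ref{em-hat} to the deepest stratum $S$, extending into $N_S$ via Corollary \ref{cor-isotopyext}, then deleting a thin neighborhood $N_\eta(S)$ and inducting on the lower-depth complement, finally interpolating the two holonomic pieces on an annulus using Corollary \ref{cor-interpolate}. However, there is a genuine gap in the step you are treating as routine, and it is precisely the point the paper flags in the introduction as the reason this theorem does \emph{not} reduce to the relative holonomic approximation theorem \cite[Theorem 3.2.1]{em-book}.

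The problematic claim is that on the tube $T$ over $\bbar{S_{i-1}}$ one ``simply declares $\til f_i$ to be the control-theoretic pullback of $\til f_{i-1}$.'' This does not produce a section of $E$ over $T$: the section $\til f_{i-1}$ takes values in $E|_{\bbar{S_{i-1}}}$, whose fibers have strictly smaller dimension than the fibers of $E$ over the open stratum $S_i$; composing with the control retraction $\pi_{S_{i-1}S_i}$ lands you in the wrong fiber. What actually needs to happen here is that the jet you have approximated over $\bbar{S_{i-1}}$ be \emph{extended} to a germ of a section over a neighborhood in $\bbar{S_i}$ using the \emph{normal formal data already contained in $f$} — this is exactly why the sheaf $\sjr^r_E$ carries the normal formal sections $s^*_{S,n}$ as part of its definition (Definition \ref{def-sjr}), and why Remark \ref{rmk-nfisholo} (the normal formal section is automatically holonomic) is crucial. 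Concretely, the paper grafts the conical component of $f_{S,n}$ onto the manifold-level approximation $\til f_S$ via the fiber-preserving homotopy extension Lemma \ref{lem-fiberhep}, which is Corollary \ref{cor-isotopyext}; this is where the stratified-bundle structure (rather than a naive control pullback) enters. Your remark about ``carrying the full jet in the normal directions'' gestures at this, but the mechanism you propose does not cash it out.

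Relatedly, your patching step invokes ``the relative/parametric form of the manifold holonomic approximation theorem...obtained by the standard trick of absorbing the collar into $K$.'' The paper explicitly warns that this trick is insufficient in the stratified context, for the same reason: the boundary condition on the collar is a \emph{stratified} jet compatible with the normal data, not merely a holonomic smooth jet on a piece of the open stratum, and the manifold theorem does not extend a jet of $E_S$ over $S$ to a jet over a germ of a neighborhood in $\bbar L$. The paper avoids this by deleting $N_\eta(S)$, applying the inductive hypothesis to the resulting lower-depth stratified space $X_1$, and then \emph{interpolating} between the two holonomic sections $\til f_e$ and $\til f_1$ on the annulus $N_{2\eta}(S)\setminus N_\eta(S)$ using the normal $C^r$-closeness to $f$ of both (Corollary \ref{cor-interpolate}); the interpolation, not a relative manifold HAT, is what makes the pieces agree. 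To repair your argument you should replace the control-pullback extension by the grafting of $f_{S,n}$ with $\til f_S$ (Corollary \ref{cor-isotopyext}) and replace the relative-HAT gluing by the annular interpolation of Corollary \ref{cor-interpolate}.
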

We should point out that neither Theorem \ref{thm-diffrlnflexintro}, nor Theorem \ref{em-hatsintro} follow from the relative holonomic approximation theorem  \cite[Theorem 3.2.1]{em-book}. The basic 
issue can be illustrated in the simple case of a pair of strata $S<L$. Let $E_S, E_L$ denote the bundles over $S, L$. In order to prove 
either of these theorems, we need to consider \emph{extensions} of a  jet 
(formal or holonomic) of $E_S$  over $S$  to a  jet over a germ of a neighborhood of  $S$ in $\bbar L$. This echoes the fact mentioned earlier, that in the stratified sheaf context, the open homotopy fiber $\HH^L_S$ is a new player in the game. Alternately, the extension may be thought of as ``gluing data" that allows us to go between jets over $S$ and jets over $L$. 
 
A host of applications in the manifold context have been enumerated by 	Eliashberg-Mishachev. All have potential generalizations to the stratified context in the light of Theorem \ref{em-hatsintro}.  We give an application of our techniques  at the end of Section \ref{sec-emhat} by showing that stratified immersions of positive codimension between stratified spaces satisfy the $h-$principle (Theorem \ref{thm-immns}): this is the stratified analog of the Smale-Hirsch theorem \cite[Chapter 8.2]{em-book}.

From a more algebraic topology perspective, a very general local-global principle was established in \cite[Theorem 1.2.3]{aft}. In the light of the main result of \cite{nv}, this result is applicable to Whitney stratified spaces. 
{In principle, this reduces the problem of establishing a stratified analog of Smale-Hirsch theorem over a general stratified space, to stratified spaces of the form $\Bbb R^n \times cA$, where $cA$ denotes cone on another stratified space $A$. Since $A$ is in general not a manifold, establishing such a Smale-Hirsch theorem remains as difficult in the local context as in the global context. In particular, Theorem \ref{thm-immns} does not follow from \cite{aft,nv}.}

{We adopt a very different strategy in the proof of Theorem \ref{thm-immns}: instead of reducing the global problem to a local one, we reduce it to an \emph{infinitesimal} one near each stratum, by Theorem \ref{thm-sheafh-intro}. To illustrate in the local context of $\Bbb R^n \times cA$, let $c_A \subset cA$ denote the cone point. We study germs of immersions at $\Bbb R^n \times c_A \subset \Bbb R^n \times cA$, and show that such a thing decouples into two components:
\begin{enumerate}
\item An immersion of $\Bbb R^n \times \{c_A\}$,
\item An $\Bbb R^n$--parametric family of germs of immersions at $c_A \subset cA$.
\end{enumerate}
This description fits very concisely into the framework of stratified continuous sheaves: Item $(1)$ defines the restriction of the stratified continuous sheaf of stratified immersions, to the stratum $\Bbb R^n \times \{c_A\}$. Item $(2)$ effectively defines the open homotopy fiber sheaf over the stratum $\Bbb R^n \times \{c_A\}$. From the classical Smale-Hirsch theorem \cite[Chapter 8.2]{em-book} we deduce stratumwise flexibility. In the proof of Theorem \ref{thm-immns}, we establish flexibility across strata by analyzing Item $(2)$. Theorem \ref{thm-immns} now follows by appealing to Theorem \ref{thm-sheafh-intro}.}

\subsection{Outline of the paper} The aim of Section \ref{sec-prel}  is to set up the context of the $h-$principle for stratified spaces. This section is in the spirit of \cite{em-book}, except that the technology is for stratified spaces in place of manifolds.  We start by describing the setup of stratified spaces  following \cite{GM_SMT,mather-notes}. Stratified spaces are recalled in Section \ref{sec-stratfdsps} and stratified maps in Section \ref{sec-stratfdmaps}. We define stratified bundles and related notions in Section \ref{sec-stratfdbdl}, where the basic fact we prove is the local structure of bundle maps
(Lemma \ref{lem-strbdl-trivialization} and Corollary \ref{strbdl-cone-link}). It is well-known that locally a stratified space looks like a product $\R^n \times cA$
of Euclidean space with a cone $cA$ on a compact stratified space $A$. Lemma \ref{lem-strbdl-trivialization} and Corollary \ref{strbdl-cone-link} upgrade this to a statement about the local structure of a stratified bundle over a stratified space.
We then proceed
in Section  \ref{sec-stratfdjet} to define
stratified jets, jet bundles,  
and formal and holonomic sections in the stratified context.

While Section \ref{sec-prel} ends by setting up the context of the $h-$principle for stratified spaces by describing jet bundles and their sections and is in the spirit of \cite{em-book},   Section  \ref{sec-sheafflexdiff} has more of  an algebraic topology flavor, and is in the spirit of
\cite{Gromov_PDR}. Here, we look at sheaves  over stratified spaces. 
The crucial  notion of a \emph{stratified  sheaf} over a stratified space is introduced in  Section \ref{sec-flexdefs}. 
Flexibility conditions are introduced in this context in Section \ref{sec-flexdefs}. A principal condition used in \cite{Gromov_PDR} is 
$\rm{Diff}-$invariance of sheaves. In Section \ref{sec-diffinv}, we describe the stratified analog, $\sdiff-$invariance, in the context of stratified spaces. 

In Section \ref{sec-hprin}, we prove one of the main  theorems of the paper, Theorem \ref{thm-hofibsflexg} (or Theorem \ref{thm-sheafh-intro} above), establishing the parametric $h-$principle for stratified sheaves over stratified spaces. The main idea or mnemonic may be  summarized as follows: flexibility (a precursor to the $h-$principle) normal to strata plus flexibility  tangential to strata furnishes the $h-$principle for  stratified sheaves over stratified spaces. In a sense, this is  in the spirit of Goresky-MacPherson's fundamental theorem on Morse data \cite{GM_SMT} on stratified spaces where
total Morse data can be recovered from normal Morse data and tangential Morse data.
On the way, we establish Theorem \ref{thm-flex2shprin}, spelling out the connection between flexibility and the $h-$principle in the context of stratified sheaves. A tool we use 
in Section \ref{sec-hprin} is Milnor's theory of microbundles. This allows us to simplify Gromov's formalism from 
\cite[Chapter 2]{Gromov_PDR}.

In Section \ref{sec-micro}, we establish the connection between flexibility  and microflexibility  of stratified sheaves (see Theorem \ref{thm-micro2flexs}). It  follows (see Theorem \ref{thm-micro2flexs2}) that the restriction of microflexible $\sdiff-$invariant sheaves to positive codimension stratified subspaces satisfies the parametric $h-$principle.

Section \ref{sec-formalfnstrat} is devoted to  using microbundles and developing a homotopy model of the Gromov diagonal normal sheaf $\FF^\bullet$
for a sheaf $\FF$ of controlled sections of a stratified bundle $P: E \to X$. In so doing, we answer Sullivan's Question \ref{qn-sullivan} by developing a formalism of flag-like sets.
The description is hybrid in nature. There is a tangential component given by sections along manifold strata $S$ and there is a normal component given by sections over the link of $S$ in $X$. Since the link $A$ of a stratum $S$ is itself a stratified space, the restriction of $P: E \to X$ to $P:P^{-1}(A)\to A$ is again a stratified bundle of lesser complexity;  hence  an  inductive description. 

We return to jets and jet bundles of stratified bundles over stratified spaces in Section \ref{sec-hat}.
Theorem \ref{thm-diffrlnflex}   establishes that the
sheaf of sections of a stratified jet bundle  satisfies the parametric $h-$principle. We also prove the stratified analog of Eliashberg-Mishachev's holonomic approximation theorem in Theorem \ref{em-hats}. As an application of Theorem \ref{thm-sheafh-intro}, we establish a stratified Smale-Hirsch theorem: stratified immersions of positive codimension between stratified spaces satisfy the $h-$principle (Theorem \ref{thm-immns}).\\

\noindent {\bf Acknowledgments:} The authors thank Yasha Eliashberg for comments on an earlier draft. The authors thank the anonymous referee(s) for 
their detailed and helpful comments and suggestions.

\section{Smoothly stratified objects and maps} \label{sec-prel}

\subsection{Smoothly stratified spaces}\label{sec-stratfdsps}

\begin{defn}\label{def-Idec} Let $X$ be a locally compact second countable metric space and let $(I, \leq)$ be a partially ordered set. An \emph{$I$-decomposition} of $X$ is a locally finite collection $\{S_\alpha\}_{\alpha \in I}$ of disjoint locally closed subsets of $X$ such that 
\begin{enumerate}
\item $S_\alpha$ is a topological manifold for all $\alpha \in I$
\item $X = \bigcup_{\alpha \in I} S_\alpha$ 
\item $S_\alpha \cap \overline{S_\beta} \neq \emptyset \iff S_\alpha \subseteq \overline{S_\beta} \iff \alpha \leq \beta$. 
\end{enumerate}\end{defn}

If $X$ is an $I$-decomposed space as above, we shall call $S_\alpha, \, \alpha \in I,$ the \emph{strata} of $X$, and denote by $\Sigma$ the collection of strata of $X$ indexed by $I$. We shall use $\alpha \leq \beta$ and $S_\alpha \leq S_\beta$ interchangeably, partially ordering $\Sigma$ instead. If $S_\alpha \leq S_\beta$ and $S_\alpha \neq S_\beta$ we shall write $\alpha < \beta$ (or $S_\alpha < S_\beta$). Note that $S_\alpha < S_\beta$ is equivalent to saying that $S_\alpha$ lies in the boundary $$\partial S_\beta := \overline{S_\beta} \setminus S_\beta$$ of $S_\beta$. For any stratum $S \in \Sigma$ we define the \emph{depth} of $S$ to be 
$$\depth(S): = \sup\{n : S_i \in \Sigma \text{ such that } S < S_1 < \cdots < S_{n-1}\}.$$ 
Similarly, define  the \emph{height} of $S$ to be 
$$\height(S): = \sup\{n : S_i \in \Sigma \text{ such that } S > S_1 > \cdots >S_{n-1}\}.$$ 

We shall moreover define the \emph{depth} and \emph{dimension} of $X$ respectively as
$$\depth (X) := \sup_{\alpha \in I}\, \depth (S_\alpha), \;\;\; \dim X := \sup_{\alpha \in I} \,\dim (S_\alpha)$$

\begin{defn}\label{def-whitneyab}Let $(S, L)$ be a pair of smooth (not necessarily properly) embedded submanifolds of a smooth manifold $M$ such that $S \subset \overline{L}$.
\begin{enumerate}
\item The pair $(S, L)$ is said to satisfy \emph{Whitney condition (a)} if for any sequence $\{x_n\} \subset L$ converging to $x \in S$ such that the sequence of tangent planes $T_{x_n} L$ converge to a plane $\tau \subset T_x M$, the inclusion $T_x S \subset \tau$ holds. 
\item The pair $(S, L)$ is said to satisfy \emph{Whitney condition (b)} if the following holds.\\ Let  $\{x_n\} \subset L$, $\{y_n\} \subset S$ be any pair of sequences both converging to $x \in S$ such that the tangent planes $T_{x_n} L$ converge to some plane $\tau \in T_x M$. Further suppose that the secants $\overline{x_n y_n}$ converge to a line $\ell \in T_x M$. Then $\ell \subset \tau$.
\end{enumerate}
\end{defn}

The notions of convergence of planes and lines mentioned above are defined locally by choosing a coordinate chart around $x$ in $M$, such that the chart  contains a tail of  the sequences $\{x_n\}, \{y_n\}$. It is straightforward to check that the property of the pair $(S, L)$ satisfying either of the above conditions is independent of the chosen coordinate chart. See \cite{mather-notes} for a coordinate-free restatement of condition $(b)$. 
Note that condition $(b)$ implies condition $(a)$, since given any sequence $\{x_n\} \subset L$ with $T_{x_n} L \to \tau$ and a line $\ell \subset T_x S$, defined in a local chart $(U, x) \cong (\mathbb R^n, 0)$ around $x$, one can construct a pair of sequences $\{x_n\} \subset L$ and $\{y_n\} \subset S$ such that the secants $\overline{x_n y_n}$ converge to $\ell \in T_x M$. Now as $T_{x_n} L \to \tau$, we must have $\ell \subset \tau$ by hypothesis of satisfying condition $(b)$. Therefore, $\ell \subset \tau$. But $\ell \subset T_x S$ was arbitrary, so we conclude $T_x S \subset \tau$. Therefore, condition $(a)$ holds.

\begin{defn}\label{def-whitneyss}  A \emph{Whitney stratified subset} of a smooth manifold $M$ is a subset $X \subset M$ with an $I$-decomposition $\Sigma$ such that every stratum in $\Sigma$ is a {smoothly} embedded submanifold of $M$, and any pair of strata {$(S, L)$ in $\Sigma$ such that $S < L$} satisfies the Whitney condition $(b)$.\end{defn}

Thom and Mather showed that every Whitney stratified subset of a smooth manifold has a canonical local model akin to manifolds being locally modeled by Euclidean spaces. This gives us an intrinsic definition of a topological Whitney stratified set. The cone on a topological space $A$ is denoted as {$cA$}.

\begin{defn}\cite{friedman-notes}\label{def-cs} A {\emph{CS set}}\footnote{{The acronym ``CS" stands for ``conically smooth".}} is an $I$-decomposed space $(X, \Sigma)$ such that for any stratum $S \in \Sigma$ the following holds: \\ For any point $x \in S$ there exists an open neighborhood $U$ of $x$ in $X$, {a chart $V$} around $x$ in $S$, and a stratified space $(A, \Sigma_A)$ called the \emph{link} of $x$, such that there is a stratum-preserving homeomorphism {$\varphi : V \times cA \to U$} where $U$ {is} given the induced stratification from $X$.\end{defn}

\begin{theorem}\cite{mather-notes}\label{mather-cs}
	Every Whitney stratified subset of a smooth manifold is a CS set.
\end{theorem}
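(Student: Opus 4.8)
The assertion is the Thom--Mather theorem, and I would prove it through the theory of \emph{control data} and Thom's first isotopy lemma. Write $X \subset M$ for the Whitney stratified subset, with set of strata $\Sigma$. \textbf{Step 1: control data.} The first task is to equip the stratification with a \emph{system of control data}: for each stratum $S \in \Sigma$, a tubular neighborhood $\pi_S : T_S \to S$ of $S$ in $M$ together with a smooth tubular function $\rho_S : T_S \to [0, \infty)$ with $\rho_S^{-1}(0) = S$, such that for every pair $S < R$ the compatibility relations $\pi_S \circ \pi_R = \pi_S$ and $\rho_S \circ \pi_R = \rho_S$ hold wherever both sides are defined. One builds these by downward induction on $\dim S$: given the tubes for all strata of larger dimension, choose any tubular neighborhood of $S$ in $M$ and then shrink and modify it so that the compatibility relations hold on the previously constructed tubes. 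The Whitney conditions are used precisely here --- condition $(a)$ guarantees that, after shrinking, $\pi_S$ restricts to a submersion $T_S \cap R \to S$ for every $R > S$ (so that $\rho_S|_{T_S \cap R}$ is a submersion onto a half-line), and condition $(b)$ is what makes the relations $\pi_S \circ \pi_R = \pi_S$ simultaneously realizable along chains of strata. I would cite \cite{mather-notes} for this construction.

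\textbf{Step 2: controlled vector fields and local triviality.} Control data let one lift a vector field on a stratum $S$ to a \emph{controlled} vector field on a neighborhood of $S$ in $X$ --- tangent to every stratum, and $\pi_S$- and $\rho_S$-related to the field downstairs. Such a vector field is only continuous on $X$, but it has a well-defined continuous flow, because on each stratum the flow is smooth and the control relations force the stratumwise flows to fit together continuously. Integrating controlled vector fields yields \emph{Thom's first isotopy lemma}: a proper stratified submersion $f : Y \to N$ onto a manifold (meaning $f$ restricted to each stratum of $Y$ is a submersion) is a locally trivial fibre bundle, with local trivializations that are stratum-preserving homeomorphisms.

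\textbf{Step 3: the local cone structure.} Now fix $x \in S$ and choose a coordinate chart $V$ around $x$ in $S$ with $\overline V$ compact and $\overline V \subset \pi_S(T_S)$. For sufficiently small $\epsilon > 0$, let $A := \pi_S^{-1}(x) \cap \rho_S^{-1}(\epsilon)$, with the stratification induced from $X$; this compact stratified space will be the link. By Step 1 the projection
$$\pi_S : \pi_S^{-1}(V) \cap \rho_S^{-1}(\epsilon) \longrightarrow V$$
is a proper stratified submersion, so by Step 2 it is trivial: there is a stratum-preserving homeomorphism $\pi_S^{-1}(V) \cap \rho_S^{-1}(\epsilon) \cong V \times A$. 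Using the control data I would next produce a controlled vector field $v$ on $T_S$ with $\pi_{S*} v = 0$ and $v(\rho_S) = \rho_S$; its flow contracts the level sets of $\rho_S$ toward $S$, and combining it with the trivialization just obtained identifies $N := \pi_S^{-1}(V) \cap \rho_S^{-1}([0, \epsilon])$ with $V \times cA$ --- carrying $N \cap S$ to $V \times \{c_A\}$, where $c_A$ is the cone point of $cA = (A \times [0, \epsilon]) / (A \times \{0\})$ --- and this identification is again a stratum-preserving homeomorphism. Taking $U := \mathrm{int}\, N$ and $\varphi$ to be (the restriction of) this homeomorphism verifies Definition~\ref{def-cs} at $x$.

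The heart of the argument --- and the main obstacle --- is Step 1: producing control data that are compatible along all pairs of strata at once, which is exactly where Whitney regularity is indispensable, together with the integration of the merely continuous controlled vector fields underpinning Step 2. Granting those two ingredients, Step 3 is essentially formal. As this is classical Thom--Mather theory, in a write-up I would invoke \cite{mather-notes} for Steps 1--2 and give details only for the extraction of the cone structure in Step 3.
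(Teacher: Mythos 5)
The paper gives no proof of this statement: it is stated as a citation to Mather's notes, so the "paper's own proof" \emph{is} the Thom--Mather argument you have sketched. Your three-step outline (construct control data by downward induction using Whitney regularity; integrate controlled vector fields to get the first isotopy lemma; trivialize $\pi_S^{-1}(V)\cap\rho_S^{-1}(\epsilon)$ and cone it off with a radial controlled vector field) is exactly the standard route through \cite{mather-notes}, so your proposal is correct and takes the same approach as the source the paper relies on.
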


Given an $I$-decomposed space $(X, \Sigma)$, a \emph{tubular neighborhood system} or simply, a \emph{tube system} $\mathcal{N}$ on $\Sigma$ is a collection of triples $(N_\alpha, \pi_\alpha, S_\alpha)_{\alpha \in I}$ consisting of (for every $\alpha \in I$) an open neighborhood $N_\alpha$ of the stratum $S_\alpha$ in $X$, called the \emph{tubular neighborhood} of the stratum, a retraction $\pi_\alpha : N_\alpha \to S_\alpha$, called the \emph{tubular projection}, and a continuous function $\rho_\alpha : N_\alpha \to [0, \infty)$ such that $\rho_\alpha^{-1}(0) = S_\alpha$: $\rho_\alpha$ is  called the \emph{radial function}.

We now define an  \emph{abstract stratification} in the sense of Mather \cite{mather-notes}. It provides a notion of a smooth stratification on an $I$-decomposed space $(X, \Sigma)$ independent of the ambient space it is embedded in. This is analogous to the abstract definition of a smooth manifold using a smooth atlas rather than via an embedding.

\begin{defn}\label{def-abtractstratsp} An $I$-decomposed space $(X, \Sigma)$ equipped with a tube system $\mathcal{N}$ on $\Sigma$, denoted by the triple $(X, \Sigma, \mathcal{N})$, defines an \emph{{abstractly} stratified space} if the following holds 
\begin{enumerate}
	\item Each stratum $S_\alpha \in \Sigma$ {is a} smooth manifold. 
	\item For any pair $\alpha, \beta \in I$ of indices such that $\alpha < \beta$, we set $N_{\alpha\beta} = N_\alpha \cap S_\beta$ and the restrictions of $\pi_\alpha$ and $\rho_\alpha$ to $N_{\alpha\beta}$ are denoted by $\pi_{\alpha\beta}$ and  $\rho_{\alpha\beta}$ respectively. {The maps $\pi_{\alpha\beta} : N_{\alpha\beta} \to S_\beta, \rho_{\alpha\beta} : N_{\alpha\beta} \to (0, \infty)$ are smooth\footnote{{The smooth structure on the domain $N_{\alpha\beta}$ of $\pi_{\alpha\beta}, \rho_{\alpha\beta}$ is induced as an open subset of the smooth manifold $S_\alpha$.}}, and the map $(\pi_{\alpha\beta}, \rho_{\alpha\beta}) : N_{\alpha\beta} \to S_\alpha \times (0, \infty)$ is a submersion.}
	\item For all triples $\alpha, \beta, \gamma \in I$ of indices with $\alpha < \beta < \gamma$, the \emph{$\pi$-control condition} $\pi_{\alpha\beta}\pi_{\beta\gamma}(x) = \pi_{\alpha\gamma}(x)$ and the \emph{$\rho$-control condition} $\rho_{\alpha\beta}\pi_{\beta\gamma}(x) = \rho_{\alpha\gamma}(x)$ are satisfied whenever $x \in N_{\beta\gamma} \cap N_{\alpha\gamma} \cap \pi_{\beta\gamma}^{-1}(N_{\alpha\beta})$. 
\end{enumerate}
\end{defn}

For simplicity of notation, we will often denote the tubular neighborhood of a stratum $S \in \Sigma$ in an {abstractly} stratified space $(X, \Sigma, \mathcal{N})$ by $N_S$ and the associated tubular function and radial function will be denoted by $\pi_S$ and $\rho_S$. For two strata $S, L \in \Sigma$, $S < L$, the tubular neighborhood of $S$ in $L$ will be defined as $N_{SL} := N_S \cap L$ and the restrictions of $\pi_S$ and $\rho_S$ to $N_{SL}$ will be denoted by $\pi_{SL}$ and $\rho_{SL}$ respectively. Let
$$N_S^\varepsilon := \{x \in N_S : \rho_S(x) < \varepsilon(\pi_S(x))\} \subset N_S,$$ where $\varepsilon : S \to (0, \infty)$ is a {\emph{positive smooth function}}. We  shall often use the shorthand $\varepsilon > 0$ if there is no scope of confusion. $N_S$ shall usually mean $N_S^1$.

{Here, we use $\varepsilon$ to denote a function on $S$ defining a tubular neighborhood of $S$. However, for most applications, we shall need to consider the function $\varepsilon$ only on relatively compact subsets of $S$, where it may be taken to be a small constant, hence this notation.}

Two tube systems $\mathcal{N} = (N_S, \pi_S, \rho_S)_{S \in \Sigma}$ and $\mathcal{N}' = (N'_S, \pi'_S, \rho'_S)_{S \in \Sigma}$ on an $I$-decomposed space $(X, \Sigma)$ {shall be declared} \emph{equivalent} if for any strata $S \in \Sigma$, there exists an open neighborhood $S \subset N''_S \subset N_S \cap N'_S$ such that $\pi_S|N''_S = \pi'_S|N'_S$ and $\rho_S|N''_S = \rho'_S|N''_S$. If $(X, \Sigma_X, \mathcal{N}_X)$ and $(Y, \Sigma_Y, \mathcal{N}_Y)$ are two {abstractly} stratified spaces and $f : X \to Y$ is a stratum-preserving homeomorphism such that the pulled back tube system $f^* \mathcal{N}_Y = (f^* N_S, f^{-1} \circ \pi_S \circ f, \rho_S \circ f)_{S \in \Sigma_Y}$ is equivalent to $\mathcal{N}_X$, then $f$ is said to be an \emph{isomorphism} between $X$ and $Y$.\\

{Examples of stratified spaces include manifolds with corners. A product $X \times Y$ of stratified spaces $X, Y$ is naturally stratified with strata consisting of products of strata of $X$ and $Y$, however there is no canonically defined abstract stratification in general. For instance, consider $I \times I$ where $I = [0, 1]$ is stratified as a manifold with boundary. {We describe the tube system near the corner stratum $S = (0, 0) \in I \times I$. Pick a metric on $[0, 1] \times [0, 1]$, let $N_S$ be a quarter-disk around $(0, 0)$, let $\pi_S : N_S \to S$ be the constant retraction and let $\rho_S$ be the radial distance function from the origin with respect to the metric. This choice constrains the behavior of the tube system for the edge strata $L_1 = (0, 1) \times \{0\}$ and $L_2 = \{0\} \times (0, 1)$ abutting $S$:  $N_{L_1}, N_{L_2}$ must be constructed so that they shrink as they approach $(0, 0)$ and so that $\pi_{L_i}$ preserve the radial lines $\{\rho_S = \mathrm{const}.\}$ in $N_S \cap N_{L_i}$, $i = 1, 2$.} However, if one of $X$ or $Y$ is a manifold, $X \times Y$ does have a canonical abstract stratification. {Indeed, suppose $X$ is a manifold and $S$ is a stratum of $Y$. Then we may immediately set $N_{X \times S} = X \times N_S$, $\pi_{X \times S} = \mathrm{id}_X \times \pi_S$ and $\rho_{X \times S}(x, y) = \rho_S(y)$.}

\begin{theorem}\cite{mather-notes}\label{mather} Any Whitney stratified subset $(X, \Sigma) \subset M$ admits {a tubular neighborhood system consisting of (not necessarily properly) embedded tubular neighborhoods $\nu(S)$, one for each stratum $S \in \Sigma$ in $M$}. Further, there exists a projection $\pi_S : \nu(S) \to S$ and radial function $\rho_S : \nu(S) \to [0, \infty)$ such that $N_S = \nu(S) \cap X$. The restrictions of $\pi_S$ and $\rho_S$ to $N_S$ furnish a tube system $\mathcal{N} = (N_S, \pi_S, \rho_S)_{S \in \Sigma}$ which {makes $(X, \Sigma, \mathcal{N})$ an abstractly stratified space}.\end{theorem}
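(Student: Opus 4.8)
The plan is to carry out the classical Thom--Mather construction of \emph{control data}, organizing it as an induction on dimension. Recall first that, by Whitney condition (a), $S_\alpha < S_\beta$ forces $\dim S_\alpha < \dim S_\beta$; hence the strata of any fixed dimension $d$ form an antichain whose closures meet only strictly lower-dimensional strata. Enumerate the dimensions $d_0 < d_1 < \cdots < d_m = \dim X$ occurring among the strata of $\Sigma$. The statement we induct on is: there exist embedded tubular neighborhoods $\nu(S) \subset M$ of the submanifolds $S$, with bundle projections $\pi_S$ and radial functions $\rho_S$ (the norm-squared of a chosen metric on the normal bundle, so $\rho_S^{-1}(0) = S$), defined for all strata $S$ with $\dim S \le d_k$, pairwise disjoint over incomparable equidimensional strata, and mutually satisfying the $\pi$-- and $\rho$--control conditions. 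For $k = 0$ the strata of dimension $d_0$ have empty frontier, hence are closed submanifolds of $M$, and one simply chooses pairwise disjoint tubular neighborhoods (possible by local finiteness), with $\pi_S$ the bundle projection and $\rho_S$ the norm-squared; there are no control conditions among them.

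For the inductive step, suppose the data has been built for every stratum of dimension $< d_k$, and fix a stratum $S = S_\beta$ of dimension $d_k$. Two consequences of Whitney regularity drive the construction. First, Whitney condition (a), applied to each pair $S_\alpha < S_\beta$, shows that after shrinking $\nu(S_\alpha)$ the restriction $\pi_\alpha|_{\nu(S_\alpha) \cap S_\beta} \colon \nu(S_\alpha) \cap S_\beta \to S_\alpha$ is a submersion. Second, Whitney condition (b) --- which forces the limiting secant line from $S_\alpha$ to $S_\beta$ to lie in the limiting tangent plane of $S_\beta$ --- shows that $\rho_\alpha$ may be chosen so that $(\pi_\alpha, \rho_\alpha)|_{\nu(S_\alpha) \cap S_\beta}$ is a submersion onto $S_\alpha \times (0, \infty)$; this is precisely clause (2) of Definition \ref{def-abtractstratsp}. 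Together these say that the already-built lower tubes restrict to a non-degenerate ``partial tubular structure'' on a punctured neighborhood of the frontier $\partial S_\beta = \overline{S_\beta} \setminus S_\beta$ inside $\overline{S_\beta}$.

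Now we construct $\pi_\beta$. On $\nu(S_\alpha) \cap \nu(S_\beta)$ we require $\pi_\alpha \circ \pi_\beta = \pi_\alpha$ and $\rho_\alpha \circ \pi_\beta = \rho_\alpha$ for every $\alpha < \beta$; equivalently, $\pi_\beta$ must preserve the level sets of $(\pi_\alpha, \rho_\alpha)$. Since the lower data already satisfies the control conditions, for different $\alpha < \beta$ these level-set foliations are mutually compatible, so the requirements are consistent and together they prescribe $\pi_\beta$ on a germ of a neighborhood of $\partial S_\beta$ in $\overline{S_\beta}$: one follows $(\pi_\alpha, \rho_\alpha)$ for the deepest relevant $\alpha$ and then picks a tubular projection within the (lower-dimensional) fibers, the submersivity from the previous paragraph making this legitimate. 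As $S_\beta$ is a manifold carrying a tubular-neighborhood germ prescribed near $\partial S_\beta$, the tubular neighborhood theorem --- in the relative form appropriate to the not-necessarily-proper embedding $S_\beta \hookrightarrow M$, i.e.\ existence and uniqueness up to ambient isotopy together with isotopy extension --- lets us extend to a global embedded $\nu(S_\beta) \subset M$; we set $N_\beta = \nu(S_\beta) \cap X$, take $\rho_\beta$ the norm-squared of a metric agreeing with the prescribed one near $\partial S_\beta$, and shrink $\nu(S_\beta)$ to be disjoint from the tubes of incomparable equidimensional strata. By construction, $\pi_\beta$ and $\rho_\beta$ are pairwise compatible with every lower stratum, and the triple $\pi$/$\rho$--control identities of Definition \ref{def-abtractstratsp}(3) follow formally from these pairwise identities; iterating over all strata of dimension $d_k$ and then over $k$ completes the induction. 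The resulting $(N_S, \pi_S, \rho_S)_{S \in \Sigma}$ is a tube system making $(X, \Sigma, \mathcal{N})$ abstractly stratified: clause (1) is the hypothesis that strata are manifolds, clause (2) is the submersivity arranged via Whitney (b), and clause (3) is what we imposed.

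The main obstacle is exactly the extension problem for $\pi_\beta$ in the inductive step: one must extend the tubular projection from the germ near $\partial S_\beta$, where it is \emph{forced} by the lower control data, to all of $S_\beta$ while keeping it a genuine tubular neighborhood in $M$ \emph{and} preserving the control identities against every lower stratum simultaneously. Whitney condition (b) is indispensable here --- it is precisely what makes the forced germ a non-degenerate tubular-neighborhood germ (with radial and projection functions submersive along the higher strata), without which the extension theorems would not apply. The surrounding bookkeeping (local finiteness to allow the simultaneous treatment of equidimensional strata, the successive shrinkings needed for disjointness and for $\rho_S^{-1}(0) = S$, and the smoothness of $\pi_{\alpha\beta}, \rho_{\alpha\beta}$) is routine. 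This is Mather's construction of control data, and we refer to \cite{mather-notes} for the complete argument.
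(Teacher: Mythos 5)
The paper cites this theorem to Mather's notes and gives no proof of its own, so the comparison is with the reference rather than with the paper. Your proposal is a correct high-level account of Mather's construction of control data: the induction on dimension is legitimate (Whitney condition (a) does force $\dim S_\alpha < \dim S_\beta$ whenever $S_\alpha < S_\beta$, so strata of a fixed dimension form an antichain), the reading of Whitney (a) and (b) as supplying the submersivity of $\pi_{\alpha\beta}$ and of $(\pi_{\alpha\beta}, \rho_{\alpha\beta})$ in clause (2) of Definition \ref{def-abtractstratsp} is exactly right, the observation that the triple control identities of clause (3) follow from imposing the stronger pairwise identities $\pi_\alpha \circ \pi_\beta = \pi_\alpha$, $\rho_\alpha \circ \pi_\beta = \rho_\alpha$ on tube overlaps is correct, and you correctly identify the controlled tubular-neighborhood extension (from a germ along the frontier, while maintaining compatibility with all lower strata simultaneously) as the genuine technical crux, appropriately deferring it to \cite{mather-notes}. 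The only looseness is expository — e.g., the claim that the germ of $\pi_\beta$ near $\partial S_\beta$ is ``forced'' overstates things slightly (the level-set foliations of the various $(\pi_\alpha, \rho_\alpha)$ constrain but do not uniquely determine $\pi_\beta$ there), and the phrase ``a metric agreeing with the prescribed one'' glosses over what data the lower strata actually impose on $\rho_\beta$ — but neither is a gap in the argument, just shorthand for steps that Mather carries out carefully.
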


The following theorem is a version of the Whitney embedding theorem for {abstractly} stratified spaces, essentially saying that every {abstractly} stratified space of dimension $n$ {(i.e., maximal dimension of strata is $n$)} can be embedded in $\mathbb{R}^N$ for $N \geq 2n+1$ as a Whitney stratified space, and any two such embeddings are isotopic if $N \geq 2n+2$.

\begin{theorem}\cite{natsume}\label{natsume} Let $(X, \Sigma, \mathcal{N})$ be an {abstractly} stratified space with $\dim X = n$. Then for any $N \geq 2n+1$ there is a \emph{realization} of $X$ in $\mathbb{R}^N$, i.e.\ there exists an embedding $\iota : X \to \mathbb{R}^N$ such that $X' = \iota(X)$ is a Whitney stratified subset of $\mathbb{R}^N$ with a stratification $\Sigma' = \{\iota(S) : S \in \Sigma\}$ and a tube system $\mathcal{N}' = (N_S, \pi_S, \rho_S)$ {as in Theorem \ref{mather}, such that 
$$\iota : (X, \Sigma, \mathcal{N}) \to (X, \Sigma', \mathcal{N}')$$
is an isomorphism of {abstractly} stratified spaces}. Moreover if $N \geq 2n+2$ any two such embeddings $\iota_0, \iota_1 : X \to \mathbb{R}^N$ are isotopic in the following sense: there is a realization $H : X \times I \to \mathbb{R}^N$ such that $H(x, t) = (H_t(x), t)$ where $H_t : X \to \mathbb{R}^N$ is a realization for all $0 \leq t \leq 1$ and $H_0 = \iota_0, H_1 = \iota_1$.\end{theorem}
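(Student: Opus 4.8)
The plan is to argue by induction on $\depth(X)$. When $X$ is a smooth manifold (the smallest possible depth), both assertions are the classical Whitney embedding theorem, valid for $N\ge 2n+1$, and the classical Whitney isotopy theorem, valid for $N\ge 2n+2$; both follow from general position, since then $2\dim X<N$, respectively $2\dim X+1<N$. So assume $d:=\depth(X)\ge 1$ and that the statement, in a form that also allows compact abstractly stratified spaces with boundary (obtained for instance by doubling), is known in all depths $<d$.

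First I would peel off the deepest part of $X$. Let $Z\subset X$ be the union of the minimal strata; it is a closed smooth submanifold with $\dim Z\le n$, and after shrinking $\mathcal{N}$ its tubular neighborhoods are pairwise disjoint, giving $\pi:N_Z\to Z$ and $\rho_Z$. The level set $\partial N_Z:=\rho_Z^{-1}(1)$ is the total space of a locally trivial stratified fibre bundle $q:\partial N_Z\to Z$ whose fibre over a point of a minimal stratum $S$ is its link $A$, and by Definition~\ref{def-cs}, together with the bijection between chains above $S$ in $X$ and chains in $A$, each such $A$ is a compact abstractly stratified space with $\depth A<d$ and $\dim A<n$; hence $\depth(\partial N_Z)<d$ and $\dim(\partial N_Z)<n$. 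Setting $X_1:=X\setminus\rho_Z^{-1}\big([0,1)\big)$, we obtain an abstractly stratified space with boundary $\partial N_Z$ whose singular locus is strictly less deep than that of $X$, and $X$ is recovered by gluing $X_1$ to the open cone bundle of $q$ along $\partial N_Z$.

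Next I would assemble a realization. By the inductive hypothesis, realize the link bundle $\partial N_Z$ as a Whitney stratified subset of $\mathbb{R}^N$ --- concretely, realize a typical link $A$ in a slice $\mathbb{R}^{N-1}$ by induction and then realize $q$ by a general-position gluing of product charts $V\times A$ over $Z$, which is possible because $\dim(\partial N_Z)<n$ --- and realize $Z$ itself as a smooth submanifold of $\mathbb{R}^N$. Thicken $\partial N_Z$ to the geometric open cone bundle $N_Z\hookrightarrow\mathbb{R}^N$ over $Z$, and realize $X_1$ in $\mathbb{R}^N$ by the inductive hypothesis rel its boundary $\partial N_Z$ and compatibly with a collar; then glue along $\partial N_Z$. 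A generic small perturbation, available since $2n+1\le N$, makes the union an embedded collection of strata with no spurious intersections, while the frontier condition of Definition~\ref{def-Idec}(3) is built into the construction. Whitney's condition~(b) along $Z$ is verified in the local model $V\times cA\hookrightarrow\mathbb{R}^{\dim V}\times c(\text{realization of }A)$, where it reduces to the inductively known Whitney regularity of the realization of $A$; along the strata coming from $X_1$ it is inherited from the inductive realization. Finally, Mather's uniqueness of controlled tubular neighborhoods (cf.\ Theorem~\ref{mather} and \cite{mather-notes}) lets one adjust the geometric tube data so that the $\pi$- and $\rho$-control identities of Definition~\ref{def-abtractstratsp} hold and the resulting tube system is equivalent to the image of $\mathcal{N}$, producing the required isomorphism of abstractly stratified spaces. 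The isotopy statement for $N\ge 2n+2$ is obtained by running exactly the same induction one parameter at a time: connect $\iota_0|_Z$ to $\iota_1|_Z$ by the classical Whitney isotopy theorem, extend over $\partial N_Z$ using the inductive isotopy statement for the link together with a parametrized tubular-neighborhood argument, extend over $X_1$ by the inductive isotopy statement rel $\partial N_Z$, and note that since $2n+1<N$ a generic such $1$-parameter family has no double points and is therefore an isotopy, the control data being carried along by the controlled isotopy extension theorem.

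The step I expect to be the main obstacle is not the dimension count but the preservation of the control conditions: every embedding, perturbation and isotopy above must be arranged so that the induced geometric tubular neighborhoods in $\mathbb{R}^N$ continue to satisfy the $\pi$- and $\rho$-control identities and remain equivalent to $\mathcal{N}$. This is precisely the bookkeeping that Mather's controlled vector fields and controlled isotopy extension are designed to handle, and it is where the bulk of the work lies; the closely related verification of Whitney's condition~(b) along the deepest strata at each inductive stage is the second technical point. For the complete argument see \cite{natsume}.
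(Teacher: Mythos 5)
The paper does not prove Theorem~\ref{natsume} --- it is stated as a cited result from Natsume \cite{natsume} --- so there is no internal proof here to compare against; I can only evaluate your sketch on its own merits.

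Your inductive-on-depth outline captures the natural strategy (peel off the union $Z$ of minimal strata, realize the link bundle $\partial N_Z \to Z$, cone it off over $Z$, glue in $X_1$), and your identification of the control-data bookkeeping and of Whitney condition~(b) at the deepest strata as the main technical loads is apt. The place where the sketch as written does not quite close is the realization of the link bundle: ``general-position gluing of product charts $V \times A$ over $Z$'' yields an embedding of the total space of $q : \partial N_Z \to Z$, but not automatically a \emph{fibered} embedding compatible with $q$ and with a normal projection onto a tubular neighbourhood of $\iota(Z)$ in $\mathbb R^N$ --- and it is exactly such a fibered embedding that is required for the phrase ``thicken to the geometric open cone bundle over $Z$'' to produce the conical local model in which you verify Whitney~(b). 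Making that step precise needs a parametrized-over-$Z$ strengthening of both halves of the inductive statement (realization \emph{and} isotopy of realizations of the fiber $A$), together with a way to intertwine the structure cocycle of $q$ (a priori valued in stratified self-homeomorphisms of $A$, not in anything linear) with an embedding of $A$ into a sphere $S^{N-\dim Z-1}$; neither of these is free. An alternative organization that sidesteps fiberwise gluing is to proceed chart-by-chart with a controlled partition of unity: realize each local model $V_i \times cA_i$ in some $\mathbb R^{m_i}$ by induction on the depth of the cone $cA_i$, stack these to a proper embedding of $X$ in a high-dimensional $\mathbb R^M$, and then cut down to $\mathbb R^{2n+1}$ by composing with generic linear projections, carrying the tube data along via Mather's controlled vector fields. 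Either route can presumably be completed, but the current proposal leaves the fibered realization step at the level of an assertion rather than an argument.
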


\begin{theorem}\cite{goresky-triangulation}\label{goresky-triangulation}
	Any {abstractly} stratified space admits a triangulation by smoothly embedded simplices compatible with the filtration by {stratum-closures.}\end{theorem}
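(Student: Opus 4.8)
The plan is to prove the theorem by induction on $\depth(X)$, after reducing to a convenient model. By Theorem \ref{natsume} I may realize $X$ as a Whitney stratified subset of some $\mathbb{R}^N$ equipped with a compatible tube system $\mathcal{N} = (N_S, \pi_S, \rho_S)_{S \in \Sigma}$ as in Theorem \ref{mather}; in particular, by Theorem \ref{mather-cs}, $X$ is a CS set and so carries the local conical structure of Definition \ref{def-cs}. It is convenient to prove a relative statement: every abstractly stratified space whose boundary is a union of strata admits a triangulation by smoothly embedded simplices, compatible with the filtration by stratum closures and restricting to a smooth triangulation on each stratum, and moreover any such triangulation prescribed on the boundary extends over the whole space. (The extra bookkeeping for minimal strata lying in $\partial X$ is dealt with by first extending the prescribed boundary triangulation over a stratified collar of $\partial X$, and is suppressed below.) When $\depth(X) = 0$, $X$ is a disjoint union of smooth manifolds, and one appeals to the Cairns--Whitehead smooth triangulation theorem, extending a prescribed boundary triangulation through a collar.

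For the inductive step, let $\Sigma_{\min}$ denote the minimal strata of $X$, so $X_{\min} := \bigcup_{S \in \Sigma_{\min}} S$ is a closed subset of $X$ that is a smooth manifold, a disjoint union of closed smooth manifolds. Choose pairwise disjoint tubular neighborhoods $N_S$, $S \in \Sigma_{\min}$, and fix a small positive smooth function $\epsilon$ on $X_{\min}$; put $B_S := \{x \in N_S : \rho_S(x) = \epsilon(\pi_S(x))\}$ with projection $\pi_S : B_S \to S$, and let $\overline{N_S^\epsilon}$ denote the corresponding closed tube. By the local conical structure, $\pi_S : B_S \to S$ is a proper stratified submersion whose fibre is the link $A_S$ of $S$, an abstractly stratified space with $\depth(A_S) < \depth(X)$; by Thom's first isotopy lemma \cite{mather-notes} it is therefore a locally trivial fibre bundle, and $\overline{N_S^\epsilon}$ is stratum-preservingly isomorphic to the mapping cylinder $\mathrm{Cyl}(\pi_S)$, with core $S$ and free end $B_S$. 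The complement $Y := X \setminus \bigcup_{S \in \Sigma_{\min}} N_S^\epsilon$ is again an abstractly stratified space, with boundary $\partial X \cup \bigcup_S B_S$ and $\depth(Y) = \depth(X) - 1$.

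I would then build the triangulation of $X$ in four steps. (1) Triangulate the smooth manifold $X_{\min}$ smoothly. (2) Over each closed simplex $\Delta$ of this triangulation, local triviality gives an identification $B_S|_\Delta \cong \Delta \times A_S$; the inductive hypothesis provides a compatible smooth triangulation of the link $A_S$, which, combined with that of $\Delta$, yields a prism triangulation of $\Delta \times A_S$. Patching these over the skeleta of $X_{\min}$ produces a triangulation of $B_S$ with respect to which a subdivision of the triangulation of $S$ makes $\pi_S$ simplicial; since the mapping cylinder of a simplicial map has a canonical triangulation agreeing with prescribed ones on its two ends, this triangulates $\overline{N_S^\epsilon}$ with $S$ and $B_S$ as subcomplexes. (3) Apply the relative inductive hypothesis to $Y$, which has depth one less, with boundary triangulation the prescribed one on $\partial X$ together with the triangulation of $\bigcup_S B_S$ just built — propagating any subdivision of $\bigcup_S B_S$ forced by step (2) inward through the collar $\{\epsilon \le \rho_S \le 2\epsilon\}$. (4) Glue $Y$ to the cylinders $\overline{N_S^\epsilon}$ along $\bigcup_S B_S$. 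Compatibility with the filtration is preserved throughout because stratum closures are kept as subcomplexes, and smoothness of the simplices along each stratum is inherited from the smooth triangulations of $X_{\min}$ and of the links and from the smoothness of $\pi_S$ and $\rho_S$ along strata.

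The step I expect to be the main obstacle is (2): triangulating the link bundle $\pi_S : B_S \to S$ so that it becomes simplicial over the base — that is, triangulating a fibre bundle with stratified fibre compatibly with a given triangulation of the base. The real content there is an induction over the skeleton of the triangulation of $S$, extending triangulations of $\partial\Delta \times A_S$ across $\Delta \times A_S$ while keeping control of the bundle's transition functions; the needed control must be extracted from the (controlled) local trivializations supplied by Thom's isotopy lemma, arranged so as to preserve the fibrewise triangulation up to subdivision. This is the precise point at which the theory of triangulating submersions is required and is the technically substantive part; the relative boundary formulation and the propagation of subdivisions through collars are then routine.
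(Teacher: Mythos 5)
The paper does not prove this statement at all: it cites Goresky's 1978 paper and uses the result as a black box, so there is no internal argument to compare against. Your proposal is a reasonable reconstruction of Goresky's own strategy: realize $X$ as a Whitney stratified subset via Theorem~\ref{natsume}, induct on depth, excise conical tubes $N_S^\epsilon$ around the minimal strata, view each $\overline{N_S^\epsilon}$ as a mapping cylinder of the link bundle $\pi_S : B_S \to S$, triangulate that bundle fiberwise, and glue to the inductively triangulated complement $Y$. The depth-zero base case via Cairns--Whitehead, the relative formulation for prescribed boundary triangulations, and the collar propagation of subdivisions are all standard and correctly placed.

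The gap is step (2), and it is not a side issue: it \emph{is} the theorem. Triangulating a controlled fiber bundle $\pi_S : B_S \to S$ with stratified fiber $A_S$ so that $\pi_S$ becomes simplicial over a given base triangulation is precisely the technical content of Goresky's paper, and the prism-over-skeleta scheme you describe does not by itself close it. The problem is concrete: over an overlap of two base simplices $\Delta, \Delta'$ the two trivializations $B_S|_\Delta \cong \Delta \times A_S$ and $B_S|_{\Delta'} \cong \Delta' \times A_S$ differ by a transition map with values in stratified (controlled) homeomorphisms of $A_S$, which need not respect any given fiberwise triangulation even after subdivision. Resolving this requires an additional structure — in Goresky's treatment, a compatible ``family of lines'' in the tubes and a controlled-isotopy straightening argument that renders the transition maps PL with respect to carefully re-chosen fiber triangulations — and this is exactly what you have deferred with the phrase ``arranged so as to preserve the fibrewise triangulation up to subdivision.'' As it stands you have a correct skeleton with the load-bearing lemma named but not supplied; that is an honest roadmap but not a proof.
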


\subsection{Stratified maps}\label{sec-stratfdmaps}

\begin{defn}\label{def-controlledmap} A map $f : (X, \Sigma_X) \to (Y, \Sigma_Y)$ of $I$-decomposed spaces is said to be a {\emph{stratum-preserving map}} if for any $S \in \Sigma_X$, there is a unique $L \in \Sigma_Y$ such that $f(S) \subset L$. Equivalently, for every stratum $L \in \Sigma_Y$, $f^{-1}(L)$ is a disjoint union of strata of $\Sigma_X$.
	
	If $(X, \Sigma_X, \mathcal{N}_X)$ and $(Y, \Sigma_Y, \mathcal{N}_Y)$ are {abstractly} stratified spaces, then a stratum-preserving map $f : X \to Y$ of the underlying $I$-decomposed spaces is said to be a \emph{controlled map} if for any stratum $S \in \Sigma_X$ and the corresponding unique stratum $L \in \Sigma_Y$ such that $f(S) \subset L$, the following conditions are satisfied 
{\begin{enumerate}
\item $f|S : S \to L$ is a smooth map. 
\item There exists $\varepsilon > 0$ such that $f(N_S^\varepsilon) \subset N_L$. 
\item The \emph{$\pi$-control} condition 
$$f(\pi_S(x)) = \pi_L(f(x))$$ 
and the \emph{$\rho$-control} condition 
$$\rho_S(x) = \rho_L(f(x))$$ 
hold for all $x \in N_S^\varepsilon$.
\end{enumerate}}
	
	If all the conditions above except the $\rho$-control condition is satisfied, $f$ is said to be a \emph{weakly controlled map}.\end{defn}

\begin{defn}\label{def-stratfdsubmimm} A controlled map $f : (X, \Sigma_X, \mathcal{N}_X) \to (Y, \Sigma_Y, \mathcal{N}_Y)$ is a \emph{stratified submersion} if   for any stratum $L \in \Sigma_Y$ and any component $S \in \Sigma_X$ of $f^{-1}(L)$, $f|S : S \to L$ is a submersion.

\end{defn} 

\subsection{Stratified bundles}\label{sec-stratfdbdl}

\begin{defn}\cite{mather-notes}\label{def-strvf} Let $(X, \Sigma, \mathcal{N})$ be an {abstractly} stratified space. A \emph{stratified vector field} $\eta$ on $X$ is a collection $\{\eta_S : S \in \Sigma\}$ where for each stratum $S \in \Sigma$, $\eta_S$ is a smooth vector field on $S$. The
	stratified vector field $\eta$ will be called a \emph{controlled vector field} if for any pair $S, L \in \Sigma$ of strata with $S < L$, there exists some $\varepsilon > 0$ such that for any $x \in N^\varepsilon_S \cap L$, the following conditions hold:
	\begin{enumerate}
		\item $\eta_L \rho_{SL}(x) = 0$.
		\item $(\pi_{SL})_*\eta_L(x) = \eta_S(\pi_{SL}(x))$.
	\end{enumerate} 
	If we simply drop the first condition, we obtain a \emph{weakly controlled vector field}.
\end{defn}
Thus, a controlled vector field in the higher dimensional  stratum $L$ is  {parallel to the lower dimensional  stratum $S$}, i.e. it does not change along the radial direction $\rho_{SL}$. This is ensured by the
first condition above. It also projects `isomorphically' to the vector field in the lower dimensional  stratum $S$. This is ensured by the
second condition above. A weakly controlled vector field in the higher dimensional  stratum $L$ is not necessarily {parallel to the lower dimensional  stratum $S$}, i.e.\ it is allowed to have a radial component; however, if the radial component is subtracted from a weakly controlled vector field, we obtain a controlled vector field.

We now define higher-dimensional controlled distributions. These will be useful in defining stratified fiber bundles below. Note that we drop the $\rho$-control condition in this case.

\begin{defn}\label{def-strdist}Let $(X, \Sigma, \mathcal{N})$ be an {abstractly} stratified space. A \emph{stratified distribution} $D$ on $X$ is a collection $\{D_S : S \in \Sigma\}$ where for each stratum $S \in \Sigma$, $D_S$ is a smooth subbundle of the tangent bundle $TS$ of $S$.
	The distribution 
	$D$ will be called a \emph{weakly controlled distribution} if
	for any pair $S, L \in \Sigma$ of strata with $S < L$, there exists some $\varepsilon > 0$ such that for any {$x \in N^\varepsilon_S \cap L$}, $$(\pi_{SL})_* D_L(x) = D_S(\pi_{SL}(x)).$$ \end{defn}

Note that the dimensions of $D_S, D_L$ may differ for $S \neq L$. For the next definition, we shall need the local structure of neighborhoods $N_S$ of strata $S$.
By Thom's first isotopy lemma, $N_S$ is a fiber bundle over $S$ with fiber $cA$, where $A$ denotes the link of $S$ in $X$.

\begin{defn}\label{def-strbdl} A triple $(E, X, p)$ consisting of 
	
	\begin{enumerate}
		
		\item {An} {abstractly} stratified {space} $(X, \Sigma, \mathcal{N})$, called the \emph{base space},
		
		\item {An} {abstractly} stratified space $(E, \widetilde{\Sigma}, \widetilde{\mathcal{N}})$, called the \emph{total space},
		
		\item {A} weakly controlled map $p : E \to X$ called the \emph{bundle projection},
		
	\end{enumerate}
	will be called a \emph{stratified fiber bundle} if 
	\begin{enumerate}
		\item[(i)] For every stratum $\widetilde{S} \in \widetilde{\Sigma}$ and the corresponding unique stratum $S \in \Sigma$ such that $p(\widetilde{S}) \subseteq S$, the restriction $p : \widetilde{S} \to S$ is a {smooth} fiber bundle.
		\item[(ii)] The stratified distribution $\ker dp := \{\ker d(p|_{\widetilde{S}}) : \widetilde{S} \in \widetilde{\Sigma}\}$ on $E$ is weakly controlled.
		\item[(iii)]  Let $p|N_{\til{S}}: N_{\til{S}} \to N_{{S}}$ denote the restriction of $p$ to a neighborhood $N_{\til{S}}$ of ${\til{S}}$. Let $B, A$ denote the links of ${\til{S}},S$ respectively, so that $N_{\til{S}}$ (resp.\ $N_{{S}}$) is a bundle over ${\til{S}}$ (resp.\ $S$) with fiber $cB$
(resp.\ $cA$).  Identify $\til S$   with the zero-section of $N_{\til{S}}$. We demand that $(p|N_{\til{S}})^{-1}  ({S}) = \til{S}$.
	\end{enumerate} 
	
	Given a stratified fiber bundle $(E, X, p)$, a \emph{(weakly) controlled section} of $p$ is a (weakly) controlled map $s : X \to E$ such that $p \circ s = \mathrm{id}$.\end{defn}

\begin{rmk}
{It was pointed out to us by the referee that a related notion of a stratified fiber bundle has been defined in algebraic topology \cite{baues}, where the strata are bundles and the attachment of strata is controlled by a structure category of fibers. In this paper, smooth structures are essential,
as we are interested in the differential topology of stratified fiber bundles in the sense of Definition \ref{def-strbdl}. To differentiate these two notions, 
stratified fiber bundles in the sense of Definition \ref{def-strbdl} should, strictly speaking, be called \emph{smooth stratified fiber bundles}. However, since stratified fiber bundle in the sense of  \cite{baues} will never be used in this paper, we 
shall, in what follows, use the terminology `stratified fiber bundles' in the 
sense of Definition \ref{def-strbdl}.}
\end{rmk}

We should point out that a stratified bundle $(E, X, p)$ as defined above is necessarily locally trivial over strata {by Thom's second isotopy lemma, originally formulated in \cite{Thom_stratmaps} (for a detailed proof, see \cite[Proposition 11.2]{mather-notes})}. Hence, we can think of a stratified bundle as a collection $\{(E_S, S, p): S \in \Sigma\}$, where each $p : E_S \to S$ is a genuine topological bundle over the stratum $S$, {where the fiber can a priori be a stratified space}. {The conditions of Definition \ref{def-strbdl} ensure that these bundles patch together consistently. Condition (ii) is known as Thom's condition $(a_p)$ in literature \cite[Section 11]{mather-notes}}. {As a consequence of Condition (ii), we have that whenever $S < L$ is a pair of strata in $X$, the dimension of the fiber of $E_S$ is less than that of $E_L$. Lastly, Condition (iii) ensures that for every stratum $S \in \Sigma$ of $X$, $p^{-1}(S)$ does not contain a pair of strata which abut each other, therefore $p : E_S \to S$ is a fiber bundle such that the connected components of any fiber is a manifold.}

{Let $S$ be any stratum  of $X$ and $\widetilde{S}$ a stratum of $E$ lying above $S$.  Let  $N_S \subset X$ and $N_{\tilde{S}} \subset E$ be the  corresponding tubes. Since, further, the projection $p$ is weakly controlled,  we must have $(p|N_{\tilde{S}})^{-1}(cA) = cB$. Here, $cA$ and $cB$ are the conical fibers of the tubular projection maps $\pi_S : N_S \to S$ and $\pi_{\tilde{S}} : N_{\tilde{S}} \to \widetilde{S}$, respectively. Moreover, Condition (iii) ensures that the preimage of the cone point $c_A$ of $cA$ under $p|N_{\tilde{S}}$ is exactly the cone point $c_B$ of $cB$. This will be useful in Corollary \ref{cor-trivialzn} below.}

\begin{eg}
{Let $U(1)$ act on $S^2$ by rotations about the axis through a pair of antipodal points $\{\pm x\} \subset S^2$. Consider the stratification of $S^2$ by orbit-type, given by $\Sigma = \{\{x\}, \{-x\}, S^2 \setminus \{\pm x\}\}$. The quotient map $p : S^2 \to S^2/U(1) \cong [-1, 1]$ is an example of a stratified bundle.}

{In general, for a connected compact Lie group $G$ acting on a smooth manifold $M$, the quotient map $p : M \to M/G$ is a stratified bundle. For details, see \cite{Davis_orbifib} and \cite{Verona_orbistr}.}
\end{eg}

We shall sometimes use the suggestive notation $p:E \to X$ for a stratified fiber bundle. The following {lemma} and its consequences (Corollary \ref{strbdl-cone-link} and Corollary \ref{cor-trivialzn}) give the \emph{local structure} of stratified fiber bundles.

\begin{lemma}\label{lem-strbdl-trivialization} Let $(E, X, p)$ be  a stratified fiber bundle.  For any point $\widetilde{x} \in E$ with $p(\widetilde{x}) = x$, there is an open neighborhood $V$ of $\widetilde{x}$ in $E$, and $U$ of $x$ in $X$, equipped with the respective induced stratifications, such that $p(V) = U$, and
{\begin{enumerate}
\item There exist {abstractly} stratified spaces $(A, \Sigma_A, \mathcal{N}_A)$, $(B, \Sigma_B, \mathcal{N}_B)$ and isomorphisms of {abstractly} stratified spaces $\psi : V \to cB \times \mathbb{R}^n$ and $\varphi : U \to cA \times \mathbb{R}^m$ for some $n \geq m$, 
\item There exists a map $f : cB \times \mathbb{R}^n \to cA \times \mathbb{R}^m$ which factors as $f = (g, \mathrm{proj})$ where $\mathrm{proj} : \mathbb{R}^n \to \mathbb{R}^m$ denotes the projection to the first $m$ coordinates, 
\end{enumerate}
making the following diagram commute:
$$
\begin{CD}
V @>>> cB \times \mathbb R^n \\
@VVV @VVV \\
U @>>> cA \times \mathbb R^m
\end{CD}
$$}
\end{lemma}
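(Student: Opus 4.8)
The plan is to build the trivialization by downward induction on the depth of the strata, starting from $\widetilde x$ itself and working outward through the tube structure. First I would locate the stratum $\widetilde S \ni \widetilde x$ and the stratum $S \ni x = p(\widetilde x)$, so that by Definition \ref{def-strbdl}(i) the restriction $p : \widetilde S \to S$ is a smooth fiber bundle. Since $S$ is a manifold, I can choose a chart $\varphi_0 : U_0 \to \mathbb R^m$ around $x$ in $S$ over which this bundle trivializes; after shrinking, the fiber of $p|\widetilde S$ is a smooth manifold $F$ (this is where Definition \ref{def-strbdl}(iii) is used: it guarantees $p^{-1}(S)$ meets $\widetilde S$ in a genuine manifold fiber, not a stratified one), and I obtain a chart $\psi_0 : V_0 \to \mathbb R^n$ around $\widetilde x$ in $\widetilde S$ with $n - m = \dim F$, such that $p|_{\widetilde S}$ becomes the projection $\mathbb R^n \to \mathbb R^m$ onto the first $m$ coordinates. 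This handles the ``base level'' of the cone.

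Next I would pass to the normal (cone) directions using the tube systems. By Thom's first isotopy lemma (invoked in the text just before Definition \ref{def-strbdl}), the tubular neighborhood $N_S$ is a fiber bundle over $S$ with fiber $cA$ where $A$ is the link of $S$, and likewise $N_{\widetilde S}$ is a bundle over $\widetilde S$ with fiber $cB$, $B$ the link of $\widetilde S$. Shrinking $U_0, V_0$ further so that the restrictions of these tube bundles trivialize over them, I get $U \cong cA \times \mathbb R^m$ (as abstractly stratified spaces — here one must check that the local product structure from Thom's lemma is compatible with the tube-system data, i.e.\ is an isomorphism in the sense of Definition \ref{def-abtractstratsp}, which is exactly the content of the CS-set/Thom–Mather local structure theorem, Theorem \ref{mather-cs} and Theorem \ref{mather}) and similarly $V \cong cB \times \mathbb R^n$. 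These give the desired isomorphisms $\varphi$ and $\psi$ of item (1), with the chosen $n \geq m$. The $\mathbb R^m$-factor of $\varphi$ is arranged to agree with $\varphi_0$, and the $\mathbb R^n$-factor of $\psi$ with $\psi_0$, so that under these identifications $p$ sends the $\mathbb R^n$-coordinates to the $\mathbb R^m$-coordinates by the standard projection.

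It then remains to see that, in these coordinates, $p$ has the product form $f = (g, \mathrm{proj})$ of item (2), i.e.\ that the $\mathbb R^m$-component of $p(\widetilde y)$ depends only on the $\mathbb R^n$-component of $\widetilde y$ and not on the $cB$-component. This is where the weak control of $p$ (Definition \ref{def-strbdl}, hypothesis (3), together with condition (ii), Thom's $(a_p)$) enters: the $\pi$-control condition $p \circ \pi_{\widetilde S} = \pi_S \circ p$ forces $p$ to carry the fiber $\pi_{\widetilde S}^{-1}(\widetilde y_0)$ over a point $\widetilde y_0 \in \widetilde S$ into the fiber $\pi_S^{-1}(p(\widetilde y_0))$ over its image; and the discussion following Definition \ref{def-strbdl} already records that $(p|N_{\widetilde S})^{-1}(cA) = cB$ fiberwise, with the cone point $c_B$ mapping to $c_A$. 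Composing with the trivializations, the map on the $\mathbb R^m$-factor is therefore constant along $\{c_B\}\times cB$-slices over a fixed point of $\widetilde S$, hence equals $\mathrm{proj}$ after the coordinate change; setting $g$ to be the remaining (cone-to-cone) component gives the factorization, and the square commutes by construction.

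The main obstacle I expect is the bookkeeping in the second step: verifying that the simultaneous local trivializations of the two tube bundles can be chosen \emph{compatibly with $p$} and with the abstract-stratification data, so that $\varphi$ and $\psi$ are genuine isomorphisms of abstractly stratified spaces and not merely stratum-preserving homeomorphisms. Concretely one has to track the $\pi$- and $\rho$-control conditions through the trivializations and use the equivalence relation on tube systems (Definition \ref{def-abtractstratsp}) to absorb the discrepancies; the weak control of $p$ (no $\rho$-control) is what makes this possible — the radial coordinate on the $cB$-fiber is allowed to be reparametrized, exactly as explained in the commented Remark \ref{rmk-wc2c}. Everything else is a routine application of Thom's isotopy lemmas and the local structure of CS sets.
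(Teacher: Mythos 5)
Your plan takes a genuinely different route from the paper, and it does correctly produce the $\mathbb{R}^m$-component of $f$: once $\varphi$ is built so that its $\mathbb{R}^m$-factor reads off $\varphi_0 \circ \pi_S$ and $\psi$ so that its $\mathbb{R}^n$-factor reads off $\psi_0 \circ \widetilde\pi$, the $\pi$-control relation $\pi_S\circ p = p\circ\widetilde\pi$ and the initial chart choice $\varphi_0\circ(p|\widetilde O)=\mathrm{proj}\circ\psi_0$ give $\mathrm{pr}_{\mathbb{R}^m}\circ f = \mathrm{proj}\circ\mathrm{pr}_{\mathbb{R}^n}$ exactly as you intend. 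However, there is a genuine gap in what that yields. By applying Thom's first isotopy lemma \emph{independently} to $\pi_S : N_S \to S$ and $\widetilde\pi : N_{\widetilde S} \to \widetilde S$, you have no control over how the cone component of $f$ depends on the $\mathbb{R}^n$-coordinate; you only get $f = (g, \mathrm{proj})$ with $g : cB\times\mathbb{R}^n \to cA$ a priori depending on both factors. But the conclusion that the paper actually proves, and that Corollary \ref{strbdl-cone-link} explicitly invokes ("$(V,U,p|_V) \cong (cB\times\mathbb{R}^n, cA\times\mathbb{R}^m, p_c\times\mathrm{proj})$ \ldots furnished by Lemma \ref{lem-strbdl-trivialization}"), is the genuine product form with $p_c : cB \to cA$ constant in the $\mathbb{R}^n$-direction.

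To get the stronger form, the trivialization $\psi$ of $V$ cannot be chosen independently of $\varphi$: one must lift the controlled vector fields $\eta_1,\dots,\eta_m$ on $U$ \emph{through $p$} to controlled vector fields $\widetilde\eta_1,\dots,\widetilde\eta_m$ on $V$ over them, and flow along those. The existence of such a lift is the content of \cite[Proposition 11.5]{mather-notes}, and its applicability is what Claim \ref{claim-linal} establishes: the fact that $(\widetilde\pi, p) : V \to \widetilde O \times_O U$ is a stratumwise submersion, which is where condition (ii) of Definition \ref{def-strbdl} (weak control of $\ker dp$, i.e.\ Thom's $(a_p)$) actually enters. Your plan never uses condition (ii) at all — a warning sign, since the lemma should fail without it. Separately, you correctly flag but do not resolve the verification that $\varphi, \psi$ are isomorphisms of abstractly stratified spaces rather than merely stratum-preserving homeomorphisms; the paper handles this automatically because the trivializations are built by flowing along controlled vector fields, which preserve the tube data by definition, rather than invoking Thom's isotopy lemma as a black box.
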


\begin{proof} Suppose $\widetilde{S}$ is the unique stratum of $E$ containing $\widetilde{x}$. Let $S$ be the unique stratum of $X$ containing $x$. So $p(\widetilde{S}) \subseteq S$. Let $\widetilde{\pi}$ be the tubular projection associated to $\widetilde{S}$ in $E$ and let $\pi$ be the tubular projection associated to $S$ in $X$. Since $p|\widetilde{S} : \widetilde{S} \to S$ is a fiber bundle, we can choose charts $\widetilde{O} \cong \mathbb{R}^n$ around $\widetilde{x}$ in $\widetilde{S}$ and $O \cong \mathbb{R}^m$ around $x$ in $S$ such that $p : \widetilde{O} \to O$ is equivalent to the projection $\mathrm{proj} : \mathbb{R}^n \to \mathbb{R}^m$ with respect to local coordinates. Let {$V = \widetilde{\pi}^{-1}(\widetilde{O}) \cap \mathrm{cl}(N^\varepsilon_{\widetilde{S}})$} and $U = \pi^{-1}(O) \cap \mathrm{cl}(N^\varepsilon_S)$ for some appropriate $\varepsilon > 0$
	(here $\mathrm{cl}(-)$ denotes closure). Observe that $\pi : U \to O$ is a proper stratified submersion. Choose  coordinate vector fields $\partial_1, \cdots, \partial_m$ on $O$ corresponding to local coordinates $t_1, \cdots, t_m$. By \cite[Proposition 9.1]{mather-notes} there exist controlled vector fields $\eta_1, \cdots, \eta_m$ on $U$ which commute stratumwise such that $\pi_* \eta_i = \partial_i$ for all {$1 \leq i \leq m$}.  Let $F = \pi^{-1}(x) \cap \mathrm{cl}(N^\varepsilon_S)$.  
	Let $\Phi_i$ be the local $1$-parameter family of stratum-preserving homeomorphisms on $U$ generated by $\eta_i$, for $0 \leq i \leq m$. For any $u \in U$, there exists a unique $v \in F$ and unique $(t_1, \cdots, t_m)\in O$, such that {$(\Phi_1^{t_1} \circ \Phi_2^{t_2}\cdots \circ \Phi_m^{t_m})(v)=u$}. 
	This gives an inverse homeomorphism $h : U \to F \times O$ defined by {$$h(u) = ((\Phi_m^{-t_m} \circ \cdots \circ \Phi_2^{-t_2} \circ \Phi_1^{-t_1})(u), t_1, \cdots, t_m),$$} where $\pi_S(u) = (t_1, \cdots, t_m)$, so that $t_1, \cdots, t_m$ are (implicitly) functions of $u$.
	
Now consider the commutative square
	{$$
	\begin{CD}
	V @>{\widetilde{\pi}}>> \widetilde{O} \\
	@V{p}VV @V{p}VV \\
	U @>{\pi}>> O
	\end{CD}
	$$}
This gives us a map to the fibered product $(\widetilde{\pi}, p) : V \to \widetilde{O} \times_O U$. Note that since $\ker dp$ is a weakly controlled distribution on $E$ by hypothesis,  $\widetilde{\pi}_* \ker dp_v = \ker dp_{\widetilde{\pi}(v)}$, for any $v \in V$,  i.e.\ $d\widetilde{\pi}$ restricts to a surjection $d\widetilde{\pi} : \ker dp_v \to \ker dp_{\widetilde{\pi}(v)}$. We now use a fact from linear algebra:
	
	\begin{claim}\label{claim-linal} Let $\mathbf{W}_1, \mathbf{W}_2, \mathbf{W}_3, \mathbf{W}_4$ be vector spaces occurring in the following  commutative diagram 
		{$$
	\begin{CD}
	\mathbf{W}_1 @>{f}>> \mathbf{W}_2\\
	@V{p}VV @V{q}VV \\
	\mathbf{W}_3 @>{g}>> \mathbf{W}_4
	\end{CD}
	$$}

\noindent 	where $f, g, p, q$ are all surjective linear maps. {If $f$ restricts to a surjection $f : \ker p \to \ker q$ then the induced map to the fibered product $(f, p) : \mathbf{W}_1 \to \mathbf{W}_2 \times_{\mathbf{W}_4} \mathbf{W}_3$ is surjective.}
	\end{claim}
	
	\begin{proof}[Proof of Claim \ref{claim-linal}] For any $u \in \mathbf{W}_2$ and $v \in \mathbf{W}_3$ such that $q(u) = g(v)$, choose lifts $\widetilde{u}, \widetilde{v} \in \mathbf{W}_1$ such that $f(\widetilde{u}) = u$ and $p(\widetilde{v}) = v$ by surjectivity of $f$ and $p$, respectively. Then observe that $(q \circ f)(\widetilde{u} - \widetilde{v}) = q(u) -  (q \circ f)(\widetilde{v}) = q(u) - (g \circ p)(\widetilde{v}) = q(u) - g(v) = 0$ by commutativity of the diagram. Therefore, $f(\widetilde{u} - \widetilde{v}) \in \ker q$. As $f : \ker p \to \ker q$ is surjective, there must be some $k \in \ker p$ such that $f(\widetilde{u} - \widetilde{v}) = f(k)$. Therefore there must also be some $\ell \in \ker f$ such that $\widetilde{u} - \widetilde{v} = k + \ell$. Let $w = \widetilde{u} - \ell = \widetilde{v} + k \in \mathbf{W}_1$. This is the desired element.\end{proof}
	
	We now return to the proof of Lemma \ref{lem-strbdl-trivialization}. Note that
	$\widetilde{O} \times_O U$ is an abstractly stratified space. Claim \ref{claim-linal} implies $(\widetilde{\pi}, p) : V \to \widetilde{O} \times_O U$ is a stratumwise submersion.  {Therefore, there exist controlled vector fields $\widetilde{\eta}_i$ on $V$ {\textit{over}} $\eta_i$ on $U$ for $1 \leq i \leq m$, see \cite[Proposition 11.5]{mather-notes} (we pause here to record a warning that controlled vector fields \textit{over} controlled vector fields are not controlled vector fields in the usual sense of the word, see \cite[Section 11]{mather-notes} for a careful discussion)}. In particular, $\widetilde{\pi}_* \widetilde{\eta_i} = \widetilde{\partial}_i$ for all $1 \leq i \leq m$, where $\widetilde{\partial}_1, \cdots, \widetilde{\partial}_m$ are the first $m$ coordinate vector fields on $\widetilde{O}$ obtained as lifts of $\partial_1, \cdots, \partial_m$ by the projection $p : \widetilde{O} \to O$. Consider the rest of the coordinate vector fields $\widetilde{\partial}_{m+1}, \cdots, \widetilde{\partial}_n$ on $\widetilde{O}$ and let $\widetilde{\eta}_{m+1}, \cdots, \widetilde{\eta}_n$ be controlled vector fields on $V$ such that $\widetilde{\pi}_* \widetilde{\eta}_i = \widetilde{\partial}_i$ for $m+1 \leq i \leq n$.
	
	Let $\widetilde{\Phi}_i$ denote the local $1$-parameter family of stratum-preserving homeomorphisms of $V$ generated by $\widetilde{\eta_i}$ for $1 \leq i \leq n$. We obtain, as before, two homeomorphisms $\widetilde{h}_1 : V \to \widetilde{F} \times \widetilde{O}$ given by {$\widetilde{h}_1(v) = (\widetilde{\Phi}_n^{-t_n} \circ \cdots \circ \widetilde{\Phi}_1^{-t_1}(v), t_1, \cdots, t_n)$ and $\widetilde{h}_2 : V \to F' \times O$ given by $\widetilde{h}_2(v) = (\widetilde{\Phi}_m^{-t_m} \circ \cdots \circ \widetilde{\Phi}_1^{-t_1}(v), t_1, \cdots, t_m)$} by considering the flow generated by all of $\widetilde{\eta}_1, \cdots, \widetilde{\eta}_n$ in the first case, and the flow generated by the first $m$ of these, namely $\widetilde{\eta}_1, \cdots, \widetilde{\eta}_m$, in the second case. 

{Here $\widetilde{F} = \widetilde{\pi}^{-1}(\widetilde{x}) \cap N^\varepsilon_{\widetilde{S}}$ and $F' = (p \circ \widetilde{\pi})^{-1}(x) \cap N^\varepsilon_{\widetilde{S}}$. Consider the map
\begin{gather*}\phi : \widetilde{F} \times \widetilde{O} \to F' \times O \\
\phi(z, t_1, \cdots, t_n) = (\Phi_{m+1}^{t_{m+1}} \circ \cdots \circ \Phi_n^{t_n}(z), t_{m+1}, \cdots, t_n)\end{gather*}}
	Then the following diagram commutes:
	{$$
	\begin{CD}
	V @>{\widetilde{h}_1}>> \widetilde{F} \times \widetilde{O}\\
	@V{\mathrm{id}}VV @V{\phi}VV \\
	V @>{\widetilde{h}_2}>> F' \times O
	\end{CD}
	$$}
	Since $\widetilde{\eta}_1, \cdots, \widetilde{\eta}_m$ are controlled vector fields on $V$ {\textit{over}} $\eta_1, \cdots, \eta_m$ on $U$, by \cite[Proposition 11.6]{mather-notes} there is also a commutative diagram as follows
	{$$
	\begin{CD}
	V @>{\widetilde{h}_1}>> F' \times O\\
	@V{p}VV @V{(p, \mathrm{id})}VV \\
	U @>{h}>> F \times O
	\end{CD}
	$$}
	By combining the two commutative diagrams above, we obtain (up to change of coordinates) an equivalence of $p : V \to U$ with {a map $\widetilde{F} \times \widetilde{O} \to F \times O$ defined by $(z, \mathbf{t}) \mapsto (g_\mathbf{t}(z), p(\mathbf{t}))$. Now, observe that the map $\widetilde{F} \times \widetilde{O} \to F \times \widetilde{O}$, $(z, \mathbf{t}) \mapsto (g_\mathbf{t}(z), \mathbf{t})$ is also a stratified fiber bundle}. So we can lift coordinate vector fields $\widetilde{\partial}_1, \cdots, \widetilde{\partial}_n$ on $\widetilde{O}$ to controlled vector fields on $F \times \widetilde{O}$ and from there to controlled vector fields on $\widetilde{F} \times \widetilde{O}$ {\textit{over}} the aforementioned controlled vector fields. Once again, using \cite[Proposition 11.6]{mather-notes}, we obtain a commutative diagram as follows
	{$$
	\begin{CD}
	\widetilde{F} \times \widetilde{O} @>{\cong}>> \widetilde{F} \times \widetilde{O}\\
	@V{p}VV @V{(g_{\mathbf{0}}, p)}VV \\
	F \times \widetilde{O} @= F \times \widetilde{O}
	\end{CD}
	$$}

\noindent  where the diagram is compatible with projections of each of the terms to $\widetilde{O}$. Therefore, by conjugating by the isomorphism on the top horizontal arrow we obtain an equivalence of $\widetilde{F} \times \widetilde{O} \to F$, $(z, \mathbf{t}) \mapsto g_{\mathbf{t}}(z)$ with $\widetilde{F} \times \widetilde{O} \to F$, $(z, \mathbf{t}) \mapsto g_{\mathbf{0}}(z)$. This shows that $p : V \to U$ is equivalent (up to reparametrization) to a genuine product 
{$$g_{\mathbf{0}} \times p : \widetilde{F} \times \widetilde{O} \to F \times O$$}
	By an application of Thom's first isotopy lemma {\cite[Proposition 11.1]{mather-notes}}, we identify $F \cong cA$ and $\widetilde{F} \cong cB$ where $A = \pi^{-1}(x) \cap \rho^{-1}(r)$ and $B = \widetilde{\pi}^{-1}(\widetilde{x}) \cap \widetilde{\rho}^{-1}(r')$ for some sufficiently small $r, r' > 0$, with the induced abstract stratification from $E$ and $X$ respectively. The lemma follows.\end{proof}

{We refine the conclusions of Lemma \ref{lem-strbdl-trivialization} by elaborating on the structure of the map $p_c : cB \to cA$ between the conical factors in the local trivialization 
$$(V, U, p|_V) \cong (cB \times \mathbb R^n, cA \times \mathbb R^m, p_c \times \mathrm{proj})$$
of a stratified fiber bundle $(E, X, p)$ furnished by Lemma \ref{lem-strbdl-trivialization}}

\begin{cor}\label{strbdl-cone-link} $p_c : cB \to cA$ is a stratified fiber bundle.\end{cor}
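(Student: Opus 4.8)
The plan is to deduce Corollary~\ref{strbdl-cone-link} by carefully unpacking what Lemma~\ref{lem-strbdl-trivialization} already gives us. From the lemma we have a local trivialization
$$(V, U, p|_V) \cong (cB \times \mathbb R^n, cA \times \mathbb R^m, p_c \times \mathrm{proj}),$$
and the goal is to verify that the single map $p_c : cB \to cA$ — obtained by restricting $p$ to a conical fiber $\widetilde F \cong cB$ of $\widetilde\pi$ sitting over a conical fiber $F \cong cA$ of $\pi$ — is itself a stratified fiber bundle in the sense of Definition~\ref{def-strbdl}. So first I would recall that $cB$ and $cA$ carry abstract stratifications: the stratum-closures of $cB$ are $cB' = c(\Sigma_B\text{-stratum closure})$ together with the cone point, and similarly for $cA$, and $\widetilde F \to F$ being a restriction of the bundle $p$ inherits a weakly controlled structure (the tube systems $\widetilde{\mathcal N}, \mathcal N$ restrict to $\widetilde F, F$ because the conical fibers are themselves saturated under the tubular projections — this is exactly the content of Thom's first isotopy lemma used at the end of the proof of Lemma~\ref{lem-strbdl-trivialization}). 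Thus $p_c : cB \to cA$ is a weakly controlled stratum-preserving map between abstractly stratified spaces, so items (1)--(3) of the ambient data in Definition~\ref{def-strbdl} are in place.

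Next I would verify conditions (i), (ii), (iii). For (i): a stratum $\widetilde T$ of $cB$ is either the cone point $c_B$ or has the form $c_\circ(T_B) := T_B \times (0,\infty)$ for a stratum $T_B$ of $B$; in the former case $p_c$ maps $c_B$ to $c_A$ and $c_B \to c_A$ is trivially a (zero-dimensional) fiber bundle, and in the latter case $p_c$ restricted to $T_B \times (0,\infty)$ is, up to the product with $\mathbb R$ and the radial coordinate, just the restriction of the original bundle $p : \widetilde S \to S$ to a suitable piece — more precisely, $\widetilde F \cap \widetilde T$ is a fiber of $\widetilde\pi$ restricted to a stratum of $E$ lying over the corresponding fiber of $\pi$ restricted to a stratum of $X$, and $p$ being a smooth fiber bundle between these strata by Definition~\ref{def-strbdl}(i) restricts to a smooth fiber bundle between fibers. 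The key point is that $p_c$ on each stratum of $cB$ is a restriction (to a $\widetilde\pi$-fiber) of a fiber bundle map $p|_{\widetilde S}$, and restrictions of smooth fiber bundles to $\widetilde\pi$-fibers are again fiber bundles because $\widetilde\pi$ is itself (by Thom's first isotopy lemma) a fiber bundle compatible with $p$. For (ii): the distribution $\ker dp_c$ is the restriction of $\ker dp$ to the submanifold-strata of $\widetilde F$; since $\ker dp$ is weakly controlled on $E$ and $\widetilde F$ is a $\widetilde\pi$-saturated piece, the weak control condition $(\pi_{SL})_* D_L = D_S \circ \pi_{SL}$ restricts to the analogous condition for the tube system of $cB$ — here one uses that $\widetilde\pi$ maps $\ker dp$ onto $\ker dp$ (this surjectivity was exploited via Claim~\ref{claim-linal} in the proof of the lemma). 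For (iii): the link of a stratum $c_\circ(T_B)$ of $cB$ at a point, and the link of its image stratum in $cA$, are literally the links of $T_B$ in $B$ and $T_A$ in $A$ (cone points contribute nothing new to links of non-cone-point strata), and the condition $(p_c|N_{\widetilde T})^{-1}(T_A\text{-part}) = \widetilde T$ is inherited from condition (iii) for the original bundle $p$ applied to $\widetilde S, S$, since again everything in sight is a $\widetilde\pi$-saturated restriction; the cone-point stratum $c_B \mapsto c_A$ needs the separate observation, already recorded in the discussion after Definition~\ref{def-strbdl}, that $(p|N_{\widetilde S})^{-1}(c_A) = c_B$, which is exactly condition~(iii) transported through the trivialization.

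The main obstacle, and the step deserving the most care, is (ii) — verifying that $\ker dp_c$ is weakly controlled, because ``restricting a weakly controlled distribution to a saturated subspace of the tube'' is not a formal triviality: one must check that the restricted tube system on $cB$ (coming from $\widetilde{\mathcal N}$) is genuinely a tube system making $cB$ abstractly stratified and that the projection formula survives restriction. I would handle this by invoking Thom's first isotopy lemma (\cite[Proposition 11.1]{mather-notes}) to see that the conical fibers $\widetilde F, F$ are transverse to all strata and their tube systems are induced, together with \cite[Proposition 11.5, 11.6]{mather-notes} on controlled vector fields over controlled vector fields, which is precisely the machinery already used in Lemma~\ref{lem-strbdl-trivialization} to produce the product decomposition. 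In fact the cleanest route is: the proof of Lemma~\ref{lem-strbdl-trivialization} shows $p|_V$ is, after reparametrization, the product $g_{\mathbf 0} \times p : \widetilde F \times \widetilde O \to F \times O$; restricting to $\widetilde O = \{*\} \times O = \mathbb R^m$ a point and $O$ a point recovers $p_c = g_{\mathbf 0} : \widetilde F \to F$ with $\widetilde F \cong cB$, $F \cong cA$, and the conditions (i)--(iii) for this restricted map follow because each was verified stratumwise or tube-wise for the ambient $p$ and all constructions (the flows $\widetilde\Phi_i$, the controlled vector fields) were stratum-preserving and tube-compatible. So the proof is essentially a bookkeeping argument confirming that Definition~\ref{def-strbdl} is closed under passing to the conical factor of a local trivialization, with Thom's isotopy lemmas doing the genuine geometric work.
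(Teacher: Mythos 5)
Your proposal starts from the same place as the paper — the product trivialization
$$(V,U,p|_V) \cong (cB \times \mathbb R^n, cA \times \mathbb R^m, p_c \times \mathrm{proj})$$
furnished by Lemma~\ref{lem-strbdl-trivialization} — and your checklist for conditions (ii) and (iii) of Definition~\ref{def-strbdl} is sound. In fact you are more careful than the paper about condition~(iii), which the paper's proof leaves implicit; that is a genuine improvement in exposition.

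However, your verification of condition~(i) has a gap precisely at the load-bearing step. You write that ``restrictions of smooth fiber bundles to $\widetilde\pi$-fibers are again fiber bundles because $\widetilde\pi$ is itself a fiber bundle compatible with $p$,'' but this is not a formal consequence of compatibility. Restricting a fiber bundle $p|_{\widetilde S}:\widetilde S \to S$ to a $\widetilde\pi$-fiber $\widetilde F$ (landing in a $\pi$-fiber $F$) produces a smooth map, but for it to be a fiber bundle you need to supply surjectivity, submersivity \emph{and} some compactness or properness hypothesis. Submersivity does follow from the fact that $p_c\times\mathrm{proj}$ is stratumwise a submersion and that a factor of a submersive product map is submersive; surjectivity follows similarly. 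But local triviality does not: a proper surjective submersion is a fiber bundle (Ehresmann), while a non-proper one need not be, and there is no a priori reason that a factor of a product fiber bundle is locally trivial unless you argue for it. The paper closes this gap explicitly: it notes that one can \emph{arrange $p_c$ to be proper} by a judicious choice of the radii $r,r' > 0$ appearing at the end of the proof of Lemma~\ref{lem-strbdl-trivialization}, so that $p_c|_L : L \to S$ becomes a proper surjective submersion, and only then invokes Ehresmann's fibration theorem to conclude it is a smooth fiber bundle. Your proposal neither mentions the properness arrangement (which depends on a choice you never fix) nor Ehresmann, and the general principle you appeal to as a substitute would itself need this argument to justify. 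So while the scaffolding of your proof is correct and matches the paper's, the key technical input for condition~(i) is missing rather than merely compressed; it is not ``bookkeeping,'' as you describe it at the end, but the one step where Definition~\ref{def-strbdl} is not obviously closed under passing to the conical factor.
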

\begin{proof}We know
$$p_c \times \mathrm{proj} : cB \times \mathbb{R}^n \to cA \times \mathbb{R}^m$$
is a stratified fiber bundle, since it is equivalent to the stratified fiber bundle $p|V : V \to U$ by a pair of isomorphisms of abstractly stratified spaces. Then for any stratum $S$ of $cA$ and any stratum $L$ of $cB$ such that $f(L) \subset S$, the restriction $p_c \times \mathrm{proj} : L \times \mathbb{R}^n \to S \times \mathbb{R}^m$ is a smooth fiber bundle. In particular, $p_c|L  : L \to S$ is a surjective smooth submersion.

We can arrange $p_c$ to be a proper map by choosing $r, r' > 0$ appropriately in the last part of Lemma \ref{lem-strbdl-trivialization}. Therefore, $p_c|L$ is a proper surjective submersion and hence a smooth fiber bundle by Ehresmann's fibration theorem. Therefore, $p_c : cB \to cA$ is a weakly controlled map which is a stratumwise smooth fiber bundle. Moreover, it is straightforward to check that $\ker dp_c$ is a controlled distribution on $cB$ since $\ker d(p_c \times \mathrm{proj})$ is a controlled distribution on $cB \times \mathbb{R}^n$ by hypothesis. This establishes that $(cB, cA, p_c)$ satisfies all the hypotheses in Definition \ref{def-strbdl} and is therefore a stratified fiber bundle.
\end{proof}

\begin{cor}\label{cor-trivialzn}
The map $p_c : cB \to cA$ is equivalent to the cone on a map $p_{\ell} : B \to A$ between the links, by a pair of isomorphisms of the abstractly stratified spaces $cA$ and $cB$. That is, there exists a commutative diagram of the form
$$
\begin{CD}
cB @>{\cong}>> cB\\
@VV{p_c}V @VV{c(p_\ell)}V\\
cA @>{\cong}>> cA
\end{CD}
$$
\end{cor}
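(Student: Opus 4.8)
The plan is to upgrade the local product structure established in Lemma \ref{lem-strbdl-trivialization} and Corollary \ref{strbdl-cone-link} from ``$p_c : cB \to cA$ is a stratified fiber bundle'' to ``$p_c$ is, up to isomorphism of abstractly stratified spaces, the cone $c(p_\ell)$ on its restriction $p_\ell := p_c|_B : B \to A$ to the links''. The key observation is that $cA$ (resp.\ $cB$) carries a canonical radial function $\rho_A : cA \to [0,\infty)$ (resp.\ $\rho_B$) coming from the cone coordinate, and that $p_c$ being a weakly controlled map between abstractly stratified spaces interacts well with these. First I would fix the link-level slices: set $A = \rho_A^{-1}(r)$, $B = \rho_B^{-1}(r')$ for small $r, r' > 0$ (as at the end of Lemma \ref{lem-strbdl-trivialization}), and $p_\ell = p_c|_B : B \to A$, which is a stratified fiber bundle by restriction (it is a proper stratumwise submersion, so Ehresmann applies stratumwise exactly as in Corollary \ref{strbdl-cone-link}).

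Next I would produce a radial vector field on $cB$ that is $p_c$-related to the radial vector field $\partial_{\rho_A}$ (scaled appropriately) on $cA$. On $cA \setminus \{c_A\}$ the Euler/radial field $\xi_A = \rho_A\,\partial_{\rho_A}$ generates the scaling flow; its time-$t$ flow sends $\rho_A \mapsto e^t \rho_A$ and fixes angular coordinates. Since $p_c|L : L \to S$ is a smooth submersion on each stratum $L$ over $S$ of $cB$ away from the cone points, and since the fibers are compact (we arranged $p_c$ proper), I would lift $\xi_A$ to a stratified vector field $\xi_B$ on $cB \setminus \{c_B\}$ which is $p_c$-related to $\xi_A$ and controlled with respect to the tube system (using \cite[Proposition 9.1]{mather-notes} for lifting controlled vector fields, and \cite[Proposition 11.5]{mather-notes} for lifting over a controlled field, exactly the machinery already used in Lemma \ref{lem-strbdl-trivialization}). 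One must check $\xi_B$ can be chosen so that $\rho_B$ is strictly increasing along $\xi_B$ (this can be arranged by adding a suitable multiple of the radial field in the fiber direction, which is vertical for $p_c$ hence does not disturb $p_c$-relatedness), so that its flow is complete outward and contracts everything to the cone point $c_B$ as $t \to -\infty$. Condition (iii) of Definition \ref{def-strbdl} is exactly what guarantees $p_c^{-1}(c_A) = \{c_B\}$, so the flow of $\xi_B$ descends correctly to the flow of $\xi_A$ including at the cone points.

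Then I would use the flows: integrating $\xi_B$ gives a stratum-preserving homeomorphism $cB \cong cB$ (``straightening'' the radial coordinate to be literally $\rho_B' = \rho_A \circ p_c$ near where this makes sense, or more precisely conjugating the flow of $\xi_B$ to the standard scaling flow on $cB$, which exists because any complete vector field generating an outward-contracting flow on a cone with the given fixed point is conjugate to the Euler field — this is a cone-coordinate version of the straightening used repeatedly by Mather), and similarly the flow of $\xi_A$ straightens $cA \cong cA$. Under these conjugations $p_c$ becomes a map commuting with the standard scaling flows on both cones and fixing $c_B \mapsto c_A$; any such map is determined by its restriction to a single radial slice $\rho_B = r'$, i.e.\ it is exactly $c(p_\ell)$ (extended over the cone point by $c_B \mapsto c_A$, which is forced). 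Assembling, this yields the commutative square of Corollary \ref{cor-trivialzn} with the top and bottom arrows the straightening isomorphisms and the right vertical arrow $c(p_\ell)$. I would also note the isomorphisms can be taken to respect the tube systems $\mathcal{N}_A, \mathcal{N}_B$, since the straightening vector fields were chosen controlled.

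The main obstacle I anticipate is the simultaneous control of the lifted radial field $\xi_B$: it must be (a) $p_c$-related to $\xi_A$ on every stratum, (b) controlled with respect to the tube system of $cB$ so that the resulting isomorphism is an isomorphism of \emph{abstractly} stratified spaces and not merely a stratum-preserving homeomorphism, and (c) strictly outward-pointing in $\rho_B$ so the flow conjugates to the standard scaling. Reconciling (a) and (b) is delicate because lifts of controlled fields ``over'' controlled fields are not controlled in the usual sense — the same warning recorded in the proof of Lemma \ref{lem-strbdl-trivialization} — so one must be careful to invoke \cite[Propositions 11.5, 11.6]{mather-notes} in the correct form and to check that the subsequent flows still induce stratum-preserving, tube-preserving maps. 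Once these lifting and straightening lemmas are in hand the identification of $p_c$ with $c(p_\ell)$ is essentially formal, being a statement about maps equivariant under the scaling $\mathbb{R}$-action on cones.
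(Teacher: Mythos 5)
Your proof is correct in outline but takes a genuinely different route from the paper, and it is worth comparing the two.

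The paper's proof is shorter and rests on a small but effective trick: compose $p_c$ with the radial coordinate $\rho_A : cA \to [0,1)$ to obtain $\Phi := \rho_A \circ p_c : cB \to [0,1)$. Condition (iii) of Definition \ref{def-strbdl} forces $\Phi^{-1}(0) = \{c_B\}$, so $\Phi : cB \setminus \{c_B\} \to (0,1)$ is a proper stratified submersion onto a \emph{one-dimensional, contractible, single-stratum} base. Thom's first isotopy lemma (\cite[Proposition 11.1]{mather-notes}) then trivializes $\Phi$ directly, giving $cB \setminus \{c_B\} \cong B \times (0,1)$, and since $cA$ is already a cone (hence already a product away from the cone point), all that remains is a reparametrization of the radial coordinates. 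In particular, the paper never needs to straighten $cA$ at all, and it never has to lift a vector field from $cA$ to $cB$: it only has to lift from $(0,1)$, which is much less delicate.

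Your proposal instead lifts the Euler field $\xi_A$ from $cA$ to a controlled field $\xi_B$ on $cB \setminus \{c_B\}$ and integrates the flow to conjugate both cones to standard scaling form, identifying $p_c$ with a scaling-equivariant map and hence with $c(p_\ell)$. This works, and you correctly flag the subtle point — controlled lifts ``over'' controlled fields need Mather's Propositions 11.5–11.6 and are not controlled in the naive sense — which is precisely where the technical weight of the argument lives. But this is strictly more machinery than the paper needs: by composing with $\rho_A$ first, the paper replaces your lift over the stratified base $cA \setminus \{c_A\}$ by a lift over the manifold $(0,1)$, where Thom's first isotopy lemma applies cleanly and the ``controlled over controlled'' caveat evaporates. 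The same composition trick also makes your step of arranging $\rho_B$ to be strictly increasing along $\xi_B$ (the part where you add ``a suitable multiple of the radial field in the fiber direction'' — a move that is itself not entirely well-defined, since the fibers of $p_c$ are compact manifolds with no canonical radial direction) unnecessary: trivializing $\Phi$ over $(0,1)$ already produces a coordinate $t$ along which the flow is standard, with no need to compare $\xi_B$ to $\rho_B$ at all. So while your argument is morally the same (both ultimately rest on Thom–Mather controlled vector field lifting), the paper's route is materially more economical, and I would recommend adopting the $\Phi = \rho_A \circ p_c$ reduction rather than the direct Euler field lift.
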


\begin{proof}
For concreteness, let $cA= A \times [0,1)/A\times \{0\}$ and  $cB= B \times [0,1)/B\times \{0\}$, {and let us indicate the cone points as $\{c_A\}$ and $\{c_B\}$ respectively}. Identify the $[0,1)$ factor in each with radial co-ordinates on $cA, cB$, {using the radial functions $\rho_A$ and $\rho_B$, respectively}. Let $\rho_A: cA \to [0,1)$ denote the projection onto its radial co-ordinate, and let $\Phi=\rho_A \circ p_c$.
	
Then $\Phi: cB \to [0,1)$ is a stratified fiber bundle with compact fibers, where the base $[0,1)$ has exactly two strata $\{0\}$ and $(0,1)$. Let $c_B$ denote the cone-point of $B$. Then condition (iii) of Definition \ref{def-strbdl} ensures that  $\Phi : cB \setminus \{c_B\} \to (0,1)$ is a stratified fiber bundle where the base is a single stratum, and the fibers are compact. By Thom's first isotopy lemma {\cite[Proposition 11.1]{mather-notes}}, $\Phi : cB \setminus \{c_B\} \to (0,1)$ is a product fibration, i.e.\ $cB \setminus \{c_B\}$ is isomorphic to $B \times (0,1)$ {as abstractly stratified spaces, by an isomorphism which preserves the projection to $(0, 1)$}. Reparametrizing the radial co-ordinates furnishes the conclusion.
\end{proof}

{Let $(A, \Sigma_A, \mathcal{N}_A)$ and $(B, \Sigma_B, \mathcal{N}_B)$ be abstractly stratified spaces. The same argument as in Lemma \ref{lem-strbdl-trivialization} and Corollary \ref{cor-trivialzn} can be used to establish lifting of \emph{stratified homotopies} $H: A \times I \to B$ i.e.\ a homotopy where, for every stratum $S \in \Sigma_A$, there exists a unique stratum $L \in \Sigma_B$ such that $H(I \times S) \subset I \times L$.}

\begin{prop}\label{strathomotlift}Let $(E, B, p)$ be a stratified bundle, and $H : A \times [0, 1] \to B$ be a stratified homotopy. Let $h_0 := H|A \times \{0\}$ and let $\til{h}_0 : A \to E$ be a lift of $h_0$. Then there exists a lift $\til{H} : A \times [0, 1] \to E$ such that $\til{H}$ is a stratified homotopy, $\til{H}|A \times \{0\} = \til{h}_0$ and $\til{H}$ covers $H$, i.e. \ $\til{H} \circ p = H$.\end{prop}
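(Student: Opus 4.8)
The plan is to pull the bundle back along $H$ and then trivialize it in the $[0,1]$-direction, treating $[0,1]$ as one more Euclidean coordinate in the argument of Lemma~\ref{lem-strbdl-trivialization}.

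First I would form the pullback $p' : E' := H^*E \to A \times [0,1]$. The product $A \times [0,1]$ is naturally an abstractly stratified space, with $H$ stratum-preserving; its strata are the cylinders $S \times [0,1]$, $S \in \Sigma_A$. The next step is to check that $p'$ inherits the structure of a stratified fiber bundle in the sense of Definition~\ref{def-strbdl}: over a cylinder $S \times [0,1]$, with $L \in \Sigma_B$ the unique stratum satisfying $H(S\times[0,1]) \subset L$, the restriction of $p'$ is the pullback of the smooth fiber bundle $p : E_L \to L$ along $H|_{S\times[0,1]}$ and hence again a smooth fiber bundle; the tube data of $E'$ and the distribution $\ker dp'$ are pulled back from those of $E$; and conditions (i)--(iii) follow from the corresponding properties of $p$. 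Under this identification, a lift $\til H$ of $H$ extending $\til h_0$ is precisely a (weakly controlled) section $\sigma$ of $p'$ over $A \times [0,1]$ whose restriction to $A \times \{0\}$ is the section $\sigma_0$ determined by $\til h_0$.

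To produce $\sigma$, I would rerun the vector-field-flow argument of Lemma~\ref{lem-strbdl-trivialization} with the single extra coordinate direction $t$. Let $\partial_t$ be the coordinate vector field on $A \times [0,1]$ in the $[0,1]$-factor; it is controlled for the product tube system and generates the translation flow $\Psi^t(a,s) = (a, s+t)$. Since $p'$ is a stratified submersion, lift $\partial_t$ to a controlled vector field $\til\xi$ on $E'$ lying \emph{over} $\partial_t$ by \cite[Proposition 11.5]{mather-notes} (with the caveat, already recorded in Lemma~\ref{lem-strbdl-trivialization}, that such a field is not controlled in the naive sense). Let $\til\Psi^t$ be the local one-parameter group of stratum-preserving homeomorphisms of $E'$ generated by $\til\xi$; it covers $\Psi^t$. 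As in Lemma~\ref{lem-strbdl-trivialization}, one restricts to closures of $\varepsilon$-tubes, over which $p'$ is proper, so that $\til\Psi^t(\sigma_0(a))$ is defined for all $a \in A$ and all $t \in [0,1]$. Then
$$\sigma(a,t) := \til\Psi^t\bigl(\sigma_0(a)\bigr), \qquad (a,t) \in A \times [0,1],$$
is a section of $p'$ restricting to $\sigma_0$ on $A \times \{0\}$, and composing it with the natural map $E' = H^*E \to E$ yields the lift $\til H : A \times [0,1] \to E$. By construction $\til H$ covers $H$ and agrees with $\til h_0$ at $t = 0$; since $\til\xi$ restricts on each stratum of $E'$ to a vector field lying over $\partial_t$, its flow preserves strata, so $\til H$ is a stratified homotopy; and weak controlledness of $\til H$ follows from that of $\til h_0$ together with $\til\xi$ being controlled.

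I expect the main obstacle to lie in the second paragraph: checking that the control data of $E$ --- in particular the weakly controlled distribution $\ker dp'$ and conditions (ii)--(iii) of Definition~\ref{def-strbdl} --- really do pull back under $H$, given that $H$ is only assumed stratum-preserving. Handling this may require first homotoping $H$ rel $A \times \{0\}$ to a controlled stratified homotopy, or else carrying out the vector-field lift stratum by stratum and patching with a partition of unity along $A$, exactly as is done implicitly inside Lemma~\ref{lem-strbdl-trivialization}. Once $E' \to A \times [0,1]$ is known to carry this structure, the remainder is a verbatim transcription of the flow argument already used there, with $[0,1]$ in the role of one of the $\mathbb{R}^m$-coordinates; Corollary~\ref{cor-trivialzn} is not needed, since no cone factor is being split off.
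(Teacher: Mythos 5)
Your approach — pull $E$ back along $H$ and trivialize in the $t$-direction by lifting $\partial_t$ to a controlled vector field and flowing — is essentially the paper's proof: the paper invokes Corollary~\ref{cor-trivialzn} (which itself is the flow argument) where you rerun it inline, and otherwise the outline is the same.

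There is, however, one genuine omission. You take the strata of $A \times [0,1]$ to be the cylinders $S \times [0,1]$, but these are manifolds \emph{with boundary} and hence are not allowed as strata of an abstractly stratified space (Definition~\ref{def-Idec} requires each stratum to be a topological manifold; Definition~\ref{def-abtractstratsp} requires it to be smooth). The only stratifications $[0,1]$ naturally admits split off $\{0\}$ and $\{1\}$, which is exactly what you do not want: the flow then starts at a boundary stratum, and $H$ no longer sends strata-of-the-product to strata cleanly. The paper's fix is to enlarge $[0,1]$ to $(-\varepsilon, 1+\varepsilon)$ and extend $H$ to be constant on $A \times (-\varepsilon, 0]$ and $A \times [1, 1+\varepsilon)$; then $A \times (-\varepsilon, 1+\varepsilon)$ has strata $S \times (-\varepsilon, 1+\varepsilon)$, which are honest manifolds, $H$ remains a stratified map, and the translation flow is globally defined near $t = 0$. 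Without this enlargement the argument you describe doesn't quite run in the stated category. The concern you raise at the end — whether $H^*E$ really inherits a stratified-bundle structure from a merely stratum-preserving $H$ — is also present in the paper's own one-sentence treatment; it is a fair point, but not a defect unique to your version.
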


\begin{proof}We modify the homotopy by enlarging $[0, 1]$ slightly to $(-\ep, 1+\ep)$ and defining $H : A \times (-\ep, 1+\ep) \to B$ by declaring $H$ to be constant on $A \times (-\ep, 0]$ and $A \times [1, 1+\ep)$. It is now possible to choose $\ep > 0$ such that $H$ is a stratified mapping, where $A \times (-\ep, 1+\ep)$ is stratified by $S \times (-\ep, 1+\ep)$; $S \in \Sigma_A$ being the strata of $A$.

Let $H^* E \subset A \times (-\ep, 1+\ep) \times E$ denote the pullback of $E$ over $A \times (-\ep, 1+\ep)$. Since $H$ is a stratified map, $H^* E$ is a stratified bundle over $A \times (-\ep, 1+\ep)$. By projecting first to $A \times (-\ep,1+\ep)$ and then to $(-\ep,1+\ep)$ as in Corollary \ref{cor-trivialzn}, we obtain a commutative diagram
$$
\begin{CD}
H^* E @>{\cong}>> E_0 \times (-\ep, 1+\ep)\\
@VVV @VVV \\
A \times (-\ep,1+\ep) @= A \times (-\ep,1+\ep)
\end{CD}
$$
Where $E_0 = h_0^* E$ is the pullback of the stratified fiber bundle $(E, B, p)$ over $A$ under $h_0 : A \to B$. The map $\til{h}_0 : A \to E$ induces a map to the fibered product $H^*(\til{h}_0) : A \to H^* E$. 

Let $\Phi : H^* E \to E_0 \times (-\ep, 1+\ep)$ denote the isomorphism above. Then, the product homotopy with coordinates changed by $\Phi$, i.e.
$$H = \Phi^{-1} \circ (\Phi \circ H^*(\til{h}_0), t)$$ gives the required lift.
\end{proof}

 A slight generalization of stratified fiber bundles is sometimes useful:

\begin{defn}\label{def-stratumwisebdl}
A \emph{stratumwise bundle} $P: E \to B$ consists of
\begin{enumerate}
\item an abstractly stratified space $E$ {called} the \emph{total space},
\item an abstractly stratified space $B$ {called} the \emph{base space},
\item a stratum-preserving map $P$, such that for every stratum $S$  of $B$,
$$P|P^{-1}(S) \, : P^{-1}(S)  \to S$$ is a topological fiber bundle, with fiber 
a stratified space $F_S$.
\end{enumerate}
\end{defn}

\begin{eg}\label{eg-pdkt}
A product of stratified spaces is a stratumwise bundle, but not necessarily a 
stratified fiber bundle. {For example, the projection map $p : I \times I \to I$ to the first coordinate, with the stratification on $I \times I$ as discussed before Theorem \ref{mather}, is not a stratified bundle: it fails Condition (iii) in Definition \ref{def-strbdl}. Indeed, the fiber of $p$ over $\{0\}$ is the stratified space $(\{0\} \times [0, 1])$. This is a manifold with boundary but not a manifold.}
\end{eg}

\begin{defn}\label{str-tbl}Let $(X, \Sigma, \mathcal{N})$ be an abstract $C^\infty$-stratified space. By Theorem \ref{natsume} there is a realization $X' \subset \mathbb{R}^N$ such that $(X', \Sigma', \mathcal{N}')$ is an abstract stratified set, where $\mathcal{N}'$ is induced from a tubular neighborhood system $(\nu(S), \pi_S, \rho_S)_{S \in \Sigma'}$ on the Whitney stratified set $X' \subset \mathbb{R}^n$. We define the {\bf tangent bundle} $TX$ to be the union $\bigcup_{S \in \Sigma'} TS \subset T\mathbb{R}^N$ of tangent bundles to each strata of $X'$. This inherits a topology from $T\mathbb{R}^N = \mathbb{R}^{2N}$ and an $I$-decomposition $\Sigma^{(1)} = \{TS_\alpha : S_\alpha \in \Sigma'\}_{\alpha \in I}$. 
	
	Let $p : TX \to X$  be the {\bf projection} obtained by restricting the projection $T\mathbb{R}^N \to \mathbb{R}^N$ to $TX$ and composing with the inverse of the realization homeomorphism $X \to X'$. Define $$N^{(1)}_\alpha := T\nu(S_\alpha) \cap TX$$ to be the tube around the stratum $TS_\alpha$ of $TX$, i.e.\ $N^{(1)}_\alpha$ is the intersection of $TX$ with the tangent bundle to the tubular neighborhood $\nu(S_\alpha) $.
	
	The associated tubular projection $\pi^{(1)}_\alpha := d\pi_\alpha$ is defined by  the restriction of the differential $$d\pi_\alpha : T\nu(S_\alpha) \to TS_\alpha$$ to $N^{(1)}_\alpha.$ 
	
	Finally, consider the differential of the radial function $d\rho_\alpha : T\nu(S_\alpha) \to \mathbb{R}$ as a map to the fibers of $T[0, \infty) \cong [0, \infty) \times \mathbb R$. Then we define the radial function associated to the stratum $TS_\alpha$ as 
	$$\rho^{(1)}_\alpha := \rho_\alpha \circ p + (d\rho_\alpha)^2.$$ 
	
	We denote the tube system defined by these functions as $\mathcal{N}^{(1)} = (N^{(1)}_\alpha, \pi^{(1)}_\alpha, \rho^{(1)}_\alpha)_{\alpha \in I}$.\end{defn}

\begin{lemma}\label{lem-txstratfd} The triple $(TX, \Sigma^{(1)}, \mathcal{N}^{(1)})$ is an abstract $C^\infty$-stratified space.\end{lemma}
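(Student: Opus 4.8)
The plan is to verify directly that $(TX,\Sigma^{(1)},\mathcal N^{(1)})$ satisfies Definition \ref{def-abtractstratsp}, pushing everything down along the projection $p:TX\to X'$ to the realization $X'\subset\mathbb R^{N}$ (Theorem \ref{natsume}) with its Mather tube system $(\nu(S_\alpha),\pi_\alpha,\rho_\alpha)_{\alpha\in I}$ (Theorem \ref{mather}). There are two packages of checks: first that $\Sigma^{(1)}=\{TS_\alpha\}$ is an $I$-decomposition of $TX$ in the sense of Definition \ref{def-Idec}, and then that $\mathcal N^{(1)}$, as defined in Definition \ref{str-tbl}, is a tube system making conditions (1)--(3) of Definition \ref{def-abtractstratsp} hold.

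For the $I$-decomposition: each $TS_\alpha$ is the tangent bundle of the smooth manifold $S_\alpha$, hence a smooth manifold of dimension $2\dim S_\alpha$; the $TS_\alpha$ are disjoint since the $S_\alpha$ are; local finiteness of $\{TS_\alpha\}$ is pulled back from that of $\{S_\alpha\}$ along the continuous map $p$; and each $TS_\alpha$ is a locally closed submanifold of $T\mathbb R^{N}=\mathbb R^{2N}$, hence locally closed in $TX$. The only point needing thought here is the incidence relation $TS_\alpha\cap\overline{TS_\beta}\neq\emptyset\iff TS_\alpha\subseteq\overline{TS_\beta}\iff\alpha\le\beta$: applying $p$ and using the incidence relation for $X'$ handles $TS_\alpha\cap\overline{TS_\beta}\neq\emptyset\Rightarrow\alpha\le\beta$, and for the converse one invokes Whitney condition $(a)$ of Definition \ref{def-whitneyab}. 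Namely, given $\alpha\le\beta$ and $(x,v)\in TS_\alpha$, choose $x_n\in S_\beta$ with $x_n\to x$, pass to a subsequence with $T_{x_n}S_\beta\to\tau$, use Whitney $(a)$ to get $v\in T_xS_\alpha\subseteq\tau$, and pick $v_n\in T_{x_n}S_\beta$ with $v_n\to v$, exhibiting $(x,v)\in\overline{TS_\beta}$.

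For the tube system the governing observations are that $N^{(1)}_{\alpha\beta}:=N^{(1)}_\alpha\cap TS_\beta=T\nu(S_\alpha)\cap TS_\beta=TN_{\alpha\beta}$, that $\pi^{(1)}_{\alpha\beta}=d\pi_{\alpha\beta}:TN_{\alpha\beta}\to TS_\alpha$, and that $\rho^{(1)}_{\alpha\beta}(x,v)=\rho_{\alpha\beta}(x)+(d\rho_{\alpha\beta}(x,v))^{2}$. Smoothness of $\pi^{(1)}_{\alpha\beta}$ and $\rho^{(1)}_{\alpha\beta}$, and positivity of the latter (as $\rho_{\alpha\beta}>0$ on $N_{\alpha\beta}$, which misses $S_\alpha$), are then immediate; $N^{(1)}_\alpha=T\nu(S_\alpha)\cap TX$ is an open neighbourhood of $TS_\alpha$ in $TX$; $\pi^{(1)}_\alpha=d\pi_\alpha|_{N^{(1)}_\alpha}$ retracts $N^{(1)}_\alpha$ onto $TS_\alpha$ by differentiating $\pi_\alpha|_{S_\alpha}=\mathrm{id}$; and $(\rho^{(1)}_\alpha)^{-1}(0)=TS_\alpha$ because $\rho_\alpha$ (which we may take smooth on $\nu(S_\alpha)$, e.g.\ the squared fibre-norm of a Mather tube) attains its minimum along $S_\alpha$, so $d\rho_\alpha$ vanishes over $S_\alpha$, while $\rho_\alpha\circ p>0$ off $S_\alpha$, and the fibre of $TX=\bigcup_\gamma TS_\gamma$ over a point of $S_\alpha$ is exactly $T_xS_\alpha$ by disjointness of strata. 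The one genuinely analytic step is that $(\pi^{(1)}_{\alpha\beta},\rho^{(1)}_{\alpha\beta}):TN_{\alpha\beta}\to TS_\alpha\times(0,\infty)$ is a submersion: in local coordinates furnished by the submersion normal form of $(\pi_{\alpha\beta},\rho_{\alpha\beta})$, in which the latter is the projection $(s,r,u)\mapsto(s,r)$ of $S_\alpha\times(0,\infty)\times\mathbb R^{c}$, the tangent-level map becomes $((s,r,u),(\dot s,\dot r,\dot u))\mapsto((s,\dot s),\,r+\dot r^{2})$, whose Jacobian is manifestly surjective. It is precisely the summand $\rho_\alpha\circ p$ (contributing the $r$, hence $\partial/\partial r=1$) that makes this a submersion in the radial direction, while the summand $(d\rho_\alpha)^{2}$ is what cuts the zero set down to $TS_\alpha$; so both terms in the definition of $\rho^{(1)}_\alpha$ are used.

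Finally, the $\pi$- and $\rho$-control conditions for a triple $\alpha<\beta<\gamma$ follow by differentiating those for $X$. By the chain rule and $\pi_{\alpha\beta}\circ\pi_{\beta\gamma}=\pi_{\alpha\gamma}$ we get $\pi^{(1)}_{\alpha\beta}\circ\pi^{(1)}_{\beta\gamma}=d\pi_{\alpha\beta}\circ d\pi_{\beta\gamma}=d\pi_{\alpha\gamma}=\pi^{(1)}_{\alpha\gamma}$; and $\rho^{(1)}_{\alpha\beta}(\pi^{(1)}_{\beta\gamma}(x,v))=\rho_{\alpha\beta}(\pi_{\beta\gamma}(x))+(d(\rho_{\alpha\beta}\circ\pi_{\beta\gamma})(x)\,v)^{2}=\rho_{\alpha\gamma}(x)+(d\rho_{\alpha\gamma}(x)\,v)^{2}=\rho^{(1)}_{\alpha\gamma}(x,v)$, the two summands being controlled respectively by the $\rho$-control identity $\rho_{\alpha\beta}\circ\pi_{\beta\gamma}=\rho_{\alpha\gamma}$ for $X$ and by its derivative. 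Since $N^{(1)}_{\alpha\beta}=TN_{\alpha\beta}$ and $\pi^{(1)}_{\beta\gamma}=d\pi_{\beta\gamma}$, the domain $N^{(1)}_{\beta\gamma}\cap N^{(1)}_{\alpha\gamma}\cap(\pi^{(1)}_{\beta\gamma})^{-1}(N^{(1)}_{\alpha\beta})$ over which these are required is exactly $p^{-1}$ of the corresponding domain for $X$, so the identities hold wherever they must. I expect the submersion check of the third paragraph, together with the appeal to Whitney condition $(a)$ in the incidence relation, to be the only steps requiring any real argument; everything else is bookkeeping with the chain rule and the definitions of the tube-system data.
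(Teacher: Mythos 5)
Your proof is correct and its core coincides with the paper's: identify $N^{(1)}_{\alpha\beta}=TN_{\alpha\beta}$, $\pi^{(1)}_{\alpha\beta}=d\pi_{\alpha\beta}$, $\rho^{(1)}_{\alpha\beta}=\rho_{\alpha\beta}\circ p+(d\rho_{\alpha\beta})^2$, and derive the $\pi$- and $\rho$-control conditions for $TX$ by differentiating those for $X$. Where you diverge from the paper's write-up, you do so to your advantage. You check explicitly that $\Sigma^{(1)}$ is an $I$-decomposition, including the incidence relation $\alpha\leq\beta\Rightarrow TS_\alpha\subseteq\overline{TS_\beta}$ via Whitney condition (a); the paper does not address this. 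You work out the submersion property of $(\pi^{(1)}_{\alpha\beta},\rho^{(1)}_{\alpha\beta})$ in the submersion normal form for $(\pi_{\alpha\beta},\rho_{\alpha\beta})$, which the paper asserts as ``straightforward'' without argument. Most notably, your computation of $(\rho^{(1)}_\alpha)^{-1}(0)=TS_\alpha$ is cleaner than the paper's: since $\rho_\alpha\circ p>0$ off $p^{-1}(S_\alpha)$ and the fiber of $TX$ over any point of $S_\alpha$ is already $T_xS_\alpha$ by disjointness of the strata, the equality falls out with no further analysis. The paper instead decomposes $v$ orthogonally against $T_xS_\alpha$ and appeals to $d\rho_\alpha$ being ``strictly increasing'' in normal directions, a property in tension with the fact that a smooth $\rho_\alpha\geq 0$ vanishing on $S_\alpha$ necessarily has $d\rho_\alpha\equiv 0$ along $S_\alpha$; your route sidesteps that issue entirely. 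One small correction to your own commentary: as your argument itself shows, the summand $(d\rho_\alpha)^2$ is not what cuts the zero set down to $TS_\alpha$, since $\rho_\alpha\circ p$ together with the fiber structure of $TX$ already accomplishes this; its actual purpose in Definition \ref{str-tbl} is to make $\rho$-controlled sections of $p:TX\to X$ correspond to controlled vector fields, which surfaces in Lemma \ref{lem-cvf} rather than here.
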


\begin{proof}For any pair of indices $\alpha, \beta \in I$ with $\alpha < \beta$, $N_{\alpha\beta} = N_\alpha \cap S_\beta$ is a submanifold of $S_\beta$. Hence it inherits a $C^\infty$-structure. Consider $\pi_{\alpha\beta} : N_{\alpha\beta} \to S_\alpha$ and $\rho_{\alpha\beta} : N_{\alpha\beta} \to (0, \infty)$ $-$ both $C^\infty$-maps. If $\pi^{(1)}_{\alpha\beta}$ and $\rho^{(1)}_{\alpha\beta}$ denote the restrictions of $\pi^{(1)}_\alpha$ and $\rho^{(1)}_\alpha$ to $$N^{(1)}_{\alpha\beta} = N^{(1)}_\alpha \cap TS_\beta = TN_\alpha \cap TS_\beta = T(N_\alpha \cap S_\beta) = TN_{\alpha\beta},$$
	then observe that $\pi^{(1)}_{\alpha\beta} = d\pi_{\alpha\beta}$ and $\rho^{(1)}_{\alpha\beta} = \rho_{\alpha\beta} \circ p + (d\rho_{\alpha\beta})^2$. 
	
	We know that for any triple of indices $\alpha, \beta, \gamma \in I$, the following control conditions hold: 
	\begin{equation} 
		\pi_{\alpha\beta} \circ \pi_{\beta\gamma} = \pi_{\alpha\gamma} \label{control-1}\end{equation} 
	\begin{equation}
		\rho_{\alpha\beta} \circ \pi_{\beta\gamma} = \rho_{\alpha\gamma} \label{control-2}
	\end{equation} 
	Differentiating and using the chain rule on \ref{control-1} and \ref{control-2}, we have 
	\begin{equation}
		d\pi_{\alpha\beta} \circ d\pi_{\beta\gamma} = d\pi_{\alpha\gamma}\label{control-1'}
	\end{equation} 
	\begin{equation}d\rho_{\alpha\beta} \circ d\pi_{\beta\gamma} = d\rho_{\alpha\gamma}\label{control-2'}
	\end{equation} 
	whenever both sides of the equations are defined. Equation
	\ref{control-1'} implies that $$\pi^{(1)}_{\alpha\beta} \circ \pi^{(1)}_{\beta\gamma} = \pi^{(1)}_{\alpha\gamma},$$ hence $(TX, \Sigma^{(1)}, \mathcal{N}^{(1)})$ has $\pi$-control. 
	
From  Equation \ref{control-2'} we also obtain 
	\begin{equation}
		(d\rho_{\alpha\beta})^2 \circ d\pi_{\beta\gamma} = (d\rho_{\alpha\gamma})^2\label{rho-1}
	\end{equation} 
	Since $p \circ d\pi_{\beta\gamma} = \pi_{\beta\gamma}$, we see that Equation \ref{control-2} also implies 
	\begin{equation}(\rho_{\alpha\beta} \circ p) \circ d\pi_{\beta\gamma} = \rho_{\alpha\gamma} \circ p\label{rho-2}
	\end{equation} 
	Adding Equations \ref{rho-1} and \ref{rho-2}, we see that $\rho^{(1)}_{\alpha\beta} \circ \pi^{(1)}_{\beta\gamma} = \rho^{(1)}_{\alpha\beta}$. This is the desired $\rho$-control.
	
	Note that $\rho^{(1)}_\alpha(x, v) = 0$ if and only if $\rho_\alpha(x) = d\rho_\alpha(x, v) = 0$. Since $\rho_\alpha(x) = 0$, $x \in S_\alpha$. Next write $v = u + w \in T\mathbb{R}^N$ where $u$ is the orthogonal projection of $v$ to $T_x S_\alpha$ under a fiberwise inner product defined on $\nu(S_\alpha)$. Then 
	$$0 = d\rho_\alpha(x, v) = d\rho_\alpha(x, u) + d\rho_\alpha(x, w).$$
	But $d\rho_\alpha(x, u) = 0$ since $\rho_\alpha \equiv 0$ on $S_\alpha$. This forces $d\rho_\alpha(x, w) = 0$. Since $\rho_\alpha$ is the radial function on $\nu(S_\alpha)$, $d\rho_\alpha$ is strictly increasing in any direction orthogonal to $S_\alpha$, forcing $w = 0$. Hence $v \in T_xS_\alpha$. Therefore $(x, v) \in TS_\alpha$, i.e., $(\rho^{(1)}_\alpha)^{-1}(0) = TS_\alpha$. Finally it is straightforward to check that $(\pi_{\alpha\beta}^{(1)}, \rho_{\alpha\beta}^{(1)}) : N^{(1)}_{\alpha\beta} \to S_\beta \times (0, \infty)$ is a submersion. 
	
	We have checked that $\pi^{(1)}$ and $\rho^{(1)}$ are valid projection and radial functions, and $(TX, \Sigma^{(1)}, \mathcal{N}^{(1)})$ satisfies both the control conditions. The lemma follows.\end{proof}

\begin{lemma}\label{lem-cvf} (Weakly) controlled sections of the projection $p : TX \to X$ are (weakly) controlled vector fields on $X$  (cf.\ Definition \ref{def-strvf}).  \end{lemma}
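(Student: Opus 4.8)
The plan is to unwind the definitions: I will restrict a section of $p : TX \to X$ to each stratum and check directly that the resulting stratumwise vector fields satisfy the control conditions of Definition \ref{def-strvf}, the computation being driven by the formula $\rho^{(1)}_S = \rho_S \circ p + (d\rho_S)^2$ from Definition \ref{str-tbl} together with the identity $p \circ s = \mathrm{id}$. The converse (assembling a controlled section from a controlled vector field) goes the same way read backwards, modulo a point about continuity that I flag at the end.

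First, the underlying data. A section $s : X \to TX$ of $p$ is stratum-preserving, since $p(TS) = S$ forces $s(S) \subset p^{-1}(S) = TS$; hence for each stratum $S$ it restricts to $\eta_S := s|S : S \to TS$, a section of $TS \to S$, which is a smooth vector field precisely when $s|S$ is smooth. So a (weakly) controlled section yields a stratified vector field $\{\eta_S\}$, and what remains is to see that the across-strata part of the controlled-map conditions of Definition \ref{def-controlledmap} is exactly conditions (1)--(2) of Definition \ref{def-strvf}. I would first observe that condition (2) of Definition \ref{def-controlledmap} is automatic here: if $x \in N_S^\varepsilon$ then $x$ lies in $\nu(S)$, an open subset of $\mathbb{R}^N$, and in some stratum $L \geq S$, so $s(x) = \eta_L(x) \in T_x L \subset T_x\nu(S) \cap TX = N^{(1)}_S$, whence $s(N_S^\varepsilon) \subset N^{(1)}_S$ for every $\varepsilon \leq 1$.

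The core computation is then as follows. Fix strata $S < L$ and a point $x \in N_S^\varepsilon \cap L$. Since $\pi^{(1)}_S = d\pi_S$, the $\pi$-control equation $s(\pi_S(x)) = \pi^{(1)}_S(s(x))$ becomes $\eta_S(\pi_{SL}(x)) = (\pi_{SL})_*\eta_L(x)$, i.e.\ condition (2) of Definition \ref{def-strvf}. Since $\rho^{(1)}_S = \rho_S\circ p + (d\rho_S)^2$ and $p(s(x)) = x$, the $\rho$-control equation $\rho_S(x) = \rho^{(1)}_S(s(x))$ becomes $\rho_S(x) = \rho_S(x) + \big(d\rho_S(\eta_L(x))\big)^2$, forcing $d\rho_S(\eta_L(x)) = 0$; as $\eta_L(x)$ is tangent to $L$ this reads $\eta_L\,\rho_{SL}(x) = 0$, i.e.\ condition (1). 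Each step is reversible, so conversely a controlled vector field assembles a section satisfying the $\pi$- and $\rho$-control equations; and deleting $\rho$-control on the map side matches deleting condition (1) on the vector-field side, which gives the weakly controlled case. The quantifiers over $\varepsilon$ cause no trouble, since a controlled map supplies, for each $S$, a single $\varepsilon_S$ valid for all $L > S$, and on the relatively compact subsets used throughout the paper one may anyway take $\varepsilon$ to be a small constant.

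Finally, the one point that is not purely formal is continuity: a (weakly) controlled section is in particular a continuous map $X \to TX$, whereas Definition \ref{def-strvf} asks only for a stratumwise-smooth collection, and not every controlled such collection need glue to a continuous section. I would reconcile this by reading a stratified vector field as being, by design, a section (hence continuous into $T\mathbb{R}^N$) of $TX \to X$ that is smooth along strata; with this understood the correspondence above is a bijection, and otherwise the statement is the forward implication together with the converse for those controlled vector fields whose associated section is continuous. I expect this reconciliation, rather than any of the control-condition bookkeeping, to be the only part of the argument requiring care.
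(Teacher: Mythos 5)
Your core computation coincides with the paper's: $\pi$-control for the section is differentiated via $\pi^{(1)}_S = d\pi_S$ to give condition $(2)$ of Definition \ref{def-strvf}, and $\rho$-control together with $\rho^{(1)}_S = \rho_S \circ p + (d\rho_S)^2$ and $p \circ s = \mathrm{id}$ forces $d\rho_{SL}(\eta_L) = 0$, which is condition $(1)$. The additional points you raise — that $s$ is automatically stratum-preserving, that condition $(2)$ of Definition \ref{def-controlledmap} holds automatically for sections, and that a converse would require reading a controlled vector field as a continuous section (which is not automatic from Definition \ref{def-strvf}) — are all correct, but note that the lemma asserts, and the paper proves, only the forward implication, so the continuity caveat you rightly flag does not touch the stated result.
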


\begin{proof} For concreteness, we prove the lemma for controlled sections and controlled vector fields. The same proof works for weakly controlled sections and weakly controlled vector fields. 
	Let $\eta : X \to TX$ be a controlled section of $p$. Then for any stratum $S \subset X$, $\eta|S : S \to TS$ is a $C^\infty$-section of the tangent bundle of $S$; let us denote this vector field as $\eta_S$. Then $\{\eta_S : S \in \Sigma\}$ is a stratified vector field on $X$. Note that since $\eta$ is $\pi$-controlled, it follows that for any pair of strata $S, L \subset X$ with $S < L$, we have $\pi^{(1)}_{SL}(\eta(x)) = \eta(\pi_{SL}(x))$. Hence $(\pi_{SL})_*(\eta_L(x)) = \eta_S(\pi_{SL}(x))$. Moreover $\eta$ is $\rho$-controlled, hence $\rho^{(1)}_{SL}(\eta(x)) = \rho_{SL}(x)$. Now $\rho^{(1)}_{SL}(\eta(x)) = \rho_{SL}(x) + d\rho_{SL}(\eta(x))^2$, therefore $d\rho_{SL}(\eta(x)) = 0$.  Equivalently, $(\eta_L)_*\rho_{SL} = 0$. This verifies that $\{\eta_S : S \in \Sigma\}$ is indeed a controlled vector field on $X$.\end{proof}

\begin{prop}\label{str-derv}Let $(X, \Sigma_X, \mathcal{N}_X)$ and $(Y, \Sigma_Y, \mathcal{N}_Y)$ be abstract stratified spaces and $f : X \to Y$ be a controlled (resp.\ weakly controlled) map. Then the stratum-wise differential induces a controlled (resp.\ weakly controlled)  map $df : TX \to TY$.\end{prop}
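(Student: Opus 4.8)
The plan is to define $df : TX \to TY$ stratum-wise by $df|_{TS} := d(f|_S)$ and to verify, clause by clause, that it satisfies Definition \ref{def-controlledmap}, deducing each property of $df$ from the corresponding property of $f$ by differentiation and the chain rule. If $S$ is a stratum of $X$ and $L$ the unique stratum of $Y$ with $f(S) \subset L$, then $df$ carries the stratum $TS$ of $TX$ (Definition \ref{str-tbl}) into the stratum $TL$ of $TY$, and $df|_{TS} = d(f|_S)$ is smooth because $f|_S$ is. Assembling these stratum-wise differentials into a single \emph{continuous} map $TX \to TY$ is the one point that is not purely formal: here one uses the local product normal form of a (weakly) controlled map near a stratum furnished by Lemma \ref{lem-strbdl-trivialization} and Corollary \ref{cor-trivialzn} -- in the controlled case the radial coordinate is preserved, by $\rho$-control -- and reads off continuity of $df$ across strata from this picture. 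I expect this to be the main obstacle; the control conditions below are then bookkeeping with the chain rule.

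Write $p$ for the tangent projection of either space, so that $p \circ df = f \circ p$. Recall from Definition \ref{str-tbl} that $\pi^{(1)}_S = d\pi_S$ and $\rho^{(1)}_S = \rho_S \circ p + (d\rho_S)^2$ on $N^{(1)}_S$ (and likewise on $Y$). Fix $S$ and the corresponding $L$, let $\varepsilon > 0$ be the function attached to $f$ for this pair in Definition \ref{def-controlledmap}, and set $\varepsilon^{(1)} := \varepsilon \circ p$, a positive smooth function on $TS$. Since $\rho^{(1)}_S \geq \rho_S \circ p$ and $p \circ \pi^{(1)}_S = \pi_S \circ p$, one gets $(N^{(1)}_S)^{\varepsilon^{(1)}} \subseteq p^{-1}(N_S^\varepsilon)$; combined with $p \circ df = f \circ p$, $f(N_S^\varepsilon) \subseteq N_L$, and the identification $N^{(1)}_L = p^{-1}(N_L)$ inside $TY$ (valid because $T\nu(L)$ is the restriction of the ambient tangent bundle over $\nu(L)$), this gives $df\big((N^{(1)}_S)^{\varepsilon^{(1)}}\big) \subseteq N^{(1)}_L$. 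Differentiating the $\pi$-control identity $\pi_L \circ f = f \circ \pi_S$ stratum-wise yields $d\pi_L \circ df = df \circ d\pi_S$, i.e.\ the $\pi$-control $\pi^{(1)}_L \circ df = df \circ \pi^{(1)}_S$ for $df$ on $(N^{(1)}_S)^{\varepsilon^{(1)}}$.

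For the $\rho$-control -- needed only in the controlled, not the weakly controlled, case -- differentiate $\rho_L \circ f = \rho_S$ to get $d(\rho_L \circ f) = d\rho_S$, and combine with $\rho_L \circ f \circ p = \rho_S \circ p$ and $p \circ df = f \circ p$ to compute
\[
\rho^{(1)}_L(df(v)) = \rho_L\big(f(p(v))\big) + d\rho_L(df(v))^2 = \rho_S(p(v)) + d\rho_S(v)^2 = \rho^{(1)}_S(v),
\]
which is precisely $\rho$-control for $df$. In the weakly controlled case one simply omits this paragraph and all mention of $\rho$-control. The manipulations throughout mirror the chain-rule computation already used in the proof of Lemma \ref{lem-txstratfd}, now applied to the intertwiners $\pi_S, \rho_S$ of the map $f$ rather than to the triple-control identities of the tube system.
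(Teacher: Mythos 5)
Your core argument is the same as the paper's: differentiate the $\pi$- and $\rho$-control identities for $f$ on $N^X_{\alpha\beta}$, then use $p \circ df = f \circ p$ to assemble the control identities for $\pi^{(1)}$ and $\rho^{(1)}$. Your one-line display
\[
\rho^{(1)}_L(df(v)) = \rho_L\big(f(p(v))\big) + d\rho_L(df(v))^2 = \rho_S(p(v)) + d\rho_S(v)^2 = \rho^{(1)}_S(v)
\]
is precisely the paper's squaring of Equation \eqref{rho-control'} and adding it to Equation \eqref{rho-control''}. You also do one thing the paper leaves implicit: you check condition (2) of Definition \ref{def-controlledmap} by producing $\varepsilon^{(1)} = \varepsilon \circ p$ and observing $(N^{(1)}_S)^{\varepsilon^{(1)}} \subset p^{-1}(N_S^\varepsilon)$ via $\rho^{(1)}_S \geq \rho_S \circ p$ and $p\circ\pi^{(1)}_S=\pi_S\circ p$. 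That is a worthwhile addition.

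The one genuine misstep is the framing paragraph, where you call continuity of $df$ "the main obstacle" and propose to resolve it via Lemma \ref{lem-strbdl-trivialization} and Corollary \ref{cor-trivialzn}. Those results give the local normal form of a \emph{stratified fiber bundle} — a weakly controlled map that is in addition a stratumwise fiber bundle whose kernel distribution $\ker dp$ is weakly controlled and which satisfies condition (iii) of Definition \ref{def-strbdl}. A general controlled map satisfies none of those extra hypotheses, so you cannot invoke a local product picture for it; the reference simply does not apply. The paper does not raise the continuity question at all, treating the proposition as a pure chain-rule verification of the control identities, which is also where the actual content of your argument lies. If you do want to address continuity across strata, you would need a separate argument rather than an appeal to the stratified bundle trivialization lemmas.
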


\begin{proof}Suppose $\alpha, \beta \in I$ is a pair of indices such that $\alpha < \beta$, and $S_\alpha, S_\beta \in \Sigma_X$ be the corresponding pair of strata of $X$. Let $L_\alpha, L_\beta \in \Sigma_Y$ be the unique strata of $Y$ such that $f(S_\alpha) \subset L_\alpha$ and $f(S_\beta) \subset L_\beta$. Let us denote the tube system on $X$ associated to the pair of strata $(S_\alpha, S_\beta)$ by $(N^X_{\alpha\beta}, \pi^X_{\alpha\beta}, \rho^X_{\alpha\beta})$ and similarly the tube system on $Y$ associated to the pair of strata $(L_\alpha, L_\beta)$ by $(N^Y_{\alpha\beta}, \pi^Y_{\alpha\beta}, \rho^Y_{\alpha\beta})$. As $f$ is a controlled mapping, we have 
	\begin{equation}f \circ \pi^X_{\alpha\beta} = \pi^Y_{\alpha\beta} \circ f\label{pi-control}\end{equation} 
	\begin{equation}\rho^X_{\alpha\beta} = \rho^Y_{\alpha\beta} \circ f \label{rho-control}\end{equation} 
	on $N^X_{\alpha\beta} \cap f^{-1}(N^Y_{\alpha\beta})$. Differentiating Equation \ref{pi-control}, it is immediate that $df \circ (\pi^X)^{(1)}_{\alpha\beta} = (\pi^Y)^{(1)}_{\alpha\beta} \circ df$. Differentiating Equation \ref{rho-control} we obtain
	\begin{equation}d\rho^X_{\alpha\beta} = d\rho^Y_{\alpha\beta} \circ df\label{rho-control'}\end{equation}
	Since $(p \circ df)(x, v) = f(x)$ for all $(x, v) \in TX$, we can rewrite Equation \ref{rho-control} as 
	\begin{equation}\rho^X_{\alpha\beta} \circ p = \rho^Y_{\alpha\beta} \circ p \circ df \label{rho-control''}\end{equation}
	Squaring both sides of Equation \ref{rho-control'} and adding to Equation \ref{rho-control''} gives $(\rho^X)^{(1)}_{\alpha\beta} = (\rho^Y)^{(1)}_{\alpha\beta} \circ df$. This verifies that $df : (TX, \Sigma_X^{(1)}, \mathcal{N}_X^{(1)}) \to (TY, \Sigma_Y^{(1)}, \mathcal{N}_Y^{(1)})$ is a controlled map. It is clear from the proof that if $f$ is only weakly controlled, $df$ is also weakly controlled.\end{proof}

\subsection{Stratified jets}\label{sec-stratfdjet}

Let $(X, \Sigma, \mathcal{N})$ be an abstract stratified space. Then the tangent bundle $(TX, \Sigma^{(1)}, \mathcal{N}^{(1)})$ is also an abstractly stratified space by Lemma \ref{lem-txstratfd}. We can now iterate this construction to define the {\bf $k$-fold iterated tangent bundle} $T^{(k)} X$. However, the radial functions become rather unwieldy to work with.  We redefine the control structure on $T^{(k)} X$ as follows:

\begin{defn}[Iterated tangent bundle]\label{def-itdtb} Let $(X, \Sigma, \mathcal{N})$ be an abstract stratified space and we choose a realization $X' \subset \mathbb{R}^N$ such that $(X', \Sigma', \mathcal{N}')$ is an abstract stratified set. Here, $\mathcal{N}'$ is induced from a tubular neighborhood system $(\nu(S), \pi_S, \rho_S)_{S \in \Sigma'}$ on the Whitney stratified set $X' \subset \mathbb{R}^N$. 
	
	Let $T^{(k)} X$ be the union $\bigcup_{S \in \Sigma'} T^{(k)} S \subset T^{(k)} \mathbb{R}^N$ of the $k$-fold iterated tangent bundles to each stratum of $X'$. Then $T^{(k)} X$
	 inherits a topology from $T^{(k)}\mathbb{R}^N = \mathbb{R}^{2^k N}$ and an $I$-decomposition $\Sigma^{(k)} = \{T^{(k)} S_\alpha : S_\alpha \in \Sigma'\}_{\alpha \in I}$. Let $$p^{(k)}: T^{(k)} X \to X$$
	be the projection obtained from restricting $T^{(k)}\mathbb R^N \to \mathbb{R}^N$ to $X'$ and composing with the inverse of the realization homeomorphism $X \to X'$. 
	
	Define $$N^{(k)}_\alpha := T^{(k)} \nu(S_\alpha) \cap T^{(k)} X$$ to be the tube around the stratum $T^{(k)} S_\alpha$ of $T^{(k)} X$. Let the restriction of the $k$-th derivative 
	$$d^{(k)} \pi_\alpha : T^{(k)}\nu(S_\alpha) \to T^{(k)}S_\alpha$$ to $N^{(k)}_\alpha$ be the associated tubular projection $\pi^{(k)}_\alpha := d^{(k)}\pi_\alpha$. Let $d^{(i)}\rho_\alpha : T^{(i)} \nu(S_\alpha) \to \mathbb{R}$ be the $i$-th derivative of the radial function $\rho_\alpha$. We define the  radial function associated to the stratum $T^{(k)} S_\alpha$ of $T^{(k)} X$ to be $$\rho^{(k)}_\alpha := \rho_\alpha \circ p^{(k)} + (d\rho_\alpha)^2 + \cdots + (d^{(k)}\rho_\alpha)^2$$ restricted to $T^{(k)}_\alpha$. This defines a tube system $\mathcal{N}^{(k)} = (T^{(k)}_\alpha, \pi^{(k)}_\alpha, \rho^{(k)}_\alpha)_{\alpha \in I}$ and $(T^{(k)} X, \Sigma^{(k)}, \mathcal{N}^{(k)})$ is abstractly stratified. \end{defn}

\begin{defn}\label{def-contEvaldjet}   Let $(E, X, p)$ be a stratified fiber bundle and $U \subset X$ be an open subset with the canonical abstract stratification inherited from $X$. We define a {\bf formal (weakly) controlled $E$-valued $r$-jet} over $U$ to be an $(r+1)$-tuple $(s_0, s_1, \cdots, s_r)$ such that $s_k : T^{(k)} U \to T^{(k)} E$ is a  (weakly)  controlled section of $(T^{(k)} E, T^{(k)} X, d^{(k)} p)$ over $T^{(k)} U \subset T^{(k)} X$ for all $0 \leq k \leq r$ and each $s_{k+1}$ {\bf covers} $s_k$ in the sense that the following diagram commutes: 
	$$\begin{array}{ccccccccc}
		E & \xla{} & TE & \xla{} & T^{(2)}E & \xla{} & \cdots & \xla{} & T^{(r)} E \\
		\;\;\xua{s_0} &  & \xua{s_1} &  & \xua{s_2} &  & \cdots &  & \xua{s_r} \\
		U & \xla{} & TU & \xla{} & T^{(2)}U & \xla{} & \cdots & \xla{} & T^{(r)} U \\
	\end{array}$$\end{defn}

An extended and modified notion of stratified jets will be given later in Definition \ref{def-sjr} before we prove the $h-$principle for jet sheaves.
The {\bf sheaf of formal  (weakly)  controlled $E$-valued $r$-jets}, denoted as $\mathcal{J}^r_E$ ($\JJ^r_{E,w})$),  assigns to each open subset $U \subset X$ the set $\mathcal{J}^r_E(U)$ ($\JJ^r_{E,w}(U))$) of all formal (weakly) controlled $E$-valued $r$-jets over $U$. Let $\Gamma_E$ ($\Gamma_{E,w}$) denote the sheaf of  (weakly)  controlled local sections of $(E, X, p)$. Then there is similarly a morphism of sheaves 

\begin{equation}\label{eqn-jr}
	J^r :  \Gamma_E \to \mathcal{J}^r_E  
\end{equation}
\begin{equation*}
	\big(J^r_w : \Gamma_{E,w} \to \mathcal{J}^r_{E,w}\big)
\end{equation*}
which sends a (weakly) controlled local section $s \in \Gamma_E(U)$ of $(E, X, p)$ over an open subset $U \subset X$ to the formal (weakly) controlled $E$-valued $r$-jet $$J^r s:= (s, ds, d^{(2)}s, \cdots, d^{(r)} s) \in \mathcal{J}_E^r(U)$$
$$\big(J^r_w s:= (s, ds, d^{(2)}s, \cdots, d^{(r)} s) \in \mathcal{J}^r_{E,w}(U)\big).$$ The image of this morphism is a subsheaf 
$\mathcal{H}^r_{E}$ of $\mathcal{J}^r_{E}$ 
($\mathcal{H}^r_{E,w}$ of $\mathcal{J}^r_{E,w}$) which we shall call the {\bf (weakly)  controlled sheaf of holonomic $E$-valued $r$-jets on $X$}.

Let $T^{(k)} E$ be equipped with metrics $\mathrm{dist}^{(k)}_E$ respecting the topology for all $0 \leq k \leq r$. Then for any open subset $U \subset X$ we can equip $\mathcal{J}^r_E(U)$ (or $\mathcal{J}^r_{E,w}(U)$) with a metric topology: we shall call two (weakly) controlled $E$-valued $r$-jets $J = (s_0, s_1, \cdots, s_r)$ and $J' = (s_0', s_1', \cdots, s_r')$ over $U$ are {\bf $\varepsilon$-close} if 
$$\sup_{x \in U} \mathrm{dist}^{(k)}_E(s_k(x), s_k'(x)) < \varepsilon \; \text{for all} \; 0 \leq k \leq r$$

\section{Flexibility, Diff-invariance}\label{sec-sheafflexdiff} Two crucial notions that come into play in Gromov's sheaf-theoretic $h-$principle \cite[Section 2.2]{Gromov_PDR} over manifolds are
\begin{enumerate}
	\item Flexibility,
	\item $\mathrm{Diff}$-invariance.
\end{enumerate}
The purpose of this section is to extend these two notions  both at the level of the base space as well as 
that of the nature of the sheaf. Thus, we shall
\begin{enumerate}
	\item replace the base manifold by a stratified space,
	\item replace the sheaf of quasi-topological spaces in \cite{Gromov_PDR} by stratified sheaves,
\end{enumerate}
and extend Gromov's notions of flexibility and $\mathrm{Diff}$-invariance to this setup.

\subsection{Flexibility of  sheaves}\label{sec-flexsimp} 
Following Gromov \cite[Ch. 2]{Gromov_PDR}, we shall refer to sheaves of {quasitopological spaces}\footnote{{A quasitopological space consists of a set $A$ and for every topological space $X$ a subset $A(X)$ of the collection of all set-theoretic maps $X \to A$ such that: 
		\begin{itemize}
	\item  if $f : X \to X'$, $g \in A(X')$, then $g \circ f \in A(X)$, 
		\item  for an open cover $\{U_i\}$ of $X$, $f \in A(X)$ if $f|U_i \in A(U_i)$, and 
			\item  if $X = X_1 \cup X_2$, where $X_1, X_2$ are closed sets, $f \in A(X)$ if $f|X_i \in A(X_i)$.
		\end{itemize} We call maps $f : X \to A$ for which $f \in A(X)$ as ``continuous". A set-theoretic map $f : A \to A'$ between quasitopological spaces is said to be continuous if for any $g \in A(X)$, $f \circ g \in A'(X)$.}} as \emph{continuous sheaves}. We collect together in this subsection, some basic notions from \cite[Ch. 2]{Gromov_PDR}, and facts about continuous sheaves.

\begin{defn}\label{def-micfibn}\cite[p. 40]{Gromov_PDR}
Let $\alpha:A \to A'$ be a continuous map of quasitopological spaces. Consider a continuous map $\phi: P \to A$ of a compact polyhedron $P$ into $A$. Let $\phi'=\alpha \circ \phi$. Let $\Phi': P \times [0,1] \to A'$ be such that  
$\Phi'| P \times \{0\}=\phi'$.

The map $\alpha$ is called a \emph{{(Serre)} fibration} if {for all such polyhedra $P$, maps $\phi: P \to A$ and homotopies $\Phi'$ of $\phi'$}, $\Phi'$ lifts to a map $\Phi: P \times [0,1] \to A$ such that $\Phi| P \times \{0\}= \phi$ and $\alpha \circ \Phi = \Phi'$.

{The map $\alpha$ is called a} \emph{{(Serre)} microfibration} if {for all such polyhedra $P$, maps $\phi: P \to A$ and homotopies $\Phi'$ of $\phi'$}, there exists ${0 < \ep \leq 1}$ (where $\ep$ may depend on $P, \phi, \Phi'$) and a map $\Phi : P \times [0,\ep] \to A$, such that $\Phi|P \times \{0\} = \phi$ and $\alpha \circ \Phi = \Phi'|P \times [0,\ep]$.
\end{defn}							

Henceforth, by fibration (resp.\ microfibration), we shall mean a Serre fibration (resp.\ microfibration) of quasitopological spaces. {Given a continuous sheaf $\mathcal{F}$ on a space $X$ and a subset $K \subset X$, we shall denote 
$\mathcal{F}(K) := \lim_{U \supset K} \mathcal{F}(U)$.}

\begin{defn}\label{def-kan}
	Let $X$ be locally compact Hausdorff.
	A continuous sheaf $\FF$ on $X$ is 
	\emph{flexible} (resp.\ \emph{microflexible}) if for all {pairs of compact subsets 
	$K \subset K'$ of X}, $\FF(K') \to \FF(K)$ is a  fibration (resp. microfibration).
\end{defn}

\begin{eg}\label{lem-surj}
	Let $f: Y \to X$ be surjective, and $\FF$ be the continuous sheaf of sections associated to $f$, {equipped with the quasitopology on mapping spaces}. Then $\FF$ is flexible.
\end{eg}

\begin{theorem}\label{thm-gromov-whe2he}\cite[Theorem B, p. 77]{Gromov_PDR} Let ${\Phi : \FF \to \GG}$ be a morphism of flexible  sheaves over a finite dimensional locally compact Hausdorff space $X$. Then $\Phi$ is a local weak homotopy equivalence if and only if $\Phi$ is a  weak homotopy equivalence, i.e.\ $\Phi_x: \FF_x \to \GG_x$ is a weak homotopy equivalence for all $x \in X$ if and only if ${\Phi_U : \FF(U) \to \GG(U)}$ is a weak homotopy equivalence for all $U \subset X$ open.
\end{theorem}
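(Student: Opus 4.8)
The plan is to treat the two implications separately, the forward one being soft and the reverse one carrying all the content. For \emph{global $\Rightarrow$ local}: assume $\Phi_U$ is a weak homotopy equivalence for every open $U$. Since the stalk is the filtered colimit $\FF_x=\injlim_{U\ni x}\FF(U)$ (and likewise for $\GG$), and any continuous map of a compact polyhedron into such a colimit factors through a finite stage, every lifting and homotopy problem for $S^n$ and $D^{n+1}$ against $\Phi_x$ can be pushed into some $\FF(U)\to\GG(U)$ and solved there. Hence $\Phi_x$ is a weak homotopy equivalence for all $x$.

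For \emph{local $\Rightarrow$ global} I would first reduce the claim from all open $U$ to all compact $K\subset X$. Given an open $U$, choose an exhaustion by compacts $K_i$ with $K_i\subset\mathrm{int}\,K_{i+1}$; the sheaf axioms identify $\FF(U)$ with the inverse limit of the tower $\cdots\to\FF(K_{i+1})\to\FF(K_i)\to\cdots$, and flexibility (Definition~\ref{def-kan}) makes each restriction map in this tower a fibration, since $K_i\subset K_{i+1}$ are compact. A levelwise weak homotopy equivalence between two such towers of fibrations is a weak homotopy equivalence on inverse limits (the Milnor $\lim^1$ argument), so it suffices to prove that $\Phi_K\colon\FF(K)\to\GG(K)$ is a weak homotopy equivalence for every compact $K$.

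For the compact case the two tools are a Mayer--Vietoris gluing lemma and an induction on the covering dimension $n=\dim X$. For compact $A,B$ the gluing (sheaf) axiom makes $\FF(A\cup B)\to\FF(A)\times_{\FF(A\cap B)}\FF(B)$ an isomorphism, and flexibility makes $\FF(A)\to\FF(A\cap B)$ a fibration, so this square is a homotopy pullback; the same holds for $\GG$, and the gluing lemma for homotopy pullbacks then shows that if $\Phi$ is a weak homotopy equivalence over $A$, over $B$ and over $A\cap B$, it is so over $A\cup B$. I would then cover a given compact $K$ by finitely many compact ``bricks'' $Q_1,\dots,Q_m$ (arcs or cubes inside local charts, or pieces of a triangulation when one is available) arranged so that every nonempty multiple intersection of the $Q_i$ lies in a subspace of dimension $<n$. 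Iterating Mayer--Vietoris reduces the problem over $K$ to (i) the problem over compact subsets of those lower-dimensional subspaces, handled by the induction hypothesis applied to $\FF$ and $\GG$ restricted there, and (ii) the problem over a single brick $Q_i$, which may be taken inside an arbitrarily small open set. The base $n=0$ is immediate: there the bricks may be taken pairwise disjoint and compact-open, so $\FF(K)=\prod_i\FF(Q_i)$ and only (ii) remains.

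The main obstacle is exactly point (ii): showing that $\Phi$ is a weak homotopy equivalence over each sufficiently small brick. A brick is not a point, so this does not reduce formally to the hypothesis on stalks; instead one shrinks the brick $Q$ through a neighborhood basis of a point $x$ and uses flexibility once more — the restriction maps among nested compact sets being fibrations, $\FF_x=\injlim_Q\FF(Q)$ and $\GG_x=\injlim_Q\GG(Q)$ are \emph{homotopy} colimits — to transport a given lifting/extension problem over $Q$ down to the colimit, solve it there using that $\Phi_x$ is a weak homotopy equivalence, and then spread the solution back out over a slightly larger brick. Carrying out this limiting step uniformly in the polyhedral parameter, together with the combinatorial bookkeeping needed in (i) to arrange the cover so that the induction on $n$ actually closes, is where the real work lies. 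Once (i) and (ii) are in place, Mayer--Vietoris yields the statement for all compact $K$, and the first reduction upgrades it to all open $U$, completing the reverse implication.
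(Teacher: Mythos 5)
The paper states this result as Gromov's Theorem~B and gives no proof of its own; it simply cites \cite[p.~77]{Gromov_PDR}. So there is nothing in the paper to compare against, and your proposal is being judged on its own merits. The easy direction and the general scaffolding of the hard one — exhaust opens by compacts and apply a Milnor $\lim^1$ argument, then a Mayer--Vietoris scheme in which flexibility turns sheaf-gluing squares into homotopy pullbacks (the role Theorem~\ref{thm-bh} plays elsewhere in this paper) and one inducts on covering dimension — are sensible and are the natural route.

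The genuine gap is your step~(ii), and your sketch of it has the maps pointing the wrong way. You propose to push a lifting problem from a brick $Q$ to the stalk $\FF_x=\injlim_{Q\ni x}\FF(Q)$, solve it there using that $\Phi_x$ is a weak homotopy equivalence, and ``spread the solution back out over a slightly larger brick.'' But a solution produced in the stalk is represented in some $\FF(Q')$ with $Q'\subset Q$ possibly much smaller than $Q$; restriction maps run from larger compacts to smaller ones, and flexibility only makes $\FF(Q)\to\FF(Q')$ a \emph{fibration}, not a weak equivalence, so there is no mechanism to carry a solution over $Q'$ back up to $Q$. The single-brick base case is therefore not closed by the local hypothesis in the way you suggest. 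A second, related problem is that the induction on $\dim$ as written is circular: covering an $n$-dimensional compact $K$ by $n$-dimensional bricks with lower-dimensional multi-intersections reduces the overlaps but not the bricks themselves, so everything is thrown back onto step~(ii). (A smaller point: a sequential colimit along \emph{fibrations} is not automatically a homotopy colimit; what is actually used is that maps of compact polyhedra into the quasitopological colimit factor through a finite stage, which is a property of the colimit construction, not of the structure maps being fibrations.) A complete argument has to telescope the subdivision, control the homotopies via flexibility at every stage, and pass to the limit — or run a different inductive scheme — rather than solving once at the stalk and inflating.
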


\subsection{Stratified spaces and flexibility conditions}\label{sec-flexdefs}
Let $(X, \Sigma)$ be a Whitney stratified space in the sense of Definition \ref{def-whitneyss}. Then, by the Whitney conditions (a), (b) of Definition \ref{def-whitneyab} and Thom's isotopy {lemma} \cite[Section 1.5]{GM_SMT} (see also Lemma \ref{lem-strbdl-trivialization}) we have:

\begin{lemma}\label{lem-nbhd}
	Let $x \in X$ and let $S$ denote the unique stratum in which $x$ lies. Then there exists an open neighborhood $U$ of $x$ and a stratum-preserving homeomorphism $\phi: U \to \R^i \times cA$, where 
	\begin{enumerate}
		\item $S$ has dimension $i$
		\item $A$ is a compact stratified space admitting a stratum-preserving homeomorphism with the link of $S$ 
		in $X$.
	\end{enumerate}
\end{lemma}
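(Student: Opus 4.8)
The plan is to invoke the local structure theory for abstractly stratified spaces, which is precisely Lemma \ref{lem-strbdl-trivialization} applied to the trivial bundle, together with the Thom isotopy lemmas already cited in the preceding discussion. First I would fix $x \in X$ and let $S$ be the unique stratum containing $x$; set $i = \dim S$. By Theorem \ref{mather-cs} (or by Definition \ref{def-cs} directly, since a Whitney stratified subset is a CS set), there is an open neighborhood $U$ of $x$, a chart $V \cong \R^i$ around $x$ in $S$, and a stratified space $A$ — the link of $x$ — together with a stratum-preserving homeomorphism $\varphi : V \times cA \to U$. Composing with the chart identification $V \cong \R^i$ gives the desired $\phi : U \to \R^i \times cA$.

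Next I would verify the two itemized claims. Claim (1) is immediate: $V$ is an open chart in the $i$-dimensional manifold $S$, so the stratum of $\R^i \times cA$ corresponding to $\R^i \times \{c_A\}$ — which $\phi$ carries to $S \cap U$ — has dimension $i$. For claim (2), I would note that the link appearing in the CS-set structure (Definition \ref{def-cs}) can be taken to be $A = \pi_S^{-1}(x) \cap \rho_S^{-1}(r)$ for a sufficiently small $r > 0$, where $(\pi_S, \rho_S)$ is the tube system around $S$; this is exactly the identification of the link furnished by Thom's first isotopy lemma (\cite[Proposition 11.1]{mather-notes}), the same mechanism used at the end of the proof of Lemma \ref{lem-strbdl-trivialization} and in Corollary \ref{cor-trivialzn}. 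By construction this $A$ is a compact stratified space, and the stratum-preserving homeomorphism $\phi$ restricts to a stratum-preserving homeomorphism between $A$ and the link of $S$ in $X$.

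The only point requiring a little care — and the main (mild) obstacle — is producing the product \emph{coordinates} $\phi$ rather than merely an abstract homeomorphism: one needs that the tubular projection $\pi_S : N_S^\varepsilon \to S$ restricted over the chart $V$ is a trivial bundle with fiber $cA$. This is supplied by applying Thom's first isotopy lemma to the proper stratified submersion $\pi_S : \pi_S^{-1}(V) \cap \mathrm{cl}(N_S^\varepsilon) \to V$, exactly as in the first paragraph of the proof of Lemma \ref{lem-strbdl-trivialization} (lifting coordinate vector fields on $V$ to commuting controlled vector fields on the tube via \cite[Proposition 9.1]{mather-notes} and integrating them). Since $V$ is contractible this trivialization is global over $V$, and shrinking $\varepsilon$ and $r$ if necessary keeps the fiber a cone on the compact link $A$. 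Assembling these pieces yields the stratum-preserving homeomorphism $\phi : U \to \R^i \times cA$ with the stated properties.
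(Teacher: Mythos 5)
Your proposal is correct and takes essentially the same route the paper does: the paper states Lemma \ref{lem-nbhd} as an immediate consequence of the Whitney conditions together with Thom's isotopy lemma (citing \cite[Section 1.5]{GM_SMT} and Lemma \ref{lem-strbdl-trivialization}) and gives no separate proof, so your argument — invoking the CS-set structure from Theorem \ref{mather-cs}/Definition \ref{def-cs}, identifying the link as $\pi_S^{-1}(x)\cap\rho_S^{-1}(r)$, and trivializing the tube over a chart $V$ via lifted controlled vector fields as in the first paragraph of the proof of Lemma \ref{lem-strbdl-trivialization} — is exactly the intended unpacking. The only minor redundancy is that once you cite Definition \ref{def-cs} (via Theorem \ref{mather-cs}) you already have the stratum-preserving homeomorphism $\varphi : V \times cA \to U$, so the final paragraph re-deriving the product trivialization is explaining how that theorem is proved rather than adding a logically necessary step.
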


We shall now define a notion of sheaves over stratified spaces $(X, \Sigma)$. This is a finer notion than that of a sheaf over the underlying topological space $X$. It associates data to open subsets of each stratum-closure of $X$. To formulate this, we introduce the \emph{stratified site} associated to $(X, \Sigma)$:

\begin{defn}\label{def-stratfdsite}
A stratified space $(X,\Sigma)$ comes equipped with the canonical filtered collection of topological spaces $\{\bbar{S}\}$, where
	\begin{enumerate}
		\item $S$ is a stratum of $(X,\Sigma)$.
		\item  $\bbar{S}$ is equipped with the subspace topology inherited from $X$.
	\end{enumerate}
The \emph{stratified site} $\str(X, \Sigma)$ is {the category where}
	\begin{enumerate}
		\item Objects of $\str(X, \Sigma)$ are open subsets $U \subset \bbar{S}$ of some stratum-closure. 
		\item Morphisms of $\str(X, \Sigma)$ are inclusions $U \hookrightarrow V$ between such subsets.
	\end{enumerate}
\end{defn}

\begin{rmk}\label{rmk-site}
The term stratified site in Definition \ref{def-stratfdsite} is borrowed from algebraic geometry. For example, it can be checked that $\str(X,\Sigma)$ comes equipped with a natural Grothendieck topology (namely, sieves in $\str(X, \Sigma)$ are covers consisting of objects in $\str(X, \Sigma)$), and forms an example of a site.
\end{rmk}

\begin{defn}\label{def-sssheaf}
{A \emph{stratified continuous sheaf} $\FF$ on $(X, \Sigma)$ is a quasitopological space-valued sheaf on the site $\str(X, \Sigma)$ (Definition \ref{def-stratfdsite}), i.e., it assigns a quasitopological space $\FF(U)$ to every object $U$ of $\str(X,\Sigma)$ in a way that the gluing axiom is satisfied.
	In particular, restricting to open subsets in the stratum closures, it consists of a collection of sheaves $\{\FF_{\bbar{L}}\}$, one for every stratum $L$ of $X$, such that for every pair $S<L$, there is a morphism of sheaves over $\bbar{S}$
$$\res^L_S: i_{\bbar{S}}^*\FF_{\bbar{L}}\to \FF_{\bbar{S}}$$
{that we call the \emph{restriction map from $L$ to $S$}}. (Note that 
$\FF_{\bbar{L}}$ is a sheaf over ${\bbar{L}}$.) }
\end{defn}

{Open subsets of $X$ contained entirely in the closure of a top-dimensional stratum are certainly elements of $\str(X,\Sigma)$. An arbitrary open subset $U$ of $X$, on the other hand, can be written as a union of finitely many elements of $\str(X, \Sigma)$. This becomes relevant when $X$ has more than one top-dimensional stratum. The value of the sheaf on such a $U$ can be defined using fiber products, see Section \ref{sec-glueacross}.
  We shall denote by $\FF_S$ the restriction of the sheaf $\FF_{\bbar{S}}$ over 
  ${\bbar{S}}$
  to the open stratum $S$}. 

\begin{eg}\label{eg-stratbdlstratsheaf}\rm{
{
Recall (Definition \ref{def-contEvaldjet} and the discussion in Section \ref{sec-stratfdjet}) that for any stratified fiber bundle $(E, X, p)$, there are natural sheaves $\Gamma_E, \Gamma_{E, w}$ over $X$ consisting of controlled (or weakly controlled) sections of $E$. There are also sheaves $\JJ^r_E, \HH^r_E, \JJ^r_{E,w},  \HH^r_{E,w}$ over $X$ consisting of controlled (or weakly controlled) formal and holonomic $E$-valued $r$-jets.}

{Let $S$ be a stratum of $X$. Let $E_{\bbar{S}} = p^{-1}(\bbar{S})$. We shall refer to $E_{\bbar{S}}$ as the \emph{restriction} of $E$ to  $\bbar{S}$. Also, let $p_{\bbar{S}}: E_{\bbar{S}}\to {\bbar{S}}$ denote the \emph{restriction} of $p$ to 	$E_{\bbar{S}}$.		
Then $(E_{\bbar{S}}, \bbar{S}, p_{\bbar{S}})$ is a stratified bundle as well. Each member of
the family $\{(E_{\bbar{S}}, \bbar{S}, p_{\bbar{S}}): S \in \Sigma\}$ has naturally 
associated sheaves over $\bbar{S}$:
$$\Gamma_{E, \bbar{S}}, \Gamma_{E, w, \bbar{S}}, \JJ^r_{E,{\bbar{S}}}, \HH^r_{E,{\bbar{S}}}, \JJ^r_{E,w,{\bbar{S}}},  \HH^r_{E,w,{\bbar{S}}}.$$
The family $\{\Gamma_{E, \bbar{S}} : S \in \Sigma\}$ is an example of a stratified continuous sheaf over $X$. For every pair of strata $S < L$, the restriction map
$$\mathrm{res}^L_S : i_{\bbar{S}}^* \Gamma_{E, \bbar{L}} \to \Gamma_{E, \bbar{S}}$$
is given at the level of stalks over a point $x \in \bbar{S}$ as follows. For a germ of a controlled section $\sigma$ of $(E_{\bbar{L}}, \bbar{L}, p_{\bbar{L}})$ defined over a neighborhood $U \subset \bbar{L}$ of $x$, we define $\mathrm{res}^L_S(\sigma)$ to be the restriction of $\sigma$ to the subset $U \cap \bbar{S} \subset \bbar{S}$.  
Similarly, each of the collections
\begin{gather*}\{\Gamma_{E,w,{\bbar{S}}} : S \in \Sigma\}, \{\JJ^r_{E,{\bbar{S}}} : S \in \Sigma\}, \{\HH^r_{E,{\bbar{S}}} : S \in \Sigma\}, \\
\{\JJ^r_{E,w,{\bbar{S}}} : S \in \Sigma\}, \{\HH^r_{E,w,{\bbar{S}}}: S \in \Sigma\}\end{gather*}
constitutes a stratified continuous sheaf over $X$.
}
}
\end{eg}

\begin{rmk}\label{rmk-stratshcnstrble}
\rm{
{The notion of stratified continuous sheaves on a stratified space $(X, \Sigma)$ can be considered as a ``non-abelian" analogue of constructible sheaves in algebraic geometry: a constructible sheaf on $(X, \Sigma)$ is a sheaf of $k$--modules on $X$ which is locally constant restricted to each stratum $S \in \Sigma$ (here $k$ is a commutative ring). Therefore, stratified local systems give rise to constructible sheaves, whereas stratified fiber bundles give rise to stratified continuous sheaves.}

{An important basic example of a constructible sheaf is as follows: Let $\pi : X \to \Bbb{CP}^1$ be a Lefschetz fibration. Then, the derived pushforward of the constant sheaf $\mathbf{R}\pi_*\underline{k}_X$ defines a constructible sheaf on the stratification $\Sigma = \{S, L\}$ of $\Bbb{CP}^1$, where $S$ is the set of singular values of $\pi$, and $L = \Bbb{CP}^1 \setminus S$. Homological information pertaining to the map $\pi$ (e.g., vanishing cycles of $\pi$) are recorded by the constructible sheaf $\mathbf{R}\pi_*{\underline{k}}_X$ (e.g., the monodromy of the local system $i_L^*\mathbf{R}\pi_*{\underline{k}}_X$).}

{We can likewise construct a stratified continuous sheaf on $\Bbb{CP}^1$ associated to $\pi$ as in Example \ref{eg-stratbdlstratsheaf}, as $\pi$ is a stratumwise fiber bundle (Definition \ref{def-stratumwisebdl}). This records \emph{homotopical} information pertaining to the map $\pi$ (e.g.\ whether sections of $\pi$ solving certain differential relations satisfy the $h-$principle or not).}
}
\end{rmk}

\begin{defn}\label{def-stratflex}
	A stratified  sheaf on $(X,\Sigma)$ is \emph{flexible} (resp. \emph{microflexible}) if 
	for any stratum $S$, {the continuous sheaf $\FF_{\bbar{S}}$ on $\bbar{S}$} is 
	flexible (resp.\ microflexible).

	A stratified  sheaf on $(X,\Sigma)$ is \emph{stratumwise flexible} (resp. \emph{stratumwise microflexible}) if 
	for any stratum $S$, {the continuous sheaf $\FF_S$ on $S$} is 
flexible (resp.\ microflexible).
\end{defn}

Note that the latter is a condition on the sheaves $\{\FF_{\bbar{S}}\}$ comprising the stratified sheaf $\FF$ \emph{after restricting each $\FF_{\bbar{S}}$ to the open stratum $S$.}

\begin{eg}\label{eg-stratflexnotflex}\rm{
{Let $X, Y$ be fixed topological spaces and $f : X \to Y$ be a fixed map which is not a fibration. On the interval $[0, \infty)$, we define a presheaf $\FF$ as follows. Set $\FF(U) = X$ whenever $U \subset [0, \infty)$ is an open subset containing $0$, and $\FF(U) = Y$ otherwise. For a pair of open subsets $U \subset V \subset [0, \infty)$, we define the restriction map $\FF(V) \to \FF(U)$ on a case-by-case basis:
\begin{enumerate}
\item Suppose $V$ does not contain $0$, so neither does $U$. Then, we define the restriction map $\FF(V) \to \FF(U)$ to be the identity map $\mathrm{id}_Y : Y \to Y$.
\item Suppose $V$ contains $0$ but $U$ does not. Then, we define the restriction map $\FF(V) \to \FF(U)$ to be $f : X \to Y$.
\item Suppose $V, U$ both contain $0$. Then, we define the restriction map $\FF(V) \to \FF(U)$ to be the identity map $\mathrm{id}_X : X \to X$.
\end{enumerate}
We obtain a continuous sheaf on $[0, \infty)$ by sheafifying $\FF$. Let us continue to denote this sheaf as $\FF$, by a slight abuse of notation.}

{Let us stratify $[0, \infty)$ as $\Sigma = \{S, L\}$ where $S = \{0\}$ and $L = (0, \infty)$. We may turn $\FF$ into a stratified sheaf by defining $\FF_{\bbar{L}} = \FF$ and $\FF_{\bbar{S}} = i_{\bbar{S}}^*\FF$. Then $\FF_L = i_L^*\FF_{\bbar{L}}$ is the constant sheaf on $L$ taking values in $Y$, and $\FF_S = i_S^*\FF_{\bbar{S}} = \FF_{\bbar{S}}$ is the constant sheaf on $S$ taking values in $X$. These are both flexible sheaves.}

{However, as $f : X \to Y$ is not a fibration, $\FF_{\bbar{L}} = \FF$ is not a flexible sheaf. This gives an example of a stratified sheaf $\FF = \{\FF_{\bbar{S}}, \FF_{\bbar{L}}\}$ which is stratumwise flexible, but not flexible.}
}
\end{eg}

We recall a construction from \cite[p. 77]{Gromov_PDR}.  Let $\mathcal{F}, \mathcal{G}$ be continuous sheaves on $X$ and $q : \mathcal{F} \to \mathcal{G}$ be a morphism of continuous sheaves. Consider the continuous sheaf $\widetilde{\mathcal{F}}$ defined by assigning to every open set $U \subset X$ the set
$$\widetilde{\mathcal{F}}(U) := \{(s, \gamma) \in \mathcal{F}(U) \times \mathrm{Maps}(I, \mathcal{G}(U)) :  q(s) = \gamma(0)\}.$$
Equip  $\widetilde{\mathcal{F}}(U)$ with a quasitopology  as follows: for any topological space $W$, a map $W \to \widetilde{\mathcal{F}}(U)$ is continuous if and only if the projections $W \to \mathcal{F}(U)$ and $W \to \mathrm{Maps}(I, \mathcal{G}(U))$ are continuous. There is a morphism of continuous sheaves $\widetilde{q} : \widetilde{\mathcal{F}} \to \mathcal{G}$ given by $\widetilde{q}(s, \gamma) = \gamma(1)$. Then  $$\til q:\widetilde{\mathcal{F}}(U) \to \mathcal{G}(U)$$  is a fibration. Let $\psi \in \mathcal{G}(X)$ be a global section.

\begin{defn}\label{def-hofib}  We shall call the fiber of $\widetilde{q}$ over $\psi$ the \emph{homotopy fiber} of $q$ over $\psi$:
	$$\mathrm{hofib}(q; \psi)(U) := \widetilde{q}^{-1}(\psi|_U) \subset \widetilde{\mathcal{F}}(U).$$
{If the choice of $\psi \in \mathcal{G}(X)$ is understood, we simply denote the homotopy fiber sheaf as $\mathrm{hofib}(q)$.} \end{defn}

Let $\mathcal{F}$ be a stratified continuous sheaf on $(X, \Sigma)$. For ease of exposition, we assume the existence of and  fix a global section $\psi \in \mathcal{F}(X)$.

\begin{defn}\label{def-hh} For $S<L$, 
	define the \emph{closed homotopy fiber sheaf} of $\FF$ from $L$ to $S$ by $$\bH^L_S = \hofib(\mathrm{res}^L_S : i_{\bbar{S}}^*\FF_{\bbar{L}} \to \FF_{\bbar{S}}).$$
	The corresponding  \emph{open homotopy fiber sheaf} is defined to be $\HH^L_S =i_S^*\bH^L_S $
\end{defn}

Note that  $$\HH^L_S = \hofib(i_{{S}}^*\FF_{\bbar{L}} \to \FF_{{S}}).$$

\begin{defn}\label{def-infstratflex}
	A stratified continuous sheaf on $(X,\Sigma)$ is \emph{infinitesimally flexible across strata}  if 
	for any  $S<L$ in $\Sigma$,  $\HH^L_S$ is  flexible.
	
\end{defn}

\begin{lemma}\label{lem-mapsI2F}
	Let $\FF$ be a  continuous sheaf over $X$ and  $Z\subset K \subset X$ be compact subsets. 
	\begin{enumerate}
		\item If $\FF(K) \to \FF(Z)$
		is a  fibration, then so is $\maps(I^n, \FF(K)) \to \maps(I^n, \FF(Z))$.
		\item If $\FF$ is a stratumwise flexible stratified sheaf, so is $\maps(I^n,\FF)$.
		\item If $\FF$ is a stratified sheaf which is infinitesimally flexible across strata, so is $\maps(I^n,\FF)$.
	\end{enumerate}
\end{lemma}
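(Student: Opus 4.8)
The plan is to deduce the whole statement from part (1), which is the familiar fact that $\maps(I^n,-)$ preserves Serre fibrations, translated into Gromov's quasitopological setting. For (1), I would start from the lifting data in Definition \ref{def-micfibn}: a compact polyhedron $P$, a continuous map $\phi : P \to \maps(I^n,\FF(K))$, and a homotopy $\Phi' : P\times[0,1]\to\maps(I^n,\FF(Z))$ extending the composite of $\phi$ with the induced map $\maps(I^n,\FF(K))\to\maps(I^n,\FF(Z))$. By the very definition of the quasitopology on a mapping space, $\phi$ and $\Phi'$ are the same data as continuous maps $\hat\phi:P\times I^n\to\FF(K)$ and $\hat\Phi':(P\times I^n)\times[0,1]\to\FF(Z)$ with $\hat\Phi'|(P\times I^n)\times\{0\}$ equal to $\hat\phi$ followed by $\FF(K)\to\FF(Z)$. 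Since $P\times I^n$ is again a compact polyhedron and $\FF(K)\to\FF(Z)$ is a fibration by hypothesis, $\hat\Phi'$ lifts to $\hat\Phi:(P\times I^n)\times[0,1]\to\FF(K)$; its adjoint $\Phi:P\times[0,1]\to\maps(I^n,\FF(K))$ is the required lift. (A microfibration variant, which we will not need here, is proved identically, carrying along the parameter $\ep$.)

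Next I would set up the bookkeeping that lets (2) and (3) follow formally. For a continuous sheaf $\GG$ on a space $Y$ define $\maps(I^n,\GG)$ by $U\mapsto\maps(I^n,\GG(U))$; since $\maps(I^n,-)$ is right adjoint to $(-)\times I^n$ it preserves limits of quasitopological spaces, so $\maps(I^n,\GG)$ satisfies the gluing axiom and, for every compact $Z\subset K\subset Y$, one has $\maps(I^n,\GG)(K)=\maps(I^n,\GG(K))$ and the restriction map $\maps(I^n,\GG)(K)\to\maps(I^n,\GG)(Z)$ is $\maps(I^n,-)$ applied to $\GG(K)\to\GG(Z)$. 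For a stratified continuous sheaf $\FF=\{\FF_{\bbar S}\}$ put $\maps(I^n,\FF)=\{\maps(I^n,\FF_{\bbar S})\}$; then $i_{\bbar S}^{*}$ and restriction to an open stratum both commute with $\maps(I^n,-)$, so $(\maps(I^n,\FF))_{S}=\maps(I^n,\FF_{S})$. Finally, using $\maps(I,\maps(I^n,\GG(U)))=\maps(I^n,\maps(I,\GG(U)))$ one reads off directly from Definition \ref{def-hofib} the strict identity $\hofib(\maps(I^n,q);\mathrm{const}_\psi)=\maps(I^n,\hofib(q;\psi))$ for any morphism $q$ of continuous sheaves and global section $\psi$; hence the closed and open homotopy fiber sheaves of $\maps(I^n,\FF)$ from $L$ to $S$ are $\maps(I^n,\bH^L_S)$ and $\maps(I^n,\HH^L_S)$.

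With these identifications (2) and (3) are immediate. For (2): stratumwise flexibility of $\FF$ means $\FF_S(K)\to\FF_S(Z)$ is a fibration for every stratum $S$ and every pair of compact sets $Z\subset K\subset S$; by (1) so is $\maps(I^n,\FF_S)(K)\to\maps(I^n,\FF_S)(Z)$, which is exactly the restriction map for $(\maps(I^n,\FF))_S$. For (3): each $\HH^L_S$ is flexible as a continuous sheaf on the manifold $S$, so by (1) $\maps(I^n,\HH^L_S)$ is flexible, and we have identified this with the open homotopy fiber sheaf of $\maps(I^n,\FF)$ from $L$ to $S$. The only genuinely delicate points — and where I would be most careful — are ensuring that the exponential adjunction and preservation of limits by $\maps(I^n,-)$ are literally valid in Gromov's category of quasitopological spaces (this is what makes the reduction to the polyhedron $P\times I^n$ legitimate), and checking that $\hofib(\maps(I^n,q))=\maps(I^n,\hofib(q))$ holds on the nose rather than merely up to weak equivalence, since parts (2) and (3) use it as an equality of sheaves.
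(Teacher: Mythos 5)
Your proof is correct and takes essentially the same route as the paper's: establish (1) by the exponential adjunction for Gromov's quasitopological mapping spaces, then deduce (2) and (3) from (1) via the commutation of $\maps(I^n,-)$ with $i_S^*$ and with the homotopy-fiber construction. The only cosmetic difference is that you lift against a compact polyhedron $P$ while the paper uses a CW-complex $C$ (equivalent for Serre fibrations), and you spell out the $\hofib$ commutation that the paper's part (3) leaves implicit.
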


\begin{proof}\leavevmode
\begin{enumerate}
\item We are given that $\mathcal{F}(K) \to \mathcal{F}(Z)$ is a  fibration. Let $C$ be a CW-complex; then every map $C \times I^n \times I \to \mathcal{F}(Z)$ with a chosen initial lift $C \times I^n \times \{0\} \to \mathcal{F}(K)$ admits a lift. Thus, $\maps(I^n, \mathcal{F}(K)) \to \maps(I^n, \mathcal{F}(Z))$ satisfies the homotopy lifting property with respect to homotopies of maps from $C$. As $C$ was arbitrary, this proves $\maps(I^n, \mathcal{F}(K)) \to \maps(I^n, \mathcal{F}(Z))$ satisfies the homotopy lifting property and thus is a Serre fibration.
\item This is an immediate corollary of $(1)$.
\item Infinitesimal flexibility across strata is the statement that for all $S, L \in \Sigma_X$, $S < L$, $\HH^L_S$ is flexible. Applying $\maps(I^n, -)$  and using $\iota_S^* \maps(I^n, \mathcal{F}_{\bbar L}) = \maps(I^n, \iota_S^* \mathcal{F}_{\bbar L})$, we obtain the desired claim.\qedhere
\end{enumerate}
\end{proof}

\subsection{Diff-invariance}\label{sec-diffinv} We shall say that  $U \subset S$ is a \emph{relatively compact embedded open ball}, if
$U$ is an open ball and
$\bbar{U} \subset S$ is a compact (smoothly) embedded  ball in $S$.
Following Gromov \cite{Gromov_PDR}, we shall say that a sheaf $\FF$ over a manifold $V$ is \emph{$\diffc$-invariant} if {it is acted on by the pseudogroup of compactly supported diffeomorphisms of $V$} in the following sense: for every pair of relatively compact open balls $U, U' \subset V$ (i.e., $\bbar{U}, \bbar{U'}$ are embedded compact balls in $V$), and a diffeomorphism $\phi: U' \to U$, {there is an isomorphism of sheaves $\psi : \phi^*(\FF\vert_U) \to \FF\vert_{U'}$ such  that $\psi$ is  functorial in $\phi, U, U'$}. Recall that the pseudogroup $\mathrm{Diff}_c(V)$ of diffeomorphisms is the set of all pairs $(U,f)$, where $U \subset V$ is an open set and {$f$ is a compactly supported diffeomorphism of $M$ carrying $U$ onto another open set $U' = f(U) \subset V$}.

Finally, if $\FF^1, \FF^2$ are $\diffc$-invariant sheaves over a manifold $V$, then a morphism of sheaves $\Phi: \FF^1
\to \FF^2$ is said to be  \emph{$\diffc$-invariant} if it is natural with respect to the $\diffc(V)$-action, i.e.\ if
$\psi_i: \phi^*(\FF^i\vert_U)\to\FF^i\vert_{U'}$, for $i=1,2$, denote the isomorphisms above, then the following diagram commutes:
$$
	\begin{CD}
		\phi^*(\FF^1\vert_U)@>{\psi_1}>>\FF^1\vert_{U'} \\
		@V{\Phi}VV @V{\Phi}VV\\
		\phi^*(\FF^2\vert_U)	@>{\psi_2}>>\FF^2\vert_{U'}
	\end{CD}
$$

\begin{defn}\label{def-stratdiff}
	A stratified continuous sheaf on a stratified space $(X,\Sigma)$ is \emph{$\sdiff$-invariant}  if 
	\begin{enumerate}
		\item for any  $S<L$ in $\Sigma$,  $i_S^*\FF_{\bbar{L}}$ is  $\diffc(S)-$invariant.
		\item for any  $S\in\Sigma$, $\FF_{S}$ is  $\diffc(S)-$invariant.
		\item for any  $S<L$ in $\Sigma$, $\res^L_S$ is  $\diffc(S)-$invariant.
	\end{enumerate}
\end{defn}

We observe the following.

\begin{lemma}\label{lem-diffinvconststalk}
	Let $\FF$ be a $\diffc-$invariant sheaf over a connected manifold $M$. Then $\FF$ has constant stalks, {i.e.\ for any pair of points $x, y \in M$, $\FF_x \cong \FF_y$.}
\end{lemma}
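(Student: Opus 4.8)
The statement is essentially local: since $M$ is connected, it suffices to show that the set of points $y \in M$ with $\FF_y \cong \FF_x$ is both open and closed, equivalently that $\FF_x \cong \FF_y$ whenever $x, y$ lie in a common coordinate ball. So first I would reduce to the case where $x, y$ both lie in a relatively compact embedded open ball $U' \subset M$ (in the sense introduced just before Definition \ref{def-stratdiff}), and in fact, after shrinking, may assume $U' \cong \R^{\dim M}$ via a chart. The goal then becomes: for $x, y \in U'$, produce an isomorphism $\FF_x \cong \FF_y$.

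\textbf{Key step: moving one point to another by a compactly supported diffeomorphism.} The main input is the standard fact from differential topology that the diffeomorphism group of $\R^n$ (or of a ball) acts transitively on points: there is a compactly supported diffeomorphism $\phi$ of $U'$ (extended by the identity to all of $M$, so $(U', \phi) \in \diffc(M)$ after enlarging the ambient ball slightly if needed) with $\phi(x) = y$. Let $U = \phi(U') = U'$. By $\diffc$-invariance of $\FF$ there is an isomorphism of sheaves $\psi : \phi^*(\FF|_U) \to \FF|_{U'}$. Passing to stalks at $x$: the stalk of $\phi^*(\FF|_U)$ at $x$ is canonically $(\FF|_U)_{\phi(x)} = \FF_y$, and the stalk of $\FF|_{U'}$ at $x$ is $\FF_x$. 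Hence $\psi$ induces an isomorphism $\FF_y \cong \FF_x$, as desired.

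\textbf{Assembling the global statement.} Define a relation on $M$ by $x \sim y$ iff $\FF_x \cong \FF_y$ (as quasitopological spaces). By the previous paragraph, every point has a neighborhood (a chart ball) on which all stalks are isomorphic to the stalk at the center, so each equivalence class is open; since the classes partition $M$ and $M$ is connected, there is only one class. Therefore $\FF_x \cong \FF_y$ for all $x, y \in M$.

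\textbf{Expected main obstacle.} The only subtlety is bookkeeping with the pseudogroup $\diffc(M)$ rather than genuine global diffeomorphisms: one must check that the compactly supported diffeomorphism of the ball, extended by the identity, genuinely furnishes an element of the pseudogroup to which the $\diffc$-invariance of $\FF$ applies, and that the induced stalk map at $x$ is the claimed composition — i.e., tracking the canonical identification of the stalk of a pullback sheaf $\phi^*\GG$ at $x$ with the stalk $\GG_{\phi(x)}$. This is routine but is the one place where the quasitopological-sheaf formalism (rather than ordinary sheaves) must be invoked; since pullback and stalks of continuous sheaves behave as in the classical case, no new difficulty arises.
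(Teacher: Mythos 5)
Your proposal is correct and the core mechanism is the same as the paper's: use $\diffc$-invariance applied to a diffeomorphism moving $x$ to $y$, and pass to stalks. The paper's proof is slightly more direct — it chooses a nested family of balls $\{U_i\}$ shrinking to $x$, picks a single diffeomorphism $\phi: U_1 \to V_1$ of embedded balls with $\phi(x) = y$ (such $\phi$ exists for \emph{any} $x, y \in M$, since all balls in an $n$-manifold are diffeomorphic), and passes to the limit over $i$ to identify the stalks — so it never explicitly needs the openness-and-connectedness reduction you set up. Your version reduces to $x,y$ in a common chart and then globalizes via connectedness, which is a perfectly valid and very standard scaffolding, and in fact is the natural route if one reads the $\diffc$-invariance hypothesis as requiring $\phi$ to extend to a globally compactly supported diffeomorphism of $M$ (the paper's Definition is somewhat ambiguous on this point, which is likely why the connectedness hypothesis appears in the statement at all). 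Your stalk-tracking step, identifying $(\phi^*(\FF|_U))_x$ with $\FF_{\phi(x)}$, is the same identification the paper invokes implicitly when "passing to limits."
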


\begin{proof}
	Let $x, y \in M$. Let $\{U_i :i \in \natls\}$ be a family of nested open balls around $x$ such that
	$\cap_i U_i = \{x\}$. There exists a homeomorphism $\phi: U_1 \to V_1$ such that $V_1$ is a neighborhood of $y$, and $\phi(x) = y$. Let $V_i = \phi(U_i)$. Then  $\{V_i :i \in \natls\}$ is a family of nested open balls around $y$ such that
	$\cap_i V_i = \{y\}$. By $\diffc-$invariance, {$\FF\vert_{U_i} \cong \phi^* \FF\vert_{V_i}$}, and hence (by passing to limits), $\FF_x \cong \FF_y$.
\end{proof}

Let $\FF$ be a stratified (continuous) sheaf over a stratified space $(X,\Sigma)$ such that
\begin{enumerate}
	\item  $\FF$  is infinitesimally flexible across strata.
	\item $\FF$ is $\sdiff-$invariant.
\end{enumerate}

Recall that 
for any  $S<L$ in $\Sigma$,  $\res^L_S : i_S^*\FF_{\bbar{L}}  \to \FF_{S}$ is  a   morphism of sheaves, and
 $\HH^L_S=\hofib (\res^L_S)$ denotes the homotopy fiber sheaf.

\begin{lemma}\label{lem-trivialhofibbdl} $\HH^L_S$ is  $\diffc(S)-$invariant. In particular,
	$\HH^L_S$ has constant stalks over $S$. 
\end{lemma}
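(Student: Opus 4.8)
The plan is to deduce this from the two standing hypotheses on $\FF$, namely $\sdiff$-invariance and infinitesimal flexibility across strata, together with Lemma \ref{lem-diffinvconststalk}. The key point is that the homotopy fiber construction $\hofib(-)$ is functorial, and in fact natural with respect to the $\diffc(S)$-action because all of its ingredients are. Concretely, recall from Definition \ref{def-hofib} that $\HH^L_S = \hofib(\res^L_S : i_S^*\FF_{\bbar L}\to \FF_S)$ is built from the sheaves $i_S^*\FF_{\bbar L}$, $\FF_S$, the mapping-space sheaf $\maps(I, \FF_S)$, and the restriction morphism $\res^L_S$. By Definition \ref{def-stratdiff}, the sheaves $i_S^*\FF_{\bbar L}$ and $\FF_S$ are $\diffc(S)$-invariant, and $\res^L_S$ is $\diffc(S)$-invariant as a morphism of sheaves.

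So the first step is to observe that $\maps(I, \FF_S)$ inherits a $\diffc(S)$-action: for a diffeomorphism $\phi : U' \to U$ between relatively compact open balls in $S$, the isomorphism $\psi : \phi^*(\FF_S|_U)\to \FF_S|_{U'}$ induces $\maps(I,\psi) : \phi^*(\maps(I,\FF_S)|_U)\to \maps(I,\FF_S)|_{U'}$, functorially in $\phi, U, U'$. The second step is to note that the auxiliary sheaf $\widetilde{i_S^*\FF_{\bbar L}}$ from the $\hofib$ construction (Definition \ref{def-hofib}), whose sections over $V$ are pairs $(s,\gamma)\in i_S^*\FF_{\bbar L}(V)\times \maps(I, \FF_S(V))$ with $\res^L_S(s) = \gamma(0)$, is therefore $\diffc(S)$-invariant, since the defining constraint $\res^L_S(s)=\gamma(0)$ is preserved by the product action by $\diffc(S)$-invariance of $\res^L_S$; the third step is that the projection $\widetilde{q} : \widetilde{i_S^*\FF_{\bbar L}}\to \FF_S$, $(s,\gamma)\mapsto\gamma(1)$, is a $\diffc(S)$-invariant morphism. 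Hence its fiber over the global section $\psi|_S$, which is exactly $\HH^L_S$, is $\diffc(S)$-invariant: for each $\phi$ the isomorphism restricts to an isomorphism of fibers because $\widetilde{q}$ is natural in $\phi$ and the section $\psi|_S$ is fixed. Finally, the ``in particular'' statement follows by applying Lemma \ref{lem-diffinvconststalk} to the $\diffc(S)$-invariant sheaf $\HH^L_S$ over the connected manifold $S$ (if $S$ is disconnected, apply it componentwise).

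I expect the only genuine subtlety to be bookkeeping: verifying that the constraint $q(s)=\gamma(0)$ cutting out $\widetilde{\mathcal F}$ and the section $\psi$ cutting out the homotopy fiber are both respected by the $\diffc(S)$-action, i.e.\ chasing the naturality diagram from Definition \ref{def-stratdiff} through the two-step construction of $\hofib$. There is no hard analytic or topological input here; it is purely a matter of noting that every piece of the $\hofib$ construction is a (co)limit of $\diffc(S)$-invariant data along $\diffc(S)$-invariant maps, hence $\diffc(S)$-invariant, and then invoking the already-established Lemma \ref{lem-diffinvconststalk}.
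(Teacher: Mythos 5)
Your proof is correct and takes essentially the same approach as the paper: derive $\diffc(S)$-invariance of $i_S^*\FF_{\bbar L}$, $\FF_S$, and $\res^L_S$ from $\sdiff$-invariance (Definition \ref{def-stratdiff}), appeal to functoriality of the homotopy fiber construction, and then apply Lemma \ref{lem-diffinvconststalk}. The only difference is that you unpack the ``functoriality of $\hofib$'' step into its constituent pieces (the induced action on $\maps(I,\FF_S)$, on the auxiliary sheaf $\widetilde{\FF}$, and finally on the fiber over $\psi$) where the paper compresses this into a single sentence; also note that your proof, like the paper's, ultimately uses only $\sdiff$-invariance, not the infinitesimal flexibility hypothesis you cite in the opening plan.
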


\begin{proof}

	By $\sdiff-$invariance (Definition \ref{def-stratdiff})
	of $\FF$, it follows that 
	
	\begin{enumerate}
		\item for any  $S<L$ in $\Sigma$,  $i_S^*\FF_{\bbar{L}}$ is  $\diffc(S)-$invariant.
		\item for any  $S\in\Sigma$, $\FF_{S}$ is  $\diffc(S)-$invariant.
		\item for any  $S<L$ in $\Sigma$, $\res^L_S$ is  $\diffc(S)-$invariant.
	\end{enumerate}
	By functoriality of the homotopy fiber construction, $\HH^L_S$ is $\diffc(S)-$invariant.
	Hence, by Lemma \ref{lem-diffinvconststalk}, $\HH^L_S$ has constant stalks.
\end{proof}

\section{The sheaf-theoretic $h$-principle}\label{sec-hprin}

\subsection{The (Gromov) diagonal normal sheaf}\label{sec-gromovformalfn}
Let $\FF$ be  a continuous sheaf  over a locally compact countable polyhedron $X$ (e.g.\ a stratified space). Define a sheaf $\PP$ over $X \times X$ by {assigning to every basic open set $U \times V \subset X \times X$, the quasitopological space}
$$\PP(U \times V ):= {\mathrm{Maps}(U, \FF(V))},$$ 

\begin{defn}\label{def-formalfn}
The (Gromov) diagonal normal sheaf $\FF^\bullet$ associated to $\FF$ is defined by $$\FF^\bullet = \diag^* \PP,$$ where $\diag: X \to X \times X$ is the diagonal embedding.
\end{defn}

When $\FF$ {is a} subsheaf of the sheaf of sections of a surjective map $P: E \to X$ between topological spaces, an alternate description of the (Gromov) diagonal normal sheaf may be given in terms of (a slight relaxation of) Milnor's construction of microbundles \cite{milnor-microb,kister-mic}. 

\begin{defn}\label{def-mic} Let $X$ be a topological space.
	The \emph{tangent microbundle} $(U_X,X,p)$ to $ X$ is defined to be the germ of a neighborhood $U_X$ of $\diag(X) \subset  X \times X$ along with the  projection $p : U_X \to X$ to the first coordinate.
\end{defn}

\begin{rmk} In Milnor's definition \cite{milnor-microb}, a microbundle is required to always be locally trivial, whereas we relax this condition.\end{rmk}

Let $P: E \to X$ be surjective and $\Gamma (U, E)$ denote the space of sections over $U \subset X$ equipped with the compact open topology. Let $\FF$ denote a subsheaf of the sheaf of sections {$\Gamma(-, E)$} satisfying some property $\AAA$, i.e.\ $\FF(U)$ consists of sections $s \in \Gamma (U, E)$ satisfying the property $\AAA$.

Then $\PP(U \times V)$ consists of continuous maps from $U$ to {$\FF(V) \subset \Gamma(V, E)$} (where the latter has the inherited compact open topology). The following are two equivalent descriptions:
\begin{enumerate}
	\item  $\PP(U \times V )$ consists of $U-$parametrized families of sections over $V$ {satisfying property $\mathcal{A}$}
	\item $\PP(U \times V )$ consists of continuous maps from $U \times V$ to $E$ such that for each $x\in U$, {it restricts on $\{x\} \times V$ to a section $\sigma_x :V \to E$ satisfying property $\mathcal{A}$}
\end{enumerate}

It is more convenient to think of $\FF^\bullet$ as the restriction of $\PP$ to the {tangent microbundle} $(U_X,X,p)$ (Definition \ref{def-mic}) rather than as the restriction to $\diag (X)$. This is because $(U_X,X,p)$ is defined as a germ of open neighborhoods of ${\diag (X) \subset X \times X}$, and restriction of the sheaf $\PP$ to any representative of $U_X$ makes sense \emph{without passing to limits}. We proceed to describe this in some more detail.

Let $\{U_i \times U_i\}$ be a collection of basic open sets in $X \times X$. We shall say that a collection of elements $\phi_i \in \PP(U_i \times U_i)$ are \emph{consistent}, if for all $i \neq j$, 
$$\phi_i=\phi_j \text{ on } {(U_i \cap U_j) \times (U_i \cap U_j)}$$
Let $W \subset \diag (X) \subset X \times X$ be an open subset. Then any element of $\FF^\bullet(W)$ is represented by a family of consistent 
elements $\phi_i \in \PP(U_i \times U_i)$, where $\{U_i \times U_i\}$ covers $W$. {Using the equivalence described in the preceding paragraphs, we may treat $\phi_i$ as maps $\phi_i : U_i \times U_i \to E$. The consistency condition therefore allows us to glue these to a well-defined map $\phi : (W \times W) \cap U_X \to E$. We list  the properties of this map as a characterization of sections of $\FF^\bullet$ over $W$:}

\begin{rmk}\label{rmk-formalasgerms}\rm{$\phi: (W\times W) \cap U_X \to E$ is an element of $\FF^\bullet(W)$ if and only if
	\begin{enumerate}
		\item The restriction ${\phi\vert{\diag(W)}}$ of $\phi$ to $\diag(W)$ is a section of $E$ over $p(\mathrm{diag}(W)) = W \subset X$, and
		\item {For any $w \in W$,} the restriction of $\phi$ to $(\{w\}\times W)\cap U_X$ is the germ of a section of $E$ over the open subset $p_2((\{w\}\times W)\cap U_X) \subset W \subset X$ of $w$ {where $p_2: X \times X \to X$ defines the projection of $X \times X$ to the second coordinate.}
	\end{enumerate}

Thus, the first coordinate in $X \times X$ defines the base space of the tangent microbundle $(U_X,X,p)$, and the second gives  germs of neighborhoods of points $x \in X$. Hence, 
	an element of $\FF^\bullet(W)$ is given by a section of $E$ over 
	$W$ (in the first coordinate)  decorated with germs of sections $\{s_w: w \in W\}$ ({in} the second coordinate).
	
A caveat is in order. {The preceding paragraph suggests that elements of $\FF^\bullet(X)$ correspond to maps $U_X \to E$ from the total space of the tangent microbundle $(U_X, X, p)$ to the total space of the surjective map $(E, X, P)$ whose sections define the sheaf $\FF$. However, this map is {\bf not} a fiber-preserving map; in fact, the situation is completely orthogonal. This is because the fibers of the tangent microbundle $p : U_X \to X$ are subspaces of the second factor in the square $X \times X$, which map to germs of \emph{sections} of the surjection $P : E \to X$, and are therefore ``transverse" to the fibers of $P$.}}
\end{rmk}

\begin{defn}\label{defn-sheafmaps2ff}
	Let $\FF$ denote a {continuous} sheaf $X$. Let $W$ be a fixed topological space.
	Define a new sheaf 
	$\maps(W,\FF)$ over $X$ as follows. For any $U \subset X$ open, set $$\maps(W,\FF) (U) = \maps(W,\FF(U)), $$ where $ \maps(W,\FF(U))$ is equipped with {the natural quasitopology on mapping spaces\footnote{{For a topological space $X$ and a quasitopological space $A$, the natural quasitopology on $\maps(X, A)$ is defined by setting $\maps(X, A)(Z) = A(X \times Z)$ for any topological space $Z$.}}.}
\end{defn}

\begin{lemma}\label{lemw2ff*}
	For $\FF, X, W$ as in Definition \ref{defn-sheafmaps2ff} above, the Gromov diagonal normal sheaves satisfy
	$$(\maps(W,\FF))^\bullet = \maps(W,\FF^\bullet). $$
\end{lemma}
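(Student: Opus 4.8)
The plan is to unwind both sides of the claimed identity $(\maps(W,\FF))^\bullet = \maps(W,\FF^\bullet)$ directly from the definitions, since both are built from the same two operations: the ``$\maps(W,-)$'' construction of Definition \ref{defn-sheafmaps2ff} and the Gromov diagonal normal sheaf construction of Definition \ref{def-formalfn}, which is $\diag^* \PP$ where $\PP(U \times V) = \maps(U, \FF(V))$. So first I would compute the auxiliary sheaf $\PP$ associated to $\maps(W,\FF)$: by definition, for a basic open box $U \times V \subset X \times X$,
$$\PP_{\maps(W,\FF)}(U \times V) = \maps\big(U,\ (\maps(W,\FF))(V)\big) = \maps\big(U,\ \maps(W, \FF(V))\big).$$
On the other side, $\maps(W, \FF^\bullet)$ is the pushforward of the $\maps(W,-)$ construction through $\diag^*\PP_{\FF}$; its associated box-sheaf before taking $\diag^*$ is $\maps(W, \PP_{\FF})(U\times V) = \maps(W,\ \PP_{\FF}(U \times V)) = \maps(W,\ \maps(U, \FF(V)))$. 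Wait — more precisely $\maps(W,-)$ is applied to the sheaf on $X$, not on $X\times X$, so I must be slightly careful about order; the cleaner route is to observe that $\maps(W,-)$ commutes with pullback along any map of spaces (here $\diag$), and with the passage from $\FF$ to $\PP$.

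The key step, then, is the natural isomorphism $\maps(W, \maps(U, A)) \cong \maps(U, \maps(W, A))$ of quasitopological mapping spaces, valid for any topological spaces $W, U$ and any quasitopological space $A$ — this is just the exponential/adjunction symmetry $\maps(W, \maps(U, A))(Z) = \maps(U,A)(W \times Z) = A(U \times W \times Z)$ and likewise $\maps(U, \maps(W,A))(Z) = A(W \times U \times Z)$, which agree up to the evident swap homeomorphism $U \times W \times Z \cong W \times U \times Z$. Applying this with $A = \FF(V)$ shows $\PP_{\maps(W,\FF)} = \maps(W, \PP_{\FF})$ as box-sheaves on $X \times X$ (where on the right $\maps(W,-)$ is the analogous construction on $X\times X$, or equivalently is applied factor-by-factor), naturally in the box $U\times V$ and hence as sheaves on $X\times X$ after sheafifying/gluing. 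Then I would pull back along $\diag: X \to X \times X$: since $\diag^*$ is just restriction to germs along the diagonal, and $\maps(W,-)$ is defined pointwise on sections over opens and therefore commutes with $\diag^*$ (one checks $\diag^*(\maps(W,\GG)) = \maps(W, \diag^*\GG)$ stalkwise, both being $\maps(W, \GG_{\diag(x)})$), we conclude
$$(\maps(W,\FF))^\bullet = \diag^*\PP_{\maps(W,\FF)} = \diag^*\maps(W,\PP_{\FF}) = \maps(W, \diag^*\PP_{\FF}) = \maps(W, \FF^\bullet),$$
and all isomorphisms are natural, so they are compatible with restriction maps and define an isomorphism of stratified continuous sheaves.

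I expect the main obstacle — really the only subtle point — to be bookkeeping about \emph{where} the $\maps(W,-)$ operation lives: on one side it is an operation on sheaves over $X$, and in the formula for $\PP$ it must be transported to an operation involving sheaves over $X \times X$; I need to verify that the exponential-law swap is genuinely natural in the box $U \times V$ (including compatibility with the restriction maps $\PP(U\times V) \to \PP(U'\times V')$ and, in the stratified setting, with the maps $\res^L_S$) so that it descends from box-sheaves to honest sheaves on the stratified site, and that it interacts correctly with $\diag^*$. None of this requires computation beyond chasing the defining formula $\maps(X,A)(Z) = A(X\times Z)$ for quasitopologies through products, but it does require stating the exponential law for quasitopological spaces as a lemma (or citing \cite[Ch. 2]{Gromov_PDR}) and checking naturality carefully. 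Once that lemma is in hand, the proof is a two-line diagram chase; I would present it at roughly that level of detail.
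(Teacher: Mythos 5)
Your proposal takes essentially the same route as the paper: compute the box-sheaf $\PP$ for $\maps(W,\FF)$, apply the exponential law $\maps(U,\maps(W,A))\cong\maps(U\times W,A)\cong\maps(W,\maps(U,A))$ to identify it with $\maps(W,\PP_\FF)$, and then commute $\maps(W,-)$ past $\diag^*$. The paper leaves the exponential law and the commutation with $\diag^*$ implicit, whereas you spell them out via the quasitopology formula $\maps(X,A)(Z)=A(X\times Z)$ and a stalkwise check; that extra detail is correct and welcome, but there is no difference in the underlying argument.
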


\begin{proof} Consider the sheaf $\PP$ on $X \times X$ given by 
$$\PP(U \times V) = \maps (U, \maps(W, \FF) (V)).$$
Also, let $\PP_1$ be the sheaf on $X \times X$ given by 
$$\PP_1(U \times V) = \maps (U,  \FF (V)).$$
Then,
	\begin{eqnarray*}
		\PP(U \times V) &= \maps (U, \maps(W, \FF(V)) ) &= \maps (U\times W, \FF(V)) )\\
		&=\maps (W, \maps(U, \FF(V)) ) &= \maps (W,\PP_1(U \times V)).
	\end{eqnarray*}
Hence,
	\begin{eqnarray*}
		(\maps(W,\FF))^\bullet =& \diag^*\PP  &= \diag^* (\maps (W, \PP_1) ) \\ =&  \maps (W, \diag^*\PP_1)&=  \maps (W, \FF^\bullet).\qedhere
	\end{eqnarray*}
\end{proof}

There exists a sheaf over $W \times X$  closely related  to the sheaf $\maps(W,\FF)$ over $X$ (Definition \ref{defn-sheafmaps2ff}). This is defined below.

\begin{defn}\label{defn-sheafmaps2ff2}
	Let $\GG$ denote a {continuous sheaf} over $X$. Let $W$ be a fixed topological space.
	Define a new sheaf of $W-$parametrized sections $\FF=\maps^p(W,\GG)$ over $W\times X$ as follows. For any $U \subset X$ and $V\subset W$ open, set $$\FF(V \times U)=\maps^p(W,\FF) (V\times U) = \maps(V,\GG(U)), $$ where $ \maps(V,\GG(U))$
	is equipped with the {natural quasitopology on mapping spaces}.
\end{defn}

\begin{eg}\label{eg-surj3}{\rm
		A natural example of a sheaf of $W-$parametrized sections may be given by the following.
	Let $P: Y \to X$ be a continuous surjective map and let $\GG$ denote the sheaf of continuous sections of $P$. Let $W$ be a fixed topological space. Define
		a continuous surjective map $P_W: W \times Y \to W \times X$ such that $P_W(w,y)=(w, P(y))$. Then the sheaf of continuous sections of $P_W$ is given precisely by 
		$\FF=\maps^p(W,\GG)$.}
\end{eg}

\begin{lemma}\label{lemw2ff*p}
	For $\FF, \GG, X, W$ as in Definition \ref{defn-sheafmaps2ff2} above {with $W$ locally contractible}, the Gromov diagonal normal sheaves satisfy the following for open $V \subset W$ and $U \subset X$: {There is a homotopy equivalence,
	$$\FF^\bullet(V \times U) \simeq \maps(V,\GG^\bullet(U)). $$}
\end{lemma}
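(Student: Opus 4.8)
The plan is to unwind both sides of the claimed equivalence using the definitions of $\FF^\bullet$ as a pullback of $\PP$ along the diagonal, and the definition of $\maps^p(W,\GG)$, and then to compare them via the microbundle picture from Remark \ref{rmk-formalasgerms}. First I would write $\FF=\maps^p(W,\GG)$ over $W\times X$, so by Definition \ref{def-formalfn}, $\FF^\bullet=\diag_{W\times X}^*\PP_\FF$ where $\PP_\FF$ lives over $(W\times X)\times(W\times X)$ and assigns to a basic open $(V_1\times U_1)\times(V_2\times U_2)$ the space $\maps(V_1\times U_1,\ \FF(V_2\times U_2))=\maps(V_1\times U_1,\ \maps(V_2,\GG(U_2)))$. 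Rearranging the mapping spaces by the exponential law (as in Lemma \ref{lemw2ff*}) this is $\maps(V_1,\ \maps(V_2,\ \maps(U_1,\GG(U_2))))$. The key point: the diagonal of $W\times X$ factors through $\diag_W\times\diag_X$, so restricting to $\diag(V\times U)$ one would like to get $\maps(V,\ \maps(U,\GG(U)))$ germified in the second $X$-coordinate, i.e.\ $\maps(V,\GG^\bullet(U))$. The subtlety that prevents this from being an \emph{equality} — and forces the statement to be only a homotopy equivalence — is that $\FF^\bullet(V\times U)$ consults germs of neighborhoods of the full diagonal $\diag(V\times U)\subset(W\times X)^{\times 2}$, i.e.\ germs in \emph{both} the $W$-direction and the $X$-direction simultaneously, whereas $\maps(V,\GG^\bullet(U))$ only germifies in the $X$-direction and keeps $V$ fixed.

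So the heart of the argument is to show that germifying in the $W$-direction as well does not change the homotopy type, and this is exactly where local contractibility of $W$ is used. Concretely, I would use the tangent microbundle $(U_{W\times X},\ W\times X,\ p)$ and the description in Remark \ref{rmk-formalasgerms}: an element of $\FF^\bullet(V\times U)$ is a map from $(V\times U\times V\times U)\cap U_{W\times X}$ satisfying the two conditions there. Shrinking the neighborhood, I can arrange the representative to be defined on $\OO_W\times(V\times U\times U)$ where $\OO_W$ is a neighborhood of $\diag(V)$ in $V\times V$. Projecting $\OO_W\to V$ to the first factor and using local contractibility of $W$ (hence of $V$) to produce a fiberwise deformation retraction of $\OO_W$ onto $\diag(V)\cong V$ — more precisely, writing $V$ as a union of contractible opens and contracting $\OO_W$ onto the diagonal over each, then assembling — one obtains a homotopy equivalence between the space of such germs over $\OO_W$ and the space of their restrictions to $\diag(V)\times(V\times U\times U)\cap(\cdots)$. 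That restricted space is precisely $\maps(V,\ \{\text{germs of sections along }\diag(U)\})=\maps(V,\GG^\bullet(U))$ once one identifies $\GG^\bullet(U)$ via Remark \ref{rmk-formalasgerms} applied to $\GG$ over $X$ (this identification uses Example \ref{eg-surj3}, which says $\FF=\maps^p(W,\GG)$ really is the section sheaf of $P_W:W\times Y\to W\times X$ when $\GG$ is a section sheaf, so that the microbundle description is available).

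In more detail, the steps in order would be: (1) Reduce to the case $\GG$ is the section sheaf of a surjection $P:Y\to X$ so that Remark \ref{rmk-formalasgerms} applies to both $\GG^\bullet$ and, via Example \ref{eg-surj3}, to $\FF^\bullet$; for general continuous $\GG$ one works directly with $\PP_\FF$ and the exponential-law manipulation, the same contraction argument going through formally. (2) Expand $\FF^\bullet(V\times U)$ as germs near $\diag(V\times U)$ inside $(W\times X)^{\times 2}$, and separate the $W\times W$ and $X\times X$ coordinates; since the diagonal of a product is the product of the diagonals, a cofinal system of neighborhoods of $\diag(V\times U)$ is given by products $\OO_W\times\OO_X$ with $\OO_W\supset\diag(V)$, $\OO_X\supset\diag(U)$. (3) Fix $\OO_X$ achieving $\GG^\bullet(U)$; then $\FF^\bullet(V\times U)=\varinjlim_{\OO_W}\maps\big((\OO_W)\,;\ \maps((\OO_X);\ Y)\ \text{with the two section conditions}\big)$, which up to reindexing is $\varinjlim_{\OO_W}\Gamma_{\diag}(\OO_W,\ \underline{\GG^\bullet(U)})$, sections of the trivial "bundle" with fiber $\GG^\bullet(U)$ over $\OO_W$ that are genuine along $\diag(V)$. (4) Use local contractibility of $W$ to build, over a cover of $V$ by opens $V_\alpha$ with $V_\alpha\times V_\alpha\cap\OO_W$ contractible onto $\diag(V_\alpha)$, a deformation retraction realizing the restriction map $\Gamma_{\diag}(\OO_W,-)\to\maps(\diag(V),-)=\maps(V,\GG^\bullet(U))$ as a homotopy equivalence; pass to the colimit over $\OO_W$. (5) Check naturality in $V$ and $U$ so that the equivalence is of sheaves (or at least compatible with restrictions), which is immediate from the construction.

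The main obstacle I expect is step (4): producing the fiberwise deformation retraction of a neighborhood $\OO_W$ of the diagonal in $V\times V$ down to the diagonal, \emph{continuously in the relevant quasitopological sense}, and gluing the local contractions (which exist by local contractibility of $W$) into a global homotopy without assuming metrizability or paracompactness beyond what is available — Gromov's setting is locally compact Hausdorff, and one must be careful that the colimit over shrinking $\OO_W$ interacts well with the homotopies. I would handle this by doing the contraction one chart at a time and invoking the sheaf/quasitopology gluing axioms (the third bullet in the definition of quasitopological space, for decompositions into closed pieces) together with the fact that a homotopy equivalence can be assembled from local homotopy equivalences over a cover when everything in sight is flexible — or, more elementarily, by noting we only ever need $V$ a ball (the lemma is applied to small coordinate charts), in which case $\OO_W$ can be taken to be a genuine tube $\{(v,v'):|v-v'|<\epsilon\}$ which retracts linearly onto its diagonal, making the deformation retraction completely explicit and the homotopy equivalence transparent.
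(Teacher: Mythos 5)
Your proof follows essentially the same route as the paper: unwind $\FF^\bullet$ via the exponential law to get $\diag_V^*\maps((V\times V),\GG^\bullet(U))$, factor the diagonal as $\diag_W\times\diag_X$, and then invoke local contractibility of $W$ to deformation retract the tangent microbundle of $V$ onto its zero section, yielding $\maps(V,\GG^\bullet(U))$. The paper dispatches the final contraction in a single sentence; your more careful chart-by-chart (or explicit linear-tube) treatment of that retraction correctly fills in what the paper leaves implicit, but it is the same argument.
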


\begin{proof} As in Definition \ref{def-formalfn}, the Gromov diagonal normal sheaf is constructed by first  constructing
	a sheaf $\PP$ on $(W \times X)\times (W \times X)$ as follows.
	\begin{eqnarray*}
		{\PP((V \times U)\times (V \times U))} =& \maps((V \times U), \FF (V \times U)) \\ 
		=&  \maps((V \times U),\maps(V,\GG(U)))  \\
		=& \maps((V \times V),\maps(U,\GG(U)))  \\
	\end{eqnarray*}
	
	Restricting $\PP$ to the diagonal, we have the following.
{
	\begin{eqnarray*}
		\FF^\bullet(V \times U) = &	\diag^*\PP((V \times U)\times (V \times U))\\
		= & \mathrm{diag}_V^* \maps((V \times V), \mathrm{diag}_U^*\maps(U,\GG(U)))  \\
		= & \mathrm{diag}_V^* \maps((V \times V), \GG^\bullet(U))
	\end{eqnarray*}
The last expression is the space of germs of maps from the tangent microbundle of $V$ to $\GG^\bullet(U)$. As $W$ is locally contractible, the tangent microbundle of any open subset $V \subset W$ deformation retracts to its zero section, i.e., $V$ itself. Thus, there is a homotopy equivalence
$$\mathrm{diag}_V^* \maps((V \times V), \GG^\bullet(U)) \simeq \maps(V, \GG^\bullet(U)).$$}
	This completes the proof.
\end{proof}

There is a tautological inclusion $$\De: \FF \stackrel{\iota}\longrightarrow \FF^\bullet$$ sending $s \in \FF(U)$ to 
$(s,\{s_x: x \in U\})$, where $s_x$ denotes the germ of the section $s$ at $x \in U$.

\begin{defn}\label{def-sheafh}\cite[p. 76]{Gromov_PDR}
	A (continuous) sheaf $\FF$ satisfies the sheaf theoretic $h$-principle {if the morphism $\Delta_U : \mathcal{F}(U) \to \FF^\bullet(U)$ is a surjection in $\pi_0$ for all open $U \subset X$, i.e., every section $\phi \in \FF^\bullet(U)$ can be homotoped to lie in $\Delta_U(\mathcal{F}(U))$}. Further, $\FF$ satisfies the 
	parametric sheaf theoretic $h$-principle if the morphism 
	$ \Delta_U: \FF(U) \to \FF^\bullet(U)$ is a weak homotopy equivalence for all open $U \subset X$.
\end{defn}

\begin{prop}\label{prop-formalisflex}\cite[p. 76]{Gromov_PDR}
	Let $\FF$ be a continuous sheaf over a locally compact finite dimensional Hausdorff space $X$. Then $\FF^\bullet$ is flexible.
\end{prop}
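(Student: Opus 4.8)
The plan is to reduce the flexibility of $\FF^\bullet$ to Example~\ref{lem-surj}, which asserts that the sheaf of sections of a surjective map between topological spaces is flexible. The key observation is the characterization of $\FF^\bullet$ given in Remark~\ref{rmk-formalasgerms}: a section of $\FF^\bullet$ over an open set $W$ is essentially a map from (a representative of) the tangent microbundle $U_X$, restricted over $W$, into a fixed total space, subject to the two pointwise conditions listed there. This suggests that $\FF^\bullet$ is, up to reindexing, the sheaf of sections of a surjection, at which point Example~\ref{lem-surj} applies directly.

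Concretely, first I would set up $\PP$ on $X \times X$ as in Definition~\ref{def-formalfn}, so that $\FF^\bullet = \diag^*\PP$. Since $X$ is locally compact, finite dimensional and Hausdorff, I would work with a representative of the tangent microbundle, i.e.\ an actual open neighborhood $U_X$ of $\diag(X) \subset X \times X$ with projection $p : U_X \to X$ to the first factor, and observe (as in the discussion preceding Remark~\ref{rmk-formalasgerms}) that restriction of $\PP$ to $U_X$ computes $\FF^\bullet$ without passing to limits, because $U_X$ is already an open neighborhood of the diagonal. Then I would exhibit $\FF^\bullet(W)$ for $W \subset X$ open as the space of sections over $W$ of a surjective map: the total space consists of pairs (a point $w \in W$, the germ at $w$ of a section of the structure defining $\FF$ over the slice $p^{-1}(w) \cap (W \times X)$), topologized appropriately, and the projection to $W$ is the obvious one. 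The fact that $\FF$ is a subsheaf of the sheaf of sections of some surjection $P : E \to X$ (or, in the general case, just a continuous sheaf — one can always realize a continuous sheaf étale-theoretically as sections of a surjection onto $X$) makes this total space genuinely surjective onto $W$.

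With this description in hand, flexibility of $\FF^\bullet$ follows verbatim from the argument of Example~\ref{lem-surj}: given a compact pair $K \subset K' \subset X$, a compact polyhedron $P$, a map $P \to \FF^\bullet(K')$ and a homotopy $P \times I \to \FF^\bullet(K)$ extending its image, one runs the bump-function argument, damping the homotopy by a continuous function that is $1$ on $K$ and $0$ outside a small neighborhood of $K$ contained in the domain over which the initial lift is defined. The damping takes place in the second ($X$) coordinate of $X \times X$, i.e.\ along the germ directions, so the construction produces a consistent family of germs and hence a well-defined lift into $\FF^\bullet(K')$. Functoriality and continuity in the quasitopological sense are routine to check, exactly as in Example~\ref{lem-surj}.

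The main obstacle is bookkeeping rather than conceptual: one must be careful that the "section of a surjection" description of $\FF^\bullet$ is literally correct — in particular that the gluing/consistency condition across the covering $\{U_i \times U_i\}$ of $W$ (as in the paragraph before Remark~\ref{rmk-formalasgerms}) is exactly the sheaf condition for sections of the surjection being constructed, and that the quasitopology on $\FF^\bullet(W)$ coming from $\diag^*\PP$ agrees with the compact-open-type quasitopology on that section space. Once this identification is pinned down, there is no further difficulty, since finite-dimensionality and local compactness of $X$ guarantee the tangent microbundle representative behaves well and the bump-function damping stays within the relevant domains. Alternatively, if one prefers to avoid re-deriving Example~\ref{lem-surj}, one can cite Lemma~\ref{lemw2ff*} and the functoriality of $(-)^\bullet$ to reduce to the case already treated in \cite[p.~76]{Gromov_PDR}, but the self-contained microbundle argument above is cleaner in the present setup.
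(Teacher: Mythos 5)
The paper does not supply its own proof of this proposition; it is stated with a citation to Gromov's book, so there is no in-paper argument to compare against. I will therefore assess the proposal on its own terms.

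The core mechanism you invoke, the bump-function damping of a homotopy, is indeed what drives Gromov's argument, so the proposal is on the right track. However, there are two genuine gaps.

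First, the damping is performed in the wrong coordinate. You say the damping ``takes place in the second ($X$) coordinate of $X \times X$, i.e.\ along the germ directions.'' Recall that an element of $\FF^\bullet$ over a basic cover $\{U_i \times U_i\}$ is a consistent family $\phi_i \in \PP(U_i\times U_i) = \mathrm{Maps}(U_i, \FF(U_i))$, where the first coordinate of $U_i\times U_i$ is the parametrizing (base) point and the second is where the section lives. Given a homotopy $\Phi : P \times I \to \FF^\bullet(K)$ represented, after shrinking to a common refinement, by maps $\phi_{i,p,t} : U_i \to \FF(U_i)$, the damping that actually works and gives a consistent family is
$$\widetilde{\phi}_{i,p,t}(u) := \phi_{i,p,\rho(u)t}(u) \in \FF(U_i),$$
i.e.\ one damps the homotopy parameter by $\rho$ evaluated at the \emph{first} (base) coordinate $u$. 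This is well-defined because for each fixed $u$ one picks out a single element $\phi_{i,p,\rho(u)t}(u)$ of $\FF(U_i)$, and consistency across overlapping $U_i\cap U_j$ is inherited directly from consistency of the $\phi_{i,p,s}$. By contrast, damping by $\rho$ evaluated at the second coordinate would require forming, for fixed $u$, the ``section'' $v \mapsto \phi_{i,p,\rho(v)t}(u)(v)$ obtained by mixing the values of \emph{different} sections $\phi_{i,p,s}(u) \in \FF(U_i)$ at different points $v$. For a general continuous sheaf $\FF$ this mixed object is not an element of $\FF(U_i)$ at all; the operation is not even defined. It only has a chance of making sense when $\FF$ is literally a sheaf of sections of a map $E\to X$, and even then it creates domain-of-definition headaches that the first-coordinate damping avoids. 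Since the proposition is stated for an arbitrary continuous sheaf, this is a real error, not a matter of convention.

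Second, the reduction to Example~\ref{lem-surj} via an \'etal\'e realization is not pinned down. You propose a total space whose fiber over $w$ is the stalk $\FF_w$ of germs, ``topologized appropriately.'' But the \'etal\'e topology (germ-wise discrete) gives back $\FF$ itself as the sheaf of sections, not $\FF^\bullet$. The whole point of $\FF^\bullet$ is that its sections are \emph{not} locally constant germs but arbitrary continuous fields of germs, which requires a coarser, microbundle-type topology that you gesture at but never construct; and for quasitopological-space-valued sheaves it is not clear that this total space exists as a bona fide topological object to which Example~\ref{lem-surj} applies. In practice it is cleaner to run the bump-function argument directly on the consistent-family description of $\FF^\bullet$ rather than route through an undefined total space. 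Finally, the suggested fallback of invoking Lemma~\ref{lemw2ff*} and ``the case already treated in \cite[p.~76]{Gromov_PDR}'' is circular, since the cited statement on p.~76 \emph{is} Proposition~\ref{prop-formalisflex}.
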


\begin{theorem}\label{thm-fleximphprin}\cite[p. 76]{Gromov_PDR}
	Let $\FF$ be a  continuous 
	sheaf  over a locally compact countable polyhedron  $X$ (e.g.\ a manifold or a stratified space). If $\FF$ is flexible, it satisfies the parametric $h$-principle.
\end{theorem}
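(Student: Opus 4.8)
The plan is to follow Gromov's original argument \cite[p.~76--77]{Gromov_PDR}, transported to our slightly streamlined microbundle formalism for $\FF^\bullet$. The statement to prove is that a flexible continuous sheaf $\FF$ over a locally compact countable polyhedron $X$ satisfies the parametric $h$-principle, i.e.\ the tautological map $\Delta_U\colon \FF(U)\to\FF^\bullet(U)$ is a weak homotopy equivalence for every open $U\subset X$. The key structural inputs already available are: (i) $\FF^\bullet$ is flexible (Proposition~\ref{prop-formalisflex}); (ii) Theorem~\ref{thm-gromov-whe2he}, which says a morphism of flexible sheaves over a finite-dimensional locally compact Hausdorff space is a weak homotopy equivalence on all opens if and only if it is a weak homotopy equivalence on all stalks; and (iii) the description of $\FF^\bullet$ as the restriction of $\PP(U\times V)=\maps(U,\FF(V))$ to (a representative of) the tangent microbundle $U_X\subset X\times X$.

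First I would reduce to a stalkwise statement. Since $X$ is a countable polyhedron it is in particular finite-dimensional, locally compact and Hausdorff, and $\Delta\colon\FF\to\FF^\bullet$ is a morphism of flexible sheaves by Proposition~\ref{prop-formalisflex} together with the hypothesis that $\FF$ is flexible. So by Theorem~\ref{thm-gromov-whe2he} it suffices to prove that $\Delta_x\colon\FF_x\to(\FF^\bullet)_x$ is a weak homotopy equivalence for every $x\in X$. This is the crux of the argument and the step I expect to be the main obstacle: one must understand the stalk of $\FF^\bullet$ and compare it to $\FF_x$.

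The second step is to compute $(\FF^\bullet)_x$. Using Remark~\ref{rmk-formalasgerms}, a section of $\FF^\bullet$ over a neighborhood $W$ of $x$ is a section of (the surjection underlying) $\FF$ over $W$ decorated, at each point $w\in W$, with a germ of a section at $w$, compatibly in the microbundle direction. Passing to the stalk at $x$, the ``base'' datum over the shrinking neighborhoods $W$ of $x$ contributes exactly $\FF_x$, while the ``germ'' decoration at the point $x$ itself is again an element of $\FF_x$, and the two are tied together by the consistency condition along the diagonal. Concretely, $(\FF^\bullet)_x$ is the space of pairs consisting of a germ at $x$ and a continuous assignment $w\mapsto(\text{germ at }w)$ defined near $x$ agreeing at $x$; contracting the base $W$ to the point $x$ (possible since $X$ is locally contractible as a polyhedron, using the local cone structure) shows this space deformation retracts onto the diagonal copy of $\FF_x$. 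The map $\Delta_x$ is precisely the inclusion of that diagonal, so it is a (weak) homotopy equivalence. More formally, one can realize the contraction by choosing a contraction $H\colon W\times[0,1]\to W$ of a cone neighborhood onto $x$ and pulling back the microbundle; flexibility of $\FF$ is what lets one lift the needed homotopies of germ-valued maps, exactly as in the proof of Proposition~\ref{prop-formalisflex}.

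Finally, I would record the non-parametric consequence: surjectivity on $\pi_0$ of each $\Delta_U$ follows a fortiori. The main obstacle, as noted, is making the stalk computation $(\FF^\bullet)_x\simeq\FF_x$ rigorous: one must check that the quasitopology on the stalk of $\FF^\bullet$ — defined via consistent families over covers of shrinking neighborhoods of $x$ — behaves correctly under the contraction, and that the retraction is compatible with $\maps(I^n,-)$ so that it is a genuine weak equivalence and not merely a $\pi_0$-bijection. Here the flexibility hypothesis on $\FF$ (not just microflexibility) is essential, since it supplies the honest homotopy lifting needed to push germ data along the contracting homotopy $H$ without shrinking the time interval. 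Once the stalkwise weak equivalence is in hand, Theorem~\ref{thm-gromov-whe2he} upgrades it to a weak equivalence $\FF(U)\xrightarrow{\ \sim\ }\FF^\bullet(U)$ for all open $U$, which is the assertion of the parametric $h$-principle.
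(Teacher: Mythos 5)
The paper does not give its own proof of this theorem — it is cited verbatim from Gromov — but your strategy is the correct one and is exactly what the citation points to: reduce to stalks via Gromov's local-to-global Theorem~B (Theorem~\ref{thm-gromov-whe2he}), then verify $\Delta_x$ is a stalkwise weak equivalence by contracting the base. The paper's Remark~\ref{rmk-hprinvsflex} in fact records precisely this decomposition of the argument.

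There is, however, a conceptual misattribution in your final paragraph that you should correct. You claim that flexibility of $\FF$ is ``essential'' for the stalk computation $(\FF^\bullet)_x\simeq\FF_x$, because it ``supplies the honest homotopy lifting needed to push germ data along the contracting homotopy $H$.'' This is not right. The deformation retraction of $(\FF^\bullet)_x$ onto the diagonal copy of $\FF_x$ is obtained simply by \emph{precomposing} the germ-valued family with the contraction $H\colon W\times I\to W$: one sets $\phi^t(w)=\phi(H(w,t))$, shrinking domains as needed (which is harmless since one is working with germs). No lifting is required at any stage, so the stalk equivalence holds for an arbitrary continuous sheaf over a locally contractible base — as the paper's Remark~\ref{rmk-hprinvsflex} states without any flexibility hypothesis, and as is consistent with Proposition~\ref{prop-formalisflex} holding for every $\FF$. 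Flexibility of $\FF$ enters the proof exactly once: to satisfy the hypotheses of Theorem~\ref{thm-gromov-whe2he}, so that the stalkwise weak equivalence globalizes. Getting this attribution wrong matters, because it obscures the role Gromov's Theorem~B plays as the sole step where the flexibility hypothesis does work.

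A second, smaller point: Theorem~\ref{thm-gromov-whe2he} as stated requires $X$ to be finite-dimensional, whereas a locally compact countable polyhedron need only be \emph{locally} finite-dimensional. Your reduction glosses over this. Gromov's original argument addresses it (essentially by exhausting by finite subcomplexes/skeleta and taking limits, using that a flexible sheaf restricted to a closed polyhedral subspace stays flexible), and you should at least acknowledge the issue rather than assert finite-dimensionality as a consequence of the polyhedron hypothesis.
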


\begin{rmk}\label{rmk-hprinvsflex}\rm{
	For sheaves, the notion of flexibility is strictly stronger than that of an $h-$principle. Suppose $X$ is locally contractible. By definition of the stalks $\FF_x$ and $\FF^\bullet_x$ of a continuous sheaf $\FF$  and its diagonal normal form $\FF^\bullet$, {respectively, we see that the tautological inclusion $\De_x : \FF_x \to \FF^\bullet_x$ is a weak homotopy equivalence at the level of stalks. That is, the germ of a formal section at $x$ is homotopic to the germ of a holonomic section at $x$. Therefore, given a formal section in an open neighborhood $U_x$ of $x$, we may homotope it to a holonomic section, possibly in a small open neighborhood $U_x' \subset U_x$.}
	Flexibility allows us to glue these local holonomic sections together to obtain a (global) holonomic section over a large open set. Thus, flexibility may be thought of as an analog of a Mayer-Vietoris principle used to glue homotopy equivalences (see for instance Theorem \ref{thm-bh} below).
	
	The existence of the $h-$principle is invariant under homotopy equivalence of sheaves (essentially by definition), i.e.\  if $\FF$ satisfies the $h-$principle and $\GG$ is 
	homotopy equivalent to $\FF$, then $\GG$ satisfies the $h-$principle.
	The same is not true for flexibility. This is the raison d'etre behind the existence of Section 2.2.7 of \cite{Gromov_PDR}.}
\end{rmk}

{Suppose $(X, \Sigma)$ is a stratified space and $\FF = \{\FF_{\bbar{S}} : S \in \Sigma\}$ is a stratified continuous sheaf on $(X, \Sigma)$. If $\FF$ is flexible (Definition \ref{def-stratflex}), then the individual sheaves $\FF_{\bbar{S}}$ over $\bbar{S}$ are flexible continuous sheaves. Thus, by Theorem \ref{thm-fleximphprin}, $\FF_{\bbar{S}}$ satisfies the parametric $h-$principle for all $S \in \Sigma$. However, in many natural examples (such as Example \ref{eg-stratflexnotflex}), we only have \emph{stratumwise flexibility} (Definition \ref{def-stratflex}) of $\FF$. The goal of the subsequent sections is to deduce the parametric $h-$principle for all the individual sheaves $\FF_{\bbar{S}}$ while only assuming stratumwise flexibility of $\FF$ and flexibility of the open homotopy fiber sheaves $\HH^L_S$ (Definition \ref{def-hh}).}

\subsection{{Gromov diagonal normal construction for  stratified continuous sheaves}}\label{sec-stratfdgromov}  We now extend the notion of a sheaf-theoretic $h-$principle (Definition \ref{def-sheafh}) to stratified sheaves.
The essential difference between a stratified continuous sheaf and a continuous sheaf over $X$ is that a stratified sheaf $\FF$ assigns a quasitopological space $\FF(U)$ to  an open subset of $U \subset \bbar{L}$ for \emph{every} stratum $L$ of $X$, whereas an ordinary sheaf does so only for open subsets of $X$. Hence, for every pair $S<L$, and for every $x \in S$, there are two stalks:
\begin{enumerate}
	\item $(\FF_S)_x$, which we shall refer to as the \emph{intrinsic stalk}. More generally, for 
	any $U \subset S$, $(\FF_S)\lvert_{U}$ will be referred to as the \emph{intrinsic sheaf} over $U$.
	\item $(i_S^*\FF_{\bbar{L}})_x$, which we shall refer to as the \emph{extrinsic stalk}. More generally, for 
	any $U\subset S $, $(i_S^*\FF_{\bbar{L}})\lvert_{U}$ will be referred to as the \emph{extrinsic sheaf} over $U$.
\end{enumerate}

\begin{defn}\label{def-diagstratsheaf}
For a stratified sheaf $\FF =\{\FF_{\bbar{S}} \}$, over a stratified space $X$, the Gromov diagonal normal 
stratified sheaf is given by $\FF^\bullet =\{\FF^\bullet_{\bbar{S}} \}$.
\end{defn}

We check below that  the restriction morphisms of $\FF^\bullet$ are the expected ones, and that there is a canonical map from $\FF$ to $\FF^\bullet$.

\begin{lemma}\label{lem-f2f*}
	{$\FF^\bullet$ is a stratified sheaf, and} $\Delta: \FF \to \FF^\bullet$ is a morphism of stratified sheaves.
\end{lemma}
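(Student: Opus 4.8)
The plan is to verify the two assertions in order, using the fact that the diagonal normal construction $\FF \mapsto \FF^\bullet$ is functorial. First I would recall that for an ordinary continuous sheaf $\GG$ over a space $Y$, the construction $\GG \mapsto \GG^\bullet$ is obtained by building the sheaf $\PP$ on $Y \times Y$ via $\PP(U\times V) = \maps(U, \GG(V))$ and then pulling back along $\diag : Y \to Y\times Y$. The key observation is that this construction is natural in the pair $(Y, \GG)$: a continuous map $i : Y' \to Y$ together with a morphism $\GG' \to i^*\GG$ induces a morphism $(\GG')^\bullet \to i^*(\GG^\bullet)$, compatibly with composition. Indeed, the $\PP$ construction is visibly functorial (it only uses $\maps(-,-)$ and restriction morphisms), and pullback along $\diag$ commutes with pullback along $i \times i$, so $\diag^*$ of a compatible square of $\PP$-sheaves gives the required morphism.

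Given this, the first assertion --- that $\FF^\bullet = \{\FF^\bullet_{\bbar S}\}$ is a stratified sheaf --- amounts to checking that this collection of ordinary sheaves assembles into a sheaf on the stratified site $\str(X,\Sigma)$, i.e.\ that for every pair $S < L$ there is a restriction morphism $i_{\bbar S}^* \FF^\bullet_{\bbar L} \to \FF^\bullet_{\bbar S}$ satisfying the cocycle/coherence conditions. I would apply the functoriality observation to the inclusion $i_{\bbar S} : \bbar S \hookrightarrow \bbar L$ and the given restriction morphism $\res^L_S : i_{\bbar S}^* \FF_{\bbar L} \to \FF_{\bbar S}$ of the stratified sheaf $\FF$: this produces $(\res^L_S)^\bullet : i_{\bbar S}^*(\FF^\bullet_{\bbar L}) \to \FF^\bullet_{\bbar S}$, using that $i_{\bbar S}^*(\FF_{\bbar L}^\bullet) = (i_{\bbar S}^* \FF_{\bbar L})^\bullet$ (pullback commutes with the diagonal normal construction, since both are built from $\maps$ and pullbacks). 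Coherence for triples $S < L < L'$ then follows from functoriality of $\bullet$ applied to the coherence already satisfied by the $\res$ maps of $\FF$.

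For the second assertion, the tautological inclusion $\Delta : \FF \to \FF^\bullet$ is defined stratum-closure-wise by the maps $\Delta_{\bbar S} : \FF_{\bbar S} \to \FF^\bullet_{\bbar S}$ sending a section $s$ over $U \subset \bbar S$ to $(s, \{s_x : x \in U\})$; each $\Delta_{\bbar S}$ is a morphism of continuous sheaves over $\bbar S$ by the discussion preceding Definition \ref{def-sheafh}. It remains to check that these are compatible with the restriction morphisms, i.e.\ that for each $S < L$ the square relating $i_{\bbar S}^* \FF_{\bbar L} \to \FF_{\bbar S}$ and $i_{\bbar S}^* \FF^\bullet_{\bbar L} \to \FF^\bullet_{\bbar S}$ via $\Delta$ commutes. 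This is a direct unwinding: applying $\res^L_S$ to $s$ and then $\Delta$ gives $(\res^L_S(s), \{(\res^L_S(s))_x\})$, while applying $\Delta$ first and then $(\res^L_S)^\bullet$ gives the image of $(s, \{s_x\})$, which by construction of the induced map on $\bullet$-sheaves is exactly $(\res^L_S(s), \{\res^L_S(s_x)\})$; since restriction commutes with taking germs, $\res^L_S(s_x) = (\res^L_S(s))_x$, and the two agree.

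The main obstacle --- really the only non-formal point --- is verifying cleanly that pullback commutes with the diagonal normal construction and that the $\bullet$ construction is functorial in the space, so that the induced morphism $(\res^L_S)^\bullet$ is well-defined on the \emph{germ-level} sheaves rather than only on a fixed representative. I would handle this by working with the description of $\FF^\bullet$ as the restriction of $\PP$ to (a representative of) the tangent microbundle $(U_X, X, p)$, as in Remark \ref{rmk-formalasgerms}: a map $i : \bbar S \hookrightarrow \bbar L$ restricts $U_{\bbar L}$ to a neighborhood of $\diag(\bbar S)$ containing $U_{\bbar S}$, and $\res^L_S$ applied fiberwise to the germs of sections gives the map of microbundle-restricted $\PP$-sheaves; passing to the germ along $\diag$ then yields $(\res^L_S)^\bullet$. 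Once this is set up, everything else is a routine diagram chase, and I would not belabor it.
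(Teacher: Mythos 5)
Your approach is substantially the one the paper takes: build the stratified sheaf $\PP$ on $X\times X$ (product stratification), restrict to the diagonal, and produce $(\res^L_S)^\bullet$ by restricting the tangent microbundle over $\bbar L$ down to a neighborhood of $\diag(\bbar S)$ in $\bbar S\times\bbar S$ and post-composing with $\res^L_S$ on values. The paper realizes this via an explicit triple limit (over open $U_L\supset U$ in $\bbar L$, over basic open covers of the diagonal, and over the pieces of such a cover) rather than via the microbundle language, but the content is the same, and your verification of the commuting square for $\Delta$ by tracking the constant families $\{s_x\}$ is also the paper's argument.

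There is, however, one assertion you should not let stand: the parenthetical claim $i_{\bbar S}^*(\FF_{\bbar L}^\bullet) = (i_{\bbar S}^*\FF_{\bbar L})^\bullet$, justified by ``both are built from $\maps$ and pullbacks.'' This is false as an equality. The left side is built from germs of maps defined on neighborhoods of $\diag(\bbar S)$ inside $\bbar L\times\bbar L$, whereas the right side is built from germs of maps defined on neighborhoods of $\diag(\bbar S)$ inside $\bbar S\times\bbar S$ --- a strictly smaller microbundle. There is a natural \emph{restriction morphism} $i_{\bbar S}^*(\FF_{\bbar L}^\bullet)\to(i_{\bbar S}^*\FF_{\bbar L})^\bullet$, and the paper later (Lemma~\ref{lem-formalfnrestcomm} and Remark~\ref{rmk-formalfnrestcomm}) shows it is a weak homotopy equivalence when $\bbar S$ is a neighborhood deformation retract of $\bbar L$; it is not an isomorphism. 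Fortunately, your third paragraph's direct construction of $(\res^L_S)^\bullet$ --- restrict the domain of the microbundle to $\bbar S\times\bbar S$, then push the values forward through $\res^L_S$ --- does not actually pass through the claimed equality, so the proof survives. But the parenthetical should be replaced by the one-directional restriction morphism, or simply deleted in favor of the direct construction.
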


\begin{proof}
{To check the first claim, on $X \times X$ we define 
$$\PP_{\bbar{L}}(U \times V) = \maps(U, \FF(V))$$
where $U, V$ are any pair of open subsets in some stratum-closure $\bbar L$. The collection 
$$\{\PP_{\bbar{L}}: L \text{ a stratum of } X\}$$ 
defines a stratified sheaf $\PP$ on $(X \times X, \Sigma \times \Sigma)$. We define the Gromov diagonal normal stratified sheaf $\FF^\bullet$ on $X$ by restricting $\PP$ to $\diag(X)$. This agrees with the description $\FF^\bullet = \{\FF^\bullet_{\bbar{L}}\}$ in Definition \ref{def-diagstratsheaf} by construction. For a pair of strata $S < L$, we have the intrinsic sheaf $\FF^\bullet_S$ on the open stratum and the extrinsic sheaf $i_S^* \FF^\bullet_{\bbar{L}}$. The restriction morphism $(\mathrm{res}^L_S)^\bullet : i_S^*\FF^\bullet_{\bbar{L}} \to \FF^\bullet_S$ can be explicitly described as follows.}

{For any open subset $U \subset S$, let  $U_L$ be an open subset of $\bbar{L}$ containing $U$ such that $U_L \cap S = U$. Let $\mathcal{B}(U)$ denote a collection of basic open subsets $\{V \times W\}$ in the product topology on $S \times S$ covering the diagonally embedded copy of $U \subset S$ in $S \times S$. Let $\mathcal{B}(U_L)$ denote the collection of basic open subsets $\{V_L \times W_L\}$ of $\bbar{L} \times \bbar{L}$ covering the diagonally embedded copy of $U_L \subset \bbar{L}$ in $\bbar{L} \times \bbar{L}$.  Then, 
\begin{gather*}
\FF^\bullet_S(U) = \varinjlim_{\mathcal{B}(U)} \varprojlim_{V \times W \in \mathcal{B}(U)} \mathcal{P}_S(V \times W) \\ i_S^*\FF^\bullet_{\bbar{L}}(U) = \varinjlim_{U_L \supset U} \varinjlim_{\mathcal{B}(U_L)} \varprojlim_{V_L \times W_L \in \mathcal{B}(U_L)} \mathcal{P}_L(V_L \times W_L)
\end{gather*}
where by the direct limits indexed by $\mathcal{B}(U), \mathcal{B}(U_L)$ we mean direct limit as the union of the elements of the open covers decrease to $U$, $U_L$, respectively. Since $V \times W \subset V_L \times W_L$, we have the restriction maps $\mathcal{P}_L(V_L \times W_L) \to \mathcal{P}_S(V \times W)$ as $\mathcal{P}$ is a stratified sheaf. Passing to limits furnishes $(\mathrm{res}^L_S)^\bullet$. }

{For the second claim, we observe that for any stratum $\bbar{L}$, 
$$\Delta : \mathcal{F}_{\bbar{L}} \to \FF^\bullet_{\bbar{L}}$$
is a morphism of sheaves. To conclude that these constitute a morphism of stratified sheaves, we need only verify that the following diagram commutes:
\begin{center}
		$
		\begin{CD}
			i_S^*	\FF_{\bbar{L}} @>\Delta>>i_S^*	\FF^\bullet_{\bbar{L}} \\
			@V{\res^{L}_S}VV @VV{(\res^{L}_S)^\bullet}V\\
			\FF_S  	@>\Delta>>\FF^\bullet_S
		\end{CD}
		$
	\end{center}
This is an exercise in unwinding definitions. Under the top horizontal arrow, a section $\phi$ of $i_S^*\FF_{\bbar{L}}(U)$ maps to an element in the triple limit defining $i_S^*\FF^\bullet_{\bbar{L}}(U)$. This element is  given 
 by the collection of constant $\phi|_{W_L}$-valued maps from $V_L$ to $\mathcal{F}_{\bbar{L}}(W_L)$. Here, we recall that $\mathcal{P}_L(V_L \times W_L) = \maps(V_L, \mathcal{F}_{\bbar{L}}(W_L))$. By the right vertical arrow, this
 collection of constant $\phi|_{W_L}$-valued maps  is sent to an element in the double limit defining $\FF^\bullet_S(W)$. This element is
 given by the collection of  constant $\mathrm{res}^L_S(\phi|_{W_L})-$ valued maps
 from $V$ to $\mathcal{F}_S(W)$.  Here, we recall that  $\mathcal{P}_S(V \times W) = \maps(V, \mathcal{F}_S(W))$. Since $\mathrm{res}^L_S(\phi|_{W_L}) = \phi \vert_W$,
 this element of $\FF^\bullet_S(U)$ is exactly the image of $\mathrm{res}^L_S(\phi)$ under the bottom horizontal arrow, concluding the proof.}
\end{proof}

{\begin{rmk}\label{rmk-diagstratsheaf}
The essence of the proof above is that the definition of $\FF^\bullet$ only considers the strata $S \times S \subset X \times X$, even though $X \times X$ also has mixed strata of the form $S \times L$ for $S, L \in \Sigma$.
\end{rmk}}

We now have the following analog of Definition \ref{def-sheafh}:

\begin{defn}\label{def-ssheafh}
	{A stratified continuous sheaf $\FF$ on $(X, \Sigma)$ satisfies the stratified sheaf theoretic $h$-principle, if for any object $U$ of $\str(X, \Sigma)$, the morphism $\Delta_U : \mathcal{F}(U) \to \FF^\bullet(U)$ (furnished by Lemma \ref{lem-f2f*}) is a surjection in $\pi_0$, i.e., for every stratum $L \in \Sigma$ of $X$ and open set $U \subset \bbar{L}$, any section $\phi \in \FF^\bullet_L(U)$ can be homotoped to lie in $\Delta(\FF_L(U)) \subset  \FF^\bullet_L(U)$.}
	
	{Further, $\FF$ satisfies the 
	parametric stratified sheaf theoretic $h$-principle if the morphism 
	$\Delta_U: \FF(U) \to \FF^\bullet(U)$ is a weak homotopy equivalence for all objects $U$ of $\str(X, \Sigma)$.}
\end{defn}

The following is an analog of Proposition \ref{prop-formalisflex} for stratified sheaves:

\begin{prop}\label{prop-formalisflexs}
	Let $\FF$ be a stratified continuous sheaf over a stratified space {$(X, \Sigma)$}. {Then $\FF^\bullet_{\bbar{L}}$ is flexible for every stratum $L \in \Sigma$}.
\end{prop}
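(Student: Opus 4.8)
The plan is to reduce the statement directly to Gromov's Proposition \ref{prop-formalisflex}. The first step is to unwind the definitions and observe that the component $\FF^\bullet_{\bbar{L}}$ of the Gromov diagonal normal stratified sheaf (Definition \ref{def-diagstratsheaf}) is precisely the ordinary Gromov diagonal normal sheaf $(\FF_{\bbar{L}})^\bullet$ of the continuous sheaf $\FF_{\bbar{L}}$ over the topological space $\bbar{L}$. Indeed, in the construction carried out in the proof of Lemma \ref{lem-f2f*}, the sheaf $\PP$ on $X \times X$ is built out of the ``square'' blocks $\PP_{\bbar{L}}(U \times V) = \maps(U, \FF_{\bbar{L}}(V))$, and restricting along the diagonal $\diag(\bbar{L}) \subset \bbar{L} \times \bbar{L}$ recovers exactly $\diag^* \PP_{\bbar{L}} = (\FF_{\bbar{L}})^\bullet$; the mixed strata $S \times L$ of $X \times X$ play no role (cf.\ Remark \ref{rmk-diagstratsheaf}).

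The second step is to verify that $\bbar{L}$ satisfies the hypotheses of Proposition \ref{prop-formalisflex}, i.e.\ that it is a locally compact, finite dimensional, Hausdorff space. This is immediate: by Definition \ref{def-Idec} the space $X$ is locally compact, second countable, and metrizable, and $\bbar{L}$ is a closed subspace of $X$, hence itself locally compact and Hausdorff; finite dimensionality follows from $\dim X < \infty$ (equivalently, $\bbar{L}$ is a finite dimensional separable metric space, hence of finite covering dimension).

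The final step is to invoke Proposition \ref{prop-formalisflex} with base space $\bbar{L}$ and continuous sheaf $\FF_{\bbar{L}}$, which gives that $(\FF_{\bbar{L}})^\bullet = \FF^\bullet_{\bbar{L}}$ is flexible, as desired. I do not expect any real obstacle here; the proposition is essentially a corollary of Gromov's result once the correct identification is in place. The only point needing (minor) care is precisely the first step — checking that the stratified diagonal normal construction, restricted to a stratum closure $\bbar{L}$, coincides with the unstratified construction applied to $\FF_{\bbar{L}}$ — and this is already recorded in Remark \ref{rmk-diagstratsheaf}.
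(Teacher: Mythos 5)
Your proof is correct and takes essentially the same route as the paper: both identify $\FF^\bullet_{\bbar{L}}$ with the ordinary Gromov diagonal normal construction $(\FF_{\bbar{L}})^\bullet$ (the paper cites naturality from Lemma \ref{lem-f2f*} for this, you cite Remark \ref{rmk-diagstratsheaf}) and then invoke Proposition \ref{prop-formalisflex}. You add a small amount of detail — explicitly checking that $\bbar{L}$ is locally compact, Hausdorff, and finite dimensional — that the paper leaves implicit, but there is no difference in substance.
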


\begin{proof}
	Flexibility of {$\FF^\bullet_{\bbar{L}}$} for every stratum $L$ follows from Proposition \ref{prop-formalisflex} and naturality of restriction maps from Lemma \ref{lem-f2f*}. 
\end{proof}

We are now in a position to state the the stratified analog of Theorem \ref{thm-fleximphprin}.

\begin{theorem}\label{thm-flex2shprin}
	Let $\FF$ be a stratified  (continuous) 
	sheaf  over a   stratified space {$(X, \Sigma)$}. If {$\FF_{\bbar{L}}$ is flexible for every $L \in \Sigma$}, it satisfies the parametric sheaf-theoretic stratified $h$-principle.
\end{theorem}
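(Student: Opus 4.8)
The plan is to reduce the statement, one stratum-closure at a time, to Gromov's non-stratified result (Theorem \ref{thm-fleximphprin}). By Definition \ref{def-ssheafh}, what has to be shown is that for every object $U$ of $\str(X, \Sigma)$ the morphism $\Delta_U : \FF(U) \to \FF^\bullet(U)$ is a weak homotopy equivalence. Every object $U$ of $\str(X, \Sigma)$ is, by Definition \ref{def-stratfdsite}, an open subset of some stratum-closure $\bbar{L}$. For such $U$ one has $\FF(U) = \FF_{\bbar{L}}(U)$, and by Lemma \ref{lem-f2f*} (see also Remark \ref{rmk-diagstratsheaf}) the value $\FF^\bullet(U) = \FF^\bullet_{\bbar{L}}(U)$ is nothing but $(\FF_{\bbar{L}})^\bullet(U)$, the Gromov diagonal normal sheaf (Definition \ref{def-formalfn}) of the \emph{ordinary} continuous sheaf $\FF_{\bbar{L}}$ over the space $\bbar{L}$; moreover $\Delta_U$ coincides with the tautological inclusion $\Delta : \FF_{\bbar{L}} \to (\FF_{\bbar{L}})^\bullet$ of Section \ref{sec-gromovformalfn}. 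So it suffices to prove that each $\FF_{\bbar{L}}$, viewed as a continuous sheaf over $\bbar{L}$, satisfies the parametric (non-stratified) sheaf-theoretic $h$-principle.

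To invoke Theorem \ref{thm-fleximphprin} I would first check that $\bbar{L}$ meets its hypotheses. As the closure of a stratum of $X$, $\bbar{L}$ is a closed stratified subspace of $X$ (stratified by the strata $T \leq L$); in particular it is locally compact and second countable, and by Theorem \ref{goresky-triangulation} it is triangulable, hence a locally compact countable polyhedron. Alternatively, one simply notes that Theorem \ref{thm-fleximphprin} is explicitly stated to apply to stratified spaces, and $\bbar{L}$ is one. Since $\FF_{\bbar{L}}$ is flexible by hypothesis, Theorem \ref{thm-fleximphprin} applies and gives that $\Delta_U : \FF_{\bbar{L}}(U) \to (\FF_{\bbar{L}})^\bullet(U)$ is a weak homotopy equivalence for every open $U \subset \bbar{L}$. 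Letting $L$ range over all strata of $X$ covers every object of $\str(X, \Sigma)$, and the parametric stratified $h$-principle follows.

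I do not expect a genuine obstacle here: the one piece of real content is the identification $\FF^\bullet_{\bbar{L}} = (\FF_{\bbar{L}})^\bullet$ together with the compatibility of the stratified and non-stratified $\Delta$'s, and this has already been established in Lemma \ref{lem-f2f*}, whose proof shows that the diagonal normal construction for stratified sheaves is computed stratum-closure by stratum-closure and never sees the mixed strata $S \times L \subset X \times X$. Thus the present theorem is essentially a formal corollary of Gromov's Theorem \ref{thm-fleximphprin}, the substantive work having been front-loaded into the set-up of the stratified site $\str(X, \Sigma)$, the stratified Gromov diagonal normal sheaf $\FF^\bullet$, and Lemma \ref{lem-f2f*}. (The genuinely hard theorem in this part of the paper is, of course, not this one but the implication ``stratumwise flexibility + flexibility across strata $\Rightarrow$ flexibility of $\FF_{\bbar{L}}$'', i.e.\ Theorem \ref{thm-hofibsflexg}/Theorem \ref{thm-sheafh-intro}.)
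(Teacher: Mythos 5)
Your proposal is correct and takes essentially the same route as the paper, which likewise invokes Lemma \ref{lem-f2f*} to see $\Delta$ as a morphism of stratified sheaves and then applies Theorem \ref{thm-fleximphprin} to each flexible $\FF_{\bbar{L}}$; you simply spell out in more detail the identification $\FF^\bullet_{\bbar{L}} = (\FF_{\bbar{L}})^\bullet$ and the verification that $\bbar{L}$ satisfies Gromov's hypotheses.
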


\begin{proof}
	By Lemma \ref{lem-f2f*}, $\Delta$ is a morphism of stratified sheaves.
	The weak homotopy equivalence property for $\Delta: \FF_{\bbar{L}} \to \FF^\bullet_{\bbar{L}}$ 
	for every stratum $L$ follows from Theorem \ref{thm-fleximphprin}. 
\end{proof}

\subsection{Topological properties}\label{sec-topprop} This subsection is rather general in flavor and sets up some basic homotopy theoretic properties of {continuous sheaves} that will be useful later. All topological spaces in this subsection are locally compact $\sigma$-compact finite dimensional locally contractible spaces.

\begin{defn}\label{def-ses}
	Let $\mathcal{F}_1, \mathcal{F}_2, \mathcal{F}_3$ be continuous sheaves on a topological space $X$. We say that 	$$\mathcal{F}_1 \stackrel{p}{\to} \mathcal{F}_2 \stackrel{q}{\to} \mathcal{F}_3$$ is a \emph{homotopy fiber sequence} if 
	\begin{enumerate}
		\item there exists some $\psi \in \mathcal{F}_3(X)$ such that $q_U \circ p_U : \mathcal{F}_1(U) \to \mathcal{F}_3(U)$ is the constant map to $\psi\vert_U$, for all $U \subset X$, and
		\item $\mathcal{F}_1 \to \mathrm{hofib}(q; \psi)$ is a weak homotopy equivalence.
	\end{enumerate}
\end{defn}

The following was observed by Gromov \cite[p.77]{Gromov_PDR} (see the paragraph preceding Theorem $B'$ there).
\begin{rmk}\label{lem-sessheaffibn}
	Let $\mathcal{F}, \mathcal{G}$ be continuous sheaves on $X$ and $q : \mathcal{F} \to \mathcal{G}$ be a morphism of continuous sheaves. If $\mathcal{F}, \mathcal{G}$ are flexible, then for any $\psi \in \mathcal{G}(X)$, $\mathrm{hofib}(q; \psi)$ is flexible. \end{rmk}

\begin{lemma}\label{lem-sessheafflex}
	Let 
	$$\mathcal{F}_1 \stackrel{p}{\to} \mathcal{F}_2 \stackrel{q}{\to} \mathcal{F}_3$$
	be a homotopy fiber sequence as in Definition \ref{def-ses}.
	If $\mathcal{F}_1, \mathcal{F}_3$ satisfy the parametric $h$-principle, then so does $\mathcal{F}_2$. \end{lemma}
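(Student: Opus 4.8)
The plan is to chase the diagram relating $\FF_i$ with $\FF_i^\bullet$ and use the machinery already set up: flexibility of diagonal normal sheaves (Proposition \ref{prop-formalisflex}), the equivalence between flexibility, local weak homotopy equivalence, and weak homotopy equivalence (Theorem \ref{thm-gromov-whe2he}), and the statement (Remark \ref{lem-sessheaffibn}) that the homotopy fiber of a morphism of flexible sheaves is flexible. First I would apply the Gromov diagonal normal construction $(-)^\bullet$ to the homotopy fiber sequence $\FF_1 \xrightarrow{p} \FF_2 \xrightarrow{q} \FF_3$. Since $\FF_1 \to \mathrm{hofib}(q;\psi)$ is a weak homotopy equivalence of sheaves, and since the $(-)^\bullet$ construction commutes with $\maps(I, -)$ (Lemma \ref{lemw2ff*}) and with taking fibers (it is built pointwise out of mapping spaces, so $\mathrm{hofib}$ is preserved up to the appropriate naturality), we get that $\FF_1^\bullet \xrightarrow{p^\bullet} \FF_2^\bullet \xrightarrow{q^\bullet} \FF_3^\bullet$ is again a homotopy fiber sequence — at least after checking that $\FF_1^\bullet \to \mathrm{hofib}(q^\bullet;\psi^\bullet)$ is a weak homotopy equivalence, which follows because at the level of stalks everything is a mapping space out of a contractible germ and the construction is exact on such data.

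Next I would set up the commuting ladder
$$
\begin{CD}
\FF_1(U) @>{p}>> \FF_2(U) @>{q}>> \FF_3(U) \\
@V{\Delta_1}VV @V{\Delta_2}VV @V{\Delta_3}VV \\
\FF_1^\bullet(U) @>{p^\bullet}>> \FF_2^\bullet(U) @>{q^\bullet}>> \FF_3^\bullet(U)
\end{CD}
$$
where each row is (the sections over $U$ of) a homotopy fiber sequence. By hypothesis $\Delta_1$ and $\Delta_3$ are weak homotopy equivalences for all open $U$. I would then invoke the five-lemma for the long exact sequences of homotopy groups associated to the two fibrations: the bottom row is a genuine fibration sequence of spaces because $\FF_i^\bullet$ is flexible (Proposition \ref{prop-formalisflex}), hence $\til q^\bullet$ is a fibration, and the top row is a homotopy fiber sequence by assumption, so both yield long exact sequences in $\pi_*$. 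Since $\Delta_1$ and $\Delta_3$ induce isomorphisms on all homotopy groups, the five-lemma forces $\Delta_2 : \FF_2(U) \to \FF_2^\bullet(U)$ to be a weak homotopy equivalence, which is exactly the statement that $\FF_2$ satisfies the parametric $h$-principle.

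The main technical obstacle I expect is the justification that $(-)^\bullet$ sends a homotopy fiber sequence of sheaves to a homotopy fiber sequence: the construction $\FF \mapsto \FF^\bullet = \diag^*\PP$ with $\PP(U\times V)=\maps(U,\FF(V))$ is covariant and built from mapping spaces, but one must verify carefully that $\mathrm{hofib}(q^\bullet) \simeq (\mathrm{hofib}\,q)^\bullet$ as sheaves — the issue being that $\mathrm{hofib}$ involves $\maps(I,-)$ applied to $\FF_3$, and one needs Lemma \ref{lemw2ff*} together with the fact that $\maps(U,-)$ preserves fibrations to commute these operations. Once this compatibility is in hand, everything else is a formal diagram chase. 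A minor point to handle en route: the basepoint $\psi$ for the original homotopy fiber sequence determines, via $\Delta_3$, a basepoint $\psi^\bullet := \Delta_3(\psi)$ for the $(-)^\bullet$ sequence, and one should note that $q^\bullet \circ p^\bullet$ is the constant map to $\psi^\bullet$ so that the bottom row is indeed a fiber sequence over the correct basepoint.
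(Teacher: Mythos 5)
Your argument follows the same route as the paper's: apply the diagonal normal construction to the given homotopy fiber sequence, obtain a commutative ladder with $\Delta$'s as vertical arrows, and conclude by the five-lemma applied to the induced long exact sequences in homotopy. You correctly flag the one technical point the paper leaves implicit — that the bottom row $\FF_1^\bullet \to \FF_2^\bullet \to \FF_3^\bullet$ is again a homotopy fiber sequence, i.e.\ that $\FF_1^\bullet \to \hofib(q^\bullet;\Delta(\psi))$ is a weak homotopy equivalence — and your stalkwise reasoning via Lemma~\ref{lemw2ff*} and the contractibility of germs is the right way to see it. One small inaccuracy worth fixing: the map $\til q^\bullet$ being a fibration is automatic from the path-space construction in Definition~\ref{def-hofib} and has nothing to do with flexibility of $\FF_i^\bullet$; what flexibility buys via Remark~\ref{lem-sessheaffibn} is that $\hofib(q^\bullet)$ is itself flexible as a sheaf, which is a different statement and not what you need at that step.
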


\begin{proof} From the homotopy fiber sequence,
	we obtain a sequence of morphisms $\mathcal{P}_1 \to \mathcal{P}_2 \to \mathcal{P}_3$ of continuous sheaves over $X \times X$ (see Definition \ref{def-formalfn} and the preceding discussion for notation). Restricting to $\diag(X) \subset X \times X$, we obtain a sequence of morphisms $\mathcal{F}_1^\bullet \to \mathcal{F}_2^\bullet \to \mathcal{F}_3^\bullet$ of sheaves over $X$. 
	By functoriality of the diagonal normal construction (see for instance Lemma \ref{lem-f2f*}), we obtain a commutative diagram
	\begin{center}
		$\begin{CD}
			\mathcal{F}_1 @>>> \mathcal{F}_2 @>>> \mathcal{F}_3\\
			@VVV @VVV @VVV\\
			\mathcal{F}_1^\bullet @>>> \mathcal{F}_2^\bullet @>>> \mathcal{F}_3^\bullet
		\end{CD}$
	\end{center}
	As $\mathcal{F}_1, \mathcal{F}_3$ satisfies the parametric $h$-principle, the first and third vertical arrows are weak homotopy equivalences. We may evaluate the diagram of sheaves on any open set $U \subset X$, and use naturality of  homotopy long exact sequences corresponding to the rows to conclude, by an application of the 5-lemma, that $\mathcal{F}_2(U) \to \mathcal{F}_2^\bullet(U)$ is a weak homotopy equivalence. Thus, $\mathcal{F}_2 \to \mathcal{F}_2^\bullet$ is a weak homotopy equivalence of continuous sheaves. This demonstrates the parametric $h$-principle for $\mathcal{F}_2$.\end{proof}

\begin{rmk}\label{rmk-sessheafflex}
	The proof of Lemma \ref{lem-sessheafflex} goes through mutatis  mutandis to show that if any two of  $\mathcal{F}_1,\mathcal{F}_2, \mathcal{F}_3$ satisfy the parametric $h$-principle, then so does the third. 
\end{rmk}

\begin{conv}
Henceforth, we adopt Gromov's convention \cite[Section 1.4.1]{Gromov_PDR} of referring to an arbitrarily small but non-specified neighborhood of a set $K \subset X$
by $\op_X K$, or simply $\op K$ if there is no scope for confusion. Thus, $\op K$ refers to a small neighborhood of $K$ 
which may become even smaller in the course of the argument \cite[p. 35]{Gromov_PDR}
(see the table on \cite[p. 36]{Gromov_PDR} for further details about this convention/notation).
\end{conv}

\begin{lemma}\label{lem-formalfnrestcomm}
	Let $\mathcal{F}$ be a continuous sheaf on a topological space $X$, and $Z \subset X$ be a closed subspace. Then the diagonal normal construction commutes with restriction, i.e.\ there is a weak homotopy equivalence of continuous sheaves $\iota_Z^* (\FF^\bullet) \to (\iota_Z^* \mathcal{F})^\bullet$.\end{lemma}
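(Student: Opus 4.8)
The plan is to unwind the definitions of both sides and produce an explicit comparison map, then verify it is a weak homotopy equivalence stalkwise (and hence, since both are continuous sheaves, on all opens by general nonsense, or directly). Recall that $\FF^\bullet = \diag^*\PP$, where $\PP$ is the sheaf on $X \times X$ with $\PP(U \times V) = \maps(U, \FF(V))$. For the closed subspace $Z \subset X$, we have $\iota_Z^*(\FF^\bullet)(W) = \varinjlim_{U \supset W} \FF^\bullet(U)$ for $W \subset Z$ open, whereas $(\iota_Z^*\FF)^\bullet = \diag_Z^* \PP_Z$, where $\PP_Z$ is the sheaf on $Z \times Z$ given by $\PP_Z(W_1 \times W_2) = \maps(W_1, (\iota_Z^*\FF)(W_2)) = \maps(W_1, \varinjlim_{V \supset W_2} \FF(V))$. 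So the difference between the two sides is precisely a matter of the order in which one takes the colimit over neighborhoods in $X$ versus the diagonal restriction and the $\maps(-,-)$ construction. There is an evident morphism $\iota_Z^*(\FF^\bullet) \to (\iota_Z^*\FF)^\bullet$ obtained by restricting a germ near $\diag(W) \subset X \times X$ of a section of $\PP$ to a germ near $\diag(W) \subset Z \times Z$; I would write this down carefully using the basic-open-cover description of sections of $\FF^\bullet$ from Remark~\ref{rmk-formalasgerms}.

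First I would check the map on stalks. Fix $z \in Z$. The stalk $(\iota_Z^*\FF^\bullet)_z = \FF^\bullet_z$, and by the discussion preceding Remark~\ref{rmk-hprinvsflex} the tautological inclusion $\FF_z \to \FF^\bullet_z$ is a weak homotopy equivalence whenever $X$ is locally contractible (a standing hypothesis of the subsection). Similarly $(\iota_Z^*\FF)^\bullet_z$ receives a weak equivalence from $(\iota_Z^*\FF)_z = \FF_z$, again using that $Z$ is locally contractible — here one needs $Z$ itself to be locally contractible, which should be included in or follows from the standing hypotheses (``locally compact $\sigma$-compact finite dimensional locally contractible'' spaces). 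Both stalks are thus weakly equivalent to $\FF_z$, and one checks the comparison map is compatible with these two identifications, i.e.\ the triangle
$$
\begin{CD}
\FF_z @= \FF_z \\
@VVV @VVV \\
(\iota_Z^*\FF^\bullet)_z @>>> (\iota_Z^*\FF)^\bullet_z
\end{CD}
$$
commutes up to homotopy. Hence the bottom arrow is a weak homotopy equivalence on every stalk.

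Next I would upgrade the stalkwise statement to a weak homotopy equivalence of sheaves. Both $\iota_Z^*(\FF^\bullet)$ and $(\iota_Z^*\FF)^\bullet$ are flexible continuous sheaves over $Z$: the second by Proposition~\ref{prop-formalisflex} applied to $\iota_Z^*\FF$ on $Z$, and the first because $\FF^\bullet$ is flexible over $X$ by Proposition~\ref{prop-formalisflex} and the restriction of a flexible sheaf to a closed subspace is flexible (one reduces directly to the definition: for compact $K \subset K' \subset Z$, $(\iota_Z^*\FF^\bullet)(K') = \FF^\bullet(K')$ and $(\iota_Z^*\FF^\bullet)(K) = \FF^\bullet(K)$, and the map between them is the fibration furnished by flexibility of $\FF^\bullet$). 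Then Theorem~\ref{thm-gromov-whe2he} applies: a morphism of flexible sheaves over a finite-dimensional locally compact Hausdorff space that is a weak homotopy equivalence stalkwise is a weak homotopy equivalence on all opens. This concludes the proof.

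The main obstacle I anticipate is purely bookkeeping: constructing the comparison morphism $\iota_Z^*(\FF^\bullet) \to (\iota_Z^*\FF)^\bullet$ cleanly and checking it is well-defined, i.e.\ that it respects the consistency/gluing conditions on the representing families $\phi_i \in \PP(U_i \times U_i)$ and is independent of the choices of neighborhoods and covers. The subtlety is that $\iota_Z^*(\FF^\bullet)$ involves a colimit over neighborhoods of $W$ in $X$ of germs along $\diag(U)$, while $(\iota_Z^*\FF)^\bullet$ mixes a colimit over neighborhoods in the second $X$-factor with germs along $\diag(W)$ inside $Z \times Z$; interchanging these requires a short cofinality argument (a neighborhood basis of $\diag(W)$ in $Z \times Z$ together with neighborhoods in the second factor is cofinal in the appropriate diagram). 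Once the map is in hand the rest is soft, relying only on local contractibility for the stalk computation and on Theorem~\ref{thm-gromov-whe2he} for the globalization.
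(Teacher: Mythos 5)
Your proposal follows essentially the same route as the paper's own proof: define the comparison morphism by restricting germs along $\diag(W)\subset X\times X$ to germs along $\diag(W)\subset Z\times Z$, show both sides are flexible (for $\iota_Z^*(\FF^\bullet)$ using the identity $(\iota_Z^*\FF^\bullet)(K)=\FF^\bullet(K)$ for compact $K\subset Z$, for $(\iota_Z^*\FF)^\bullet$ by Proposition~\ref{prop-formalisflex}), verify the map is a stalkwise weak equivalence using local contractibility, and globalize via Theorem~\ref{thm-gromov-whe2he}. The only cosmetic difference is that you route the stalkwise computation through the tautological equivalence $\FF_z\to\FF^\bullet_z$ and a commuting triangle, whereas the paper states the stalkwise equivalence directly; this is a harmless reorganization of the same observation.
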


\begin{proof} Recall that the diagonal normal construction applied to any sheaf yields a flexible sheaf (Proposition \ref{prop-formalisflex}). Therefore $\mathcal{G} := \FF^\bullet$ is a flexible sheaf on $X$. For $K \subset Z$  compact, we have the following:
	$$(\iota_Z^* \mathcal{G})(K) = \lim\limits_{\substack{V \supset K \\ V \subset Z \text{ open}}} \iota_Z^*\mathcal{G}(V) = \lim\limits_{\substack{V \supset K \\ V \subset Z \text{ open}}} \ \lim\limits_{\substack{U \supset V \\ U \subset X \text{ open}}} \mathcal{G}(U) = \lim\limits_{\substack{U \supset K \\ U \subset X \text{ open}}} \mathcal{G}(U) = \mathcal{G}(K)$$
	Thus, flexibility of $\mathcal{G}$ implies flexibility of $\iota_Z^* \mathcal{G} = \iota_Z^*(\FF^\bullet)$. Moreover, $(\iota_Z^* \mathcal{F})^\bullet$ is flexible by flexibility of the diagonal normal construction as mentioned above. Consider the sheaf morphism 
	$$(-)|_{Z} : \iota_Z^* (\FF^\bullet) \to (\iota_Z^* \mathcal{F})^\bullet$$
	defined on the stalk over $z \in Z$ by sending a germ of a mapping $\psi : \op_X(z) \to \mathcal{F}_z$ to its restriction $\psi|\op_Z(z) : \op_Z(z) \to \mathcal{F}_z$. We check that $(-)|_{Z}$ is  a sheaf morphism. Indeed, given any open set $U \subset Z$ consider an open cover $\{V_i\}$ of $U$ in $X$, and a collection $\{\phi_i : V_i \to \mathcal{F}(V_i)\}$ which is \emph{consistent}, i.e., 
	$$\mathrm{res}_{V_i\cap V_j, V_i} \circ \phi_i|_{V_i \cap V_j} \equiv \mathrm{res}_{V_i\cap V_j, V_j} \circ \phi_j|_{V_i \cap V_j}.$$
	Thus $\{\phi_i : V_i \to \mathcal{F}(V_i)\}$ represents an element $\phi \in \iota_Z^*(\FF^\bullet)(U)$. The restrictions $\{\phi_i : V_i \cap Z \to \mathcal{F}(V_i)\}$ are also  consistent, simply by restricting the above equality to $Z$. Therefore $\{\phi_i : V_i \cap Z \to \mathcal{F}(V_i)\}$ represents an element $\phi|_Z \in (\iota_Z^*\mathcal{F})^\bullet(U)$. Observe that $(-)|_{Z}$ is a \emph{stalkwise} weak homotopy equivalence as $X, Z$ are locally contractible. As both the domain and target sheaves are flexible, we  conclude that $(-)|_{Z}$ is a weak homotopy equivalence by appealing to a theorem of Gromov  \cite[Theorem B, p. 77]{Gromov_PDR} which says that local  weak homotopy equivalence implies  weak homotopy equivalence for flexible sheaves. \end{proof}

\begin{rmk}\label{rmk-formalfnrestcomm}\rm{
	Suppose $Z \subset X$ is a neighborhood deformation retract, and let $\pi : N_Z \to Z$ be a choice of such a retract. Then we can write down an explicit homotopy-inverse 
	$$(-) \circ \pi : (\iota_Z^* \mathcal{F})^\bullet \to \iota_Z^* (\FF^\bullet)$$
	defined on the stalk over $z \in Z$ by sending a germ of a mapping $\psi : \op_Z(z) \to \mathcal{F}_z$ to $\psi \circ \pi : \op_X(z) \to \mathcal{F}_z$. Let $\phi \in (\iota_Z^*\mathcal{F})^\bullet(U)$ be a section represented by a consistent family $\{\phi_i : V_i \cap Z \to \mathcal{F}(V_i)\}$ for a $\pi$-saturated open cover $\{V_i\}$ (i.e., $V_i = \pi^{-1}(V_i \cap Z)$) of $U$ in $N_Z$, satisfying the consistency relations:
	$$\mathrm{res}_{V_i\cap V_j, V_i} \circ \phi_i|_{V_i \cap V_j \cap Z} \equiv \mathrm{res}_{V_i\cap V_j, V_j} \circ \phi_j|_{V_i \cap V_j \cap Z}$$
	By taking a sufficiently fine open cover, we may assume $V_i \subset N_Z$. By composing with $\pi$ on both sides we obtain a consistent family $\{\phi_i \circ \pi : V_i \to \mathcal{F}(V_i)\}$ representing $\phi \circ \pi \in \iota_Z^*(\FF^\bullet)(U)$, proving that it is a well-defined sheaf homomorphism. Gromov's theorem  \cite[Theorem B, p. 77]{Gromov_PDR} once again demonstrates that it is a weak homotopy equivalence.}\end{rmk}

\begin{defn}\label{def-deldgerm}
	Let $(A, B)$ be a pair of topological spaces where $B \subset A$ is closed. Let $\mathcal{F}$ be a continuous sheaf on $A$. We define the space of sections on a \emph{deleted germinal neighborhood of $B$ in $A$} as 
	$$\mathcal{F}(\op(B) \setminus B) := \injlim_{U \supset B} \mathcal{F}(U \setminus B)$$ 
	There is a restriction map $\mathcal{F}(U) \to \mathcal{F}(U \setminus B)$ for every open neighborhood $U$ of $B \subset A$ which is compatible with the associated directed system indexed by the poset of open neighborhoods $\{U \subset B : A \subset U\}$. Hence we get a restriction map $\mathcal{F}(B) \to \mathcal{F}(\op(B) \setminus B)$ by applying direct limits $\varinjlim_U$ to both sides. Moreover, we have a restriction map $\mathcal{F}(A \setminus B) \to \mathcal{F}(\op(B) \setminus B)$ by restricting a section on $A \setminus B$ to a deleted germinal neighborhood of $B$ in $A$.\end{defn}

\begin{lemma}\label{lem-pastedeld}
	Let $(A, B)$ be a pair of topological spaces where $B \subset A$ is closed. Let $\mathcal{F}$ be a continuous sheaf on $A$. Then the following is a fiber square of quasitopological spaces:
	\begin{center}
		$\begin{CD}
			\mathcal{F}(A) @>>> \mathcal{F}(B)\\
			@VVV @VVV\\
			\mathcal{F}(A \setminus B) @>>> \mathcal{F}(\op(B) \setminus B)
		\end{CD}$
	\end{center}
\end{lemma}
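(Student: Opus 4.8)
The plan is to verify the universal property of the fiber product directly, working at the level of quasitopological spaces, i.e.\ by checking that for every topological space $W$ the induced square of honest sets $\{\maps(W,-)\}$ is a pullback of sets, compatibly in $W$. Concretely, an element of the fiber product $\mathcal{F}(A\setminus B)\times_{\mathcal{F}(\op(B)\setminus B)}\mathcal{F}(B)$ tested by $W$ consists of a continuous map $\sigma\colon W\to\mathcal{F}(A\setminus B)$ (a section on $A\setminus B$ depending continuously on $W$) together with a continuous map $\tau\colon W\to\mathcal{F}(B)$ (a germ of a section on a neighborhood of $B$) such that their restrictions to a deleted germinal neighborhood of $B$ agree. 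I want to produce from this data a unique continuous map $W\to\mathcal{F}(A)$, i.e.\ a genuine section on all of $A$. The agreement hypothesis says precisely that there is \emph{some} open neighborhood $U\supset B$ on which a representative of $\tau$ and the restriction of $\sigma$ define the same element of $\mathcal{F}(U\setminus B)$; shrinking, we may assume the representative $\tilde\tau\in\mathcal{F}(U)$ and $\sigma|_{U\setminus B}$ literally coincide in $\mathcal{F}(U\setminus B)$.

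The key step is then the gluing (sheaf) axiom for $\mathcal{F}$ applied to the open cover $\{A\setminus B,\ U\}$ of $A$: since $(A\setminus B)\cap U=U\setminus B$ and the two sections $\sigma|_{A\setminus B}$ and $\tilde\tau|_{U}$ restrict to the same element of $\mathcal{F}(U\setminus B)$, they glue to a unique element of $\mathcal{F}(A)$. Uniqueness of the glued section, together with the separatedness part of the sheaf axiom, gives uniqueness of the map $W\to\mathcal{F}(A)$; and naturality in $W$ is automatic since every construction used (restriction maps, the gluing isomorphism) is natural. This shows the square is a pullback of sets for each $W$, hence a fiber square of quasitopological spaces by the definition of the quasitopology on each corner (the corner spaces $\mathcal{F}(A\setminus B)$, $\mathcal{F}(B)=\injlim_{U}\mathcal{F}(U)$, $\mathcal{F}(\op(B)\setminus B)=\injlim_U\mathcal{F}(U\setminus B)$ all carry the quasitopologies for which $\maps(W,-)$ commutes with these limits).

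The main obstacle, and the only point requiring care, is the interaction of the two direct limits defining $\mathcal{F}(B)$ and $\mathcal{F}(\op(B)\setminus B)$ with the test space $W$: one must ensure that a $W$-family of germs, together with a $W$-family of sections on $A\setminus B$ that agree germinally, can be represented \emph{uniformly in $W$} on a single neighborhood $U$ of $B$. This is where local compactness and $\sigma$-compactness of the ambient spaces (standing hypotheses in this subsection) enter: they guarantee that the relevant colimits are filtered and that $\maps(W,-)$ commutes with them, so a continuous map $W\to\mathcal{F}(\op(B)\setminus B)$ indeed factors, after passing to a possibly smaller neighborhood, through some $\mathcal{F}(U\setminus B)$. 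Once this bookkeeping is set up, the rest is a formal consequence of the sheaf axiom and the universal property of colimits, exactly as in the classical Mayer--Vietoris patching for sheaves of sections.
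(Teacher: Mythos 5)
Your proposal is correct and follows essentially the same route as the paper: in both, the content is that (i) the sheaf gluing axiom applied to the cover $\{A\setminus B,\,V\}$ pastes the two compatible sections, and (ii) the quasitopology on the direct limits $\mathcal F(B)=\varinjlim_U\mathcal F(U)$ and $\mathcal F(\op(B)\setminus B)=\varinjlim_U\mathcal F(U\setminus B)$ lets a continuous test map $W\to\mathcal F(\op(B)\setminus B)$ factor through a single $\mathcal F(U\setminus B)$, so the pasting can be done uniformly in $W$. The only minor difference is organizational (you verify the universal property of the pullback directly on test spaces, while the paper first builds the inverse map $\Psi$ at the set level and then checks its continuity via test spaces), and your appeal to local compactness / $\sigma$-compactness for filteredness of the colimit is superfluous --- the poset of open neighborhoods of $B$ is directed regardless, and the factoring-through-a-stage property is built into the quasitopology on colimits.
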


\begin{proof} Suppose $\psi_1 \in \mathcal{F}(B)$ and $\psi_2 \in \mathcal{F}(A \setminus B)$ such that $\psi_1|({\op(B) \setminus B}) = \psi_2|({\op(B) \setminus B})$. Pick a representative $\widetilde{\psi}_1 \in \mathcal{F}(U)$ for some $U \supset B$ open neighborhood. Then we must have $\widetilde{\psi}_1|({V\setminus B}) = \psi_2|({V\setminus B})$ for some deleted open neighborhood $V \setminus B \subset A$. Next, we know that the following is a fiber square by the gluing axiom:
	\begin{center}
		$\begin{CD}
			\mathcal{F}(A) @>>> \mathcal{F}(V)\\
			@VVV @VVV\\
			\mathcal{F}(A \setminus B) @>>> \mathcal{F}(V \setminus B)
		\end{CD}$
	\end{center}
	We may then glue $\widetilde{\psi}_1|V$ and $\psi_2$ to obtain a $\psi \in \mathcal{F}(A)$. The element $\psi$ is independent of the choice of $\widetilde{\psi}_1$. We therefore obtain a well-defined map 
	$$\Psi : \mathcal{F}(A \setminus B)\times_{\mathcal{F}(\op(B) \setminus B)} \mathcal{F}(B) \to \mathcal{F}(A),$$ given  by $\Psi (\psi_1, \psi_2) = \psi.$
	For any topological space $X$, consider  a continuous map $f : X \to \mathcal{F}(A \setminus B) \times_{\mathcal{F}(\op(B)\setminus B)} \mathcal{F}(B)$ with respect to the quasitopology on the codomain. Then $f$ is equivalent  to a pair of continuous maps $f_1 : X \to \mathcal{F}(A \setminus B)$ and $f_2 : X \to \mathcal{F}(B)$ which agree when composed with the restriction to $\mathcal{F}(\op(B) \setminus B)$, by definition of quasitopology of fiber products. By definition of quasitopology on limits, there exists an open neighborhood $U$ and a deleted open neighborhood $V \setminus B$ of $B$ contained in $U$, such that $f_2$ factors through a continuous map $\widetilde{f}_2 : X \to \mathcal{F}(U)$ and $f_1, \widetilde{f}_2$ agree when restricted to $\mathcal{F}(V \setminus B)$. Therefore, we may paste $\widetilde{f}_2|V$ and $f_1$ to a continuous map $g : X \to \mathcal{F}(A)$. We see that $g = f \circ \Psi$, therefore $\Psi$ preserves the quasitopologies on the domain and codomain, i.e., $\Psi$ is continuous.
	
	Finally, $\Psi$ is inverse to the natural continuous map going in the opposite direction obtained from the universal property of fiber products. Thus, $\Psi$ is a homeomorphism of quasitopological spaces.\end{proof}

\begin{lemma}\label{lem-pastecpts} Let $\mathcal{F}$ be a flexible sheaf on a topological space on $X$. Let $A, K \subset X$ be a pair of subsets such that $K, A\cap K$ are both compact. Then $\mathcal{F}(A \cup K) \to \mathcal{F}(A)$ is a fibration.
\end{lemma}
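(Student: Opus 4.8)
The plan is to reduce the fibration property of $\mathcal F(A\cup K)\to\mathcal F(A)$ to the flexibility of $\mathcal F$ applied to the pair of compact sets $A\cap K\subset K$, by way of the fiber-square of Lemma \ref{lem-pastedeld}. First I would record the relevant instance of that gluing square: taking the ambient pair to be $(A\cup K, K)$ with $K$ closed, Lemma \ref{lem-pastedeld} gives a fiber square
$$
\begin{CD}
\mathcal F(A\cup K) @>>> \mathcal F(K)\\
@VVV @VVV\\
\mathcal F(A) @>>> \mathcal F\bigl(\op(A\cap K)\setminus (A\cap K)\bigr).
\end{CD}
$$
Here I am using that $(A\cup K)\setminus K = A\setminus K$ and that, since $K$ is closed, a deleted germinal neighborhood of $K$ inside $A\cup K$ agrees with a deleted germinal neighborhood of $A\cap K$ inside $A$ (a point of $A$ near $K$ is a point of $A$ near $A\cap K$, and conversely); one should spell out this identification of germinal neighborhoods carefully, as it is the one genuinely fiddly point. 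Granting it, $\mathcal F(A\cup K)$ is the pullback of $\mathcal F(K)\to\mathcal F(\op(A\cap K)\setminus(A\cap K))$ along $\mathcal F(A)\to\mathcal F(\op(A\cap K)\setminus(A\cap K))$.

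Next I would argue that the right-hand vertical map $\mathcal F(K)\to\mathcal F(\op(A\cap K)\setminus(A\cap K))$ is a (Serre) fibration of quasitopological spaces. It factors as $\mathcal F(K)\to\mathcal F(A\cap K)\to\mathcal F(\op(A\cap K)\setminus(A\cap K))$. The first map is a fibration: $A\cap K\subset K$ are both compact, so this is exactly the flexibility hypothesis on $\mathcal F$ (Definition \ref{def-kan}). For the second map one expresses $\mathcal F(\op(A\cap K)\setminus(A\cap K))=\varinjlim_{U\supset A\cap K}\mathcal F(U\setminus(A\cap K))$, and for each open $U$ the set $A\cap K$ is compact inside the (locally compact Hausdorff) space $U$, so one may exhaust $U\setminus(A\cap K)$ by compacts and use flexibility again, together with the fact that a filtered colimit of fibrations along cofibration-like restriction maps remains a fibration in the quasitopological/Serre sense of \cite[Ch. 2]{Gromov_PDR}. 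The upshot is that $\mathcal F(K)\to\mathcal F(\op(A\cap K)\setminus(A\cap K))$ has the homotopy lifting property against compact polyhedra.

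Finally, since the displayed square is a fiber square of quasitopological spaces and the right vertical arrow is a fibration, its pullback $\mathcal F(A\cup K)\to\mathcal F(A)$ is also a fibration: pullbacks of Serre fibrations are Serre fibrations, and the compatibility of the quasitopology with fiber products — already exploited in the proof of Lemma \ref{lem-pastedeld} — ensures that a lifting problem for $\mathcal F(A\cup K)\to\mathcal F(A)$ against a compact polyhedron $P$ transports to a lifting problem for $\mathcal F(K)\to\mathcal F(\op(A\cap K)\setminus(A\cap K))$ against $P$ whose solution pulls back to the desired lift. This gives the claim.

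I expect the main obstacle to be the second paragraph: verifying that $\mathcal F(K)\to\mathcal F(\op(A\cap K)\setminus(A\cap K))$ is an honest fibration. The subtlety is that $\op(A\cap K)\setminus(A\cap K)$ is not compact, so flexibility does not apply to it directly; one must pass through a colimit argument and check that the colimit is well-behaved with respect to the quasitopology and to the homotopy lifting property — essentially a germ-level version of the ``flexibility restricts to germinal neighborhoods'' phenomenon. The identification of the two notions of deleted germinal neighborhood used in the first paragraph is a close second in terms of where care is needed, though it is purely point-set.
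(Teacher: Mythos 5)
Your application of Lemma \ref{lem-pastedeld} is incorrect, and the error is fatal to the argument. With ambient pair $(A\cup K, K)$, that lemma yields the fiber square
$$
\begin{CD}
\mathcal F(A\cup K) @>>> \mathcal F(K)\\
@VVV @VVV\\
\mathcal F\bigl((A\cup K)\setminus K\bigr) @>>> \mathcal F\bigl(\op(K)\setminus K\bigr),
\end{CD}
$$
and $(A\cup K)\setminus K = A\setminus K$, \emph{not} $A$. So the bottom-left corner is $\mathcal F(A\setminus K)$, and the left vertical arrow is $\mathcal F(A\cup K)\to\mathcal F(A\setminus K)$. Even if you successfully establish that the right vertical arrow is a fibration, pulling back only shows that $\mathcal F(A\cup K)\to\mathcal F(A\setminus K)$ is a fibration — a strictly weaker statement that does not give $\mathcal F(A\cup K)\to\mathcal F(A)$ is a fibration. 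There is no general factorization trick that closes this gap: the restriction $\mathcal F(A\cup K)\to\mathcal F(A)$ factors through $\mathcal F(A\cup K)\to\mathcal F(A)\to\mathcal F(A\setminus K)$, and knowing the composite is a fibration tells you nothing about the first factor.

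The paper avoids this entirely by not using Lemma \ref{lem-pastedeld} for this step. It instead invokes the plain Mayer--Vietoris gluing square
$$
\begin{CD}
\mathcal{F}(A \cup K) @>>> \mathcal{F}(K)\\
@VVV @V{\mathrm{res}_{A\cap K, K}}VV\\
\mathcal{F}(A) @>>> \mathcal{F}(A \cap K),
\end{CD}
$$
whose bottom-left really is $\mathcal F(A)$. The right vertical arrow $\mathcal F(K)\to\mathcal F(A\cap K)$ is immediately a fibration by flexibility, because $A\cap K\subset K$ are both compact — no deleted germinal neighborhood, no colimit argument, none of the technical worries you correctly flag as the hardest part of your approach. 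The pullback then gives the claim directly. So in addition to the corner error, you have chosen the harder square: the one the paper uses keeps the target of the fibration a compact-valued evaluation where flexibility applies verbatim, whereas your deleted-neighborhood target requires exactly the limiting machinery of Lemma \ref{lem-topstrat}, which the paper deliberately proves \emph{after} and \emph{by means of} this lemma.
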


\begin{proof}  The following is a fiber square of quasitopological space
	\begin{center}
		$\begin{CD}
			\mathcal{F}(A \cup K) @>{\mathrm{res}_{K, A\cup K}}>> \mathcal{F}(K)\\
			@V{\mathrm{res}_{A, A\cup K}}VV @V{\mathrm{res}_{A\cap K, K}}VV\\
			\mathcal{F}(A) @>{\mathrm{res}_{A\cap K, A}}>> \mathcal{F}(A \cap K)
		\end{CD}$
	\end{center}
	Suppose $\psi : W \times I \to \mathcal{F}(A)$ is a homotopy with an initial lift $\widetilde{\psi}_0 : W \times 0 \to \mathcal{F}(A \cap K)$. As $A\cap K, K$ are compact, $\mathrm{res}_{A\cap K, K} : \mathcal{F}(K) \to \mathcal{F}(A \cap K)$ is a fibration. Therefore, we may lift $\mathrm{res}_{A \cap K, A} \circ \psi : W \times I \to \mathcal{F}(A \cap K)$ with initial condition $\mathrm{res}_{K, A\cup K} \circ \widetilde{\psi}_0 : W \times 0 \to \mathcal{F}(K)$ to a homotopy $\varphi : W \times I \to \mathcal{F}(K)$. Finally, using the fact that the diagram is a fiber square, $\psi, \varphi$ provide a lift $\widetilde{\psi} : W \times I \to \mathcal{F}(A \cup K)$ of $\psi$, finishing the proof.
\end{proof}

\begin{lemma}\label{lem-ltsfibns}  Let $\{X_n\}$ be an inverse system and $\{Y_n\}$ be a directed system of quasitopological spaces. Let $Z, W$ be quasitopological spaces. Let $\{f_n : X_n \to Z\}$ and $\{g_n : W \to Y_n\}$ be a collection of maps compatible with the systems $\{X_n\}$ and $\{Y_n\}$ respectively. Let $X = \varprojlim X_n$, $Y = \varinjlim Y_n$, and $f : X \to Z$, $g : W \to Y$ be the canonical maps from, and to, the respective limits.
	\begin{enumerate}
		\item If $f_n$ are fibrations and the structure maps in $\{X_n\}$ are also fibrations, then $f$ is a fibration.
		\item If $g_n$ are fibrations, then $g$ is a fibration.
	\end{enumerate}
\end{lemma}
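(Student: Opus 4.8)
The plan is to verify the homotopy lifting property of Definition~\ref{def-micfibn} directly in both cases, using two standard compatibilities between mapping spaces and (co)limits of quasitopological spaces: that $\maps(Q,\varprojlim_n X_n)=\varprojlim_n\maps(Q,X_n)$ for every $Q$, and that for a compact polyhedron $P$ the natural map $\varinjlim_n\maps(P,Y_n)\to\maps(P,\varinjlim_n Y_n)$ is a bijection. Both follow from the definition of the quasitopology on inverse limits and on directed colimits, in the style of the limit arguments already used in Section~\ref{sec-topprop} (cf.\ the proof of Lemma~\ref{lem-pastedeld}). I take the index set to be $\mathbb{N}$, as the notation $\{X_n\},\{Y_n\}$ suggests and as holds in all the applications of this lemma.

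For part (1), write the inverse system as a tower with structure maps $q_n:X_{n+1}\to X_n$, $X=\varprojlim_n X_n$, projections $p_n:X\to X_n$, and $f=f_n\circ p_n$ for every $n$. Given a compact polyhedron $P$, a map $\phi:P\to X$, and a homotopy $\Phi':P\times[0,1]\to Z$ of $f\circ\phi$, set $\phi_n:=p_n\circ\phi$, so $\Phi'|_{P\times\{0\}}=f_n\circ\phi_n$ for all $n$. The plan is to build a compatible tower of lifts $\Phi_n:P\times[0,1]\to X_n$ by induction on $n$. For $n=0$, use that $f_0$ is a fibration to lift $\Phi'$ to $\Phi_0$ with $\Phi_0|_{P\times\{0\}}=\phi_0$ and $f_0\circ\Phi_0=\Phi'$. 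For the inductive step, note $q_n\circ\phi_{n+1}=\phi_n=\Phi_n|_{P\times\{0\}}$, so since $q_n$ is a fibration one may lift the homotopy $\Phi_n$ of $q_n\circ\phi_{n+1}$ through $q_n$ to a $\Phi_{n+1}$ with $q_n\circ\Phi_{n+1}=\Phi_n$ and $\Phi_{n+1}|_{P\times\{0\}}=\phi_{n+1}$; the compatibility $f_{n+1}=f_n\circ q_n$ then gives $f_{n+1}\circ\Phi_{n+1}=f_n\circ\Phi_n=\Phi'$ for free. The compatible family $\{\Phi_n\}$ assembles into a single $\Phi:P\times[0,1]\to X$ with $f\circ\Phi=\Phi'$ and $\Phi|_{P\times\{0\}}=\phi$, proving $f$ is a fibration.

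For part (2), write $Y=\varinjlim_n Y_n$ with structure maps $\sigma_n^m:Y_n\to Y_m$ for $n\le m$ and canonical maps $i_n:Y_n\to Y$, so that $g=i_n\circ g_n$. Given $\phi:P\to W$ and a homotopy $\Phi':P\times[0,1]\to Y$ of $g\circ\phi$, the plan is: first, since $P\times[0,1]$ is a compact polyhedron, factor $\Phi'=i_n\circ\Phi'_n$ for some $\Phi'_n:P\times[0,1]\to Y_n$; second, since $i_n\circ\Phi'_n|_{P\times\{0\}}=g\circ\phi=i_n\circ g_n\circ\phi$ and $\varinjlim_m\maps(P\times\{0\},Y_m)\to\maps(P\times\{0\},Y)$ is injective, choose $m\ge n$ with $\sigma_n^m\circ\Phi'_n|_{P\times\{0\}}=\sigma_n^m\circ g_n\circ\phi=g_m\circ\phi$, and replace $\Phi'_n$ by $\Phi'_m:=\sigma_n^m\circ\Phi'_n$, which still satisfies $i_m\circ\Phi'_m=\Phi'$; finally, since $g_m:W\to Y_m$ is a fibration, lift the homotopy $\Phi'_m$ of $g_m\circ\phi$ to $\Phi:P\times[0,1]\to W$ with $\Phi|_{P\times\{0\}}=\phi$ and $g_m\circ\Phi=\Phi'_m$, whence $g\circ\Phi=i_m\circ\Phi'_m=\Phi'$. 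Thus $g$ is a fibration.

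The routine bookkeeping — compatibility of the families $\{\phi_n\}$ and $\{\Phi_n\}$, and the mapping-space/\,(co)limit identifications quoted above — I would dispatch quickly. The one point requiring genuine care is the two-step descent to a finite stage in part (2) (factoring the homotopy, then forcing the initial condition to match), which rests on the precise form of the quasitopology on directed colimits of quasitopological spaces; I expect this to be the main, though mild, obstacle, and it is handled exactly as the analogous limit arguments in Section~\ref{sec-topprop}. (For an inverse system in part (1) that is not a tower there would be a $\varprojlim^1$-type subtlety in choosing the lifts coherently, but the inverse systems occurring in this paper are always towers, so this does not arise.)
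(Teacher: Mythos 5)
Your proof takes the same approach as the paper: for (1) build a compatible tower of lifts by first lifting through $f_0$ and then inductively through the structure maps, and assemble via the universal property of the inverse limit; for (2) factor the homotopy through a finite stage by the definition of the colimit quasitopology and lift there. The one substantive difference is that in (2) you add an explicit step to reconcile the initial condition — you observe that $i_n\circ\Phi'_n|_{P\times\{0\}}=i_n\circ g_n\circ\phi$ does not by itself give $\Phi'_n|_{P\times\{0\}}=g_n\circ\phi$ in $Y_n$, and you pass to a larger index $m$ where the two agree before lifting along $g_m$. The paper's proof compresses this ("we may lift $\psi$ to $Q\times I\to W$ using $\widetilde\psi_0$ as the initial condition") and is implicitly relying on the same reconciliation; your version spells it out and is more defensible, especially since the structure maps $\sigma_n^m$ (e.g.\ sheaf restrictions, as in Lemma~\ref{lem-topstrat}) are not injective in general. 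Your closing parenthetical about the tower assumption versus $\varprojlim^1$-type issues for general cofiltered systems is also a fair caveat that the paper leaves implicit; as you note, all applications in the paper are towers. In short: correct, same route, with a small gap in part (2) of the paper's argument patched.
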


\begin{proof}
	Let $Q$ be an auxiliary topological space. 
	Let $\psi : Q \times I \to Z$ be a homotopy with an initial lift $\widetilde{\psi}_0 : Q \times \{0\} \to X$ of $\psi_0 := \psi|Q \times \{0\}$. Choose a lift of $\psi$ to a homotopy $Q \times I \to X_1$ along the fibration $f_1 : X_1 \to Z$, given initial condition $\pi_1 \circ \widetilde{\psi}_0 : Q \times \{0\} \to X_1$. Then, since the structure maps of the inverse system are fibrations, we may lift $Q \times I \to X_1$ to a homotopy $Q \times I \to X_n$ using as initial condition the maps $\pi_n \circ \widetilde{\psi}_0$ for all $n \geq 2$. This gives a collection of homotopies $\{Q \times I \to X_n\}$ compatible with the structure maps of the inverse system. By the universal property of inverse limits, this provides a homotopy $\widetilde{\psi} : Q \times I \to X$, such that $\widetilde{\psi}$ is a lift of $\psi$ with initial condition $\widetilde{\psi}_0$ as desired. This proves (1).
	
	 Let $\psi : Q \times I \to Y$ be a homotopy with an initial lift $\widetilde{\psi}_0 : Q \times \{0\} \to W$ of $\psi_0 := \psi|Q \times \{0\}$. By definition of the quasitopology of the direct limit, $\psi$ factors through a homotopy $Q \times I \to Y_n$. Since $g_n : W \to Y_n$ is a fibration, we may lift $\psi$ to $Q \times I \to W$, using $\widetilde{\psi}_0 : Q \times \{0\} \to W$ as the initial condition. This is a lift of $\psi$ with initial condition $\widetilde{\psi}_0$, as desired.
	This proves (2).
\end{proof}

We shall need the following theorem to `coglue' {weak} homotopy equivalences. {We refer the reader to \cite{brown-heath} by Brown and Heath, and the  notes  \cite{frankland-notes,francis-notes} for a proof.
\begin{theorem}\label{thm-bh}
	Consider a commutative diagram of maps of {quasitopological spaces} as in Figure \ref{homotopyco}, where  
	\begin{enumerate}
		\item the front and back squares are \emph{{(strict)}} pullback diagrams (equivalently, $Q$ and $P$ are fiber-products),
		\item $p, q$ are fibrations
	\end{enumerate}
	If the diagonal arrows, labeled $\phi_1,\phi_2, \phi$ are {weak} homotopy equivalences, so is $\Phi$.
\end{theorem}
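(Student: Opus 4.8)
The plan is to reduce the statement to the elementary fact that a weak homotopy equivalence of fibrations restricts to a weak homotopy equivalence on fibers, using that the fiber of a fibration pulled back along a map is the fiber of the original fibration over the image point. With notation as in Figure~\ref{homotopyco}, write the back pullback square as $Q = B \times_C A$, where $q : B \to C$ is the given fibration and $g : A \to C$ is the other leg, and the front square as $P = B' \times_{C'} A'$, where $p : B' \to C'$ is the given fibration and $g' : A' \to C'$ is the other leg; the diagonal arrows of the cube are $\Phi : Q \to P$, $\phi_2 : B \to B'$, $\phi : C \to C'$ and $\phi_1 : A \to A'$, and commutativity of the faces of the cube records $g' \circ \phi_1 = \phi \circ g$, $p \circ \phi_2 = \phi \circ q$, and $\mathrm{pr}_{A'} \circ \Phi = \phi_1 \circ \mathrm{pr}_A$.

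First I would observe that (Serre) fibrations of quasitopological spaces are stable under pullback: a lifting problem against the projection $\mathrm{pr}_A : Q \to A$ is, by the universal property of the fiber product, the same data as a lifting problem against $q$ together with a compatible map to $A$, so the desired lift is produced by applying Definition~\ref{def-micfibn} to $q$. Hence $\mathrm{pr}_A : Q \to A$ and $\mathrm{pr}_{A'} : P \to A'$ are fibrations, and we are in the situation of the commuting square
$$
\begin{CD}
Q @>{\Phi}>> P \\
@V{\mathrm{pr}_A}VV @VV{\mathrm{pr}_{A'}}V \\
A @>{\phi_1}>> A'
\end{CD}
$$
in which $\phi_1$ is a weak homotopy equivalence by hypothesis.

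The key step is the comparison of fibers. For $a \in A$ the fiber $\mathrm{pr}_A^{-1}(a)$ is canonically $q^{-1}(g(a))$, and likewise $\mathrm{pr}_{A'}^{-1}(\phi_1(a)) \cong p^{-1}(g'(\phi_1(a))) = p^{-1}(\phi(g(a)))$; under these identifications $\Phi$ restricts on the fiber over $a$ to the map $q^{-1}(c) \to p^{-1}(\phi(c))$ induced by $\phi_2$, where $c = g(a)$. Since $\phi_2 : B \to B'$ covers $\phi : C \to C'$ and both are weak homotopy equivalences, applying the five lemma to the morphism of long exact homotopy sequences of the fibrations $q$ and $p$ (with the usual exact-sequence-of-pointed-sets bookkeeping in degrees $0$ and $1$) shows this fiber map is a weak homotopy equivalence. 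Thus $\Phi$ is a weak homotopy equivalence on every fiber of $\mathrm{pr}_A$. A second application of the five lemma, now to the morphism of long exact homotopy sequences of $\mathrm{pr}_A$ and $\mathrm{pr}_{A'}$ taken over each path component and basepoint of $A$ (the components matching up since $\phi_1$ is a bijection on $\pi_0$), then gives that $\Phi$ induces isomorphisms on all $\pi_n$ and a bijection on $\pi_0$, i.e.\ that $\Phi$ is a weak homotopy equivalence.

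The one point that genuinely requires care in the quasitopological rather than the topological setting — and the place where I would lean on the references \cite{brown-heath,frankland-notes,francis-notes} cited with the statement — is the construction and naturality of the long exact homotopy sequence of a Serre fibration of quasitopological spaces, together with the pointed-set diagram chases at its low-degree end; modulo these, every step above is formal. (Equivalently, one could phrase the whole argument by noting that a strict pullback along a fibration computes the homotopy pullback and that the homotopy pullback is a homotopy functor of the cospan, but that formulation hides the same technical input.)
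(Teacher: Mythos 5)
Your argument is correct, and it is the standard fiber-comparison proof of the co-gluing theorem for weak equivalences. Since the paper does not reproduce a proof but instead defers to \cite{brown-heath,frankland-notes,francis-notes}, a direct comparison is somewhat moot, but it is worth noting the difference in flavor among the available arguments. The original Brown--Heath theorem concerns genuine (strong) homotopy equivalences, and its proof is constructive: one builds a fiber homotopy inverse over the base by hand, essentially by a careful obstruction-free patching of homotopies. Your proof is the weak-equivalence version: you first establish that the pullback projections $\til q \colon Q \to Y$ and $\til p \colon P \to X$ are fibrations (pullback stability is exactly the right observation, and it is formal from the universal property together with Definition~\ref{def-micfibn}), then compare fibers via the long exact sequence of $q$ and $p$, then run a second long exact sequence comparison for $\til q$ and $\til p$. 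This is shorter and more portable than the Brown--Heath construction, at the cost of one genuinely delicate point which you correctly flag: the five lemma at the tail $\pi_1(B)\to\pi_0(F)\to\pi_0(E)\to\pi_0(B)$ is the five lemma for exact sequences of pointed sets with a $\pi_1$-action, not the group-theoretic five lemma, and making injectivity and surjectivity of $\pi_0(F)\to\pi_0(F')$ precise requires invoking that action and quantifying over all basepoints and path components. Your closing remark, that one could package the whole argument as ``strict pullback along a fibration is a homotopy pullback, and homotopy pullbacks are weak-equivalence invariant,'' is accurate and is closer to the treatment in the Frankland notes cited by the paper; as you say, the technical content is the same, just relocated. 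None of this is a gap: it is precisely the bookkeeping the references exist to discharge, and your identification of which arrows play the role of fibration, base, and fiber in each square is correct (modulo a harmless relabeling of $\phi_1$ and $\phi_2$ relative to Figure~\ref{homotopyco}).
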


\begin{figure}[H]
	\begin{tikzcd}
		Q \arrow{rr}{\til{g}} \arrow{dd}{\til{q}} \arrow{rrrrd}[near end]{ \Phi} &  & E \arrow{dd}{q} \arrow{rrrrd}{\phi_1}                         &  &                                                        &  &                    \\
		&  &                                                                    &  & P \arrow{rr}[pos=0.3]{\til{f}} \arrow{dd}{\til{p}}&  & D \arrow{dd}{p} \\
		Y \arrow{rr}{g} \arrow{rrrrd}[pos=0.6]{ \phi_2 }                                                     &  & B \arrow{rrrrd}[near start]{\phi  } &  &                                                        &  &                    \\
		&  &                                                                    &  & X \arrow{rr}[near start]{f}                                     &  & A                 
	\end{tikzcd}
	\caption{Homotopy Co-gluing}
	\label{homotopyco}
\end{figure}
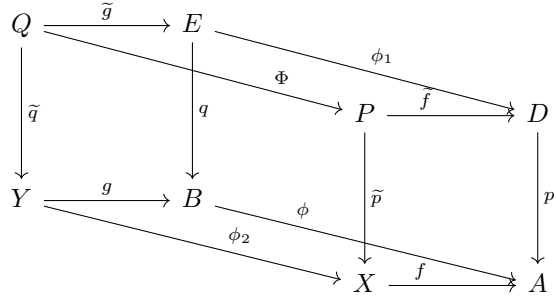

We note here for later use, a fact about homotopy fibers of fiber-products:

\begin{lemma}\label{lem-hofib-fiberpdkt}
	Let $f: X \to Z$, and $g: Y \to Z$ be continuous maps, furnishing the following pullback diagram:
	\begin{center}

		$
		\begin{CD}
			X \times_Z Y @>F>>Y \\
			@VVV  @VgVV\\
			X@>f>>Z \\
		\end{CD}
		$
	\end{center}
	Then, the homotopy fibers of
	$X \times_Z Y \to Y$ and $f:X\to  Z$ are homotopy equivalent. 
\end{lemma}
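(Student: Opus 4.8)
The plan is to compare the homotopy fiber of $F : X \times_Z Y \to Y$ with the homotopy fiber of $f : X \to Z$ directly from the definitions, exploiting the fact that the left-hand square is a strict pullback. Fix a basepoint $y_0 \in Y$ and set $z_0 = g(y_0)$; these are the points over which we form the homotopy fibers. Recall that the homotopy fiber of $g : Y \to Z$ over a point is the space of pairs (a point of $Y$, a path in $Z$ from its image to the basepoint); similarly for $F$ and $f$. Since $X \times_Z Y = \{(x,y) : f(x) = g(y)\}$ with $F(x,y) = y$, an element of $\hofib(F; y_0)$ is a triple $(x, y, \gamma)$ with $f(x) = g(y)$ and $\gamma$ a path in $Y$ from $y$ to $y_0$.

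The key step is to construct mutually inverse (up to homotopy) maps between $\hofib(F; y_0)$ and $\hofib(f; z_0)$. In one direction, send $(x, y, \gamma) \mapsto (x, g \circ \gamma)$: since $g(y) = f(x)$ and $g\circ\gamma$ runs from $g(y)$ to $g(y_0) = z_0$, this is a point of $\hofib(f; z_0)$. For the reverse direction, I would first observe that the map $F : X \times_Z Y \to Y$ is, up to homotopy equivalence, the pullback of the path-fibration $PZ \to Z$ over $X$ — more precisely, replace $g$ by the standard fibration $\widetilde{Y} = \{(y, \delta) : \delta(0) = g(y)\} \to Z$, $(y,\delta)\mapsto \delta(1)$, which is weakly equivalent to $Y$; pulling this back along $f$ gives a model $\widetilde{X \times_Z Y}$ for which the fiber over $z_0 \in Z$ is literally $\hofib(F)$, while the fiber of $f$ over $z_0$ is $\hofib(f)$, and these two fibers of the same fibration-model over the \emph{same} point $z_0$ are identified. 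Concretely: the pullback of a fibration is a fibration with the same fibers, and the square
$$
\begin{CD}
\widetilde{X \times_Z Y} @>>> \widetilde{Y}\\
@VVV @VVV\\
X @>f>> Z
\end{CD}
$$
has the left vertical map a fibration whose fiber over a point of $X$ equals the fiber of $\widetilde{Y} \to Z$ over its image; taking the total fiber over $z_0$ shows $\hofib(\widetilde{X\times_Z Y} \to \widetilde{Y})$ (which computes $\hofib(F)$) is homeomorphic to the fiber of $\widetilde{Y}\to Z$ over $z_0$ intersected appropriately — the cleanest statement being that both homotopy fibers are the honest fiber over $z_0$ of the single fibration $\widetilde{Y} \to Z$ after the identifications, hence canonically homeomorphic.

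Since the ambient spaces here are quasitopological spaces rather than ordinary topological spaces, the main obstacle I anticipate is not any deep homotopy theory but rather checking that these constructions (path spaces, pullbacks, the standard fibration replacement) behave well in the quasitopological category — i.e.\ that the relevant maps are continuous for the natural quasitopologies on mapping spaces and fiber products (cf.\ the footnote on quasitopologies and Definition \ref{defn-sheafmaps2ff}). These verifications are routine in the sense that each uses only the defining clauses of the quasitopology, but they must be carried out; this is exactly the kind of bookkeeping already handled in Lemma \ref{lem-pastedeld} and Lemma \ref{lem-ltsfibns}, and I would cite or mimic those arguments. Once continuity is in hand, the homotopy-inverse relations reduce to the standard fact that, for a fibration, the inclusion of the honest fiber into the homotopy fiber is a weak homotopy equivalence, applied to the fibration-model above. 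Thus the proof is: (1) replace $g$ by an equivalent fibration and form the strict pullback; (2) identify both homotopy fibers with the fiber of this fibration over $z_0$; (3) verify continuity/naturality in the quasitopological setting; (4) conclude the asserted homotopy equivalence.
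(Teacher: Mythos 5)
Your forward map $\hofib(F;y_0)\to\hofib(f;z_0)$, $(x,y,\gamma)\mapsto(x,g\circ\gamma)$, is fine, but the reverse direction as written does not work, and the ``cleanest statement'' at the end is incorrect. You replace $g$ by the mapping-path fibration $\widetilde{Y}\to Z$ and pull back along $f$ to form $X\times_Z\widetilde{Y}$, then assert that its fiber over $z_0$ is $\hofib(F)$, that the fiber of $f$ over $z_0$ is $\hofib(f)$, and that both coincide with the fiber of $\widetilde{Y}\to Z$ over $z_0$. None of these identifications hold: the honest fiber of $\widetilde{Y}\to Z$ over $z_0$ is $\hofib(g;z_0)$, which is unrelated to $\hofib(f)$ or $\hofib(F)$; the fiber of the composite $X\times_Z\widetilde{Y}\to Z$ over $z_0$ is $f^{-1}(z_0)\times\hofib(g;z_0)$, again not $\hofib(F)$; and the honest fiber $f^{-1}(z_0)$ computes $\hofib(f;z_0)$ only when $f$ is itself a fibration, which is not hypothesized. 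What you want is to replace $f$, not $g$: pulling the mapping-path fibration $\widetilde{X}\to Z$ of $f$ back along $g$ yields a fibration $E_2:=Y\times_Z\widetilde{X}\to Y$ whose fiber over $y_0$ is genuinely $\hofib(f;z_0)$. The paper's proof compares exactly this $E_2$ with the mapping-path fibration $E_1\to Y$ of $F$ (whose fiber over $y_0$ is $\hofib(F;y_0)$), using that both total spaces are homotopy equivalent to $X\times_Z Y$ over $Y$ and that fiber-homotopy-equivalent fibrations over $Y$ have weakly equivalent fibers.

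One further point you should be aware of, which also affects the paper's own proof: identifying $E_2=Y\times_Z\widetilde{X}$ with the strict pullback $X\times_Z Y$ over $Y$ amounts to asserting that the given square is a \emph{homotopy} pullback, and for arbitrary continuous $f,g$ this can fail. For instance, with $Z=[0,1]$, $X=\{0,1\}$, $Y=\{0\}$ and the inclusion maps, $X\times_Z Y$ is a single point while $\hofib(f;0)$ has two contractible components, so the conclusion of the lemma itself breaks. In the paper's applications the square is a homotopy pullback (one leg is a fibration, e.g.\ after a homotopy-fiber replacement of a restriction map), so the argument goes through there, but this hypothesis needs to be carried along. Finally, the quasitopological verifications you flag are indeed the routine part; the real content is the homotopy-theoretic identifications above.
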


\begin{proof}
	There are two homotopy fiber bundles that can be constructed over $Y$ as follows:
	\begin{enumerate}
		\item Let $\PP(X \times_Z Y,F,Y)\to Y$ denote the path space fibration construction applied to $F: X \times_Z Y \to Y$, and let $P_1:E_1 \to Y$ denote the resulting fibration. Then $\hofib (P_1)$ is homotopy equivalent to $\hofib (F)$.
		\item Let $\PP(X,f,Z)\to Z$ denote the path space fibration construction applied to $f: X \to Z$, and let $P_2: E_2 \to Y$ denote the pullback fibration (under $g$) of $\PP(X,f,Z)$.
		Then $\hofib (P_2)$ is homotopy equivalent to $\hofib (f)$.
	\end{enumerate}
	Then there exists
	a homotopy equivalence $E_1 \to X \times_Z Y$ covering the identity map over $Y$. 
	The same holds for $E_2$. Hence, there exists a homotopy equivalence
	$\phi:E_1 \to E_2$ of total spaces covering the identity map over $Y$. Since
	$P_1: E_1\to Y$, and  $P_2: E_2 \to Y$ are fibrations, $\hofib (P_1)$ 
	is homotopy equivalent to $\hofib (P_2)$. The lemma follows.
\end{proof}

\subsection{Sheaf-theoretic $h$-principle for stratified spaces}\label{sec-sheafh}
Let $(X, \Sigma, {\mathcal{N}})$ be an abstractly stratified set, and $\mathcal{F}$ be a stratified continuous sheaf on $(X, \Sigma)$.
For every stratum $S \in \Sigma$, we denote the associated sheaf on the closure $\bbar{S} \subset X$ by $\mathcal{F}_{\bbar{S}}$  so that $\mathcal{F}_S := \iota_S^*\mathcal{F}_{\bbar{S}}$. For every pair of strata $S, L \in \Sigma$, $S < L$, we also have the restriction map from the sheaf on $\bbar{L}$ to that on $\bbar{S} \subset \bbar{L}$ given by ${\res}^L_S: \iota_{\bbar{S}}^*\mathcal{F}_{\bbar{L}} \to \mathcal{F}_{\bbar{S}}$. Recall also the restriction map ${\res}^L_S : \iota_S^* \mathcal{F}_{\bbar{L}} \to \mathcal{F}_S$.\\

{
\noindent {\bf Scheme:} The main aim of this subsection is to establish Theorem \ref{thm-sheafh-intro} (Theorem \ref{thm-hofibsflexg}), which is one of the main theorems of this paper. The book-keeping is a little involved; so we summarize the scheme here in order to provide a road-map to the reader:
\begin{enumerate}
\item Section \ref{sec-prellts} starts off by dealing with the simplest situation, where $\Sigma$ consists  of a pair of strata
$S<L$, so that $X= L \cup S$. For $\FF$ a stratified continuous sheaf on $X$, Proposition \ref{prop-sheafh} provides sufficient conditions to establish that 
the single continuous sheaf $\FF_{\bbar{L}}$ satisfies the parametric $h-$principle.
\item Section \ref{sec-glueacross} deals with a kind of Mayer-Vietoris principle, allowing us to glue sheaves on two maximal strata \emph{of the same height} intersecting along their closures. More generally, it allows us to construct a single continuous sheaf out of a stratified continuous sheaf.
\item Section \ref{sec-main} proves Theorem \ref{thm-hofibsflexg}, first under the hypothesis that the strata are \emph{well-ordered}, and then in the general case, using the results of Sections \ref{sec-prellts} and \ref{sec-glueacross}.
\item Section \ref{sec-ctreg} provides counter-examples to show that  the hypotheses of
 Theorem \ref{thm-hofibsflexg} are crucial.
\end{enumerate}
Item (2) above allows us to `coarsen' the partial order on strata using height. Corollary \ref{cor-3glue} in Section \ref{sec-glueacross} allows us to treat \emph{all} strata at a common maximal height on the same footing. Assuming that the parametric $h-$principle has already been established on the boundary of the union of maximal height strata, it gives sufficient conditions to extend the parametric $h-$principle  to the whole stratified space. Theorem \ref{thm-hofibsflexg} then uses Corollary \ref{cor-3glue} repeatedly to establish the inductive hypothesis. The main book-keeping problem arises due to the fact that there is no canonical total order associated to a partial order, and we have to make a choice. Thus, while Corollary \ref{cor-3glue} can be applied directly to establish Theorem \ref{thm-hofibsflexg} inductively when the strata are totally ordered, a second induction has to be applied when 
 the strata are only partially ordered. This second induction is performed in the general case of the proof of Theorem \ref{thm-hofibsflexg}.}

\subsubsection{Limits and preliminary gluing}\label{sec-prellts}
\begin{lemma}\label{lem-topstrat}
	Let $U \subset \bbar{L}$ be an open subset and $U' := U \cap S$. 
	Let $\mathcal{G} = \mathcal{F}_{\bbar{L}},$ or $ \FF^\bullet_{\bbar{L}}$. Suppose that $\GG$ is flexible. Then $\mathcal{G}(U \setminus U') \to \mathcal{G}(\op_{U}(U') \setminus U')$ is a fibration.
\end{lemma}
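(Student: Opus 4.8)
The plan is to interpolate between the hypothesis — flexibility of $\GG$, which only constrains restriction maps $\GG(K')\to\GG(K)$ between \emph{compact} subsets — and the desired conclusion, which compares sections of $\GG$ on the (typically non-compact) open set $U\setminus U'$ with their germinal behaviour along the stratum piece $U'$, itself possibly non-compact.

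First I would unwind Definition \ref{def-deldgerm}. Since $U'=U\cap S$ is closed in $U$ and $U$ is metrizable, one may fix a cofinal decreasing sequence of open neighbourhoods $V_1\supset V_2\supset\cdots$ of $U'$ in $U$ with $\bigcap_n V_n=U'$ (e.g.\ $V_n=\{x\in U:\mathrm{dist}(x,U')<1/n\}$), and then $\GG(\op_U(U')\setminus U')=\varinjlim_n\GG(V_n\setminus U')$. The restriction maps $\GG(U\setminus U')\to\GG(V_n\setminus U')$ are compatible with this directed system, so by Lemma \ref{lem-ltsfibns}(2) it suffices to show each of them is a fibration. As $V_n\setminus U'$ is an \emph{open} subset of $U\setminus U'$, this is subsumed by the following general assertion, which I would isolate and prove: \emph{if $\GG$ is a flexible continuous sheaf on a locally compact, $\sigma$-compact space and $W_0\subseteq W$ are open subsets, then $\GG(W)\to\GG(W_0)$ is a fibration.} One then applies it with $W=U\setminus U'$ and $W_0=V_n\setminus U'$, both of which are open subsets of $\bbar{L}$.

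To prove the general assertion, I would write the closed subset $W\setminus W_0\subseteq W$ as an increasing union $\bigcup_n C_n$ of compacta with $C_n\subseteq\mathrm{int}_{W\setminus W_0}(C_{n+1})$ (possible since $W\setminus W_0$ is locally compact and $\sigma$-compact), and set $A_n:=W_0\cup C_1\cup\cdots\cup C_n$, with $A_0:=W_0$. A routine point-set check gives $A_n\subseteq\mathrm{int}_W(A_{n+1})$ and $\bigcup_n\mathrm{int}_W(A_n)=W$, whence $\GG(W)=\varprojlim_n\GG(A_n)$. Because $C_{n+1}\cap W_0=\emptyset$, the intersection $A_n\cap C_{n+1}=(C_1\cup\cdots\cup C_n)\cap C_{n+1}$ is compact, so Lemma \ref{lem-pastecpts} with $A=A_n$, $K=C_{n+1}$ shows $\GG(A_{n+1})\to\GG(A_n)$ is a fibration. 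Composing, each $\GG(A_n)\to\GG(A_0)=\GG(W_0)$ is a fibration, and these maps are compatible with the tower $\{\GG(A_n)\}$; Lemma \ref{lem-ltsfibns}(1) then yields that $\GG(W)=\varprojlim_n\GG(A_n)\to\GG(W_0)$ is a fibration, completing the argument. Since the hypothesis of the lemma already provides flexibility of $\GG$ in both cases $\GG=\FF_{\bbar L}$ and $\GG=\FF^\bullet_{\bbar L}$, this proof applies verbatim to both.

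The main obstacle is exactly this mismatch between flexibility — a condition on compact pairs — and the non-compact open sets that appear in the statement, aggravated by the fact that the stratum piece $U'$, near which all the germinal data is concentrated, need not be compact. The device that resolves it is to exhaust the \emph{complement} $W\setminus W_0$ by compacta and re-attach them one compact shell at a time, so that flexibility is only ever invoked against compact pairs through Lemma \ref{lem-pastecpts}, and then to pass to the inverse limit through Lemma \ref{lem-ltsfibns}. The one bookkeeping point needing care is the identification $\GG(W)=\varprojlim_n\GG(A_n)$, which rests on $\{A_n\}$ (equivalently $\{\mathrm{int}_W A_n\}$) being a nested cofinal family whose interiors exhaust $W$, so that the sheaf gluing axiom degenerates to an inverse limit along a chain.
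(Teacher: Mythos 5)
Your proof is correct and follows essentially the same strategy as the paper's: invoke Lemma \ref{lem-pastecpts} on compact pieces to obtain a tower of fibrations, then pass to an inverse limit via Lemma \ref{lem-ltsfibns}(1) and a direct limit via Lemma \ref{lem-ltsfibns}(2). The one organizational difference is that you reverse the order of the two limit passages and isolate the clean intermediate statement that flexibility makes $\GG(W)\to\GG(W_0)$ a fibration for \emph{any} pair of open subsets $W_0\subseteq W$; the paper instead runs a two-parameter argument with sets $K_m\cup(C_n\setminus U')$ where $C_n$ are closures of shrinking open collars of $U'$, requiring an extra identification (the paper's Claim~3) of $\varinjlim_n\GG(C_n\setminus U')$ with $\GG(\op_U(U')\setminus U')$ that your choice of open $V_n$ avoids by definition.
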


\begin{proof}
	Let $\{K_n\}$ be an ascending sequence of compact subsets of $U \setminus U'$ exhausting $U \setminus U'$. Let   $\{C_n\}$ be closures (in $U$) of a descending sequence of open neighborhoods of $U'$ in $U$  such that $\cap_n C_n = U'$, i.e.\ 
	\begin{gather*}
		U' \subset \cdots \subset C_3 \subset  C_2 \subset C_1 \subset U \\
		K_1 \subset K_2 \subset K_3 \subset \cdots \subset U \setminus U'
	\end{gather*}
	
	Consider restriction maps $r_{m, n} : \mathcal{G}(K_m \cup (C_n \setminus U')) \to \mathcal{G}(C_n \setminus U')$. Since both $K_m$ and $K_m \cap (C_n \setminus U') = K_m \cap C_n$ are compact,  flexibility of $\mathcal{G}|L$ on $L$ then  shows that for all $m, n$, $r_{m, n} $ is a  fibration by Lemma \ref{lem-pastecpts}. We observe now that:
	\begin{claim}\label{claim1}
		$\{\mathcal{G}(K_m \cup (C_n \setminus U'))\}_m$ is an inverse system with structure maps given by fibrations.
	\end{claim} 
	\begin{proof}[Proof of Claim \ref{claim1}:]
		Observe that for all $m \geq 1$, $K_{m+1} \cup (C_n \setminus U') = (K_m \cup C_n \setminus U') \cup K_{m+1}$. Moreover, $(K_m \cup C_n \setminus U') \cap K_{m+1} = K_m \cup (K_{m+1} \cap C_n)$ and $K_{m+1}$ are both compact. Therefore, Lemma \ref{lem-pastecpts} applies, and we obtain
		$$\mathcal{G}(K_{m+1} \cup (C_n \setminus U')) \to \mathcal{G}(K_m \cup (C_n \setminus U'))$$
		is a fibration, for all $m \geq 1$.\end{proof}
	
	\begin{claim}\label{claim2} $\varprojlim_m \mathcal{G}(K_m \cup (C_n \setminus U')) \cong \mathcal{G}(U \setminus U')$. \end{claim} 
	
	\begin{proof}[Proof of Claim \ref{claim2}:] This is true in complete generality. Let $V$ be an arbitrary open set (here $V=(U \setminus U')$). Let  $\{C_n\}$ (resp.\  $\{U_n\}$) be a sequence  of closed (resp.\ open) subsets of $U$ such that 
		\begin{enumerate}
			\item $C_n \subset U_n \subset C_{n+1}$,
			\item $\cup_n C_n = U = \cup_n  U_n$.
		\end{enumerate}
		There exist restriction maps $\mathcal{G}(V) \to \mathcal{G}(C_n)$ which are compatible with the inverse system $\{\mathcal{G}(C_n)\}$. Hence by the universal property, there exists a continuous map $\mathcal{G}(V) \to \varprojlim \mathcal{G}(C_n)$. On the other hand, we have a continuous map $\Theta: \varprojlim \mathcal{G}(C_n) \to \varprojlim \mathcal{G}(U_{n-1})$ given by  restriction. Then $\Theta$ is a homeomorphism of quasitopological spaces, with inverse $\Theta^{-1}: \varprojlim \mathcal{G}(U_{n-1}) \to \varprojlim \mathcal{G}(C_{n-1})$ given by  restriction again. 
		
		The composition $\mathcal{G}(V) \to \varprojlim \mathcal{G}(C_n) \to \varprojlim \mathcal{G}(U_{n-1})$ is also a homeomorphism, by the gluing property of continuous sheaves applied to the open cover $\{U_{n-1}\}$ of $U$. Hence, $\mathcal{G}(V) \cong \varprojlim \mathcal{G}(C_n)$, as desired. \end{proof}
	
	\begin{claim}\label{claim3} $\varinjlim_n \mathcal{G}(C_n \setminus U') \cong \mathcal{G}(\op_U(U') \setminus U')$. \end{claim} 
	
	\begin{proof}[Proof of Claim \ref{claim3}:] Consider the following chain of homeomorphisms
		$$\varinjlim_n \mathcal{G}(C_n \setminus U') \cong \varinjlim_n \varinjlim_{V \supset C_n} \mathcal{G}(V \setminus U') \cong \varinjlim_{V \supset U'} \mathcal{G}(V \setminus U') \cong \mathcal{G}(\op_U(U') \setminus U')$$
		where $V \supset C_n$ varies over all open neighborhoods of $C_n$ in $U$.
	\end{proof}
	
	We now proceed to take limits of $r_{m, n}$, first as $m \to \infty$ and then as $n \to \infty$. By Lemma \ref{lem-ltsfibns}, this gives that $\mathcal{G}(U \setminus U') \to \mathcal{G}(\op_{U}(U') \setminus U')$ is also a fibration.
\end{proof}

\begin{prop}\label{prop-sheafh} Let $(X, \Sigma_X)$ be a stratified space with exactly two strata, $\Sigma_X = \{S, L\}$, with $S<L$, so that $X = \bbar{L}$. Let $\mathcal{F}$ be a stratified continuous sheaf on $X$. Assume the following:
	\begin{enumerate}
		\item {$\mathcal{F}_S=i_S^* \FF_{\bbar{S}}$} satisfies the parametric $h-$principle, 
		\item {$\mathcal{F}_L=i_L^* \FF_{\bbar{L}}$} is flexible, and
		\item for {some} $\psi \in \mathcal{F}_S(S)$, $\HH^L_S=\hofib(\res^L_{S}; \psi)$ satisfies the parametric $h$-principle.
	\end{enumerate}
	Then $ \mathcal{F}_{\bbar{L}}$ satisfies the parametric $h-$principle. (Note that here $\mathcal{F}_{\bbar{L}}$ is a single continuous sheaf, not a general stratified sheaf.)
\end{prop}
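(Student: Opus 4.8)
\emph{Proof sketch (plan).} The aim is to verify directly, via Definition \ref{def-sheafh}, that the single continuous sheaf $\FF_{\bbar L}$ satisfies the parametric $h$-principle, i.e.\ that $\Delta_U\colon\FF_{\bbar L}(U)\to\FF^\bullet_{\bbar L}(U)$ is a weak homotopy equivalence for every open $U\subseteq\bbar L$. I would split the verification according to whether $U\subseteq L$ or $U\cap S\neq\emptyset$. If $U\subseteq L$, then $U$ is open in $\bbar L$ and disjoint from $S$, so $\FF_{\bbar L}(U)=\FF_L(U)$ and, unwinding Definition \ref{def-formalfn}, $\FF^\bullet_{\bbar L}(U)=\FF^\bullet_L(U)$, with $\Delta_U$ being the tautological inclusion for the sheaf $\FF_L$ on $L$. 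Since $\FF_L$ is flexible by hypothesis $(2)$, Theorem \ref{thm-fleximphprin} gives that $\Delta_U$ is a weak homotopy equivalence. By Definition \ref{def-deldgerm} and the fact that a filtered colimit of weak homotopy equivalences of quasitopological spaces (with the colimit quasitopology) is a weak homotopy equivalence, the same holds for the deleted germinal neighborhoods $\op_U(U')\setminus U'\subseteq L$, where $U':=U\cap S$.

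The second step is an intermediate claim: the \emph{extrinsic} sheaf $i_S^*\FF_{\bbar L}$ on $S$ satisfies the parametric $h$-principle. By Definition \ref{def-hh} (using that $\bbar S=S$ here) there is a homotopy fiber sequence $\HH^L_S\to i_S^*\FF_{\bbar L}\xrightarrow{\res^L_S}\FF_S$, where $\HH^L_S=\hofib(\res^L_S;\psi)$; to make this fit Definition \ref{def-ses} on the nose one replaces $\res^L_S$ by its associated mapping-path-space fibration, to which $i_S^*\FF_{\bbar L}$ is weakly equivalent. Hypotheses $(1)$ and $(3)$ assert that $\FF_S$ and $\HH^L_S$ satisfy the parametric $h$-principle, so Remark \ref{rmk-sessheafflex} (the ``two out of three'' form of Lemma \ref{lem-sessheafflex}) yields the parametric $h$-principle for $i_S^*\FF_{\bbar L}$.

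Finally, for a general open $U$ I would run a homotopy co-gluing along the decomposition $U=(U\setminus U')\cup U'$, noting $U'$ is closed in $U$ since $S$ is closed in $\bbar L$. Applying Lemma \ref{lem-pastedeld} to the pair $(U,U')$ for $\FF_{\bbar L}$ and for $\FF^\bullet_{\bbar L}$, and linking the two resulting strict fiber squares by $\Delta$ (using naturality of the diagonal normal construction, Lemma \ref{lem-f2f*}), produces a cube as in Figure \ref{homotopyco}. The ``fibration'' edges $q,p$ are $\FF_{\bbar L}(U\setminus U')\to\FF_{\bbar L}(\op_U(U')\setminus U')$ and $\FF^\bullet_{\bbar L}(U\setminus U')\to\FF^\bullet_{\bbar L}(\op_U(U')\setminus U')$, which are fibrations by Lemma \ref{lem-topstrat}, using flexibility of $\FF_L$ (hypothesis $(2)$), respectively of $\FF^\bullet_L$ (Proposition \ref{prop-formalisflex}), all the sets involved lying in $L$. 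Among the diagonal maps, $\phi_1$ (over $U\setminus U'$) and $\phi$ (over $\op_U(U')\setminus U'$) are weak homotopy equivalences by the first step, while $\phi_2\colon\FF_{\bbar L}(U')\to\FF^\bullet_{\bbar L}(U')$ is one because $\FF_{\bbar L}(U')=i_S^*\FF_{\bbar L}(U')$ and Lemma \ref{lem-formalfnrestcomm} (with $Z=S$) identifies $\FF^\bullet_{\bbar L}(U')$, up to weak equivalence, with $(i_S^*\FF_{\bbar L})^\bullet(U')$ compatibly with the two inclusions $\Delta$, so the intermediate claim applies. Theorem \ref{thm-bh} then gives that $\Phi=\Delta_U\colon\FF_{\bbar L}(U)\to\FF^\bullet_{\bbar L}(U)$ is a weak homotopy equivalence, which is the assertion.

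The step I expect to be the main obstacle is the bookkeeping in the last paragraph: checking that Lemma \ref{lem-pastedeld}, applied to $\FF_{\bbar L}$ and to $\FF^\bullet_{\bbar L}$, yields squares that genuinely assemble into a cube with the maps $\Delta$ as its diagonal arrows, and that the weak equivalence of Lemma \ref{lem-formalfnrestcomm} intertwines the intrinsic inclusion $\Delta$ for $i_S^*\FF_{\bbar L}$ on $S$ with the restriction to $S$ of the inclusion $\Delta$ for $\FF_{\bbar L}$ on $\bbar L$. The remaining technical nuisance --- modeling $\res^L_S$ by a strict fibration so that the homotopy fiber sequence of the second step fits Definition \ref{def-ses} verbatim --- is routine.
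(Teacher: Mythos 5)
Your proposal is correct and essentially identical to the paper's argument: the same homotopy fiber sequence combined with Lemma \ref{lem-sessheafflex} / Remark \ref{rmk-sessheafflex} gives the parametric $h$-principle for $i_S^*\FF_{\bbar L}$, Lemma \ref{lem-formalfnrestcomm} (and Remark \ref{rmk-formalfnrestcomm}) identify the restriction of $\FF^\bullet_{\bbar L}$ with $(i_S^*\FF_{\bbar L})^\bullet$ compatibly with $\Delta$, Lemma \ref{lem-pastedeld} and Lemma \ref{lem-topstrat} furnish the two fiber squares with fibration edges, and Theorem \ref{thm-bh} co-glues the weak equivalences. The compatibility you flag as the main obstacle is indeed checked in the paper, and the rest of your bookkeeping is the same.
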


\begin{proof} Let $\psi \in \mathcal{F}_S(S)$ be as in Definition \ref{def-ses}. Then by hypothesis there exists a homotopy fiber sequence of continuous sheaves over $S$,  given by
	$$\mathrm{hofib}(\res^L_{S}; \psi) \to \iota_S^* \mathcal{F}_{\bbar{L}} \to \mathcal{F}_S.$$
	By hypothesis, $\mathcal{F}_S$ and $\mathrm{hofib}(\mathrm{res}^L_S; \psi)$ satisfy the parametric $h-$principle. Hence, by Lemma \ref{lem-sessheafflex} so does $\iota_S^*\mathcal{F}_{\bbar{L}}$, i.e.\ the natural inclusion  map $\iota_S^* \mathcal{F}_{\bbar{L}} \to (\iota_S^* \mathcal{F}_{\bbar{L}})^\bullet$ is a weak homotopy equivalence. From Lemma \ref{lem-formalfnrestcomm} and Remark \ref{rmk-formalfnrestcomm}, we know that the map $(-) \circ \pi_S : (\iota_S^* \mathcal{F}_{\bbar{L}})^\bullet \to \iota_S^* (\FF^\bullet_{\bbar{L}})$ is a weak homotopy equivalence. This implies that the resulting composition  map 
	$$\iota_S^* \mathcal{F}_{\bbar{L}} \to (\iota_S^* \mathcal{F}_{\bbar{L}})^\bullet  \to \iota_S^* (\FF^\bullet_{\bbar{L}})$$ 
is a weak homotopy equivalence. By construction, this is simply the restriction of the natural inclusion map of the diagonal normal construction $\mathcal{F}_{\bbar{L}} \to \FF^\bullet_{\bbar{L}}$ to the stratum $S$.

	Let $\Phi : \mathcal{F}_{\bbar{L}} \to \FF^\bullet_{\bbar{L}}$ 	denote the natural morphism in the diagonal normal construction. Then  $\Phi|L : \mathcal{F}_L \to \FF^\bullet_L$ and $\Phi|S : \iota_S^* \mathcal{F}_{\bbar{L}} \to \iota_S^* (\FF^\bullet_{\bbar{L}})$ are both weak homotopy equivalences. We would like to ``glue" these to a {weak} homotopy equivalence $\Phi$. To this end, let $U \subset \bbar{L}$ be an open subset and $U' := U \cap S$. Using Lemma \ref{lem-pastedeld}, we have fiber squares:
	$$
		\begin{CD}
			\mathcal{F}_{\bbar{L}}(U) @>>> \mathcal{F}_{\bbar{L}}(U')\\
			@VVV @VVV\\
			\mathcal{F}_{\bbar{L}}(U \setminus U') @>>> \mathcal{F}_{\bbar{L}}(\op_{U}(U') \setminus U')
		\end{CD}
		\qquad 
		\begin{CD}
			\FF^\bullet_{\bbar{L}}(U) @>>> \FF^\bullet_{\bbar{L}}(U')\\
			@VVV @VVV\\
			\FF^\bullet_{\bbar{L}}(U \setminus U') @>>> \FF^\bullet_{\bbar{L}}(\op_{U}(U') \setminus U')
		\end{CD}
	$$
	The diagonal normal construction map $\Phi$ gives natural maps from each corner of the first diagram to the corresponding corner of the second diagram. Note first that $\Phi$ is a weak homotopy equivalence on the top-right and bottom-left corners as $\Phi|L$ and $\Phi|S$ are weak homotopy equivalences of continuous sheaves.

	Next, note that
	$$\Phi : \mathcal{F}_{\bbar{L}}(\op_U(U')\setminus U') \to \FF^\bullet_{\bbar{L}}(\op_U(U') \setminus U')$$ 
	is a direct limit of the weak homotopy equivalences $\Phi|(V \setminus U') : \mathcal{F}_{\bbar{L}}(V \setminus U') \to \FF^\bullet_{\bbar{L}}(V \setminus U')$ indexed by open neighborhoods $V \subset U$ of $U'$ in $U$. Since homotopy groups commute with direct limit of quasitopological spaces, hence the limiting map is also a weak homotopy equivalence. Thus, $\Phi$  is a weak homotopy equivalence on the bottom-right corner as well.  
	
	We shall use Theorem \ref{thm-bh} to conclude the proof. To set up  the situation such that the hypotheses of Theorem \ref{thm-bh} are satisfied, we need to establish that  
	\begin{enumerate}
		\item $\Phi$  is a weak homotopy equivalence on the top-right, bottom-left and bottom-right corners. The above paragraph did precisely this.
		\item The right vertical arrows in  both commutative diagrams above are fibrations. Lemma \ref{lem-topstrat} gives this.
	\end{enumerate}
	
	Finally, we apply Theorem \ref{thm-bh} to ``coglue" to a homotopy equivalence $\Phi : \mathcal{F}_{\bbar{L}}(U) \to \FF^\bullet_{\bbar{L}}(U)$. This proves that $\mathcal{F}_{\bbar{L}}$ satisfies the parametric $h$-principle.\end{proof}

The  above proof also gives us a criterion for verifying the parametric $h-$principle for a continuous sheaf on the underlying topological space of a stratified space $(X, \Sigma)$. We emphasize that by this we mean a \emph{single, genuine sheaf} on the topological space $X$, as opposed to a stratified sheaf which consists of a collection of sheaves, one on the closure of each stratum in $\Sigma$. {Nevertheless, we may treat $\mathcal{F}$ as the stratified sheaf $\{\mathcal{F}_{\bbar{S}} := i_{\bbar{S}}^*\mathcal{F} : S \in \Sigma\}$ on $(X, \Sigma)$. For any pair of strata $S, L \in \Sigma$ with $S < L$, we then have an \emph{equality} of sheaves $i_S^*{\mathcal{F}_{\bbar{L}}} = \mathcal{F}_S = i_S^*\FF$. Consequently, the homotopy fiber sheaf $\HH^L_S$, being homotopy fiber of the identity map $i_S^*\FF_{\bbar{L}} \to \FF_S$, is homotopically trivial.}

\begin{lemma}\label{lem-exstratfleximplieshprin}Let $X$ be a stratified space. Let $\FF$ be a continuous sheaf on $X$, 
such that $i_{{S}}^* \FF$ is  flexible for all (open) strata $S$, then $\FF$ satisfies the parametric $h-$principle.
\end{lemma}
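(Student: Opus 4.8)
The plan is to induct on $\depth(X)$ — finite, since $\dim$ strictly increases along the order $<$ on strata — reducing the inductive step to a mild generalization of the proof of Proposition \ref{prop-sheafh}. Throughout I regard $\FF$ as the stratified continuous sheaf $\{i_{\bbar{S}}^*\FF : S \in \Sigma\}$ on $(X,\Sigma)$ and use the observation recorded just above the statement: for $S < L$ one has $i_S^*\FF_{\bbar{L}} = i_S^*\FF = \FF_S$, so $\res^L_S$ is the identity map of $i_S^*\FF$; hence the open homotopy fiber sheaf $\HH^L_S = \hofib(\res^L_S)$ is weakly contractible and trivially satisfies the parametric $h$-principle. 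Thus the genuine inputs are only flexibility along the open strata (the hypothesis) and the parametric $h$-principle on the lower skeleton (the inductive hypothesis).

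For the base case $\depth(X)=1$, every stratum is maximal, hence both open and closed, so $X=\bigsqcup_S S$ as a topological space. Each $i_S^*\FF=\FF|_S$ is flexible by hypothesis, and a smooth manifold is a locally compact countable polyhedron, so $\FF|_S$ satisfies the parametric $h$-principle by Theorem \ref{thm-fleximphprin}. Since $\FF^\bullet|_S=(\FF|_S)^\bullet$ and $\GG(U)=\prod_S\GG(U\cap S)$ for $\GG=\FF,\FF^\bullet$ and $U\subset X$ open, the map $\Delta_U$ is a product of weak homotopy equivalences, hence a weak homotopy equivalence; so $\FF$ satisfies the parametric $h$-principle.

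For the inductive step, let $\depth(X)=d\ge 2$ and assume the lemma in depth $<d$. Let $M$ be the union of the maximal strata (an open submanifold) and $Z:=X\setminus M$, a closed stratified subspace whose strata are exactly the non-maximal strata of $X$; appending a maximal stratum above the top of any chain in $Z$ shows $\depth(Z)=d-1$. Every stratum $S$ of $Z$ is a stratum of $X$ and $i_S^*(i_Z^*\FF)=i_S^*\FF$, so the hypothesis of the lemma holds for $(Z,i_Z^*\FF)$ and by induction $i_Z^*\FF$ satisfies the parametric $h$-principle on $Z$. Also $\FF|_M$ is flexible: for compact $K\subset K'\subset M$, the map $\FF|_M(K')\to\FF|_M(K)$ is the product over the clopen-in-$M$ strata $S$ of the fibrations $\FF|_S(K'\cap S)\to\FF|_S(K\cap S)$.

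It remains to glue, and here the proof of Proposition \ref{prop-sheafh} applies essentially verbatim, reading $Z$ for the closed stratum $S$ there and $M$ for the open stratum $L$: that proof uses of $S$ only that it is closed and locally contractible, of $L$ only that $\FF|_L$ is flexible, and it needs $\HH^L_S$ to satisfy the parametric $h$-principle, which is automatic here. In outline: the parametric $h$-principle for $i_Z^*\FF$ makes $\Delta\colon i_Z^*\FF\to(i_Z^*\FF)^\bullet$ a weak homotopy equivalence; since it factors as $i_Z^*\FF\xrightarrow{\Phi}i_Z^*(\FF^\bullet)\xrightarrow{(-)|_Z}(i_Z^*\FF)^\bullet$ with $\Phi$ the diagonal normal morphism and $(-)|_Z$ the weak equivalence of Lemma \ref{lem-formalfnrestcomm} (so Remark \ref{rmk-formalfnrestcomm} is not needed), $\Phi|_Z$ is a weak homotopy equivalence; flexibility of $\FF|_M$ with Theorem \ref{thm-fleximphprin} makes $\Phi|_M\colon\FF|_M\to(\FF|_M)^\bullet=(\FF^\bullet)|_M$ a weak homotopy equivalence. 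For open $U\subset X$ with $U':=U\cap Z$, Lemma \ref{lem-pastedeld} gives, for $\GG=\FF$ and $\GG=\FF^\bullet$, a fiber square with corners $\GG(U),\GG(U'),\GG(U\setminus U'),\GG(\op_{U}(U')\setminus U')$, and $\Phi$ is a weak homotopy equivalence on the last three corners (on $\GG(U')$ by $\Phi|_Z$; on $\GG(U\setminus U')$ by $\Phi|_M$, as $U\setminus U'\subset M$; and on $\GG(\op_{U}(U')\setminus U')$ as the filtered colimit over open $V\supset U'$ in $U$ of the $\Phi|_M$-equivalences $\FF(V\setminus U')\to\FF^\bullet(V\setminus U')$, using that homotopy groups commute with filtered colimits of quasitopological spaces). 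By Lemma \ref{lem-topstrat}, applied with $(S,L)\leadsto(Z,M)$ — its proof uses only flexibility of the sheaf along the open piece, which is $\FF|_M$ in one case and, by Proposition \ref{prop-formalisflex}, the flexible $\FF^\bullet$ in the other — the maps $\FF(U\setminus U')\to\FF(\op_{U}(U')\setminus U')$ and $\FF^\bullet(U\setminus U')\to\FF^\bullet(\op_{U}(U')\setminus U')$ are fibrations. Theorem \ref{thm-bh} then coglues this data to a weak homotopy equivalence $\Phi\colon\FF(U)\to\FF^\bullet(U)$, and since $U$ was arbitrary, $\FF$ satisfies the parametric $h$-principle. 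The one slightly delicate point I anticipate is verifying that the substitution ``$S\leadsto Z$, $L\leadsto M$'' is legitimate — that no step of Proposition \ref{prop-sheafh} or Lemma \ref{lem-topstrat} secretly uses that the closed piece is a single stratum, or that the open piece is connected, and that flexibility of $i_S^*\FF$ is invoked only along the open strata — and that the inductive hypothesis genuinely needs flexibility for \emph{all} strata, since $Z$ contains non-maximal ones; the contractibility of $\HH^L_S$ is precisely what makes the genuinely stratified ingredient (``gluing data across strata'') vacuous, which is why stratumwise flexibility alone suffices for an honest sheaf.
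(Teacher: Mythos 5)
Your proof is correct and follows the paper's own route: induct on depth, split $X$ into the union $M$ of maximal (open) strata and the closed complement $Z=\partial M$, apply the inductive hypothesis on $Z$ and flexibility (hence Theorem~\ref{thm-fleximphprin}) on $M$, and coglue via Theorem~\ref{thm-bh} exactly as in Proposition~\ref{prop-sheafh}. The only difference is expository: you verify in detail that the substitution $(S,L)\leadsto(Z,M)$ in Lemma~\ref{lem-topstrat} and Proposition~\ref{prop-sheafh} is harmless (flexibility is invoked only over the open piece), which the paper leaves to the reader, and your observation that contractibility of $\HH^L_S$ is not actually needed is the content of the paper's remark immediately following the proof.
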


\begin{proof}We argue by induction on depth of $X$. If the stratified space $X$ has depth one, i.e.\ it is a manifold, then the result follows from Gromov's Theorem \ref{thm-fleximphprin}. Otherwise, Let $L$ denote the disjoint union of maximal strata of $X$, i.e.\ strata that do not lie on the boundary of other strata. Let $\partial L=X\setminus L$. Note that $\partial L$ is a stratified space of height strictly less than the height of $X$. Applying the inductive hypothesis to $\partial L$, we conclude that $i_{\partial L}^*\FF$ satisfies the parametric $h-$principle. By hypothesis, we also have flexibility of {$i_L^*\FF$}, since $L$ is a disjoint union of open strata in $X$. In particular, $i_L^*\FF$ satisfies the parametric $h-$principle as well.

As in the proof of Proposition \ref{prop-sheafh}, we use Theorem \ref{thm-bh} to co-glue the homotopy equivalences $i_{\partial L}^*\FF \to (i_{\partial L}^*\FF)^\bullet$ and $\iota_L^* \FF \to (\iota_L^* \FF)^\bullet$ to obtain a homotopy equivalence $\FF \to \FF^\bullet$. Therefore, $\FF$ satisfies the parametric $h-$principle.
\end{proof}

{Notice that we did not require the fact that $\HH^L_S$ is homotopically trivial in the proof of Lemma \ref{lem-exstratfleximplieshprin}. Indeed, the only place in Theorem \ref{thm-bh} we use $\HH^L_S$ is to deduce that $i_S^*\FF_{\bbar{L}}$ satisfies the $h-$principle from the fact that $\FF_S$ and $\HH^L_S$ both satisfy the $h-$principle. But in this specific case, this is automatic from the equality $i_S^*\FF_{\bbar{L}} = i_S^*\FF$, as $i_S^*\FF$ is flexible by hypothesis, and flexible sheaves satisfy the $h-$principle by Gromov's Theorem \ref{thm-fleximphprin}.}

\begin{rmk}\label{rmk-stratflexextvsint}
\rm{
{It is important to be mindful of the relationship between the hypothesis of Lemma \ref{lem-exstratfleximplieshprin} and the hypothesis of stratumwise flexibility in Definition \ref{def-stratflex}. The former is a hypothesis on a \emph{single} sheaf on the underlying topological space $X$ of the stratified space $(X, \Sigma)$, whereas the latter is a hypothesis on a \emph{stratified} sheaf on the stratified space $(X, \Sigma)$. As discussed prior to Lemma \ref{lem-exstratfleximplieshprin}, an individual sheaf $\mathcal{F}$ on $X$ can be treated as a stratified sheaf $\{i_{\bbar{S}}^*\mathcal{F} : S \in \Sigma\}$ on $(X, \Sigma)$. Then the hypothesis of Lemma \ref{lem-exstratfleximplieshprin} does translate to stratumwise flexibility of the stratified sheaf $\{i_{\bbar{S}}^*\mathcal{F} : S \in \Sigma\}$.}

{Notice that, given a general stratified sheaf $\{\mathcal{F}_{\bbar{L}} : L \in \Sigma\}$ on $(X, \Sigma)$, we would be able to ensure the parametric $h-$principle (Definition \ref{def-ssheafh}) if the hypothesis of Lemma \ref{lem-exstratfleximplieshprin} were to be true for each of the individual sheaves $\FF_{\bbar{L}}$ on $\bbar{L}$. This is equivalent to demanding the flexibility of the \emph{extrinsic} sheaves $i_S^*\FF_{\bbar{L}}$ for all $S, L \in \Sigma$, $S < L$. But for most naturally occurring examples, we would only have the luxury of asserting the flexibility of the \emph{intrinsic} sheaves $\FF_S$, $S \in \Sigma$. The difference between these sheaves lie in the homotopy fiber sheaf $\HH^L_S := \hofib(\res^L_S)$.}
}
\end{rmk}

\begin{lemma}\label{lem-bendright}
{Let $(X, \Sigma)$ be a stratified space, and $\mathcal{F}$ be a stratified sheaf on $X$. For any pair of strata $S < L$ in $X$, recall the associated closed and open homotopy fiber sheaves $\bH^L_S$ and $\HH^L_S$ from Definition \ref{def-hh}. For any triad of strata $P < S < L$ in $X$, there exist homotopy equivalences of sheaves}
\begin{gather*}i_{\bbar{P}}^*\bH^L_S \simeq \hofib\left(\bH^L_{P} \to \bH^S_{P}\right), \\
i_P^*\bH^L_S \simeq \hofib\left(\HH^L_P \to \HH^S_P\right).\end{gather*}
\end{lemma}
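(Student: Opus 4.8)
The statement asserts a ``compatibility of homotopy fibers'' for the restriction maps associated to a triad $P<S<L$. The key observation is that all the sheaves involved are, essentially by Definition \ref{def-hh}, homotopy fibers of restriction maps, and that the restriction maps compose: for $P<S<L$ we have the commuting triangle
$$
i_{\bbar{P}}^*\mathcal{F}_{\bbar{L}} \xrightarrow{\ \res^L_S\ } i_{\bbar{P}}^*\mathcal{F}_{\bbar{S}} \xrightarrow{\ \res^S_P\ } \mathcal{F}_{\bbar{P}},
\qquad
\res^S_P\circ(i_{\bbar{P}}^*\res^L_S) \simeq \res^L_P
$$
(the equality of composed restrictions is part of the data of a stratified continuous sheaf in Definition \ref{def-sssheaf}; if one only has a homotopy-coherent version, the argument below still runs with homotopy fibers replaced by their homotopy-invariant models, which is what the path-space construction of Definition \ref{def-hofib} produces anyway). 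Thus the first assertion is the statement: for a composable pair of sheaf morphisms $g\circ f$, the homotopy fiber of $f$ is the homotopy fiber of the induced map $\hofib(g\circ f)\to\hofib(g)$. This is the standard ``Puppe fiber sequence'' / ``$3\times 3$ lemma for homotopy fibers'', carried out internally to the category of continuous sheaves. The second assertion follows from the first by applying the restriction functor $i_P^*$ to the open stratum and using that $i_P^*$ commutes with the $\hofib$ construction (since $\hofib$ is built pointwise from mapping spaces and fiber products, both of which $i_P^*$ preserves, as in Lemma \ref{lem-mapsI2F} and the proof of Lemma \ref{lem-f2f*}).

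\textbf{Step 1.} Fix the basepoint section $\psi\in\mathcal{F}(X)$ as in the setup preceding Definition \ref{def-hh}, and write $\psi_P,\psi_S$ for its images in $\mathcal{F}_{\bbar{P}}(\bbar P)$, $\mathcal{F}_{\bbar{S}}(\bbar S)$ under the appropriate restrictions. Unwinding Definition \ref{def-hofib}, for an open $U\subset\bbar P$ the space $\bH^L_P(U)$ is $\{(s,\gamma): s\in i_{\bbar P}^*\mathcal{F}_{\bbar L}(U),\ \gamma\in\maps(I,\mathcal{F}_{\bbar P}(U)),\ \gamma(0)=\res^L_P(s),\ \gamma(1)=\psi_P|_U\}$, and similarly $\bH^S_P(U)$ with $L$ replaced by $S$. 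The map $\bH^L_P\to\bH^S_P$ is $(s,\gamma)\mapsto(\res^L_S(s),\gamma)$, using $\res^S_P\circ\res^L_S=\res^L_P$ so that $\gamma(0)=\res^L_P(s)=\res^S_P(\res^L_S(s))$ — i.e.\ $\gamma$ is still a valid path for the $\bH^S_P$-data.

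\textbf{Step 2.} Compute $\hofib(\bH^L_P\to\bH^S_P)$ over the basepoint of $\bH^S_P(\bbar P)$, which we may take to be $(\psi_S',\mathrm{const})$ where $\psi_S'$ is the image of $\psi$ under $i_{\bbar P}^*\mathcal{F}_{\bbar S}$ and $\mathrm{const}$ is the constant path at $\psi_P$ (adjusting by a contractible choice if $\psi_S'\neq\psi_S$, which changes nothing up to homotopy equivalence). An element over $U$ is a triple $\bigl((s,\gamma),\delta\bigr)$ with $(s,\gamma)\in\bH^L_P(U)$ and $\delta$ a path in $\bH^S_P(U)$ from $(\res^L_S(s),\gamma)$ to the basepoint; the $\bH^S_P(U)$-coordinate of $\delta$ being a path in a path space, one contracts the $\gamma$-coordinate of $\delta$ (it is a path of paths with fixed endpoint $\psi_P|_U$, hence contractible rel its value at the $\bH^L_P$-end), leaving data homotopy equivalent to: $s\in i_{\bbar P}^*\mathcal{F}_{\bbar L}(U)$, a path of sections $\eta$ in $i_{\bbar P}^*\mathcal{F}_{\bbar S}(U)$ from $\res^L_S(s)$ to $\psi_S'|_U$, and the leftover constraint on $\gamma$ which now becomes redundant. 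That is precisely $\bH^L_S(U)=\hofib(\res^L_S;\psi_S)(U)$ up to homotopy equivalence, naturally in $U$. Invoking Theorem \ref{thm-gromov-whe2he} (or just the pointwise nature of the construction) upgrades the resulting stalkwise weak equivalence to a weak homotopy equivalence of sheaves $i_{\bbar P}^*\bH^L_S\simeq\hofib(\bH^L_P\to\bH^S_P)$.

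\textbf{Step 3.} For the open version, apply $i_P^*$. Since $i_P^*$ commutes with $\hofib$ and with restriction to open strata, $i_P^*(i_{\bbar P}^*\bH^L_S)=i_P^*\bH^L_S=\HH^L_S$ restricted appropriately, and $i_P^*\hofib(\bH^L_P\to\bH^S_P)=\hofib(i_P^*\bH^L_P\to i_P^*\bH^S_P)=\hofib(\HH^L_P\to\HH^S_P)$, using $i_P^*\bH^L_P=\HH^L_P$ and $i_P^*\bH^S_P=\HH^S_P$ directly from Definition \ref{def-hh}. This gives the second homotopy equivalence.

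\textbf{Main obstacle.} The genuinely delicate point is bookkeeping the compatibility of basepoints and the interaction of the three homotopy-fiber constructions: one must be careful that the ``leftover'' path coordinates are contractible \emph{rel} the correct endpoints, and that the various homotopy equivalences are natural in $U$ so that Gromov's local-to-global theorem (Theorem \ref{thm-gromov-whe2he}, applicable since $\hofib$ of flexible-or-not sheaves need not be flexible, but the diagonal-normal replacements or the pointwise argument suffice) can be invoked. A secondary subtlety is whether the strict equality $\res^S_P\circ\res^L_S=\res^L_P$ holds on the nose in the definition of a stratified continuous sheaf or only up to coherent homotopy; in the latter case one replaces each restriction by a fibrant model (path-space construction of Definition \ref{def-hofib}) before forming homotopy fibers, which does not affect any of the weak equivalences above.
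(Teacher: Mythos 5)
Your proposal is correct and follows essentially the same route as the paper: both reduce the statement to the Puppe-type fiber sequence $\hofib(f)\to\hofib(g\circ f)\to\hofib(g)$ for the composable pair $i_{\bbar{P}}^*\FF_{\bbar{L}}\xrightarrow{f}i_{\bbar{P}}^*\FF_{\bbar{S}}\xrightarrow{g}\FF_{\bbar{P}}$, identify the first term with $i_{\bbar{P}}^*\bH^L_S$ via $i_{\bbar{P}}^*i_{\bbar{S}}^*=i_{\bbar{P}}^*$ and commutation of $i_{\bbar{P}}^*$ with $\hofib$, and obtain the open version by applying $i_P^*$. The only difference is that the paper cites \cite[Lemma 1.2.7]{may-ponto} for the fiber sequence, whereas you re-derive it by hand via the contraction of the redundant square of paths; your invocation of Theorem \ref{thm-gromov-whe2he} is unnecessary, since the contraction is natural in $U$ and already gives a weak equivalence on every $\FF(U)$, not merely on stalks.
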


\begin{proof}  Let $f: X\to Y$ and $g: Y\to Z$. Then there exists a  homotopy fiber sequence $$\hofib(f)\to \hofib (g \circ f) \to \hofib (g)$$ by \cite[Lemma 1.2.7]{may-ponto}.
	To translate this into the context of continuous sheaves, consider the following diagram:

	\medskip
	\begin{tikzcd}
		&  &  & \hofib(i_{\bbar{P}}^*{\mathcal{F}}_{\bbar{L}} \to {\mathcal{F}}_{\bbar{P}}) \arrow{dd}{3} \arrow{rr}{2} &  & \hofib(i_{\bbar{P}}^*{\mathcal{F}}_{\bbar{S}} \to {\mathcal{F}}_{\bbar{P}}) \arrow{dd}{4} \\
		&  &  &                                                                                         &  &                                                                         \\
		\hofib(i_{\bbar{P}}^*{\mathcal{F}}_{\bbar{L}} \to i_{\bbar{P}}^*{\mathcal{F}}_{\bbar{S}}) \arrow{rrr}{5} \arrow{rrruu}[bend left]{1}  &  &  & i_{\bbar{P}}^*{\mathcal{F}}_{\bbar{L}} \arrow{rr}{6} \arrow{dd}{7}                            &  & i_{\bbar{P}}^*{\mathcal{F}}_{\bbar{S}} \arrow{lldd}{8}                          \\
		&  &  &                                                                                         &  &                                                                         \\
		&  &  & {\mathcal{F}}_{\bbar{P}}                                                                         &  &                                                                        
	\end{tikzcd}
	\bigskip

	Here, 
	the arrows 6,7,8 take the place of $f, g\circ f, g$ respectively.
	The above homotopy theoretic fact (\cite[Lemma 1.2.7]{may-ponto}) then shows that the arrows 1,2 give a  homotopy fiber sequence 
	$$\hofib(i_{\bbar{P}}^*\FF_{\bbar{L}} \to i_{\bbar{P}}^*\FF_{\bbar{S}})\stackrel{1}\longrightarrow \hofib(i_{\bbar{P}}^*\FF_{\bbar{L}} \to \FF_{\bbar{P}}) \stackrel{2}\longrightarrow  \hofib(i_{\bbar{P}}^*\FF_{\bbar{S}}  \to \FF_{\bbar{P}}).$$ 

The first term in the sequence above can be identified with $i_{\bbar{P}}^* \bH^L_S$. Indeed,
$$\hofib(i_{\bbar{P}}^*\FF_{\bbar{L}} \to i_{\bbar{P}}^*\FF_{\bbar{S}}) = \hofib(i_{\bbar{P}}^* i_{\bbar{S}}^* \FF_{\bbar{L}} \to i_{\bbar{P}}^* \FF_{\bbar{S}}) = i_{\bbar{P}}^* \hofib(i_{\bbar{S}}^*\FF_{\bbar{L}} \to \FF_{\bbar{S}}) = i_{\bbar{P}}^* \bH^L_S$$
The first statement of the Lemma follows. The second statement follows by replacing $\bbar{P}$ by $P$ throughout.
\end{proof}

\subsubsection{Gluing sheaves across strata that intersect along their closure}\label{sec-glueacross}
In the proof of Theorem \ref{thm-hofibsflexg} below, we shall need to glue sheaves over different strata $S_1, S_2$ to obtain a sheaf over $\bbar{S_1} \cup \bbar{S_2}$ when $\bbar{S_1} \cap \bbar{S_2} \neq \emptyset$. Note that Definition \ref{def-sssheaf} does not directly furnish such a sheaf. For concreteness, and to illustrate the construction, suppose $S_1, S_2$ have height one (Definition 
\ref{def-Idec}), and $\bbar{S_1} \cap \bbar{S_2} = P$, where $P$ has height zero. Thus, $P = \bbar{P}$. Then, there are two extrinsic sheaves $i_P^* \FF_{\bbar{S_1}}$ and $i_P^* \FF_{\bbar{S_2}}$ over $P$, and restriction maps 
$$\res^{S_j}_P : i^*_P \mathcal{F}_{\bbar{S_j}} \to \mathcal{F}_P, \ j = 1, 2.$$
{There exists a {\emph{single, continuous}} sheaf $ \FF_{\bbar{S_1} \cup \bbar{S_2}}$ such that:
\begin{enumerate}
	\item Restricted to a germinal neighborhood of $P$ in $\bbar{S_1} \cup \bbar{S_2}$, $\FF_{\bbar{S_1} \cup \bbar{S_2}}$ is the fibered product of $i_P^*\FF_{\bbar{S_1}}$ and $i_P^*\FF_{\bbar{S_2}}$ over $\FF_P$. 
	(Here $P$ is equipped with the subspace topology from $X$.)
	In other words,
$$i^*_P \FF_{\bbar{S_1} \cup \bbar{S_2}} = i_P^*\FF_{\bbar{S_1}} \times_{\FF_P} i_P^*\FF_{\bbar{S_2}}.$$
	\item Restricted to $S_1$ (resp. $S_2$), $\FF_{\bbar{S_1} \cup \bbar{S_2}}$ is $\FF_{S_1}$ (resp. $\FF_{S_2})$. In other words, 
$$i^*_{S_1} \FF_{\bbar{S_1} \cup \bbar{S_2}} = \FF_{S_1}, \ i^*_{S_2}\FF_{\bbar{S_1} \cup \bbar{S_2}} = \FF_{S_2}.$$
\end{enumerate}
We may construct the sheaf explicitly as follows. Let $U \subset \bbar{S_1} \cup \bbar{S}_2$ be open.
\begin{enumerate}
\item If $U \subset S_j$ for $j = 1, 2$, we define,
$$\mathcal{F}_{\bbar{S_1}\cup\bbar{S_2}}(U) = \mathcal{F}_{\bbar{S_j}}(U).$$ 
\item Otherwise, $U \cap P \neq \emptyset$. Then, we define 
$$\mathcal{F}_{\bbar{S_1}\cup\bbar{S_2}}(U) = \mathcal{F}_{\bbar{S_1}}(U \cap \bbar{S_1}) \times_{\mathcal{F}_P(U \cap P)} \mathcal{F}_{\bbar{S_2}}(U \cap \bbar{S_2}).$$
\end{enumerate}
It remains to define the restriction maps for $\mathcal{F}_{\bbar{S_1} \cup \bbar{S_2}}$. Let $U, V \subset \bbar{S_1} \cup \bbar{S_2}$ be a pair of open sets such that $U \subset V$.
\begin{enumerate}
\item Suppose $V \subset S_j$ for $j = 1, 2$. Then, $U \subset S_j$. The restriction map is simply defined to be the restriction map $\FF_{\bbar{S_j}}(V) \to \FF_{\bbar{S_j}}(U)$ for the sheaf $\FF_{\bbar{S_j}}$. 
\item Suppose $V \cap P \neq \emptyset$ and $U \cap P \neq \emptyset$. Then, we have restriction maps $\FF_{\bbar{S_j}}(V \cap \bbar{S_j}) \to \FF_{\bbar{S_j}}(U \cap \bbar{S_j})$ and $\FF_P(V \cap P) \to \FF_P(U \cap P)$. By naturality of fibered products, we obtain a map,
$$\FF_{\bbar{S_1} \cup \bbar{S_2}}(V) \to \FF_{\bbar{S_1} \cup \bbar{S_2}}(U).$$ 
We define this to be the restriction map, in this case.
\item Suppose $V \cap P \neq \emptyset$ and $U \subset S_j$ for $j = 1, 2$. Then, $U \subset V \cap S_j$. The restriction map is then defined to be the composition,
$$\FF_{\bbar{S_1} \cup \bbar{S_2}}(V) \to \FF_{\bbar{S_j}}(V \cap \bbar{S_j}) \to \FF_{\bbar{S_j}}(U).$$
Here, the first arrow is projection of the fibered product to one of the constituent factors, and the second arrow is the restriction map for the sheaf $\FF_{\bbar{S_j}}$. 
\end{enumerate}
We give a general construction in the following definition.}

\begin{defn}\label{defn-cs}
{Let $(X, \Sigma)$ be a stratified space, $\FF = \{\FF_{\bbar{S}} : S \in \Sigma\}$ be a stratified continuous sheaf on $(X, \Sigma)$ and $Y \subset X$ be a closed stratified subspace. We define a single, continuous sheaf $\FF_Y$ on $Y$ as follows.}

{For an open subset $U \subset Y$, we define a poset in the stratified site $\str(X, \Sigma)$ (Definition \ref{def-stratfdsite}), given by the collection of objects
$$\DD_Y(U) = \{U \cap \bbar{S} : S \in \Sigma, S \subset Y, S \cap U \neq \emptyset\}.$$
Notice that for a pair of objects $U \cap \bbar{S}, U \cap \bbar{L}$ of $\DD_Y(U)$, if $S < L$, there is a restriction map $\mathcal{F}_{\bbar{L}}(U \cap \bbar{L}) \to \mathcal{F}_{\bbar{S}}(U \cap \bbar{S})$ from the definition of stratified continuous sheaf as a sheaf over the stratified site (Definition \ref{def-sssheaf}). We define,
$$\mathcal{F}_Y(U) := \varprojlim_{\DD_Y(U)}\FF_{\bbar{S}}(U \cap \bbar{S}).$$
For a pair of open subsets $U, V \subset Y$ such that $U \subset V$, we define another poset in the stratified site $\str(X, \Sigma)$, given by the collection of objects
$$\DD_Y(V, U) = \{V \cap \bbar{S} : S \in \Sigma, S \subset Y, S \cap U \neq \emptyset\}.$$
The restriction map $\FF_Y(V) \to \FF_Y(U)$ is defined by the composition,
$$\FF_Y(V) = \varprojlim_{\DD_Y(V)} \FF_{\bbar{S}}(V \cap \bbar{S}) \to \varprojlim_{\DD_Y(V, U)} \FF_{\bbar{S}}(V \cap \bbar{S}) \to \varprojlim_{\DD_Y(U)} \FF_{\bbar{S}}(U \cap \bbar{S}) = \FF_Y(U).$$
The first arrow is obtained using the universal property of limits, as $\DD_Y(V, U) \subset \DD_Y(V)$ is a sub-poset. The second arrow is obtained by collecting together the restriction maps $\FF_{\bbar{S}}(V \cap \bbar{S}) \to \FF_{\bbar{S}}(U \cap \bbar{S})$, using the naturality of limits. Here, $V \cap \bbar{S}$ is an object of $\DD_Y(V, U)$, and the map $\FF_{\bbar{S}}(V \cap \bbar{S}) \to \FF_{\bbar{S}}(U \cap \bbar{S})$ is the restriction map for the continuous sheaf $\FF_{\bbar{S}}$ on $\bbar{S}$.}

{We refer to the continuous sheaf $\FF_Y$ as the \emph{consolidated sheaf} on $Y$ associated to the stratified continuous sheaf $\FF$ on $(X, \Sigma)$.}
\end{defn}

{The stalks of the consolidated sheaf $\mathcal{F}_Y$ admit a simple description. Let $y \in Y$ and $S_0 \subset Y$ be the unique stratum of $Y$ containing $y$. Also, let $S_1, \cdots, S_k$ be the finitely many strata of $Y$ different from $S_0$ such that $S_0 \subset \bbar{S_i}$. Notice that these are exactly the strata of $Y$ which contains $y$ in the boundary, by Condition (3) of Definition \ref{def-Idec}. Then we have the intrinsic stalk $(\mathcal{F}_{S_0})_y$ at $y$, as well as the extrinsic stalks $(i_{S_j}^*\FF_{\bbar{S_j}})_y$, $j \in \{1, \cdots, k\}$ at $y$. Whenever $S_\ell < S_j$, $j, \ell \in \{0, \cdots, k\}$, there is a restriction morphism between the stalks,
$$\mathrm{res}^j_\ell : (i_{S_j}^*\FF_{\bbar{S_j}})_y \to (i_{S_\ell}^*\FF_{\bbar{S_\ell}})_y.$$ 
This gives a diagram of quasitopological spaces. The limit of this diagram is the stalk $(\FF_Y)_y$ of the consolidated sheaf at $y$.}

{We also note that if $(X, \Sigma)$ is a stratified space with a unique top-dimensional stratum $L$, such that $\bbar{L} = X$, then the consolidated sheaf $\FF_X$ is simply the constituent sheaf $\FF_{\bbar{L}}$ of the stratified continuous sheaf $\FF = \{\FF_{\bbar{S}} : S \in \Sigma\}$.}

\begin{rmk}
\rm{
{Let $\FF = \{\FF_{\bbar{S}} : S \in \Sigma\}$ be a stratified continuous sheaf on a stratified space $(X, \Sigma)$. Taking $Y = X$ in Definition \ref{defn-cs}, we obtain a consolidated sheaf $\FF_X$ over the underlying topological space $X$ of the stratified space. For any closed stratified subspace $Y \subset X$, we also have a consolidated sheaf $\FF_Y$ over $Y$. For any open set $U \subset X$ intersecting $Y$, observe that $\DD_Y(U \cap Y)$ is a sub-poset of $\DD_X(U)$. Therefore, by the universal property of limits, there is a map
\begin{equation}\label{eq-resXY}\FF_X(U) = \varprojlim_{\DD_X(U)}\FF_{\bbar{S}}(U \cap \bbar{S}) \to \varprojlim_{\DD_Y(U \cap Y)} \FF_{\bbar{S}}(U \cap \bbar{S}) = \FF_Y(U \cap Y).\end{equation}
Let $V \subset Y$ be an open subset. Consider the collection of open subsets $U \subset X$ such that $U \cap Y = V$. Taking a direct limit $\varinjlim_U$ in Equation \ref{eq-resXY} as $U$ shrinks to $V$, we obtain a map $i_Y^*\FF_X(V) \to \FF_Y(V)$. As this map is natural in $V$, we thus have a morphism of continuous sheaves $i_Y^*\FF_X \to \FF_Y$ on $Y$.
}
}
\end{rmk}

We note for use below, the following corollary of the proof of Proposition \ref{prop-sheafh}. 

\begin{cor}\label{cor-3glue}
	Let $(X, \Sigma)$ be a stratified space with {unique top} dimensional stratum $L$, {so that $X=\bbar L$. Let $Y=\partial L$. Let ${\FF= \{\FF_{\bbar{S}}: S \in \Sigma\}}$ be a stratified sheaf on $X$. Let $\FF_{\bbar{L}}=\FF_X$, and let $\FF_Y$ denote the consolidated sheaf on $Y$. } Suppose that
	\begin{enumerate}
		\item $\FF_Y$ satisfies the parametric $h-$principle. 
		\item ${\bH^X_Y := \hofib(i_Y^* \FF_X \to \FF_Y)}$ satisfies the parametric $h-$principle.
		\item {$\iota^*_L\FF_X$} is flexible.
	\end{enumerate}
	Then {(the single continuous sheaf)} $\FF_X$ satisfies the parametric $h-$principle.
\end{cor}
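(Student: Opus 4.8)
The plan is to run the proof of Proposition \ref{prop-sheafh} essentially verbatim, replacing the single lower stratum $S$ there by the closed stratified subspace $Y=\partial L$, the intrinsic sheaf $\FF_S$ by the consolidated sheaf $\FF_Y$, the extrinsic sheaf $\iota_S^*\FF_{\bbar L}$ by $\iota_Y^*\FF_X$, the open homotopy fiber $\HH^L_S$ by $\bH^X_Y$, and the top-stratum sheaf $\FF_L$ by $\iota_L^*\FF_X$ (recall $\FF_X=\FF_{\bbar L}$). The reason the two-stratum bookkeeping of Proposition \ref{prop-sheafh} applies here is that $L$ is the unique top-dimensional stratum with $\bbar L=X$, so every stratum of $X$ other than those contained in $Y$ lies in $L$; hence $U\setminus(U\cap Y)=U\cap L$ for every open $U\subset X$, and $Y=\partial L$ is a neighborhood deformation retract of $X$, being a closed union of strata.

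First I would exploit hypotheses (1) and (2). Fixing the global section $\psi\in\FF_Y(Y)$ implicit in the definition of $\bH^X_Y$, one has a homotopy fiber sequence $\bH^X_Y\to\iota_Y^*\FF_X\to\FF_Y$ of continuous sheaves over $Y$ in the sense of Definition \ref{def-ses}. Since $\FF_Y$ and $\bH^X_Y$ satisfy the parametric $h$-principle, Lemma \ref{lem-sessheafflex} shows $\iota_Y^*\FF_X$ does too, i.e.\ $\Delta:\iota_Y^*\FF_X\to(\iota_Y^*\FF_X)^\bullet$ is a weak homotopy equivalence. Using Lemma \ref{lem-formalfnrestcomm} and Remark \ref{rmk-formalfnrestcomm} (applicable since $Y$ is a neighborhood deformation retract) one identifies $(\iota_Y^*\FF_X)^\bullet$ with $\iota_Y^*(\FF_X^\bullet)$ in a way that realizes the restriction $\Phi|Y:\iota_Y^*\FF_X\to\iota_Y^*(\FF_X^\bullet)$ of the diagonal normal morphism $\Phi:\FF_X\to\FF_X^\bullet$ as a composite of two weak equivalences; hence $\Phi|Y$ is a weak homotopy equivalence. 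On the open stratum, hypothesis (3) together with Gromov's Theorem \ref{thm-fleximphprin} gives that $\Phi|L:\iota_L^*\FF_X\to(\iota_L^*\FF_X)^\bullet=\iota_L^*(\FF_X^\bullet)$ is a weak homotopy equivalence as well, since restriction to the open set $L$ commutes strictly with the diagonal normal construction.

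Next, for an arbitrary open $U\subset X$ with $U':=U\cap Y$ closed in $U$, I would apply Lemma \ref{lem-pastedeld} to the pair $(U,U')$ to obtain the two fiber squares linking $\FF_X(U),\FF_X(U'),\FF_X(U\setminus U'),\FF_X(\op_U(U')\setminus U')$ and their $\FF_X^\bullet$-analogues, with $\Phi$ mapping the first square to the second. Then $\Phi$ is a weak homotopy equivalence on the top-right corner by the preceding step; on the bottom-left corner $\FF_X(U\setminus U')$ because $U\setminus U'=U\cap L\subset L$ and $\iota_L^*\FF_X$ satisfies the $h$-principle; and on the bottom-right corner because $\FF_X(\op_U(U')\setminus U')$ is a filtered colimit of the spaces $\FF_X(V\setminus U')$ with $V\setminus U'=V\cap L\subset L$, on which $\Phi$ is already a weak equivalence, and homotopy groups commute with filtered colimits of quasitopological spaces. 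Finally, to invoke the cogluing Theorem \ref{thm-bh} one checks the right vertical arrows of both squares are fibrations; this is the assertion of Lemma \ref{lem-topstrat}, whose proof manipulates only subsets of $U\setminus U'\subset L$ and therefore needs only flexibility of $\iota_L^*\FF_X$ (hypothesis (3)) and of $\iota_L^*(\FF_X^\bullet)=(\iota_L^*\FF_X)^\bullet$ (automatic by Proposition \ref{prop-formalisflex}). Theorem \ref{thm-bh} then coglues these weak equivalences to a weak equivalence $\Phi:\FF_X(U)\to\FF_X^\bullet(U)$; as $U$ was arbitrary, $\FF_X$ satisfies the parametric $h$-principle.

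The only genuine work beyond transcribing the proof of Proposition \ref{prop-sheafh} is verifying that the supporting lemmas survive replacing the stratum $S$ by the closed stratified subspace $Y$. The substantive point, and the one I expect to be the main (if modest) obstacle, is that Lemma \ref{lem-topstrat} is stated there only for a two-stratum space and for an ambient sheaf $\mathcal G$ assumed \emph{globally} flexible, whereas here $\FF_X$ is \emph{not} assumed flexible; one must confirm that its proof uses nothing about $\mathcal G$ away from the sets in $U\setminus U'\subset L$, so that flexibility of $\iota_L^*\FF_X$ alone suffices. The remaining checks, namely that Lemma \ref{lem-formalfnrestcomm}, Remark \ref{rmk-formalfnrestcomm} and Lemma \ref{lem-pastedeld} apply verbatim to the closed subspace $Y\subset X$, and that no step silently invokes global flexibility of $\FF_X$, are routine.
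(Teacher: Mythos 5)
Your proposal is correct and follows exactly the paper's proof strategy: the paper's proof of Corollary \ref{cor-3glue} consists of applying Lemma \ref{lem-sessheafflex} to conditions (1)--(2) to conclude $i_Y^*\FF_X$ satisfies the parametric $h$-principle, then invoking the cogluing argument of Proposition \ref{prop-sheafh} verbatim with $Y$ in place of $S$. Your careful check that Lemma \ref{lem-topstrat}'s stated hypothesis (global flexibility of $\GG$) is stronger than what its proof actually uses (flexibility of $\GG|L$, which is all that is available here since $\FF_X$ is not assumed flexible) is a genuine point of care that the paper leaves implicit in the phrase ``as in the proof of Proposition \ref{prop-sheafh}''; otherwise the two arguments are the same.
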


\begin{proof}
	Lemma \ref{lem-sessheafflex} along with the first two conditions ensure that {$i_{Y}^* \FF_X$} 
	satisfies the parametric $h-$principle.  Now, flexibility of {$\iota^*_L \FF_X$} allows us to co-glue 
	$\iota^*_L\FF_X$  and  {$i_{Y}^* \FF_X$}  as in the proof of Proposition \ref{prop-sheafh} to conclude that  $\FF_X$ satisfies the parametric $h-$principle.
\end{proof}

{ Note  that $\partial L$ may have many strata in Corollary \ref{cor-3glue}. This is the difference with the content of Proposition \ref{prop-sheafh}.}

\subsubsection{$h-$principle from stratumwise conditions}\label{sec-main}
We are now in a position to prove the main theorem of this Section. It says roughly that flexibility of homotopy fibers for pairs of strata guarantees parametric $h-$principle for the stratified sheaf provided the latter is stratumwise flexible. We shall first prove this assuming a  total order of strata.
{As mentioned at the beginning of Section \ref{sec-sheafh}, Corollary \ref{cor-3glue} can be applied directly and inductively to establish Theorem \ref{thm-hofibsflexg} below  when the strata are totally ordered. However, a second induction has to be applied when 
	the strata are only partially ordered. This second induction is performed in the general case of the proof of Theorem \ref{thm-hofibsflexg} below.}

\begin{theorem}\label{thm-hofibsflexg} 
	Let $(X, \Sigma)$ be a stratified space and $\FF$ be a stratified sheaf on $X$ such that 
	\begin{enumerate}
	\item $\FF$ is stratumwise flexible, i.e.\ for every stratum $S \in \Sigma$, $\FF_S := i^*_{S}\FF_{\bbar{S}}$ is flexible on the stratum $S$.
	\item  $\FF$ is infinitesimally flexible across strata, i.e.\ for all $S<L$, the open homotopy fiber sheaf  $ {\HH_S^L}$ is flexible.
	\end{enumerate}
Then  the stratified sheaf $ \FF$ satisfies the parametric $h-$principle. 
\end{theorem}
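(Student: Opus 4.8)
The plan is to reduce, via Theorem \ref{thm-flex2shprin}, to showing that each constituent sheaf $\FF_{\bbar L}$ is flexible — or more precisely, satisfies the parametric $h$-principle — and to carry this out by a double induction: an outer induction on the depth of $X$ and, for the partially-ordered case, an inner induction that linearizes the partial order on strata at a fixed height. First I would dispose of the base case: if $\depth(X)=0$ then $X$ is a manifold, $\FF$ is a single flexible sheaf by hypothesis $(1)$, and Theorem \ref{thm-fleximphprin} gives the parametric $h$-principle. For the inductive step, fix a stratum $L$ and consider $\FF_{\bbar L}$, the sheaf on the stratified space $\bbar L$. If $L$ is the unique top-dimensional stratum of $\bbar L$, set $Y=\partial L$; the plan is to verify the three hypotheses of Corollary \ref{cor-3glue} for the stratified sheaf $\FF|_{\bbar L}$ and its consolidated sheaf $\FF_Y$ on $Y$. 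Hypothesis $(3)$ of the Corollary, flexibility of $i_L^*\FF_X = \FF_L$, is exactly stratumwise flexibility, hypothesis $(1)$ of the Theorem. Hypothesis $(1)$ of the Corollary, that $\FF_Y$ satisfies the parametric $h$-principle, should follow from the inductive hypothesis since $\depth(Y)<\depth(\bbar L)\le\depth(X)$ — but this requires knowing that the parametric $h$-principle for the \emph{consolidated} sheaf $\FF_Y$ follows from the conclusion of the Theorem applied to the stratified sheaf $\FF|_Y$, which is itself the content of the outer induction (one proves the statement "$\FF_Z$ satisfies the parametric $h$-principle for every closed stratified subspace $Z$" simultaneously).

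The crux is hypothesis $(2)$ of Corollary \ref{cor-3glue}: that $\bH^X_Y=\hofib(i_Y^*\FF_X\to\FF_Y)$ satisfies the parametric $h$-principle. Here the plan is to decompose this closed homotopy fiber sheaf over the strata of $Y$ and show that on each stratum $S\subset Y$ it is built, via iterated homotopy fiber sequences, out of the open homotopy fiber sheaves $\HH^L_S$, which are flexible by hypothesis $(2)$ of the Theorem. The key tool is Lemma \ref{lem-bendright}: for a triad $P<S<L$ one has $i_P^*\bH^L_S\simeq\hofib(\HH^L_P\to\HH^S_P)$, so homotopy fibers of restriction maps between the flexible sheaves $\HH^\bullet_P$ are again of the right type. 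Combined with Remark \ref{lem-sessheaffibn} (homotopy fibers of morphisms of flexible sheaves are flexible) and Lemma \ref{lem-sessheafflex} (the parametric $h$-principle propagates along homotopy fiber sequences), one assembles the $h$-principle for $\bH^X_Y$ stratum by stratum, using the structure of $Y$ as a stratified space of smaller depth together with the gluing results of Section \ref{sec-glueacross} (in particular Corollary \ref{cor-3glue} applied recursively, and Lemma \ref{lem-exstratfleximplieshprin} as the "all extrinsic sheaves flexible" prototype). The subtlety is that $\bH^X_Y$ restricted to a stratum $S$ is not literally one of the $\HH^L_S$ but rather a homotopy-limit-type combination of them over the poset of strata of $Y$ above $S$; one has to check that finite homotopy limits of flexible sheaves connected by fibrations remain flexible, which is where Lemma \ref{lem-ltsfibns} and Theorem \ref{thm-bh} enter.

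Once $\FF_{\bbar L}$ is handled for a stratified space $\bbar L$ with a \emph{unique} top-dimensional stratum, the general case — $\partial L$, or indeed $X$, having several strata of the same maximal height — is treated by the inner induction: enumerate the maximal-height strata $S_1,\dots,S_r$ of $Y$ in some order, and glue them on one at a time using Definition \ref{defn-cs} and the fiber-square description $i_P^*\FF_{\bbar{S_1}\cup\bbar{S_2}}=i_P^*\FF_{\bbar{S_1}}\times_{\FF_P}i_P^*\FF_{\bbar{S_2}}$, applying Theorem \ref{thm-bh} and Lemma \ref{lem-hofib-fiberpdkt} at each stage to co-glue the parametric $h$-principle. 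Finally, having established that every $\FF_{\bbar L}$, $L\in\Sigma$, satisfies the parametric $h$-principle, Theorem \ref{thm-flex2shprin} (whose proof only used the $h$-principle for each $\FF_{\bbar L}$ plus naturality of the diagonal normal construction, Lemma \ref{lem-f2f*}) yields the parametric stratified $h$-principle for $\FF$. I expect the main obstacle to be the bookkeeping in the inner induction — keeping track of which consolidated sheaves and which homotopy fibers appear as one peels off maximal strata, and verifying at each step that the relevant restriction maps are fibrations so that Theorem \ref{thm-bh} applies — rather than any single conceptual difficulty; the conceptual content is entirely carried by Lemma \ref{lem-bendright}, Lemma \ref{lem-sessheafflex}, and Corollary \ref{cor-3glue}.
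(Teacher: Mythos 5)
Your proposal follows essentially the same route as the paper's proof: reduce to establishing the parametric $h$-principle for each constituent sheaf $\FF_{\bbar L}$, handle the totally-ordered (linear chain) case first via Corollary \ref{cor-3glue} and Lemma \ref{lem-bendright} applied iteratively to the closed homotopy fiber sheaves $\bH^L_S$, then remove the total-order restriction by an inner induction that glues strata of maximal height one at a time using the consolidated sheaf, the fiber-product description, Lemma \ref{lem-hofib-fiberpdkt}, and Theorem \ref{thm-bh}. The one organizational difference is that the paper inducts on height while you induct on depth, and the paper makes explicit that the inductive hypothesis must carry along the $h$-principle for the closed homotopy fiber sheaves $\bH^B_A$ for all pairs of lower-height stratified subspaces $A < B$ — a point you arrive at implicitly when you observe that $\bH^X_Y$ restricted to a stratum is a "homotopy-limit-type combination" of the $\HH^L_S$; but these are the same argument.
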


\begin{proof}[Proof of Theorem \ref{thm-hofibsflexg} under the assumption of total ordering:] We first assume that the strata of $X$ are totally-ordered. Thus,
	\begin{enumerate}
		\item $(X, \Sigma)$ is a stratified space with strata $\Sigma = \{S_1 > \cdots > S_n\}$ ordered so that $S_i < S_j$ {(i.e., $S_i \subset \bbar{S_j}$)} if $i > j$.
		\item $\FF = \{\FF_{\bbar{i}} : 1 \leq i \leq n\}$ is a stratified continuous sheaf on $X$, where $\mathcal{F}_{\bbar{i}}$ is a continuous sheaf on the stratum-closure $\bbar{S}_i$.
	\end{enumerate}  
	For any pair of indices $i > j$, let ${\bH_i^j} = \hofib (i_{\bbar{S}_i}^*\FF_{\bbar{j}}\to  \FF_{\bbar{i}})$, and $\HH_i^j = i^*_{S_i} \bH_i^j$ be the closed and open homotopy fibers, respectively, as in Definition \ref{def-hh}.
	Then the hypotheses of the theorem translate to the following:
	\begin{enumerate}
		\item For all $j$, the sheaf  $\FF_j := i^*_{S_j}\FF_{\bbar{j}}$ is flexible on the stratum $S_j$.
		\item For all $i > j$, the open homotopy fiber sheaf  $ {\HH_i^j}$ is flexible.
	\end{enumerate}
We shall show that	 the sheaves  $ \FF_{\bbar{j}} $ satisfy the parametric $h-$principle. 
We proceed by induction on $n$. For $n=1$, i.e.\ for manifolds, this is due to Gromov (see the Main Theorem on pg.\ 76 of \cite{Gromov_PDR}). 

By induction, we can assume that $\FF_{\bbar{2}}$ satisfies the parametric $h-$principle (using the chain of $n-1$ strata $S_2>\cdots>S_n$). We first prove that the closed homotopy fiber sheaves $\bH^j_i$, $i > j$ satisfy the parametric $h-$principle.

\begin{claim}\label{claim-bhhprin}
With notation and hypothesis as in Theorem \ref{thm-hofibsflexg}, $\bH^j_i$ 
satisfies the parametric $h-$principle for all $i > j$.
\end{claim}

\begin{proof}[Proof of Claim \ref{claim-bhhprin}] We prove this by downward induction on $i$. First, let $i=n$.
Since $S_n$ is the deepest stratum and therefore $S_n =\bbar{S}_n$, $\bH^j_n = \HH^j_n$, and $\HH^j_n$ is flexible by hypothesis. Hence, by \cite[p. 76]{Gromov_PDR}, $\bH^j_n =\HH^j_n$  satisfies the parametric $h-$principle for all $j <n$.

Suppose now that the claim is true for $i = k+1 \leq n$ and all $j < k+1$. We shall show that $\bH^j_k$ satisfies the parametric $h-$principle as a sheaf on $\bbar{S}_k$, for all $j < k$. Observe that $i^*_{S_k} \bH^j_k = \HH^j_k$ is an open homotopy fiber {sheaf}, hence flexible by hypothesis. Next, by Lemma \ref{lem-bendright},
$$i^*_{\bbar{S}_{k+1}} \bH^j_k \simeq \hofib \left ( \bH^j_{k+1} \to \bH^k_{k+1} \right ).$$
By the induction hypothesis, $\bH^j_{k+1}$ and $\bH^k_{k+1}$ satisfy the parametric $h-$principle. Therefore, by Lemma \ref{lem-sessheafflex} and Remark \ref{rmk-sessheafflex}, we conclude that the sheaf $i^*_{\bbar{S}_{k+1}} \bH^j_k$ satisfies the parametric $h-$principle as well. Thus, by Corollary \ref{cor-3glue}, $\bH^j_k$ satisfies the parametric $h-$principle. \end{proof}

In particular $\bH^1_2$ satisfies the parametric $h-$principle. By Corollary \ref{cor-3glue}, we conclude that $\mathcal{F}_{\bbar{1}}$ satisfies the parametric $h-$principle, as required.\end{proof}

\begin{proof}[Proof of Theorem \ref{thm-hofibsflexg}, the general case:] 
	We now show how to remove the hypothesis of total orderability of strata in the proof of Theorem \ref{thm-hofibsflexg}. We shall proceed by induction on height and apply Corollary \ref{cor-3glue}. 	
	We use the notation of Corollary \ref{cor-3glue}. The proof of Corollary  \ref{cor-3glue} in fact shows that $\FF_X$ satisfies the parametric $h-$principle 
provided that for \emph{any} maximal (with respect to height) stratum  $L$, and $Y=\partial L$, we can show that $\FF_Y$, and $ \bH^X_Y$ satisfy the parametric $h-$principle. The gluing needs flexibility  of $\FF_L$ in Corollary \ref{cor-3glue},
but here our concern will be with the parametric $h-$principle. Let $m+1$ denote the height of $L$, so that $Y$ has height $m$. Assume by induction that 
\begin{enumerate}
\item $\FF_{A}$ satisfies the parametric $h-$principle for any stratified subspace $A \subset X$ of height $< m$.
\item Further, for any stratified subspace $B \subset X$ with $B>A$, $ \bH^B_A$ satisfies the parametric $h-$principle.
\end{enumerate}

Set $Y = \cup_1^k Y_i$, where $Y_i$'s denote the \emph{closures} of the maximal  (with respect to height)  strata of $Y$. By induction on $k$, it suffices to prove
the theorem for $k=2$. This is because the proof in the totally-ordered case (coupled with the above inductive hypothesis) allows us to conclude that the statement is true for $k=1$, and we can take the union $ \cup_1^{k-1} Y_i$ as a single stratified space in the inductive step. Hence, assume that $Y=Y_1 \cup Y_2$. Also,
note that $Z=Y_1 \cap Y_2$ has less height than (at least one of) $Y_1, Y_2$.

We refer to the sheaf $\FF_Y$ (constructed from $\FF_{Y_1}$ and $\FF_{Y_2}$ as in Section \ref{sec-glueacross}) as the \emph{intrinsic sheaf on $Y$}.
Similarly, we refer to $i_Y^* \FF_X$ as the \emph{extrinsic sheaf on $Y$}. It suffices, by Corollary \ref{cor-3glue} to prove that
\begin{enumerate}
\item The intrinsic sheaf $\FF_Y$ satisfies the parametric $h-$principle, and
\item $ \bH^X_Y=\hofib(i_Y^* \FF_X \to \FF_Y)$  satisfies the parametric $h-$principle.\\
\end{enumerate}

\noindent {\bf $\FF_Y$ satisfies the parametric $h-$principle:}\\ To prove that $\FF_Y$ satisfies the parametric $h-$principle, 
it suffices by stratumwise flexibility of $\FF$  to show that 
$$\AAA = i_Z^* \FF_{Y_1} \times_{\FF_Z} i_Z^*\FF_{Y_2}$$
satisfies the parametric $h-$principle. 
(Recall that $Z=Y_1 \cap Y_2$. This reduction is exactly as in the proof Proposition \ref{prop-sheafh}).

By Lemma \ref{lem-hofib-fiberpdkt}, $\hofib(\AAA \to i_Z^* \FF_{Y_1})$ is homotopy equivalent to 
 $\hofib( i_Z^* \FF_{Y_2} \to \FF_Z)$. By the inductive hypothesis on height  (of $Z$),
 $\hofib( i_Z^* \FF_{Y_2} \to \FF_Z)$ 
 satisfies the parametric $h-$principle. Hence, so does 
 $\hofib(\AAA \to i_Z^* \FF_{Y_1})$. Again, by 
 the inductive hypothesis on height (of $Z$), 
 $i_Z^* \FF_{Y_1}$ satisfies the parametric $h-$principle. 
Hence, by Lemma \ref{lem-sessheafflex}, $\AAA$  satisfies the parametric $h-$principle. \\

\noindent {\bf $ \bH^X_Y$ satisfies the parametric $h-$principle:}\\  We know that the following sheaves satisfy the parametric $h-$principle:
\begin{enumerate}
	\item $\FF_Z$ and $\hofib(i_Z^* \FF_X \to \FF_Z)$   (by the inductive hypothesis
	 on closures of strata on lower height, and closed homotopy fibers on closures of strata on lower height)
	\item $\hofib(i_{Y_1}^* \FF_X \to \FF_{Y_1})$ (from the proof in the totally-ordered case)
	\item $\hofib(i_{Y_2}^* \FF_X \to \FF_{Y_2})$ (from the proof in the totally-ordered case)
\end{enumerate}
It suffices (as in the proof of Proposition \ref{prop-sheafh}) to show that $\hofib(i_Z^* \FF_X \to \AAA)$ satisfies the parametric $h-$principle.

Note now that by Lemma \ref{lem-sessheafflex}, $\hofib(\AAA\to \FF_Z)$ 
satisfies the parametric $h-$principle (since we have already shown that $\AAA$ does, and  the inductive hypothesis gives that $\FF_Z$ does).
Further, by the inductive hypothesis applied to the lower depth stratum closure $Z$,
the homotopy fiber $\hofib(i_Z^* \FF_X \to \FF_Z)$ satisfies the parametric $h-$principle. By Lemma \ref{lem-bendright}, 
$\hofib(i_Z^* \FF_X \to \AAA)$  is homotopy equivalent to 
$$\hofib\big(\hofib(i_Z^* \FF_X \to \FF_Z) \to \hofib(\AAA\to \FF_Z)\big).$$
By  Lemma \ref{lem-sessheafflex}, this satisfies the parametric $h-$principle. 
Hence, so does the sheaf $\hofib(i_Z^* \FF_X \to \AAA)$.
\end{proof}

\begin{eg}\rm{
{Recall from Example \ref{eg-stratflexnotflex} the stratified continuous sheaf $\FF = \{\FF_{\bbar{S}}, \FF_{\bbar{L}}\}$ on $X = [0, \infty)$ with stratification $\Sigma = \{S, L\}$, where $S = \{0\}$ and $L = (0, \infty)$. Since $\FF_S$ and $\FF_L$ are constant sheaves, they are both flexible. The open homotopy fiber sheaf $\HH^L_S$ is also a constant sheaf, valued in the space $\hofib(f)$. Therefore, $\HH^L_S$ is flexible as well. Thus, $\FF$ is both stratumwise flexible and infinitesimally flexible across strata. In other words, it satisfies the hypotheses of Theorem \ref{thm-hofibsflexg}. Consequently, the stratified continuous sheaf $\FF$ (in particular, its constituent continuous sheaf $\FF_{\bbar{L}}$) satisfies the parametric $h$--principle, despite $\FF_{\bbar{L}}$ not being flexible.}
}\end{eg}

\subsubsection{Flexibility versus h-principle}\label{sec-ctreg}  The aim of the example below is to illustrate the necessity of flexibility of $\FF$ on the top stratum
in Theorem \ref{thm-hofibsflexg} and Proposition \ref{prop-sheafh}. It emphasizes that 
it is not enough to assume that $\FF$ satisfies the parametric $h-$principle  on the top stratum.

Let $X=M$ be {a closed orientable 3-manifold}, and $F \subset M$ be an embedded closed orientable surface. Stratify $X$ with two strata $\Sigma = \{S, L\}$, where $L= M \setminus F$ and $S=F$. Thus, $\bbar{S} = S$ and $\bbar{L} = M$. Let $N$ be another 3-manifold not covered by $M$.

Consider the stratified sheaf $\FF$ on $(X, \Sigma)$ defined as follows:
\begin{enumerate}
	\item $\FF_{{L}}(U) = \imm (U,N)$ for $U \subset \bbar{L}$ open,
	\item $\FF_{{S}}(V) = \imm (V,N)$ for $V \subset S$ open.
\end{enumerate}

Then the restriction map $\res: i_S^* \FF_{{L}} \to \FF_{{S}}$ simply forgets the  normal bundle to $S$ in $M$.
We note the following:
\begin{enumerate}
	\item $\FF_{{L}}|L$ satisfies the parametric $h-$principle \cite[p. 79]{Gromov_PDR} as $L$ is open.
	\item $\FF_{{L}}|L$ is not flexible (since the dimensions of $M, N$ coincide, and both are compact).
	\item $\FF_{{S}}$ is flexible \cite[p. 79]{Gromov_PDR}.
	\item {$\hofib(\res)$ is flexible. Indeed, let us fix an immersion $\psi \in \FF_S(S) = \imm(S, N)$. Sections of $i_S^*\FF_{\bbar{L}}$ over an open set $U \subset S$ are simply immersions of $U$ in $N$ together with a transverse vector field. Consequently, a section of $\hofib(\res; \psi)$ over $U \subset S$ consists of a pair $(\{\psi_t\}, X)$ where $\{\psi_t : U \to N : 0 \leq t \leq 1\}$ is a regular homotopy of immersions with $\psi_1 = \psi|U$ and $X$ is a transverse vector field to $\psi_0$. Firstly, if we forget the data of the transverse vector field, this is the sheaf of paths of immersions of $S$ in $N$, based at $\psi$. This is a flexible sheaf, as $\imm(-, N)$ is flexible on $S$ (see, Item $(3)$). Secondly, given an immersion $f : S \to N$ and a compact disc $D^2 \subset S$, a vector field defined on $D^2$ transverse to $f$ can always be extended to a transverse vector field on all of $S$. Combining these facts give us the required solution to the homotopy lifting problem.}
\end{enumerate}

\begin{prop}\label{prop-3mfld} With $M, \FF, L, S$ as above,
	$\FF$ does not satisfy the parametric $h-$principle.
\end{prop}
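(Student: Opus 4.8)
The strategy is to exhibit a single global section of the Gromov diagonal normal sheaf $\FF^\bullet_{\bbar L}$ over $\bbar L = M$ which cannot be homotoped into the image of $\Delta : \FF_{\bbar L}(M) \to \FF^\bullet_{\bbar L}(M)$, using the fact that a genuine global section of $\FF_{\bbar L}$ over $M$ is a controlled immersion $M \to N$, and no immersion $M \to N$ exists. Recall from Remark \ref{rmk-formalasgerms} that a section $\phi$ of $\FF^\bullet_{\bbar L}$ over $M$ consists of a continuous section of $E$ over $M$ (in the first coordinate) decorated, for each $x \in M$, with a germ at $x$ of a section of $E$ (in the second coordinate), where here $E \to M$ is the total space whose sections over $U$ are immersions $U \to N$. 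Concretely, such a $\phi$ amounts to the following data: a continuous map $F : M \to N$ together with, for each $x \in M$, the germ of an immersion $f_x : \op(x) \to N$ with $f_x(x) = F(x)$, varying continuously in $x$; the consistency conditions are automatically satisfied on germs. The point is that $F$ itself need \emph{not} be an immersion — only the germs $f_x$ are required to be immersions — so the obstruction to the existence of a global immersion does not obstruct the existence of such a $\phi$.

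First I would construct such a $\phi$ explicitly. Take $F : M \to N$ to be a constant map to a point $p \in N$ (or any smooth map). Over a small ball $\op(x)$, the germ $f_x$ must be an immersion sending $x$ to $F(x)$; choosing a chart around $x$ and a chart around $F(x)$, take $f_x$ to be (the germ of) an affine linear embedding $\mathbb R^3 \hookrightarrow \mathbb R^3$. These can be chosen to vary continuously in $x$ — indeed one can use a Riemannian metric on $N$ and set $f_x$ to be a small piece of the exponential map $\exp_{F(x)} \circ (dF_x)'$ where $(dF_x)'$ is any continuously-varying choice of linear isomorphism $T_x M \to T_{F(x)} N$; such a choice exists since both $TM$ and $F^*TN$ are rank-$3$ bundles and $F^*TN$ is trivial (as $F$ is nullhomotopic, or simply pull back a trivialization). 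This produces the required global section $\phi \in \FF^\bullet_{\bbar L}(M)$, and in fact it restricts to a global section of $\FF^\bullet_L(L)$ and of $\FF^\bullet_{\bbar S}(S)$ as well.

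Next I would argue $\phi$ cannot be homotoped into $\Delta(\FF_{\bbar L}(M))$. By Definition \ref{def-sheafh}, $\FF_{\bbar L}$ satisfies the $h$-principle iff $\Delta_M$ is a $\pi_0$-surjection, i.e.\ every such $\phi$ can be connected by a path in $\FF^\bullet_{\bbar L}(M)$ to some $\Delta(s)$ with $s \in \FF_{\bbar L}(M)$. But $\FF_{\bbar L}(M) = \imm(M, N) = \varnothing$, since $\dim M = \dim N = 3$, $M$ and $N$ are closed, and any immersion between equidimensional closed manifolds is a covering map, whereas $N$ is not covered by $M$ by hypothesis. Hence $\Delta(\FF_{\bbar L}(M)) = \varnothing$, so no section of $\FF^\bullet_{\bbar L}(M)$ — in particular not $\phi$ — can be homotoped into it, \emph{provided} $\FF^\bullet_{\bbar L}(M) \neq \varnothing$, which we have just guaranteed by constructing $\phi$. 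Therefore $\Delta_M$ is not a $\pi_0$-surjection and $\FF_{\bbar L}$ fails the $h$-principle; a fortiori the stratified sheaf $\FF$ fails the parametric stratified $h$-principle of Definition \ref{def-ssheafh}, since $\bbar L$ is an object of $\str(X, \Sigma)$.

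The one point that deserves care — and which I expect to be the main obstacle to writing this cleanly — is verifying that $\FF^\bullet_{\bbar L}(M)$ is genuinely nonempty, i.e.\ that the decoration by immersion-germs can be done \emph{continuously in $x$} and \emph{compatibly with the control conditions} defining $\FF$ (weakly controlled sections of $E \to M$). Since $M$ is a manifold with a single open stratum $L$ together with $S = F$, the control conditions are only a constraint near $S$; but as the germs $f_x$ are genuine immersion-germs and we are free to choose them, one checks that one may take them compatible with a tubular neighborhood structure of $F$ in $M$ (e.g.\ take $f_x$ to respect $\pi_S$ and $\rho_S$ near $S$ by pulling back via $\pi_S$ a choice made on $S$ itself, and extend arbitrarily over $L$). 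I would relegate this verification to a short paragraph, noting that it is precisely the local/germ-level statement that $\Delta_x : \FF_x \to \FF^\bullet_x$ is a weak homotopy equivalence (Remark \ref{rmk-hprinvsflex}), which holds here since $M$ is locally contractible and $\imm(-, N)$ has contractible relevant stalks. Thus the contrast is exactly the expected one: locally (on stalks, or on the open stratum $L$) the $h$-principle holds, but globally on $\bbar L = M$ it fails for lack of flexibility — demonstrating the necessity of the flexibility hypothesis on the top stratum in Theorem \ref{thm-hofibsflexg} and Proposition \ref{prop-sheafh}.
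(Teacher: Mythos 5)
Your proposal follows essentially the same strategy as the paper's proof: (1) show $\FF^\bullet_{\bbar L}(M)$ is nonempty by constructing a formal section whose base map $F : M \to N$ is constant, decorated by a continuously varying family of linear isomorphisms $T_xM \to T_{F(x)}N$; (2) note that $\FF_{\bbar L}(M) = \imm(M,N) = \varnothing$ because an immersion between closed equidimensional manifolds is a covering map and $N$ is not covered by $M$; (3) conclude $\Delta_M$ cannot be a $\pi_0$-surjection. The only cosmetic difference is that you work with the germ model of $\FF^\bullet$ from Remark~\ref{rmk-formalasgerms} and then pass to derivatives, whereas the paper invokes the homotopy-equivalent $1$-jet/bundle-map model of $\FF^\bullet$ directly (via Proposition~\ref{prop-formalfnmflds}); these amount to the same thing.

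There is, however, one gap in your justification that the paper fills but you leave open. You assert that a continuously varying choice of linear isomorphism $T_xM \to T_{F(x)}N$ exists ``since both $TM$ and $F^*TN$ are rank-$3$ bundles and $F^*TN$ is trivial.'' Triviality of $F^*TN$ alone is not sufficient: to produce a bundle isomorphism $TM \cong F^*TN$ you also need $TM$ itself to be trivial, and ``both are rank $3$'' is irrelevant. The missing ingredient is precisely what the paper states explicitly: $M$ is a closed orientable $3$-manifold, hence parallelizable, so $TM \cong M \times \R^3$ and the required isomorphism (e.g.\ to the trivial bundle $F^*TN \cong M \times \R^3$) exists. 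Once you add this, your argument is complete and matches the paper's. Your closing remarks about the control condition near $S$ and the relation to Remark~\ref{rmk-hprinvsflex} are correct and give a useful conceptual gloss, though they are not strictly needed since the failure already occurs at the $\pi_0$-level over the closed top stratum $\bbar L = M$.
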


\begin{proof}
	The Gromov diagonal construction applied to $\FF$ gives $\FF^\bullet$ homotopy equivalent to a sheaf $\GG$ given
	as follows. $\GG(U)$ consists of bundle maps $\psi:TU\to TN$ covering smooth maps $\psi_0: U \to  N$
	such that $\psi|T_xU$ is an isomorphism on every tangent space $T_xU$. In particular, 
	$\GG(M)$ consists of bundle maps $\psi:TM\to TN$ covering smooth maps $\psi_0: M \to  N$
	such that $\psi|T_xM$ is an isomorphism on every tangent space $T_xM$. Since $M$  is an orientable 3-manifold, it is parallelizable, so that $TM=M\times \R^3$.
	Therefore $\GG(M)$ contains $\psi$ covering a \emph{constant map} $\psi_0$. In particular,  $\GG(M)$ is non-empty.
	
	On the other hand, $\FF(M)$ consists of immersions from $M$  to $N$. Since $M, N$ have the same dimension, $\FF(M)$ consists of covering maps. Since  $N$  is not covered by $M$, $\FF$ is empty.
	Hence, $\FF$ does not satisfy the  parametric $h-$principle.
\end{proof}

\subsection{Microflexibility of stratified sheaves}\label{sec-micro} For the purposes of this subsection 
$X$ will denote a stratified space, and $V$  a manifold. $\FF$ will  be a (stratified) continuous sheaf.
The aim of this subsection is to generalize the following theorem of Gromov and its consequence below to stratified spaces and stratified sheaves over them. 
\begin{theorem}\cite[p. 78]{Gromov_PDR}\label{thm-gromovmicro2flex} 
	Let $Y = V \times  \R$,  and let $\Pi: Y \to V$ denote the projection
	onto the first factor. Let $\diff( V, \Pi)$ be the group of diffeomorphisms of $Y$ commuting with $\Pi$, where $V$ is identified with $V \times \{0\}$.
	Let $\FF$ be a microflexible (continuous) sheaf over $Y$ invariant under $\diff( V, \Pi)$.
	Then the restriction $\FF \vert V (= V \times \{0\})$ is a flexible sheaf over $V (= V \times \{0\})$.
\end{theorem}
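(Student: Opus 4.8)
The plan is to verify directly the defining property of flexibility (Definition \ref{def-kan}): for every pair of compact sets $K \subset K' \subset V$, the restriction map $\rho : (\iota^*\FF)(K') \to (\iota^*\FF)(K)$, where $\iota : V = V \times \{0\} \hookrightarrow Y$, is a (Serre) fibration in the sense of Definition \ref{def-micfibn}. So fix a compact polyhedron $P$, a continuous map $f : P \to (\iota^*\FF)(K')$, and a homotopy $F : P \times [0,1] \to (\iota^*\FF)(K)$ with $F|_{P \times \{0\}} = \rho \circ f$; we must produce a lift $\widetilde F : P \times [0,1] \to (\iota^*\FF)(K')$ with $\widetilde F|_{P\times\{0\}} = f$ and $\rho \circ \widetilde F = F$.

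First I would reduce the problem to honest neighborhoods in $Y$. Since $(\iota^*\FF)(K') = \varinjlim_{U'} \FF(U')$ over open $U' \supset K' \times \{0\}$, and likewise for $K$, compactness of $P$ and of $P \times [0,1]$ lets us realize $f$ by a map $\widehat f : P \to \FF(U')$ and $F$ by a map $\widehat F : P \times [0,1] \to \FF(U)$, where, using the global product structure $Y = V \times \mathbb{R}$, we may take $U' = \Omega' \times (-\delta', \delta')$ and $U = \Omega \times (-\delta, \delta)$ to be tubes with $\overline{\Omega'} \supset K'$, $\overline\Omega \supset K$ compact, $\Omega \subset \Omega'$, $0 < \delta \le \delta'$, and — after shrinking $U$ and invoking compactness of $P$ — with $\widehat F|_{P \times \{0\}}$ equal to the image of $\widehat f$ under $\FF(U') \to \FF(U)$. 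Passing to slightly smaller closed tubes $C' \subset U'$, $C \subset U$ that are compact, microflexibility of $\FF$ says $\FF(C') \to \FF(C)$ is a microfibration, so this lifting problem has a partial lift: there exist $\tau \in (0,1]$ and $\Psi : P \times [0,\tau] \to \FF(C')$ with $\Psi|_{P\times\{0\}} = \widehat f$ and $\Psi(p,t)|_C = \widehat F(p,t)$ for $t \le \tau$.

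The remaining task is to upgrade this short-time lift to a lift over all of $[0,1]$, and this is where the product structure $Y = V \times \mathbb{R}$ and the $\diff(V,\Pi)$-invariance enter decisively. For $\lambda \in (0,1]$ let $\sigma_\lambda$ be the fibrewise dilation $\sigma_\lambda(v,t) = (v, \lambda t)$: it is a diffeomorphism of $Y$ commuting with $\Pi$, hence lies in $\diff(V,\Pi)$, it fixes $V \times \{0\}$ pointwise, and it carries a tube of $\mathbb{R}$-radius $r$ to one of radius $\lambda r$. By $\diff(V,\Pi)$-invariance each $\sigma_\lambda$ induces an isomorphism of $\FF$ over $U'$ onto $\FF$ over $\sigma_\lambda(U')$, compatible with restriction to subtubes and inducing the identity on $\iota^*\FF$ (the germ at $V \times \{0\}$). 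Using these dilations one reformulates, at each stage of an iteration, the ``remaining'' lifting problem over a successively thinner tube as a copy of a single fixed model lifting problem, so that microflexibility furnishes a partial lift with a time step $\tau$ independent of the stage. Concretely: choose $0 = t_0 < t_1 < \cdots$ subdividing $[0,1]$ so that, after a finite initial run, every remaining subinterval has length $< \tau$; at step $k$ extend the lift over one more subinterval via microflexibility over a tube $C'_k$ that shrinks in the $\mathbb{R}$-direction toward $K' \times \{0\}$, first transporting the new initial data $\Psi(\cdot, t_k)$ back to the model via an appropriate $\sigma_{\lambda_k}$ to keep the microfibration constant uniform; since the $C'_k$ collapse onto $V \times \{0\}$, the limiting lift is a continuous map $P \times [0,1] \to \varinjlim_k \FF(C'_k) = (\iota^*\FF)(K')$, which is the required $\widetilde F$.

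I expect the uniformization of the microflexibility time step along the iteration to absorb most of the work: one must set up the bookkeeping of the nested tubes $C'_k$ and the dilations $\sigma_{\lambda_k}$ so that the successive lifting problems are genuinely $\diff(V,\Pi)$-conjugate to one fixed problem (in particular matching the parameter-polyhedron, the initial section, and the homotopy data up to the group action and reparametrization of $[0,1]$), after which a single $\tau$ controls them all and the infinite composition of partial lifts converges to a bona fide lift over the zero section. Checking that this limiting process respects the quasitopologies on the relevant mapping and colimit spaces (so that $\widetilde F$ is genuinely continuous in the sense of Definition \ref{def-micfibn}) is the other point requiring care, but it is of the same routine nature as the colimit manipulations already used in the reduction step.
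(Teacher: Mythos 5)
This statement is not proved in the paper at all: it is imported verbatim from Gromov \cite[p.~78]{Gromov_PDR}, so your proposal can only be measured against the standard argument, not against anything in the text. Your reduction step (factoring the germ-level data through honest tubes $\Omega\times(-\delta,\delta)$ by compactness and the colimit quasitopology) and the first short-time lift via microflexibility are fine. The gap is in the uniformization step, which is the entire content of the theorem. A Serre microfibration furnishes an $\ep$ that depends on the whole lifting problem, in particular on the map $\phi\colon P\to\FF(C')$ serving as initial condition. In your iteration that initial condition is the partial lift at time $t_k$, which is a new and uncontrolled section at every stage; conjugating by the dilation $\sigma_{\lambda_k}$ normalizes the size of the tube but does not return this section (nor the downstairs homotopy) to the original data, so the successive problems are not $\diff(V,\Pi)$-conjugate to a single fixed model, and no stage-independent $\tau$ follows. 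Worse, your parenthetical claim that $\sigma_\lambda$ induces the identity on $\iota^*\FF$ is false: already for the sheaf of functions, $f(v,t)\mapsto f(v,\lambda t)$ acts nontrivially on germs along $V\times\{0\}$; so even the compatibility between the conjugated initial data and the fixed homotopy $\widehat F$ breaks down. Note also that the shrinking tubes $C'_k$ are a red herring: microflexibility is stated for germs at fixed compacts, so each continuation can be taken with values in $\FF(C'_1)$ itself; the only obstruction is the Zeno problem in the time direction, which your argument does not resolve.

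Your fallback — pass to infinitely many stages and take a limit into $\varinjlim_k\FF(C'_k)=(\iota^*\FF)(K')$ — does not repair this. By the very definition of the quasitopology on direct limits (the same fact you invoke in the reduction, and which the paper uses in Lemma \ref{lem-ltsfibns}), a continuous map from the compact space $P\times[0,1]$ into the colimit factors through a single stage; a lift assembled from infinitely many non-stabilizing stages is therefore not continuous, and if the increments sum to less than $1$ it is not even defined on all of $[0,1]$. The missing idea is precisely how Gromov uses the compressing fiberwise diffeotopies (his ``sharply moving'' diffeotopies): they are applied to the representing sections and to the downstairs data so as to convert the entire remaining homotopy into a problem disposable by a bounded number of applications of microflexibility over a fixed tube, rather than to equalize time steps along a naive iteration. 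You have identified the right elements of $\diff(V,\Pi)$, but the step where invariance actually beats the failure of uniformity in the microfibration property is asserted, not proved.
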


The principal consequence is the following 
flexibility theorem for $\diff-$invariant sheaves. 

\begin{theorem}\cite[p. 78-79]{Gromov_PDR}\label{thm-gromovmicro2flex2}
	Let $\FF$ be a microflexible $\diff(V)-$invariant (continuous)  sheaf
	over a manifold $V$. Then the restriction to an arbitrary piecewise smooth polyhedron
	$K \subset V$ of positive codimension, $\FF|K$, is a flexible sheaf over $K$.
\end{theorem}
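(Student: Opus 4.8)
The plan is to deduce Theorem~\ref{thm-gromovmicro2flex2} from the $V\times\R$--result Theorem~\ref{thm-gromovmicro2flex} by a local analysis along a triangulation of $K$, followed by a Mayer--Vietoris reassembly. Flexibility of $\FF|K$ is a statement about compact pairs $K_1\subset K_2\subset K$, and any such $K_2$ lies in a finite subcomplex; so we may assume $K$ is a finite piecewise smooth complex, triangulated by smoothly embedded simplices (this triangulation being part of the data of a piecewise smooth polyhedron). Order the closed simplices $\sigma_1,\dots,\sigma_N$ by nondecreasing dimension, so that each partial union $K_j:=\sigma_1\cup\dots\cup\sigma_j$ is a closed subcomplex of $K$. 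I will show by induction on $j$ that $\FF|K_j$ is flexible.

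\textbf{The local model (base case $j=1$, and each $\sigma_j$).} First I would treat a single closed simplex $\sigma\subset K$. Put $k=\dim\sigma$; since $K$ has positive codimension, $k\le\dim K<\dim V=:n$, hence $n-k\ge 1$. Enlarge $\sigma$ to a contractible smoothly embedded $k$--submanifold $\Sigma_0\subset V$ with $\sigma\subset\Sigma_0$, and apply the tubular neighbourhood theorem: its normal bundle is trivial, so one gets an open set $U\subset V$ with a diffeomorphism $U\cong\Sigma_0\times\R^{n-k}$ taking $\sigma$ into the zero section $\Sigma_0\times\{0\}$. Splitting off one Euclidean factor, $U\cong V'\times\R$ with $V'=\Sigma_0\times\R^{n-k-1}$ and $\sigma\subset V'\times\{0\}$. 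Now $\FF|_U$ is microflexible (restriction of a microflexible sheaf to an open set is microflexible, as the micro–lifting property can be tested on compacta in $U$); and any diffeomorphism of $U=V'\times\R$ commuting with $\Pi:U\to V'$ is in particular a partially defined diffeomorphism of $V$, so $\mathrm{Diff}(V)$--invariance of $\FF$ makes $\FF|_U$ invariant under $\mathrm{Diff}(V',\Pi)$. By Theorem~\ref{thm-gromovmicro2flex}, $\FF|_{V'\times\{0\}}$ is flexible on $V'=V'\times\{0\}$. Finally $\sigma$ is a closed subset of $V'\times\{0\}$, and restriction of a flexible sheaf to a closed subset is flexible: for compact $C\subset\sigma$ one has $(\FF|\sigma)(C)=(\FF|_{V'\times\{0\}})(C)$ by interchanging the two direct limits over germinal neighbourhoods (exactly the computation in the proof of Lemma~\ref{lem-formalfnrestcomm}), so the fibration property is inherited. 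Hence $\FF|\sigma$ is flexible.

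\textbf{The assembly (inductive step).} Write $A=K_{j-1}$ and $B=\sigma_j$; these are closed, $A\cup B=K_j$, and $\FF|A$ is flexible by the inductive hypothesis while $\FF|B$ is flexible by the previous paragraph. Fix compact $C_1\subset C_2\subset K_j$ and set $A_i=C_i\cap A$, $B_i=C_i\cap B$, $D_i=C_i\cap A\cap B$, all compact. As in the proof of Lemma~\ref{lem-pastecpts}, the gluing axiom for the closed cover $\{A_i,B_i\}$ of $C_i$ gives strict fibre squares of quasitopological spaces $\FF(C_i)=\FF(A_i)\times_{\FF(D_i)}\FF(B_i)$, compatible with the restriction maps $\FF(C_2)\to\FF(C_1)$ etc. Given a lifting problem $\Psi:I^q\times[0,1]\to\FF(C_1)$ with initial lift $\widetilde\Psi_0:I^q\times\{0\}\to\FF(C_2)$, I would: (1) lift the $A$--component $\Psi^A:I^q\times[0,1]\to\FF(A_1)$ to $\widetilde\Psi^A:I^q\times[0,1]\to\FF(A_2)$ using flexibility of $\FF|A$ over $A_1\subset A_2$; (2) push $\widetilde\Psi^A$ forward to $\widetilde\Psi^D:I^q\times[0,1]\to\FF(D_2)$; (3) lift the pair $(\Psi^B,\widetilde\Psi^D)$ into $\FF(B_2)$. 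The key observation is that the target of (3) is $\FF(B_1)\times_{\FF(D_1)}\FF(D_2)=\FF(B_1\cup D_2)$, which is the value of $\FF$ on a compact subset of $B_2$; hence $\FF(B_2)\to\FF(B_1\cup D_2)$ is a fibration by flexibility of $\FF|B$, producing $\widetilde\Psi^B$ lifting $\Psi^B$ and mapping to $\widetilde\Psi^D$ in $\FF(D_2)$. By construction $\widetilde\Psi^A$ and $\widetilde\Psi^B$ agree in $\FF(D_2)$, so the fibre square glues them to $\widetilde\Psi:I^q\times[0,1]\to\FF(C_2)$, a lift of $\Psi$ with the prescribed initial value. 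Thus $\FF|K_j$ is flexible; taking $j=N$ finishes the proof.

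\textbf{Expected difficulties.} The real content is carried entirely by Theorem~\ref{thm-gromovmicro2flex}. The step most likely to require care is the assembly: the bookkeeping of the fibre squares, and the point that $\FF(B_1\cup D_2)$ is $\FF$ evaluated on a compact subset of $B_2$ — it is this that lets plain flexibility of $\FF|B$ (rather than a homotopy-theoretic coglueing in the style of Theorem~\ref{thm-bh}) close the lifting argument. One should also verify the routine differential-topology inputs of the local model (a contractible smooth thickening $\Sigma_0$ of a smoothly embedded simplex, triviality of its normal bundle) and that restriction of a microflexible sheaf to an open subset is again microflexible.
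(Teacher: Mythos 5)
The paper does not prove this statement; it is quoted verbatim from Gromov's book (the citation is \cite[p.~78--79]{Gromov_PDR}), so there is no internal proof to compare against. Your argument is, however, correct, and it takes a genuinely different route from the one the paper uses for the \emph{stratified} analogue, Theorem~\ref{thm-micro2flexs2}: there the authors cover $K$ by open charts $U_k$ that extend to products $U_k\times(-1,1)$ inside the ambient space, apply the $V\times\R$ result chartwise, and close via Gromov's localization lemma (``locally flexible implies flexible''). You instead triangulate $K$, establish flexibility of $\FF|\sigma$ for each closed simplex $\sigma$ in its own tubular slab, and reassemble by a Mayer--Vietoris induction on partial unions. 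The tradeoff is worth noting: the localization route requires a product neighbourhood about every point of $K$, which is automatic under the paper's stratified notion of positive codimension, but for a PL polyhedron of codimension one it is unclear at a branch vertex (the cone on the link, times an interval, need not embed in the ambient manifold). Your simplex-by-simplex argument sidesteps this entirely, pushing the singular incidence pattern into the fibre-square bookkeeping of the gluing axiom — the same closed-cover fibre square used (without proof) in the paper's Lemma~\ref{lem-pastecpts}, which you reproduce correctly in step (3). Two points you glide over but correctly flag as routine: that restriction of a flexible sheaf to a closed subspace is again flexible (the double-limit interchange over germinal neighbourhoods, as in Lemma~\ref{lem-formalfnrestcomm}), and that the identity $\FF(C)=\FF(A\cap C)\times_{\FF(A\cap B\cap C)}\FF(B\cap C)$ for compact $C$ follows by passing the open gluing axiom to a limit over shrinking neighbourhoods; both are consistent with how the paper itself treats such limits. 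In short, the proposal is a sound and in some ways more robust argument than the scheme the paper sketches for its stratified analogue, and closer in spirit to Gromov's original skeleta-wise treatment.
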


Recall that	a stratified (continuous) sheaf $\FF$ over a stratified space $X$ is \emph{stratumwise microflexible} if
	for every stratum $S$ of $X$, $\FF|S$ is  microflexible.

The main theorems of this section are now given below. 

\begin{theorem}\label{thm-micro2flexs}
	Let $Y = X \times  \R$ equipped with the product stratification,  and let $\Pi: Y \to X$ denote the projection
	onto the first factor. Let $\FF$ be a stratumwise microflexible  continuous sheaf over $Y$ invariant under $\sdiff( X, \Pi)$.
	Further, suppose that $\FF$ is infinitesimally microflexible across strata, i.e.\ for all strata $S<L$ of $Y$, $\HH^L_S$ (cf.\ Definition \ref{def-hh}) is microflexible.
	Then the restriction $\FF \vert X (= X \times \{0\})$ is a stratified sheaf over $X (= X \times \{0\})$ satisfying the parametric $h-$principle.
\end{theorem}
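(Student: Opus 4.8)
The plan is to reduce Theorem \ref{thm-micro2flexs} to the sheaf-theoretic $h$-principle of Theorem \ref{thm-hofibsflexg} by showing that the restricted sheaf $\FF|X$ on $X = X \times \{0\} \subset Y$ satisfies the two hypotheses of that theorem: stratumwise flexibility and infinitesimal flexibility across strata. The key mechanism converting microflexibility into flexibility is Gromov's Theorem \ref{thm-gromovmicro2flex}: a microflexible $\diff(V,\Pi)$-invariant sheaf over $V \times \R$ restricts to a flexible sheaf over $V \times \{0\}$. So the strategy is to apply Gromov's theorem one stratum at a time, and separately to each open homotopy fiber sheaf, all while keeping careful track of the stratified structure.

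First I would establish stratumwise flexibility of $\FF|X$. Fix a stratum $S$ of $X$; then $S \times \R$ is (a union of) strata of $Y$, and the intrinsic sheaf $\FF_{S \times \R} = i_{S\times\R}^* \FF_{\bbar{S \times \R}}$ is a continuous sheaf over the manifold $S \times \R$. By hypothesis $\FF$ is stratumwise microflexible, so $\FF_{S \times \R}$ is microflexible; by $\sdiff(X,\Pi)$-invariance of $\FF$ (Definition \ref{def-stratdiff}), $\FF_{S \times \R}$ is $\diff(S, \Pi|_S)$-invariant, where $\Pi|_S : S \times \R \to S$ is the projection. Gromov's Theorem \ref{thm-gromovmicro2flex} (applied with $V = S$) then gives that the restriction $\FF_{S\times\R}|(S \times \{0\}) = (\FF|X)_S$ is flexible on $S$. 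Since this holds for every stratum $S$ of $X$, the stratified sheaf $\FF|X$ is stratumwise flexible in the sense of Definition \ref{def-stratflex}.

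Next I would establish infinitesimal flexibility across strata for $\FF|X$. Fix a pair of strata $S < L$ in $X$; then $S \times \R < L \times \R$ is a pair of strata in $Y$. The key point is that restriction to $X \times \{0\}$ commutes with the homotopy-fiber construction: one has $(\FF|X)_S = i_S^*((\FF|X)_{\bbar{L}})$ compatibly with the restriction maps $\res$, so $\HH^L_S(\FF|X) = i_S^* \hofib(\res^{L\times\R}_{S\times\R}(\FF))|(S \times \{0\})$, i.e. the open homotopy fiber sheaf of $\FF|X$ for the pair $S < L$ is the restriction to $S \times \{0\}$ of the open homotopy fiber sheaf $\HH^{L\times\R}_{S\times\R}(\FF)$ of $\FF$ over the manifold $S \times \R$. (This commutation is exactly Lemma \ref{lem-formalfnrestcomm}-type bookkeeping together with the fact that $i_S^*$ is exact on the relevant diagrams; I would spell out the identification $i_{S\times\{0\}}^* \HH^{L\times\R}_{S\times\R} \cong \HH^L_S(\FF|X)$ carefully.) Now $\HH^{L\times\R}_{S\times\R}(\FF)$ is microflexible by the ``infinitesimally microflexible across strata'' hypothesis, and it is $\diff(S, \Pi|_S)$-invariant by functoriality of the homotopy-fiber construction applied to the $\sdiff(X,\Pi)$-invariant morphism $\res^{L\times\R}_{S\times\R}$ (cf.\ Lemma \ref{lem-trivialhofibbdl}). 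Applying Gromov's Theorem \ref{thm-gromovmicro2flex} once more, with $V = S$, we conclude that $\HH^L_S(\FF|X) = \HH^{L\times\R}_{S\times\R}(\FF)|(S\times\{0\})$ is flexible on $S$. Hence $\FF|X$ is infinitesimally flexible across strata.

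With both hypotheses of Theorem \ref{thm-hofibsflexg} verified for $\FF|X$, that theorem yields that $\FF|X$ satisfies the parametric $h$-principle, which is the assertion of Theorem \ref{thm-micro2flexs}. I expect the main obstacle to be the bookkeeping in the second step: verifying that the open homotopy fiber sheaf commutes with restriction to $X \times \{0\}$ in a way that is compatible with the $\diff$-actions, so that Gromov's theorem genuinely applies to $\HH^{L\times\R}_{S\times\R}$ as a sheaf over the manifold $S \times \R$ with the correct product-with-$\R$ structure. One subtlety worth flagging is that a priori the homotopy fiber is defined using a chosen global section $\psi \in \FF(Y)$ (Definition \ref{def-hh}); I would note that $\psi$ restricts to a global section over $X \times \{0\}$ and that all the $\hofib$ constructions are taken over compatible base sections, so the identifications above are consistent. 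The remaining ingredients — $\sdiff$-invariance descending to each $\diff(S)$, microflexibility descending stratumwise — are immediate from the definitions.
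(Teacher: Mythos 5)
Your proposal is correct and follows essentially the same route as the paper: both arguments invoke $\sdiff$-invariance (via Lemma \ref{lem-trivialhofibbdl}) to obtain $\diff(V,\Pi)$-invariance of the relevant sheaves, apply Gromov's Theorem \ref{thm-gromovmicro2flex} stratum-by-stratum and homotopy-fiber-sheaf-by-homotopy-fiber-sheaf to convert microflexibility over $S \times \R$ into flexibility over $S \times \{0\}$, and then conclude by Theorem \ref{thm-hofibsflexg}. The one place where you are more careful than the paper is the identification $\HH^{L_X}_{S_X}(\FF|X) \cong \HH^{L\times\R}_{S\times\R}(\FF)\big|_{S_X\times\{0\}}$ needed to feed the hypothesis of Theorem \ref{thm-hofibsflexg}; the paper elides this bookkeeping, and you rightly flag it as the potential subtlety, though the identification is immediate since both $\hofib$ and the restriction morphisms are computed germwise. (As a minor point, your displayed equality ``$(\FF|X)_S = i_S^*((\FF|X)_{\bbar{L}})$'' conflates the intrinsic sheaf with the extrinsic one; what you mean — and what the surrounding prose makes clear you mean — is that the two sheaves entering the homotopy fiber over $S$ are, respectively, the restrictions to $S\times\{0\}$ of $i_{S\times\R}^*\FF_{\bbar{L\times\R}}$ and $\FF_{S\times\R}$, compatibly with $\res$.)
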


\begin{proof} Note first that $\HH^L_S$ is invariant under $\sdiff( X, \Pi)$ by Lemma \ref{lem-trivialhofibbdl}.
By hypothesis, {for all strata $S < L$ of $Y$,}
	\begin{enumerate}
		\item $\FF_L$ is microflexible. 
		\item $\HH^L_S$ is microflexible. 
	\end{enumerate}
	The strata {$S, L$ of $Y$ are of the form $S_X \times \R, L_X \times \R$, where $L_X = L\cap X, S_X = S \cap X$ are strata of $X$}.
	Note that {$S, L$ are manifolds}. Invariance of $\FF$ and  $\HH^L_S$  under $\sdiff( X, \Pi)$ implies $\diff (L_X \times \R, \Pi)-$invariance of $\FF_L$ and {$\diff (S_X \times \R, \Pi)-$invariance of $\HH^L_S$}. It follows from Theorem \ref{thm-gromovmicro2flex} that {the sheaves $\FF_L\vert{L_X \times \{0\}}$  and $\HH^L_S\vert {S_X \times \{0\}}$  are flexible.}
	Hence, by Theorem \ref{thm-hofibsflexg}, {$\FF \vert (X \times \{0\})$} satisfies the parametric $h-$principle.
\end{proof}

\begin{defn}
	A stratified subspace $K<X$ is said to be of positive codimension if for every stratum $S$ of $X$,
	$K\cap S$ has  positive codimension in $S$.
\end{defn}

\begin{theorem}\label{thm-micro2flexs2}
	Let $\FF$ be a stratumwise microflexible $\sdiff-$invariant stratified (continuous)  sheaf
	over $X$. Further, suppose that $\FF$ is infinitesimally microflexible across strata, i.e.\ the open homotopy fiber sheaves $\HH^L_S$ (Definition \ref{def-hh})  are 
 microflexible.
	Then the restriction $\FF|K$ to a stratified subspace
	$K \subset X$ of positive codimension  satisfies the parametric $h-$principle.
	
\end{theorem}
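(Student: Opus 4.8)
The statement to be proved, Theorem \ref{thm-micro2flexs2}, is the stratified analogue of Gromov's Theorem \ref{thm-gromovmicro2flex2}. The plan is to reduce it to the ``fibered'' version, Theorem \ref{thm-micro2flexs}, exactly as Gromov reduces Theorem \ref{thm-gromovmicro2flex2} to Theorem \ref{thm-gromovmicro2flex}, but carrying out the reduction \emph{stratumwise} and using the hypothesis on the open homotopy fiber sheaves to control the behaviour across strata. First I would choose, by Theorem \ref{goresky-triangulation}, a triangulation of $X$ by smoothly embedded simplices compatible with the filtration by stratum closures, refined so that $K$ sits inside the codimension-$\geq 1$ skeleton of the dual cell structure; more precisely, since $K \subset X$ is a stratified subspace of positive codimension, each $K \cap S$ is a piecewise smooth polyhedron of positive codimension in the manifold $S$, and after barycentric subdivision we may assume $K$ is contained in a stratified subspace $K'$ which admits, for each stratum $S$, a product neighborhood. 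The key geometric input is that a positive-codimension piecewise smooth subpolyhedron of a manifold $V$ has a neighborhood that, near each open simplex, looks like $V' \times \mathbb{R}$ with the projection to $V'$; this is the mechanism by which Gromov converts microflexibility into flexibility.

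The core of the argument is then the following stratumwise claim: for each stratum $S$ of $X$ and each stratum $L > S$, the restricted sheaf $\FF_L|(K\cap L)$ and the restricted homotopy fiber sheaf $\HH^L_S|(K\cap S)$ are \emph{flexible}. To see this for $\FF_L|(K\cap L)$: $\FF_L$ is a microflexible $\diffc(L)$-invariant sheaf on the manifold $L$ (microflexibility by stratumwise microflexibility of $\FF$, $\diffc(L)$-invariance from $\sdiff$-invariance, Definition \ref{def-stratdiff}), and $K\cap L$ is a piecewise smooth polyhedron of positive codimension in $L$; hence Theorem \ref{thm-gromovmicro2flex2} applies directly and gives that $\FF_L|(K\cap L)$ is flexible. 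The same reasoning applies to $\HH^L_S$: by Lemma \ref{lem-trivialhofibbdl} it is $\diffc(S)$-invariant, by hypothesis it is microflexible, and $K\cap S$ has positive codimension in $S$; so Theorem \ref{thm-gromovmicro2flex2} again yields flexibility of $\HH^L_S|(K\cap S)$.

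With these two facts in hand, $\FF|K$ is a stratified continuous sheaf over the stratified space $K$ which is stratumwise flexible and infinitesimally flexible across strata. I would then need to check that the open homotopy fiber sheaf of $\FF|K$ associated to a pair of strata $K\cap S < K\cap L$ of $K$ coincides (up to weak homotopy equivalence) with the restriction $\HH^L_S|(K\cap S)$ of the corresponding open homotopy fiber sheaf of $\FF$ on $X$ --- this is a matter of observing that the homotopy fiber construction and the restriction $i_{K\cap S}^*$ commute, which follows from Lemma \ref{lem-formalfnrestcomm} and the functoriality of $\hofib$ (Definition \ref{def-hofib}) together with the fact that the restriction maps $\res^L_S$ for $\FF|K$ are just the restrictions of the $\res^L_S$ for $\FF$. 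Granting this compatibility, Theorem \ref{thm-hofibsflexg} applied to the stratified sheaf $\FF|K$ over $K$ yields that $\FF|K$ satisfies the parametric $h$-principle, which is the desired conclusion.

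\textbf{Main obstacle.} The delicate point is the global-to-local passage: Theorem \ref{thm-gromovmicro2flex2} is a statement about a single polyhedron in a single manifold, but here $K$ meets infinitely many strata (or at least several), and the product neighborhoods $V' \times \mathbb{R}$ transverse to $K\cap S$ in different strata $S$ must be chosen compatibly so that the resulting local trivializations respect the control data and the tube system $\mathcal{N}$. Concretely, one must verify that the $\sdiff$-invariance of $\FF$ --- which is invariance under the \emph{stratified} pseudogroup, hence compatible across strata via the $\pi$- and $\rho$-control conditions --- is enough to run Gromov's microflexible-implies-flexible argument simultaneously on all strata, i.e.\ that the diffeomorphisms pushing $K\cap S$ off itself inside $S$ can be arranged to be restrictions of stratified diffeomorphisms of a neighborhood of $K$ in $X$. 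This is where the hypothesis that $K$ has positive codimension \emph{in every stratum} is essential, and where I would spend the bulk of the work; once the stratumwise flexibility statements are established with their across-strata compatibility, the conclusion is a formal consequence of Theorem \ref{thm-hofibsflexg}.
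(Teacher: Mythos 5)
Your proof is correct and reaches the conclusion by essentially the same machinery as the paper: establish that $\FF|K$ is stratumwise flexible and that the open homotopy fiber sheaves restricted to $K$ are flexible, then apply Theorem \ref{thm-hofibsflexg}. The only difference is packaging — the paper uses the local product structure $U_k \times (-1,1)$ to reduce to Theorem \ref{thm-micro2flexs} and then invokes Gromov's localization lemma over the stratified space, whereas you apply Gromov's polyhedron theorem (Theorem \ref{thm-gromovmicro2flex2}) directly stratum by stratum, which amounts to performing the same localization inside each manifold stratum and then aggregating across strata via Theorem \ref{thm-hofibsflexg}; the two routes are equivalent, and the compatibility of $\hofib$ with restriction to $K \cap S$ that you flag is indeed the one point that needs spelling out, but it holds because $\hofib$ is computed sectionwise and commutes with $i^*$ pullbacks.
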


\begin{proof}
	Since $K$ is of positive codimension in $X$, for every $k\in K$, there is an open neighborhood $U_k$ of $k$ in $K$ such that $U_k \times (-1,1)$ embeds in $X$. The theorem now follows 
	from Theorem \ref{thm-micro2flexs} since locally flexible sheaves are flexible (by Gromov's localization lemma \cite[p. 79]{Gromov_PDR}).
\end{proof}

\begin{rmk}
	Theorems \ref{thm-micro2flexs} and \ref{thm-micro2flexs2} provide examples of  how to translate a positive codimension stratumwise microflexibility hypothesis into an 
	$h-$principle conclusion.
\end{rmk}

\section{{The Gromov diagonal  normal construction with smooth jets}}\label{sec-formalfnstrat}
We specialize the Gromov diagonal normal sheaf construction of  $\FF^\bullet$  in Definition \ref{def-formalfn} to the  sheaf of sections of a stratified bundle $E$ over a smooth stratified space $X$. Even in the manifold setup, an explicit connection between Gromov's $\FF^\bullet$ construction and the use of jets in \cite{em-book} is a little difficult to find. Hence, we provide a detailed treatment below. Remark \ref{rmk-formalasgerms}, which gives an explicit description of $\FF^\bullet$ will allow us to formalize this. It will turn out that
in the stratified context, $\FF^\bullet$ admits an inductive description up to homotopy in terms of two constituent sheaves:
\begin{enumerate}
	\item A purely topological {sheaf of germs of sections} (see Definition \ref{def-conegerms} below).
	\item A {sheaf of smooth jets} $\JJ^r_E$ when $E$ is a smooth bundle over a manifold (see Proposition \ref{prop-formalfnmflds} below).
\end{enumerate}
The aim of this section is to describe this structure of $\FF^\bullet$. In the process we answer Sullivan's question \ref{qn-sullivan}.

\subsection{Tangent microbundles on stratified spaces}\label{sec-microb}
The stratified tangent bundle (Definition \ref{str-tbl}) will turn out to be a stratified subbundle of the tangent microbundle to $X$ (Definition \ref{def-mic}).

Note that for $X$ a manifold, the tangent microbundle is {germinally} equivalent to the tangent bundle $TX$, as it coincides with the normal bundle to the diagonal $\diag(X) \subset  X \times X$.
For each stratum $S$ of $X$, $TS$ will thus refer to the tangent microbundle of the manifold $S$, i.e.\ it may be identified canonically with the germ of the zero-section from $S$ to the usual (manifold) tangent bundle of $S$.

The tangent microbundle to a stratified space $X$,  denoted by $tX$ henceforth, turns out to be a {stratumwise} bundle (see Definition \ref{def-stratumwisebdl}).
We provide an explicit 
description of $tX$  in terms of  {the} local structure {of $X$}.

Let ${tX :=(U, X, p)}$ be the tangent microbundle of $X$. For $x \in X$, consider the fiber $p^{-1}(x)$. This is a germ $U_x$ of a neighborhood of $x$ in $X$. Let $S$ be the unique stratum of $X$ containing $x$. Then there is {an} identification of $U_x$ with $W \times cA$ where $W$ is the germ of a ball around $x$ in $S$ and $cA$ is the germ of a cone on the link $A$ of $S$ in $X$, { with cone point $x$}. Thus, 
$${(p^{-1}(x), \{(x,x)\}) \cong (U_x, \{x\}) \cong (W, \{x\}) \times (cA, \{x\})}$$
as {germs of spaces}.
The bundle over $S$ whose fibers are (germs of) the cones $cA$ will be denoted as $NS$, and referred to as the normal cone microbundle of $S$ in $X$.

Hence the restriction of $(U, p)$ to $S$, i.e.\ $(p^{-1}(S) \cap U, S, p)$ is germinally equivalent to the direct sum, {i.e.\ fiberwise product} of the microbundles: 
$${(p^{-1}(S) \cap U, S, p) \cong (TS, S, p) \oplus (NS, S, p),}$$ 
where $TS$ is the tangent microbundle of $S$  and $(NS, S, p)$ is the  normal cone microbundle of $S$ in $X$. {This demonstrates that $tX$ is a stratumwise fiber bundle over $X$ according to Definition \ref{def-stratumwisebdl}, where the fiber over a point $x \in S$ of any particular stratum $S$ of $X$ is given by {$T_x S \times cA_S(x)$} where $A_S$ denotes the link of $S$ in $X$, and $cA_S(x)$ denotes the normal cone of $S$ in $X$ at the point $x$. We next describe a filtration of $tX$, which induces the canonical filtration of any normal cone $cA_S$ by stratum-closures.}\\

\noindent {\bf Relative tangent microbundle:}
Next, suppose that $L$ is a stratum of $X$ and let $Y= \bbar{L}$ be the stratum closure of $L$. Then $Y$ is stratified naturally by the strata of $X$ given by  the union of $L$  and those strata of $X$ that lie on the boundary of $L$.
The tangent microbundle for $Y$ can be constructed as above, replacing $X$ by $Y$. We denote the tangent microbundle $tY$ of $Y$ by $t(L; X)$ and call it the \emph{relative tangent microbundle (relative to $L$)}. Note that $t(L; X)$ is a microbundle over $Y$. If, moreover, $L$ is a dense stratum in $X$, then $t(L; X) = tX$.\\

\noindent {\bf Filtering the tangent microbundle:}
Observe that $tX$ admits a filtration by $t(L; X)$ for $L$ varying over strata of $X$.
Thus, $t(L; X)$ is a sub(micro)bundle of $tX$ restricted to any stratum $S < L$, as  
{$${t(L; X)\vert_{S} =TS \oplus N_{\bar{L}}(S)}$$ }
and  
$${tX|_S =TS \oplus  N_X(S)},$$ 
where $N_{\bar{L}}(S)$ and $N_X(S)$ denotes the normal cone microbundles of the stratum $S$ in $\bar{L}$ and $X$, respectively. So $t(L; X)\vert_{S} \subset t(X)\vert_{S}$. For any particular stratum $S$ of $X$, the collection $\{t(L; X) : S < L\}$ induces a filtration of the normal bundle $N_X(S)$ by the subbundles $\{N_{\bar{L}}(S) : S < L\}$. These in turn induce the filtration by stratum-closures $\{cA^L_S : S < L\}$ on any particular conical fiber $cA_S$. Here $A^L_S$ denotes the link of $S$ in $\bar{L}$.

\subsection{{Gromov diagonal normal construction for manifolds with jets}}\label{sec-formalfnmflds} We detail some of the points made  in Remark \ref{rmk-formalasgerms} and
 briefly recall Gromov's diagonal normal sheaf construction in the manifold context before generalizing to stratified spaces. Recall (Definition \ref{def-formalfn}) that if $\FF$ is a {continuous sheaf} over a manifold $M$, then the sheaf $\PP$ over $M \times M$ is given by  $\PP(U \times V) = \maps(U, \FF(V))$. Further, the Gromov diagonal normal sheaf $\FF^\bullet$ is obtained by restricting $\PP$ to the diagonal $\diag(M) \subset M\times M$.

Let $p:E \to M$ be a smooth fiber bundle over $M$ and $\FF$ be the sheaf of sections of $E$, i.e.\ $\FF(U) = \Gamma(U; E)$, where the space of sections $\Gamma(U; E)$ is equipped with {the quasitopology inherited from $\mathrm{Maps}(U, E)$}. Then 
$$\PP(U \times V) = \maps(U, \FF(V)) = \maps(U, \Gamma(V; E)).$$
{Therefore,} these consist of those maps $U \times  V \to E$ for which the restriction to {$\{u\} \times V$} gives a section of $E$ over $V$, {for any $u \in U$}.

Recall that a collection of elements $\phi_i \in \PP(U_i \times U_i), i=1, \cdots, m$ is consistent if for all $i \neq j$, $\phi_i=\phi_j$ on ${(U_i \cap U_j) \times (U_i \cap U_j)}$.
Then $\FF^\bullet$  can be described as follows. For any open set $W \subset \diag(M)$, an element of $\FF^\bullet(W)$ consists of a collection of consistent elements ${\phi_i \in \PP(U_i \times U_i)}$ where $\{U_i \times U_i\}$ is a basic open cover of $W$ in $M \times M$. Consistency of ${\phi_i}$ ensures that they define a well-defined germ of a map $(TW, W_0) \to {E}$ at the zero-section $W_0$ of the tangent bundle $TW \subset TM$. 

Let $M_0$ denote the zero-section of $TM$. Then, (global) sections of $\FF^\bullet$ may be viewed as certain germs of maps ${\psi} : (TM, M_0) \to E$. The fact that the maps $U_i \times U_i \to E$ are sections when restricted to the second factor implies the following two facts about the germ $\psi: (TM, M_0) \to E$: 
\begin{enumerate}
	\item $\psi$ is a section $M \to E$ when restricted to the 0-section $M_0$ of $TM$.
	\item $\psi$ is a germ of a section $(T_p M, 0) \to E$ when restricted to the tangent space $T_p M \subset TM$ at $p \in M$.
\end{enumerate}

Replacing $M$ by $U$, $\FF^\bullet(U)$ consists of germs of mappings $\psi_U: (TU, U_0) \to E$ such that $\psi_U$ {is} a section of $E$ when restricted to the 0-section $U_0 \subset TU$ and is a germ of a section of $E$ {over a neighborhood of $p$} when restricted to any tangent space $T_p U$ for $p \in U$. Thus, any element of $\FF^\bullet(U)$ consists of a section $s$ of $E$ over $U$ {decorated with a collection of} germs ${g_p : (U_p, p) \to (E, s(p))}$ of sections of $E$. Here, 
\begin{enumerate}
\item $g_p$ is defined on some neighborhood $U_p$ of $p \in U$, and sends $p$ to $s(p)$.
\item $p$ ranges over all $U$.
\end{enumerate}
 {Therefore,  we shall henceforth denote an element of $\FF^\bullet(U)$ as a tuple $(s, \{g_p : p \in U\})$. It is convenient to imagine this data as a \emph{base} section $s : U \to E$ of $E$ with the image $s(U) \subset E$ decorated by a \emph{field} of section-germs $\{g_p : p \in U\}$}.

There exists a natural {morphism of sheaves} $\Psi_r: \FF^\bullet\to \JJ^r_E$ from $\FF^\bullet$ to the {sheaf} of $r-$jets of sections of $E$ over $M$, {essentially given by setting $\Psi_r(s, \{g_p\})$ equal to the family of $r$-order Taylor polynomials of $g_p$ at $p$, as $p$ varies over $U$. We state a precise definition below:}

\begin{defn}\label{def-psi}
	{For any $(s, \{g_p : p \in U\}) \in \FF^\bullet(U)$,  define $\Psi_r(s, \{g_p\}) \in \JJ^r_E(U)$ to be the section of the $r$-jet bundle of $E$ such that at the point $p \in U$, the section takes the value $J^r_p g_p$. This defines a morphism of sheaves $\Psi_r : \FF^\bullet \to \JJ^r_E$.}
\end{defn}

\begin{prop}\label{prop-formalfnmflds} For any $r \in \natls$,
	$\Psi_r: \FF^\bullet \to \JJ^r_E$ is a {weak} homotopy equivalence of sheaves. Equivalently, for any $r>0$, the Gromov diagonal normal sheaf $\FF^\bullet$ is naturally homotopy equivalent to the sheaf $\JJ^r_E$  of $r-$jets of sections of $E$.
\end{prop}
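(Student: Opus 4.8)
The plan is to apply Gromov's Theorem \ref{thm-gromov-whe2he} (local weak homotopy equivalence implies weak homotopy equivalence for flexible sheaves), so the proof breaks into three pieces: (i) check that $\FF^\bullet$ is flexible; (ii) check that $\JJ^r_E$ is flexible; (iii) verify that $\Psi_r$ is a \emph{stalkwise} weak homotopy equivalence. Flexibility of $\FF^\bullet$ is immediate from Proposition \ref{prop-formalisflex}. For flexibility of $\JJ^r_E$, note that $\JJ^r_E$ is the sheaf of sections of the $r$-jet bundle $J^r E \to M$ (a genuine fiber bundle over $M$ since $E \to M$ is a smooth fiber bundle over a manifold), and by Example \ref{lem-surj} the sheaf of sections of a surjective map is flexible. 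So the real content is step (iii).

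\textbf{The stalkwise equivalence.} Fix $p \in M$ and work in a chart $U \cong \R^n$ around $p$ over which $E$ is trivial, $E|_U \cong \R^n \times \mathbb F$ for a fiber $\mathbb F$; a section is then a map $\R^n \to \mathbb F$. The stalk $\FF^\bullet_p$ is the space of germs at $p$ of data $(s, \{g_q : q \in U\})$ as in Definition \ref{def-psi}, i.e.\ a germ of a base section $s$ together with a germ of a field of section-germs $g_q$ with $g_q(q) = s(q)$; and $\Psi_r$ records the family $q \mapsto J^r_q g_q$ of $r$-jets. The stalk $(\JJ^r_E)_p$ is the germ at $p$ of sections of $J^r E$. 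I would show $\Psi_r$ is a weak equivalence on stalks by exhibiting a deformation-retraction-type argument: the germ of $s$ is determined by $g_p$ at the point $p$ (since $g_p(p) = s(p)$, and more precisely one can retract $s$ onto the germ $q \mapsto g_q(q)$ which is redundant data), and for each fixed $q$ the germ $g_q$ can be linearly contracted onto its $r$-jet $J^r_q g_q$ by the standard straight-line homotopy $g_q^t(x) := g_q(q + t(x-q))$ composed with truncation — this is the usual fact that the germ of a smooth map is homotopy equivalent to its $r$-jet, carried out continuously in the parameter $q$ and compatibly with the base section. Concretely: the map sending an $r$-jet field $\sigma$ to the pair consisting of the underlying section $q \mapsto \sigma(q)(0)$ together with the field of polynomial section-germs $q \mapsto (\text{the degree-}r\text{ Taylor polynomial determined by }\sigma(q))$ is a section of $\Psi_r$ on stalks, and the straight-line/Taylor homotopy shows the composite in the other direction is homotopic to the identity. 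Both constructions are natural in $U$, giving a \emph{morphism of sheaves} that is a stalkwise weak equivalence.

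\textbf{Conclusion.} Having established that $\FF^\bullet$ and $\JJ^r_E$ are both flexible and that $\Psi_r : \FF^\bullet \to \JJ^r_E$ is a morphism of sheaves which is a weak homotopy equivalence on every stalk (equivalently, a local weak homotopy equivalence, using local contractibility of $M$), Theorem \ref{thm-gromov-whe2he} applies and yields that $\Psi_{r,U} : \FF^\bullet(U) \to \JJ^r_E(U)$ is a weak homotopy equivalence for every open $U \subset M$. That is exactly the assertion that $\Psi_r$ is a weak homotopy equivalence of sheaves. The naturality statement — that this identification is compatible with restriction and with the tautological inclusion $\Delta : \FF \to \FF^\bullet$, which under $\Psi_r$ becomes the $r$-jet map $J^r : \Gamma_E \to \JJ^r_E$ of \eqref{eqn-jr} — follows directly from the definitions, since $\Psi_r(\Delta(s)) = \Psi_r(s, \{s_q\}) = (q \mapsto J^r_q s) = J^r s$.

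\textbf{Main obstacle.} The delicate point is step (iii): writing down the homotopy from a germ $g_q$ to its $r$-jet \emph{uniformly in the parameter $q$} and \emph{compatibly with the base section $s$}, so that the result is an honest homotopy of maps into the quasitopological space $\FF^\bullet(U)$ (not merely a pointwise statement about stalks). One must be careful that the straight-line contraction $g_q^t(x) = g_q(q + t(x-q))$ stays within the domain of definition of the germ — this is fine after shrinking $U$, i.e.\ at the level of germs — and that truncating to degree $r$ is continuous in the compact-open (quasi)topology, which it is since Taylor coefficients depend continuously on the jet. Once this homotopy is set up correctly and seen to be natural in $U$, the rest is bookkeeping and an appeal to Gromov's theorem.
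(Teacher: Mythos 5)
Your overall scheme matches the paper's: both prove the statement by exhibiting a local deformation retraction of germs of smooth sections onto their $r$-jets and then globalizing. Where you differ is that you make the globalization step explicit — flexibility of $\FF^\bullet$ (Proposition \ref{prop-formalisflex}), flexibility of $\JJ^r_E$ (sheaf of sections of an actual bundle, Example \ref{lem-surj}), and then Gromov's Theorem \ref{thm-gromov-whe2he} to promote the stalkwise weak equivalence to a weak equivalence on every open set. The paper is terser here: it observes that locally $\FF^\bullet$ and $\JJ^r_E$ are isomorphic to $\maps(-,C^\infty_0(\R^n,\R^m))$ and $\maps(-,P^r_0(\R^n,\R^m))$ respectively, and simply concludes from the retraction of the target germ space. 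Since Taylor truncation is chart-dependent, the paper's conclusion also implicitly rests on the same localization machinery you cite, so your version is arguably the more careful write-up of the intended argument.

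The one concrete flaw is the homotopy formula in step (iii). You write $g_q^t(x) := g_q(q + t(x-q))$ ``composed with truncation.'' That scaling homotopy contracts the germ $g_q$ to the \emph{constant} $g_q(q)$ at $t=0$, not to its degree-$r$ Taylor polynomial; it is the standard contraction of the germ space to a point, not a retraction onto $P^r_0$. ``Composed with truncation'' does not repair this: applying $T_r$ to $g_q^t$ gives a path from $T_r(g_q)$ at $t=1$ to the constant $g_q(q)$ at $t=0$, which again is not the deformation retraction you need. The correct homotopy — which the paper uses, and which you presumably had in mind when you later say ``straight-line/Taylor homotopy'' — is the convex combination
$$f_t := T_r(f) + t\big(f - T_r(f)\big), \qquad t \in [0,1],$$
which fixes the $r$-jet throughout (indeed $T_r(f_t) = T_r(f)$ for all $t$) and deformation retracts $C^\infty_0(\R^n,\R^m)$ onto $P^r_0(\R^n,\R^m)$. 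With that substitution your argument goes through; the remaining points about continuity in $q$, compatibility with the base section, and naturality in $U$ are exactly the bookkeeping you flag, and they are handled as you say.
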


\begin{proof}
	Consider the {space $C^\infty_0(\R^n, \R^m)$ of germs of smooth maps from $\mathbb R^n$ to $\mathbb R^m$ at the origin}. Also, let ${P^r_0(\R^n, \R^m) \subset C^\infty_0(\R^n, \R^m)}$ denote the {subspace of} germs of polynomials (in $n$ variable) of degree at most $r$ at $0$. Let $T_r(f)$ denote the Taylor expansion of $f \in C^\infty_0(\R^n, \R^m)$ {at 0}, truncated at degree $r$. Then 
	$$f_t := T_r(f) +t (f-T_r(f)), t \in [0,1]$$ 
	furnishes a {deformation retraction of $C^\infty_0(\mathbb R^n, \mathbb R^m)$ onto $P^r_0(\mathbb R^n, \mathbb R^m)$. As locally $\FF^\bullet$ and $\JJ^r_E$ are isomorphic, respectively, to the sheaves $\mathrm{Maps}(-, C^\infty_0(\R^n, \R^m))$ and $\mathrm{Maps}(-, P^r_0(\R^n, \R^m))$, we conclude $\Psi_r$ is a weak homotopy equivalence.}
\end{proof}

\begin{rmk}\label{rmk-formalfn}
	In fact, $\Psi_\infty: \FF^\bullet \to \JJ^\infty_E$, sending ${(s, \{g_p : p \in U\}) \in \FF^\bullet(U)}$ to its infinite jets is also surjective. If we restrict only to analytic sections, then $\Psi_\infty$ is {moreover} injective. 
	
	$\Psi_1: \FF^\bullet \to \JJ^1_E$ given by $${\Psi_1(s, \{g_p\})= (s, \{dg_p\})}$$ is of particular significance. It replaces the {germ-field $\{g_p : p \in U\}$ decorating the base section $s$ by the tangent plane fields $\{dg_p : p \in U\}$}.
\end{rmk}

\subsection{Gromov diagonal normal construction for cones}\label{sec-formalfncone}
We would like to extend the linearized notion of formal $r-$jets of sections ensured by Definition \ref{def-psi} and Proposition \ref{prop-formalfnmflds} from the manifold context  to the context of  a stratified bundle  $P:E \to X$ over a stratified space. However, a full linearization is not possible 
and we shall provide a hybrid construction, interbreeding
\begin{enumerate}
	\item the linear structure within manifold strata provided by Definition \ref{def-psi} and Proposition \ref{prop-formalfnmflds},
	\item the germ construction in Remark \ref{rmk-formalasgerms} for cones on links using the local structure given by Corollary \ref{cor-trivialzn}.
\end{enumerate} 
In this subsection, we shall focus on the second ingredient, and in the next subsection indicate how to assemble these two together. Let  $P:E \to X$ be a stratified bundle.
For the purposes of this subsection, $E=cB, X=cA$, where $E=B\times [0,1)/B \times \{0\}$, and
$X=A\times [0,1)/A \times \{0\}$ and $A, B$ are compact {abstractly} stratified spaces.
Let $c_A$ (resp.\ $c_B)$ denote the cone-point of $cA$ (resp.\ $cB$). Let $cB^0$ (resp. $cA^0$) denoted the deleted cone ${cB \setminus \{c_B\}}$  (resp. ${cA \setminus \{c_A\}}$).

By Corollary \ref{cor-trivialzn}, there exists a stratified bundle $p: B \to A$ such that
(after reparametrization if necessary), $P(b, t) = (p(b), t)$. Hence, there exists a natural stratified bundle  $P^0: cB^0\to cA^0$  induced by $P$.

Let $\FF$ (resp. $\FF_w$) denote the sheaf of  controlled (resp.\ weakly controlled) sections of $P:E \to X$ in this case. 
Note that any section of $P:cA(=E) \to cB(=X)$ necessarily sends $c_A$ to $c_B$. Let $\FF^0$ (resp. $\FF_w^0$) denote the induced  sheaf of  controlled (resp.\ weakly controlled) sections of $P^0: cB^0\to cA^0$.

We shall first inductively describe the Gromov diagonal normal sheaf for $\FF^0$ (resp. $\FF_w^0$) 
using the sheaf $\LL$ (resp. $\LL_w$) of controlled {(resp.\ weakly controlled)} sections of $p:B \to A$ and
Lemma \ref{lem-mapsI2F}. Let $\LL^\bullet$  (resp. $\LL_w^\bullet$) denote the Gromov diagonal normal sheaves of  $\LL$ (resp. $\LL_w$).
In particular, when $A, B$ are manifolds, then (see Section \ref{sec-formalfnmflds}):
\begin{enumerate}
	\item $p:B \to A$ is a smooth bundle map,
	\item $\LL=\LL_w$ is the sheaf of smooth sections given by $\LL(U) =\Gamma (U, B) $
	\item $\LL^\bullet$ is homotopy equivalent to the sheaf $\JJ^r_B$ of  $r-$jets (Proposition \ref{prop-formalfnmflds}).
\end{enumerate}

Then  controlled sections of $P^0: cB^0\to cA^0$  are given by maps of the form $\sigma
:  cA^0 \to cB^0$ of the form $\sigma (a,t)
=(s_t(a),t)$, where $s_t: A \to B$ is a   controlled section of $p: B\to A$, i.e.\ $\sigma $ is a continuous $(0,1)-$parametrized family of sections
from $A$ to $B$. The same holds for controlled sections of $P^0: (P^0)^{-1}(U)\to U$ for all open $U$ in $cA^0$. {Therefore, we have an isomorphism of sheaves $\mathcal{F}^0 \cong \maps^p((0, 1), \mathcal{L})$, where $\maps^p$ denotes the parametric sheaf as defined in Definition \ref{defn-sheafmaps2ff2}.}
The following is now an immediate consequence of Lemma \ref{lemw2ff*p}:

\begin{lemma}\label{lem-deletedconesheaf} With $\FF^0, \LL$ as above, and open subsets $V \subset A$, and $W \subset (0,1)$, we have {a homotopy equivalence:
	$$(\FF^0)^\bullet(W \times V) \simeq \maps(W,\LL^\bullet(V)).$$}
\end{lemma}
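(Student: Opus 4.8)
The plan is to recognize the statement as a direct application of Lemma \ref{lemw2ff*p}, so the proof is essentially a matter of unwinding the identifications that have already been established and checking that the hypotheses of that lemma are met. First I would record the isomorphism of sheaves
$$\FF^0 \;\cong\; \maps^p\big((0,1),\LL\big)$$
over $cA^0 = A \times (0,1)$, which was observed in the paragraph preceding the statement: a controlled section of $P^0 : cB^0 \to cA^0$ over an open set $W \times V \subset (0,1) \times A$ is precisely a continuous $W$-parametrized family of controlled sections of $p : B \to A$ over $V$, and this identification is natural in $W$ and $V$, hence an isomorphism of sheaves in the sense of Definition \ref{defn-sheafmaps2ff2}. (Here I am using the reparametrization furnished by Corollary \ref{cor-trivialzn}, so that the bundle projection $P^0$ literally has the product form $P^0(b,t) = (p(b),t)$.)

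Next I would invoke Lemma \ref{lemw2ff*p} with $W = (0,1)$, $\GG = \LL$, and $\FF = \maps^p((0,1),\LL) \cong \FF^0$, and $X = A$. The one hypothesis to verify is that the parameter space $(0,1)$ is locally contractible, which is immediate since it is an open interval (indeed, an open subset of $\R$). The conclusion of Lemma \ref{lemw2ff*p} is exactly that for open $W \subset (0,1)$ and $V \subset A$ there is a homotopy equivalence
$$(\FF^0)^\bullet(W \times V) \;\simeq\; \maps\big(W, \LL^\bullet(V)\big),$$
which is the claim. I would note explicitly, as in the proof of Lemma \ref{lemw2ff*p}, that this homotopy equivalence arises because the tangent microbundle of the open subset $W \subset (0,1)$ deformation retracts onto its zero-section $W$ (local contractibility of $(0,1)$), so that $\mathrm{diag}_W^* \maps(W \times W, \LL^\bullet(V)) \simeq \maps(W, \LL^\bullet(V))$.

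\textbf{Main obstacle.} There is no serious obstacle here — the content was already done in Lemma \ref{lemw2ff*p}. The only thing requiring a small amount of care is being precise about the sheaf isomorphism $\FF^0 \cong \maps^p((0,1),\LL)$: one must check that controlled sections (as opposed to merely continuous sections) of $P^0$ over $W \times V$ correspond exactly to continuous $W$-families of controlled sections of $p$ over $V$. This follows because the control conditions for $P^0$ (Definition \ref{def-controlledmap}) along the conical direction are automatically respected once we have put $P^0$ in the product form $P^0(b,t)=(p(b),t)$ via Corollary \ref{cor-trivialzn}: the $\rho$-control and $\pi$-control conditions for $P^0$ restricted to each slice $\{t\} \times A \to \{t\} \times A$ reduce precisely to the control conditions for $p : B \to A$, and there is no additional constraint coupling different values of $t$. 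The same remark applies verbatim with ``controlled'' replaced by ``weakly controlled'' throughout, giving the analogous statement for $\FF^0_w$ and $\LL_w$ if desired.
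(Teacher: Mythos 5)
Your proposal is correct and takes exactly the approach the paper does: the paper records the sheaf isomorphism $\FF^0 \cong \maps^p((0,1),\LL)$ in the paragraph just before the lemma statement and then declares the lemma "an immediate consequence of Lemma \ref{lemw2ff*p}." Your added remark verifying that the control conditions decouple along the $(0,1)$-direction (justifying the $\maps^p$ identification at the level of controlled rather than merely continuous sections) is a sound and slightly more careful articulation of what the paper treats as implicit.
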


Next, we describe the relationship between the sheaf of controlled sections and the sheaf of
weakly  controlled sections. Suppose $X=cA$ is equipped with a control structure (i.e.\ both a projection $\pi$ and a radial function $\rho$ in a neighborhood of $c_A$). Let $\rho_A$ denote the radial function on a small neighborhood of $c_A$ in $cA$. Without loss of generality, by shrinking 
$cA$ if necessary, we may assume that $\rho_A: cA^0 \to (0,1)$ is a fiber bundle with fiber $A$.
Pulling back $\rho_A$ under $P$ we obtain a radial function $\rho_B=P \circ \rho_A$ on $cB$. 

\begin{assume}\label{assume-control}
	Thus, without loss of generality, we assume  that $\rho_B=P \circ \rho_A$, i.e.\ $P$ is a
	controlled stratified bundle map from $cB$ to $cA$ in a neighborhood of $c$. Further, $P^0: cB^0 \to cA^0$ is a bundle map such that 
	$$\rho_B(t,b)= t = \rho_A (P^0(t,b)).$$
\end{assume}

We shall say that a section $s : cA^0 \to cB^0$ is \emph{levelwise weakly controlled} if $s(\{t\} \times A^0) \subseteq \{t\} \times B^0$ for all $t \in (0, 1)$ and the restriction  $s|(\{t\} \times A^0)$ is a weakly controlled map to $\{t\} \times B^0$.
With the control structure {on the domain and target of} $P : cB\to cA$ in place, the space of weakly controlled sections $\Gamma_w (cA^0, cB^0)$ fibers over the space of levelwise weakly controlled sections $\Gamma_\ell(cA^0, cB^0)$.  {The fibers of this fibration} are given by reparametrizing the $(0,1)-$direction in $cA^0$. More precisely, there exists a 
surjection $\Theta: \Gamma_w(cA^0, cB^0) \to \Gamma_\ell(cA^0, cB^0)$, such that for any $\sigma \in \Gamma (cA^0, cB^0)$, $\Theta^{-1} (\sigma) \cong \maps (A, \diff^+((0,1)))$, where $\diff^+((0,1))$ denotes the orientation-preserving diffeomorphisms of $(0,1)$. Thus, there is a natural product fibration:
\begin{equation*}\label{eqn-strong2weakcontrol}
	\Gamma_w (cA^0, cB^0) = \Gamma_\ell (cA^0, cB^0) \times \maps (A, \diff^+((0,1)))
\end{equation*}
This is because $	\Gamma_w (cA^0, cB^0)$ is a principal $\maps (A, \diff^+((0,1)))-$bundle
over $\Gamma_l (cA^0, cB^0) $  equipped with a natural section given by the inclusion $\Gamma_\ell(cA^0, cB^0) \hookrightarrow \Gamma_w(cA^0, cB^0)$ of levelwise weakly controlled sections into the weakly controlled sections.
Since $\diff^+((0,1))$ is contractible, we {obtain a homotopy equivalence between $\Gamma_w(cA^0, cB^0)$ and $\Gamma_\ell(cA^0, cB^0)$. Consequently, we obtain} 

\begin{cor}\label{cor-deletedconesheaf}
	With $\FF^0_w, \LL_w$ as above, and open subsets $V \subset A$, and $W \subset (0, 1)$, we have {a homotopy equivalence:
	$$(\FF^0_w)^\bullet(W \times V) \simeq \maps(W, (\LL_w)^\bullet(V)).$$}
\end{cor}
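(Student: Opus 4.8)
The plan is to deduce Corollary~\ref{cor-deletedconesheaf} from Lemma~\ref{lem-deletedconesheaf} by exploiting the product fibration structure
$$\Gamma_w(cA^0, cB^0) = \Gamma_\ell(cA^0, cB^0) \times \maps(A, \diff^+((0,1)))$$
established in the preceding paragraphs, together with functoriality of the Gromov diagonal normal construction. The first step is to identify the sheaf of levelwise weakly controlled sections $\FF^0_\ell$ with the parametric sheaf $\maps^p((0,1), \LL_w)$: a levelwise weakly controlled section over an open set $W \times V \subset (0,1) \times A$ is precisely a $W$-parametrized family of weakly controlled sections of $p : B \to A$ over $V$, which is the content of Definition~\ref{defn-sheafmaps2ff2}. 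This identification is the weakly controlled analogue of the isomorphism $\FF^0 \cong \maps^p((0,1), \LL)$ used in the proof of Lemma~\ref{lem-deletedconesheaf}.

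Next, I would observe that the product fibration above is in fact an isomorphism of \emph{sheaves} (not just a statement about global sections): over any open $W \times V$, the restriction of $\FF^0_w$ splits as $\FF^0_\ell|_{W\times V} \times \maps(V, \diff^+((0,1)))$, the second factor being the sheaf of reparametrizations of the radial coordinate that are allowed over the link directions in $V$. Since $\diff^+((0,1))$ is contractible (it deformation retracts onto the identity), the sheaf $\maps(-, \diff^+((0,1)))$ is weakly homotopy equivalent to the terminal sheaf; hence the projection $\FF^0_w \to \FF^0_\ell$ is a weak homotopy equivalence of continuous sheaves. Applying the Gromov diagonal normal construction $(-)^\bullet$, which is functorial (Lemma~\ref{lem-f2f*}) and preserves weak homotopy equivalences of sheaves over locally compact finite-dimensional Hausdorff spaces (combining Theorem~\ref{thm-gromov-whe2he} with the flexibility of diagonal normal sheaves, Proposition~\ref{prop-formalisflex}), we obtain a weak homotopy equivalence $(\FF^0_w)^\bullet \simeq (\FF^0_\ell)^\bullet$ evaluated on any $W \times V$.

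Finally, applying Lemma~\ref{lemw2ff*p} to $\FF^0_\ell = \maps^p((0,1), \LL_w)$ — noting that $(0,1)$ is locally contractible so the hypothesis of that lemma is satisfied — yields the homotopy equivalence
$$(\FF^0_\ell)^\bullet(W \times V) \simeq \maps(W, (\LL_w)^\bullet(V)).$$
Composing this with the equivalence $(\FF^0_w)^\bullet(W\times V) \simeq (\FF^0_\ell)^\bullet(W\times V)$ from the previous paragraph gives the claimed equivalence $(\FF^0_w)^\bullet(W\times V) \simeq \maps(W, (\LL_w)^\bullet(V))$, completing the proof.

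The main obstacle I anticipate is the second step: verifying that the product decomposition $\Gamma_w = \Gamma_\ell \times \maps(A, \diff^+((0,1)))$ is genuinely \emph{natural in the open set}, i.e.\ assembles into an isomorphism of sheaves rather than merely a homeomorphism on global (or stalkwise) section spaces. One must check that the chosen section $\Gamma_\ell \hookrightarrow \Gamma_w$ and the trivialization of the principal $\maps(A, \diff^+((0,1)))$-bundle are compatible with restriction maps; this requires that the reparametrization data over $V \subset A$ restricts correctly when $V$ shrinks, which follows from the fact (Assumption~\ref{assume-control}) that $\rho_B = P \circ \rho_A$ is built into the control structure globally. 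Everything else — functoriality of $(-)^\bullet$, contractibility of $\diff^+((0,1))$, and the invocation of Lemma~\ref{lemw2ff*p} — is routine once this naturality is in hand.
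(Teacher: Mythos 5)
Your proof takes essentially the same route as the paper and is correct. The paper is very terse here: it simply asserts that contractibility of $\diff^+((0,1))$ gives a homotopy equivalence between $\Gamma_w(cA^0, cB^0)$ and $\Gamma_\ell(cA^0, cB^0)$, and then states the corollary with the single word ``consequently.'' The intermediate steps you spell out --- identifying the sheaf of levelwise weakly controlled sections with $\maps^p((0,1), \LL_w)$, promoting the product fibration to a weak homotopy equivalence of sheaves, pushing this through the Gromov diagonal normal construction via functoriality and Theorem B on flexible sheaves, and finally invoking Lemma \ref{lemw2ff*p} --- are precisely what is implicit in ``consequently.'' Your worry about whether the splitting $\Gamma_w = \Gamma_\ell \times \maps(A, \diff^+((0,1)))$ is natural in the open set (hence a sheaf-level statement rather than merely a statement about global sections) is a legitimate one that the paper glosses over; your resolution, that the inclusion $\Gamma_\ell \hookrightarrow \Gamma_w$ and hence the principal-bundle trivialization is manifestly natural under restriction since $\rho_B = \rho_A \circ P$ is fixed globally by Assumption \ref{assume-control}, is the right observation.
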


\begin{defn}\label{def-conegerms}
	The space of \emph{germs} 
	of  controlled  (resp.\ weakly controlled)  sections of $P:cB(=E) \to cA(=X)$ will be denoted by
	$\Gamma_c(cA,cB)$ (resp.\ $\Gamma_{c,w}(cA,cB)$).
\end{defn}

We are now in a position to note the following Proposition which allows us to assemble the descriptions in Lemma \ref{lem-deletedconesheaf} and Definition \ref{def-conegerms}.
This is useful in providing an inductive description of the Gromov diagonal normal sheaf of sections of a stratified bundle.

\begin{prop} \label{prop-decompcones}
	Any element of the sheaf $\FF^\bullet(U)$ (resp. $\FF_w^*(U)$) for an open $U \subset cA$ determines and is determined by the following:
	\begin{enumerate}
		\item a controlled (resp. weakly controlled) section $s$ over $U$. In particular, if $U=cA$, $s: cA \to cB$ is a global controlled (resp. weakly controlled) section,
		\item a germ at $c_A$ {given} by an element of $\Gamma_c(cA,cB)$ (resp.\ $\Gamma_{c,w}(cA,cB)$) if 
		$c_A \in U$,
		\item an element {$g = \{g_w : w \in U \setminus \{c_A\}\}$ of $(\FF^0)^\bullet(U \setminus \{c_A\})$ (resp. $(\FF^0_w)^\bullet(U \setminus \{c_A\})$) such that 
			\begin{enumerate}
				\item[(i)] The first coordinate of $g$ (as in Remark \ref{rmk-formalasgerms}) coincides with the restriction ${s\vert_{U\setminus \{c_A\}}}$, and 
				\item[(ii)] As $w \to c_A$, $g_w$ ``converges" to $s$, in the sense that the germs $\{g_w : w \in U \setminus \{c_A\}\}$ and the germ of $s$ at $c_A$ jointly define a continuous map from $\mathrm{Op}(\mathrm{diag}(U)) \subset U \times U$ to $cB$.
			\end{enumerate}
}
	\end{enumerate}
\end{prop}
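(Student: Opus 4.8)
The strategy is to unwind the definition of the Gromov diagonal normal sheaf $\FF^\bullet$ in terms of germs of maps from the tangent microbundle, as recorded in Remark \ref{rmk-formalasgerms}, and then use the decomposition of the tangent microbundle of the cone $cA$ into its manifold-stratum part and its conical part. Recall from Section \ref{sec-microb} that for a point $x \in cA$ lying in a stratum $S$, the fiber of the tangent microbundle $t(cA)$ at $x$ splits germinally as $(T_x S) \times (cA_S(x))$, where $cA_S(x)$ is the normal cone. There are exactly two ``types" of points to deal with: the cone point $c_A$, whose unique stratum is $\{c_A\}$ itself, and the points of $cA^0 = cA \setminus \{c_A\}$. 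An element $\phi \in \FF^\bullet(U)$ is, by Remark \ref{rmk-formalasgerms}, a map $\phi : (U \times U) \cap U_{cA} \to cB$ whose restriction to the diagonal $\diag(U)$ is a controlled (resp.\ weakly controlled) section $s$ of $P$ over $U$, and whose restriction to $(\{w\} \times U) \cap U_{cA}$ is the germ at $w$ of a section. This immediately produces item (1), the base section $s$.

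\textbf{Key steps.} First, I would observe that the germ of $\phi$ along $(\{c_A\} \times U) \cap U_{cA}$ — present precisely when $c_A \in U$ — is by definition the germ at $c_A$ of a section of $P : cB \to cA$, i.e.\ an element of $\Gamma_c(cA, cB)$ (resp.\ $\Gamma_{c,w}(cA, cB)$); this is item (2). Second, for the points $w \in U \setminus \{c_A\}$, the collection of germs $g_w := \phi|_{(\{w\} \times U) \cap U_{cA}}$ assembles, by the very definition of $\FF^\bullet$ applied to the open subset $U \setminus \{c_A\} \subset cA^0$ (with $cA^0$ carrying the stratified structure induced by the bundle $P^0 : cB^0 \to cA^0$ of Corollary \ref{cor-trivialzn}), into an element $g = \{g_w\} \in (\FF^0)^\bullet(U \setminus \{c_A\})$ (resp.\ $(\FF^0_w)^\bullet(U \setminus \{c_A\})$); this is item (3). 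The compatibility condition (3)(i) holds because the first coordinate of $g$ — its restriction to $\diag(U \setminus \{c_A\})$ — is literally the restriction of $\phi$ to $\diag(U \setminus \{c_A\})$, which is $s|_{U \setminus \{c_A\}}$. Condition (3)(ii) is exactly the continuity of the single map $\phi$ across the locus where the second coordinate tends to $c_A$: the germs $g_w$ for $w$ near $c_A$ together with the germ of $s$ at $c_A$ are just the restrictions of one continuous map $\phi$ defined on $\op(\diag(U)) \cap U_{cA}$, so they patch. Conversely, given the data (1), (2), (3) satisfying (3)(i)–(ii), one reconstructs $\phi$ by declaring $\phi|_{\diag(U)} = s$, $\phi|_{(\{c_A\}\times U)\cap U_{cA}}$ to be the germ in (2) when $c_A \in U$, and $\phi|_{(\{w\}\times U)\cap U_{cA}} = g_w$ for $w \neq c_A$; the compatibility conditions are precisely what guarantee that this piecewise prescription glues to a well-defined continuous map on a representative of the tangent microbundle neighborhood, using the consistency criterion preceding Remark \ref{rmk-formalasgerms}. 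Naturality of this correspondence in $U$, i.e.\ compatibility with restriction maps, follows by inspecting the definitions, so the bijection is an isomorphism of (quasitopological) spaces. In the weakly controlled case, one additionally invokes Assumption \ref{assume-control} so that $P^0$ is genuinely a bundle map in the sense needed for $(\FF^0_w)^\bullet$ to be defined as in Corollary \ref{cor-deletedconesheaf}.

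\textbf{Main obstacle.} The delicate point is the precise formulation and verification of condition (3)(ii): one must check that the family $\{g_w\}_{w \in U \setminus \{c_A\}}$ together with the cone-point germ does not merely consist of individually coherent germs but in fact arises from a \emph{single} continuous map on $\op(\diag(U)) \cap U_{cA}$, and conversely that any such continuous map decomposes this way. This requires care with the topology on germ spaces and the passage to direct limits defining $\FF^\bullet(U)$ — in particular that continuity of $\phi$ near the diagonal is equivalent to the ``convergence $g_w \to s$ as $w \to c_A$" phrased in the statement. I would handle this by working with a fixed representative neighborhood $U_{cA}$ of $\diag(cA)$ (legitimate because restriction of $\PP$ to any representative of the tangent microbundle makes sense without passing to limits, as emphasized in the discussion before Remark \ref{rmk-formalasgerms}) and checking continuity pointwise on $U_{cA}$, so that the decomposition becomes a statement about restricting one honest continuous map to the diagonal, to the slice over $c_A$, and to the slices over $cA^0$. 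The remaining verifications — that these pieces are controlled sections, germs of controlled sections, and an element of $(\FF^0)^\bullet$ respectively, and that the quasitopologies match — are then routine unwinding of Definitions \ref{def-contEvaldjet}, \ref{def-conegerms}, \ref{def-formalfn} and Lemma \ref{lem-deletedconesheaf}.
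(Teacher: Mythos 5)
Your proposal is correct and follows essentially the same route as the paper's proof: both invoke Remark \ref{rmk-formalasgerms} to describe an element of $\FF^\bullet(U)$ as a base section $s$ decorated with germs $\{g_w : w \in U\}$, then partition the parameter $w$ into the cone point (producing item (2)) and the punctured cone $U \setminus \{c_A\}$ (producing item (3)), with continuity across this partition being exactly condition (3)(ii). Your treatment is somewhat more detailed in spelling out the verification of (3)(i)--(ii) and the reconstruction step, and you correctly point to Assumption \ref{assume-control} for the weakly controlled case, but the structure of the argument is the same as in the paper.
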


\begin{proof} For concreteness, we work with $\FF$ and $\FF^\bullet$. The same argument works for  $\FF_w$ and $\FF_w^\bullet$. Further,
	Equation \ref{eqn-strong2weakcontrol} really ensures that up to homotopy, these sheaves are the same, as we can apply induction (on depth) to weakly controlled
	sections from $A$ to $B$ in 	Equation \ref{eqn-strong2weakcontrol}.
	
	We use the description of the Gromov diagonal normal sheaf from Remark \ref{rmk-formalasgerms}: any element of $\FF^\bullet(U)$ consists of a controlled section 
	$s$ over $U$ decorated with germs of sections ${\{g_w: w\in U\}}$. The controlled section 
	$s$ over $U$ contributes item 1 in the statement.
	Next,
	\begin{enumerate}
		\item For $w=c_A$, ${g_w}$ is given by an element as in item 2 in the statement.
		\item For $U'=U \setminus \{c_A\}$, ${\{g_w: w \in U'\}}$ {constitutes an} element as in item 3 in the statement.
	\end{enumerate}
	Finally, we observe that the choices in items 2, 3 are {essentially independent, barring the only restriction that the germs $\{g_w : \mathrm{Op}(w) \to cB : w \in U'\}$ and the germ of $s$ at $c_A$ given by $s : \mathrm{Op}(c_A) \to cB$ jointly define a continuous map from $\mathrm{Op}(\mathrm{diag}(U)) \subset U \times U \subset cA \times cA$ to $cB$}. Hence any choice as in items 2, 3 subject to the choice of a section $s$ as in item 1 furnishes an element of  $\FF^\bullet$.
\end{proof}

{Proposition \ref{prop-decompcones} allows us to decompose elements of $\FF^\bullet$ into two independent components.
Item 2 provides a purely topological component of $\FF^\bullet$, consisting of germs of sections $\mathrm{Op}_{cA}(c_A) \to E$. Item 3 on the other hand is expressible in terms of the sheaf $\LL^\bullet$ defined on the link $A$. This enables a recursive description as follows:
restricting $\LL^\bullet$ to any normal cone appearing in the link $A$,  elements in Item 3 of Proposition \ref{prop-decompcones} can be decomposed again as a hybrid of objects as in Item 2 and Item 3, but defined on a stratified space of lesser height.}

{ The case where $A, B$ are manifolds is the lowest height case. In this case, Proposition \ref{prop-formalfnmflds}
provides a completely linear description of $\FF^\bullet$ as a sheaf of jets (linearized germs). We note, however, that this last linear description is true only up to weak homotopy equivalence.}

We illustrate this recursive description by a simple example. 

\begin{eg}\label{eg-recstratjet}
\rm{Let $X = [0, \infty) \times [0, \infty)$ be stratified as a manifold with corners. Let $p : X \times \mathbb R \to X$ denote the trivial line bundle on $X$. Let $E= X \times \mathbb R$. Then, a formal section of $p$ over $X$ consists of a collection of germs $\{g_{(x, y)} : (x, y) \in X\}$. We observe,
\begin{enumerate}
\item The germ of a section $g_{(0, 0)}$ is a purely topological section of $E$ over $ \op (\{0,0\})$ as in Item $2$.  This germinal section has \emph{no linear structure} associated with it.
\item The germs $g_{(0, y)}$ and $g_{(x, 0)}$ are hybrid objects as in Item $3$. We decompose them into two components: 
	\begin{enumerate}
	\item[(i)] Restricting to the edge strata $(\{0\} \times (0, \infty))$ and $((0, \infty) \times \{0\})$, one obtains a formal section of $p$ over these edge strata. These may then be linearized to $r$--jets. These are the linear components
	of the germs $g_{(0, y)}$ and $g_{(x, 0)}$. 
	\item[(ii)] Restricting to the normal cones of an edge stratum, we obtain a $(0, \infty)$--parametrized family of formal sections over the normal cones. These are again, purely topological sections of $E$ over the normal cones.
	Note that the normal cones to the edge strata are half-closed intervals. The height of these  normal cones equals one, while the height of $X$ is two. These are the non-linear or purely topological  components of the germs $g_{(0, y)}$ and $g_{(x, 0)}$.
	\end{enumerate}
\item The germs $g_{(x, y)}$ for $(x, y) \in (0, \infty) \times (0, \infty)$ are germs of sections defined on a manifold stratum of $X$. Thus, these may be completely linearized to $r$--jets.
\end{enumerate}
}
\end{eg}

Note also that for manifolds, elements in $\FF^\bullet(U)$ may be identified with $U$-parametrized sections from the tangent space $T_pU \to E$ provided there is a way (e.g.\ a connection) of identifying $T_pU$ and $T_qU$ for all $p, q \in U$. Regarding the tangent bundle $TU$ as 
the germ of a neighborhood of the diagonal $\diag U \subset U \times U$, elements in $\FF^\bullet(U)$ are thus equivalent to $U-$parametrized sections of $E$ over the normal space $N_{(p,p)} (\diag U)$ to the diagonal at some point $(p,p) \in \diag U$.

Suppose $\FF$ is a sheaf of topological spaces over a manifold $M$ such that the inclusion $\FF \subset \FF^\bullet$ has an inverse given by a retraction of sheaves $r: \FF^\bullet \to \FF$ so that $r$ is a fibration. Let $\PP$ denote the sheaf over $M \times M$ given in Section \ref{sec-formalfnmflds}. Define a stratification of $X=M \times M$ with strata
$S= \diag M$ and $L= (M \times M)\setminus S$. Then we define a stratified sheaf $\RR$ over 
$X$ so that 
\begin{enumerate}
	\item {$\RR|{L} = \PP$}
	\item $\RR|S = \FF$
	\item the restriction map from $\RR|\bbar{L} $ to  $\RR|S$ is given by first restricting 
	$\PP$ to $S$, to obtain $\FF^\bullet$, and then composing with the fibration $r$.
\end{enumerate}
Then (Definition \ref{def-infstratflex}) $\RR$ is infinitesimally flexible across strata.

\subsection{Gromov diagonal normal construction: general case}\label{sec-gdn}
 For the purposes of this subsection,
$(E, \Sigma_E, \mathcal{N}_E)$ and $(X, \Sigma_X, \mathcal{N}_X)$ are {abstractly} stratified spaces (see Definition \ref{def-abtractstratsp} for notation) and $P : E \to X$ is a stratified fiber bundle. Then, Lemma \ref{lem-strbdl-trivialization} and Corollary \ref{cor-trivialzn} give us the following commutative diagram.

\begin{center}
	$
	\begin{CD}
		cB@>>>\til{N}@>\til{\pi}>>\til{S} \\
		@VPVV @VPVV @VPVV\\
		c{A}@>>>{N}@>{\pi}>>{S} \\
	\end{CD}
	$,
\end{center}
where the horizontal rows are fiber bundles.

Recall (Remark \ref{rmk-formalasgerms}) that a formal section of $P: E \to X$ on an open subset $U \subset X$ is a germ of a continuous map $s^* : \op_{U \times U}(\diag(U)) \to E$ from the germ of an open neighborhood $\op_{U \times U}(\diag(U))$ of the diagonal $\diag(U) \subset U \times U$  such that 
\begin{enumerate}
	\item $s : U \to E$ defined by $s(u) := s^*(u, u)$ is a section of $E$ over $U$.
	\item For every $u \in U$, $s_u : \op_U(u) \to E$ defined by 
	{${g_u}(v) = s^*(u, v)$} is a germ of a section of $E$ in  $\op_U(u) \in U$.
	\item For every stratum $S \in \Sigma_X$ of $X$ intersecting $U$, $s$ is smooth on $S' = S \cap U$, i.e.\ $s^*|\op_{S' \times S'}(\diag({S'}))$ is a smooth germ of a map to the unique manifold stratum $\widetilde{S} \subset E$ containing $s(S)$.
\end{enumerate}

Henceforth,	in this subsection,
we shall refer to $s$ as the \emph{base} of the formal section. 

\begin{defn}\label{def-holosxn} A formal section 
	$s^*$ is called a \emph{holonomic section} of $P: E \to X$  over $U$ if  {$$s|\op (u) = {g_u}$$ }  for all $u \in U$.
\end{defn}

Let ${s^*}$ be a formal section of $P: E \to X$ over $X$. For every stratum $S$, we can restrict ${s^*}$ to $\op_{S \times X}(\diag (S))$. Note that $\op_{S \times X}(\diag (S))$ is isomorphic as a microbundle to $$(S \times S, p_1, \diag (S)) \oplus (N_S, \pi_S, 0_S),$$ where $N_S$ is the normal neighborhood of $S$ (cf.\  Definition \ref{def-abtractstratsp}) and $0_S$ is naturally identified with $S\subset N_S$. Here, $N_S$ is thought of as the (micro)normal bundle to $S$ in $X$ with fiber $cA$, where $A$ is the link of $S$ (see, for instance, the commutative diagram above).

\begin{defn}\label{def-tangnormformal} Using the microbundle-isomorphism
	$$\op_{S \times X}(\diag (S))\cong(S \times S, p_1, \diag (S)) \oplus (N_S, \pi_S, 0_S),$$
	\begin{enumerate}
		\item restricting $s^*$ to the first component, we obtain the \emph{tangential formal section}\\ ${\ss_{S, t}^*} : \op_{S \times S}(\diag (S)) \to E$, with base section ${\ss_{S, t}}$;
		\item restricting $s^*$ to the second component, we obtain the \emph{normal formal section}\\ ${\ss_{S, n}^*} : \op_{N_S}(0_S) \to E$, with base section $\ss_{S, n}$.
			\item $\ss_{S, n}^*$ and $\ss_{S, t}^*$ have the same base section $\ss_{S, n}^*|0_S = \ss_{S, t}^*|\diag(S) = \ss_S$.
	\end{enumerate}
	
\end{defn}

	\begin{rmk}[Normal formal is holonomic]\label{rmk-nfisholo} 
		\rm{Restriction of the normal formal section $\ss_{S, n}^* : \op_{N_S}(0_S) \to E$ to the fiber $c_xA=cA(x) \subset N_S$ of the normal bundle over a point $x \in S$ is a germ of a controlled section $\ss_{S, n}^*|\op_{cA(x)}(x)$ of the conical component of the stratified bundle $(\mathbf{I}, P_2) : cB \to cA$ near the cone point {$\{c_x\} \subset c_x A$}. Thus, for an open chart $V \subset S$ around $x$, we obtain a map:
			$$\phi_V : V \to \Gamma_c(cA, cB),\, \phi(x) := \ss_{S, n}^*|\op_{cA(x)}(x)$$
			On the other hand, the restriction of the normal formal section $\ss_{S, n}^*$ to the zero section $0_S \subset N_S$ of the normal bundle returns the base ${\ss}_S$ of the formal section. Therefore, $\ss_{S, n}^*\vert_{V}$ is a germ of a \emph{holonomic section} around $V \times cA \cong \pi_S^{-1}(V) \subset N_S$,
{\begin{align*}
			({\ss_S}\vert_{V}, \phi_V) : \op_{V \times cA}(V \times 0) &\to E\\
			(x, (a, t)) &\mapsto (\ss_S(x), \phi_V(x)(a, t)).
\end{align*}
The intuition behind this observation is as follows. The normal formal section $\ss_{S, n}^*$ is a formal section of $E$ over the germ of a tubular neighborhood $N_S$ of the stratum $S$. Loosely speaking, we can think of $\ss_{S, n}^*|_{S}$ as an $S$--parametrized family of formal sections of $E$ over germs of conical fibers $c_x A$,  $x \in S$. This is exactly the purpose $\phi_V$ serves, at least locally on open charts $V \subset S$. Given a formal section $\{g_p : p \in \op_{cA}(c_A)\}$ of $E$ defined over $\op_{cA}(c_A)$, there is a canonical way to extract a holonomic section of $E$ over $\op_{cA}(c_A)$. Namely, consider the holonomic germ $g_{c_A}$ at $c_A$.
The map $({\ss_S}\vert_{V}, \phi_V)$ now assembles the conical germs $\{g_{c_x}: x \in V\}$ together using the section $\ss_S : S \to E$.}

			We pause to emphasize that while this says that $\ss^*_{S, n}$ is globally a (germ of a) holonomic section of $E$ over $N_S$, there is not a meaningful way to write $\ss^*_{S, n}$ as a section of $E$ over $S$ (namely, ${\ss_S}$), together with a $S-$parametrized family of sections in $\Gamma_c(cA, cB)$. For one, observe that the bundle homomorphism $P : P^{-1}(N_S) \to N_S$ does not induce a unique map $(\mathbf{I}, P_2) : cB \to cA$ between the normal conical fibers, but rather a unique equivalence class of such maps under $\mathrm{Homeo}_c(cA)-$ and $\mathrm{Homeo}_c(cB)-$valued cocycles acting on the domain and range, respectively.
	}\end{rmk}

In general it is not possible to recover the germ ${s^*}|\op_{S \times X}(\diag (S))$ from the tangential and normal formal sections. However, for any $\varepsilon > 0$, ${s^*}|\op_{S \times X}(\diag (S))$ is $\varepsilon$-close to ${s^*}_{S, t} \oplus {s^*}_{S, n}$ by continuity in the $C^0$-norm.
Here, $${s^*}_{S,t} \oplus {s^*}_{S, n}(x, y, z) = ({s^*}_{S, t}(x, y), {s^*}_{S, n}(x, z)) \in \widetilde{N}_S \subset E$$ for $x \in S$, $y \in \op_S(x)$, $z \in \op_{cA(x)}(x)$, where $cA(x) =\pi^{-1}(x)$ (cf.\ commutative diagram above). For Definition \ref{def-crformal} below, 
we assume that $X, E$ are equipped with a metric as at the end of Section \ref{sec-stratfdjet}.
Further, when we say that two formal sections are $\varepsilon$-close, it is in the sense of closeness with respect to such a metric.

\begin{defn}\label{def-crformal} Let $\ss^* : \op_{X \times X}(\diag (X)) \to E$ be a formal section of $P: E \to X$ over $X$. Let $S < L$ be a pair of strata in $X$.
	The $\delta-$neighborhood of $S$ in $ L$ will be denoted as $N_\delta({S, L})$.
	We shall say that $\ss^*$ is of \emph{regularity $C^r$} if for all pairs $S < L$ and $\varepsilon > 0$, there exists $\delta > 0$ such that 
	\begin{enumerate}
		\item  $\ss^*_{S, n}|N_\delta({S, L}) : N_\delta({S, L}) \to E$ is smooth on the open stratum $L$,
		\item $\ss^*_{S, t} \oplus (\ss^*_{S, n}|N_\delta(S, L))$ is $\varepsilon$-close to $\ss^*_{L, t}$ in the $C^r$ norm. We shall summarize this condition by saying that $\ss^*_{L, t}$ is \emph{$C^r-$asymptotic} to $\ss^*_{S, t} \oplus ( \ss^*_{S, n}|N_\delta({S, L}) )$.
	\end{enumerate}
\end{defn}

{We briefly discuss the rationale behind this definition. We discussed above that the germ $s^*|{\mathrm{Op}_{S \times X}(\mathrm{diag}(S))}$ cannot be recovered from the tangential and normal formal sections, but is only $\varepsilon$--close to their direct sum $s^*_{S, t} \oplus s^*_{S, n}$, in the $C^0$--norm. As an explicit example, consider the case of $X = cA$, $S = \{c_A\}$. In this case, $s^*$ would be given by a collection of germs $$\{g_{(a, t)} : \mathrm{Op}_{cA}(\{(a, t)\}) \to E : (a, t) \in cA \setminus \{c_A\}\}$$ along with  a germ $$s : \mathrm{Op}_{cA}(c_A) \to E$$ at the cone point. The normal formal section in this case is $s$. Clearly $s^*$ carries more information than $s$ as $g_{(a, t)}$ are not simply restrictions of $s$ to $\mathrm{Op}(\{(a, t)\})$. However, as $t \to 0$, $g_{(a, t)}$ converges to $s$ in the sense described in Proposition \ref{prop-decompcones}, Condition (iii).}

{Below, we shall define a hybrid notion of \emph{stratified $r$--jets}.  We shall combine the following:
	\begin{enumerate}
	\item  smooth $r$--jets of sections restricted to each manifold stratum,
	\item  topological germs of sections restricted to the normal cones at the boundary of each manifold stratum.
	\end{enumerate}  
  In Definition \ref{def-psi} we had constructed a sheaf morphism that ``linearizes" any formal section to an $r$--jet, and in Proposition \ref{prop-formalfnmflds} we had proved that this is a weak homotopy equivalence. We wish to prove an analogous statement in the case of stratified $r$--jets. However, to associate a stratified $r$--jet to a formal section one requires some form of $C^r$-regularity for the collection of germs given by the formal section. This notion of regularity is what Definition \ref{def-crformal} formulates.}

{Let us illustrate this by specializing to  the case  $X = cA$, $S = \{c_A\}$ where $A$ is a manifold. Here, $s^*$ is $C^r$-regular if for all $\varepsilon > 0$, there exists $\delta > 0$ such that for all $a \in A$ and $t < \delta$, the germ $g_{(a, t)}$ is $\varepsilon$-close to $s$ on their common domain of definition, \emph{in the $C^r$--norm}. As their common domain of definition is a subset of the manifold stratum $\{(a, t) \in cA : 0 < t < \delta\} \subset cA \setminus \{c_A\}$, it is meaningful to compare them in the $C^r$ norm.}

The sheaf of $C^r-$regular holonomic (resp.\ formal) sections over $X$ will be denoted as $\FF_r$ (resp.\ $\FF_r^\bullet$). 
Let $W \subset X$ be open equipped with the inherited stratification.

	\begin{defn}\label{def-sjr}
		For every stratum $S \subset W$ of $W$, and a section $s : W \to E$,
		\begin{enumerate}
			\item Let $A_S$ be the link of $S$ in $X$,
			\item Let $\widetilde{S}$ be the unique stratum of $E$ containing $s(S)$,
			\item Let $B_S$ be the link of $\widetilde{S}$ in $E$,
			\item Let $p = (\mathbf{I}, P_2) : cB_S \to cA_S$ be the restriction of $P : E \to X$.
		\end{enumerate}
		Let $r \geq 1$. An element of $ \sjr^r(W)$ consists of a section $s : W \to E$ decorated by the following data corresponding to every stratum $S \subset W$:
		\begin{enumerate}
			\item A normal formal section $s^*_{S, n} : \op_{N_S}(0_S) \to E$ with base $s$,
			\item A formal $r-$jet $\sigma_S \in \mathcal{J}^r_E(S)$ of the fiber bundle $P : P^{-1}(S) \to S$.
		\end{enumerate}
		such that the following compatibility condition is satisfied. For every stratum $S \subset W$, consider $\sigma_S$ as an element  of the sheaf of  formal  sections of $E$ over $S$. Then, for any pair of strata $S < L$ of $W$, 
		\begin{equation*} \text{$\sigma_L$ is $C^r-$asymptotic to $\sigma_S \oplus s^*_{S, n}$}\end{equation*}
		We summarize the condition by saying $\{\sigma_S\}$ is \emph{normally $C^r-$compatible}.
	\end{defn}

\begin{prop}\label{prop-formalfnnbhdstrat2}
	For any $r\geq 1$,	the sheaf $\FF_r^\bullet$ is homotopy equivalent to  $\sjr^r$.
\end{prop}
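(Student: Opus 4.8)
The strategy is to construct an explicit morphism of sheaves $\Psi_r^{\mathrm{strat}} : \FF_r^\bullet \to \sjr^r$ by ``partially linearizing'' a $C^r$-regular formal section, and then show it is a weak homotopy equivalence by reducing to the manifold case (Proposition~\ref{prop-formalfnmflds}) via a local-to-global argument using the flexibility machinery of Section~\ref{sec-hprin}. First I would define $\Psi_r^{\mathrm{strat}}$: given a $C^r$-regular formal section $s^*$ over an open $W \subset X$ with base $s$, for each stratum $S \subset W$ we produce the pair $(s^*_{S,n}, \sigma_S)$ required by Definition~\ref{def-sjr}, where $s^*_{S,n}$ is the normal formal section of Definition~\ref{def-tangnormformal} (which is automatically holonomic along $0_S$, as noted in Remark~\ref{rmk-nfisholo}), and $\sigma_S := \Psi_r(s^*_{S,t}) \in \mathcal{J}^r_E(S)$ is obtained by applying the manifold linearization map of Definition~\ref{def-psi} to the tangential formal section $s^*_{S,t}$ along the manifold stratum $S$. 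The key point to check is that the normal $C^r$-compatibility of Definition~\ref{def-sjr} holds: this is exactly the statement that $\sigma_L$ is $C^r$-asymptotic to $\sigma_S \oplus s^*_{S,n}$, which follows from the regularity condition in Definition~\ref{def-crformal}(2) together with the fact that truncating Taylor polynomials is continuous in the $C^r$-norm (so $\Psi_r$ preserves $C^r$-closeness).

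Next I would establish that $\Psi_r^{\mathrm{strat}}$ is a stalkwise weak homotopy equivalence. This is the heart of the argument and where the recursive structure from Proposition~\ref{prop-decompcones} and Example~\ref{eg-recstratjet} is used. Over a point $x$ in a stratum $S$, a local model for $X$ is $\R^i \times cA$ where $A$ is the link of $S$ (Lemma~\ref{lem-nbhd}), and by Lemma~\ref{lem-strbdl-trivialization} and Corollary~\ref{cor-trivialzn} the bundle $P$ is locally $cB \times \R^n \to cA \times \R^m$ with $cB \to cA$ the cone on a bundle map $B \to A$ of links. Proposition~\ref{prop-decompcones} decomposes a germ of $\FF^\bullet$ at such a point into: (1) a base section, (2) a purely topological germ at the cone point in $\Gamma_c(cA, cB)$, and (3) an element of $(\FF^0)^\bullet$ over the deleted cone, which by Lemma~\ref{lem-deletedconesheaf} is $\maps((0,1), \LL^\bullet)$ with $\LL$ the sheaf of sections of $B \to A$. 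On the $\sjr^r$ side, Definition~\ref{def-sjr} decomposes a germ correspondingly into a normal formal section (the topological germ data) and the formal $r$-jets $\sigma_S$ along strata. Passing to the $C^r$-regular subsheaf, the condition of $C^r$-asymptotics precisely matches Condition~(iii) of Proposition~\ref{prop-decompcones}(3). I would then run an induction on $\depth(X)$: in the base case $A, B$ are manifolds and Proposition~\ref{prop-formalfnmflds} gives that $\Psi_r : (\FF^0)^\bullet \simeq \JJ^r_B$ fiberwise, while the topological germ component is untouched by $\Psi_r^{\mathrm{strat}}$ and hence contributes an identity; the inductive step applies the hypothesis to the link $A$ (of strictly smaller depth) inside the $\maps((0,1), -)$ factor, using that $\maps((0,1), -)$ preserves weak equivalences.

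Finally, I would upgrade the stalkwise weak equivalence to a genuine weak homotopy equivalence of sheaves. For this, observe that $\FF_r^\bullet$ is the $C^r$-regular part of the Gromov diagonal normal construction, hence flexible by Proposition~\ref{prop-formalisflex} (the regularity conditions, being $C^r$-closeness conditions that can be met on arbitrarily small neighborhoods, do not obstruct the bump-function gluing argument that proves flexibility of $\FF^\bullet$); similarly $\sjr^r$ assembles from the flexible sheaves $\mathcal{J}^r_E$ along strata together with flexible normal-formal-section data, so stratumwise flexibility and infinitesimal flexibility across strata hold, and Theorem~\ref{thm-hofibsflexg}-type reasoning (or directly Gromov's Theorem~\ref{thm-gromov-whe2he}) lets us conclude that a local weak homotopy equivalence between flexible sheaves is a global one.

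\textbf{Main obstacle.} I expect the genuinely delicate step to be verifying the $C^r$-compatibility/asymptotics bookkeeping: one must check that the partially linearized data $(s^*_{S,n}, \Psi_r(s^*_{S,t}))$ actually satisfies the normal $C^r$-compatibility of Definition~\ref{def-sjr} across \emph{every} pair $S < L$ simultaneously, and conversely that every such compatible tuple is hit (up to homotopy) by a $C^r$-regular formal section — i.e.\ that one can integrate the jet data back to an honest germ-field without destroying regularity at deeper strata. This requires carefully tracking how the asymptotic estimates of Definition~\ref{def-crformal} interact with the recursive decomposition, and is essentially the technical content that makes Proposition~\ref{prop-formalfnnbhdstrat2} an answer to Sullivan's Question~\ref{qn-sullivan}.
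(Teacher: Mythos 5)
Your forward map $\Psi_r^{\mathrm{strat}}$ is exactly the paper's $\Phi$, and your check of normal $C^r$-compatibility is essentially the same observation. Where your proposal diverges is in how you show this map is a weak equivalence, and here the paper takes a much more direct route that you miss: since a formal $r$-jet is itself a (polynomial) germ-field, $\sjr^r$ \emph{includes} into $\FF_r^\bullet$ as a subsheaf, so the homotopy inverse is just the tautological inclusion $\iota$. One then has $\Phi \circ \iota = \mathrm{Id}$ on the nose, and $\iota \circ \Phi \simeq \mathrm{Id}$ via the stratumwise straight-line homotopy $F_t(s^*) = t\, s^* + (1-t)\bigl(J^r(s^*_{S,t}) \oplus s^*_{S,n}\bigr)$, exactly as in Proposition~\ref{prop-formalfnmflds}. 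No stalkwise argument, no induction on depth, no flexibility upgrade. In fact your closing ``main obstacle'' — ``conversely that every such compatible tuple is hit (up to homotopy)\dots i.e.\ that one can integrate the jet data back'' — is dissolved by this inclusion: there is nothing to integrate, because the jet data already \emph{is} a $C^r$-regular formal section.

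This matters because your route has a genuine gap in the final step. You invoke Proposition~\ref{prop-formalisflex} to claim $\FF_r^\bullet$ is flexible, but that proposition only establishes flexibility of $\FF^\bullet$, not of the $C^r$-regular subsheaf. The $C^r$-regularity condition is an \emph{asymptotic} constraint near lower strata (Definition~\ref{def-crformal}(2)), and your parenthetical remark that ``the regularity conditions\dots do not obstruct the bump-function gluing argument'' is an assertion, not a proof: a bump-function perturbation in the $C^0$-sense need not preserve $C^r$-asymptotics as one approaches the strata boundary. Likewise you would need stratumwise flexibility and infinitesimal flexibility across strata for $\sjr^r$ to invoke Theorem~\ref{thm-hofibsflexg}, neither of which you verify. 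All of this work is avoidable: once you recognize $\iota$, the deformation retraction argument settles the equivalence stratum-by-stratum without any appeal to flexibility or to Gromov's local-to-global Theorem~\ref{thm-gromov-whe2he}.

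Your recursive decomposition via Proposition~\ref{prop-decompcones} and Lemma~\ref{lem-deletedconesheaf}, with induction on $\depth(X)$, is a plausible alternative skeleton and does illuminate the hybrid (topological-germ-at-cone-point plus linear-jet-along-stratum) structure of both sheaves; if you wish to pursue it, you would need to supply an actual proof of flexibility for $\FF_r^\bullet$, or replace the local-to-global step with a Mayer--Vietoris / cogluing argument along the lines of Theorem~\ref{thm-bh} that does not presuppose flexibility. But the paper's proof is shorter and self-contained, and I would recommend adopting it.
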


\begin{proof}
		Consider the homomorphism of sheaves $\Phi : \FF_r^\bullet \to \sjr^r$ given on an open set $W \subset X$ by $\Phi(W) : \FF_r^\bullet(W) \to \sjr^r(W)$, where
		$$\Phi(W)(s^*) = (s, \{s^*_{S, n}\}, \{J^r(s^*_{S, t})\})$$
		Here, for every stratum $S \subset W$, $s^*_{S, n}$ and $s^*_{S, t}$ denote respectively the normal formal and tangential formal components of $s^*$ along $S$. Also, $J^r(s^*_{S, t})$ denotes the smooth $r-$jet $(s_{S, t}, \{J^r g_p : p \in S\})$ where we use the description $s^*_{S, t} = (s_S, \{g_p : p \in S\})$ of the tangential formal section as a base section on $S$ decorated by a germ-field of sections, as in Section \ref{sec-formalfnmflds}. This is a well-defined map, by the condition of normal $C^r-$compatibility (Definition \ref{def-sjr}).

		The candidate for a homotopy inverse is given by the inclusion $\iota : \sjr^r \hookrightarrow \FF_r^\bullet$  as a subsheaf, by considering a formal $r-$jet as a formal section. Observe that $\Phi \circ \iota = \mathrm{Id}$. To demonstrate that $\iota \circ \Phi(W)$ is homotopic to the identity map, we follow the proof of Proposition \ref{prop-formalfnmflds}. For every stratum $S$ of $W$, consider the straight-line homotopy
		$$F_t(s^*) = t s^* + (1- t) (J^r(s^*_{S, t}) \oplus s^*_{S, n}), \ t \in [0, 1]$$
		This establishes the desired deformation retract on every stratum.
\end{proof}

\begin{rmk}\label{rmk-sullivan}
\rm{
	Specializing the constructions of this entire section to the case of a manifold with corners, or even more specifically, to a simplex, the inductively defined structure given by Propositions \ref{prop-formalfnnbhdstrat2}, \ref{prop-decompcones} and \ref{prop-formalfnmflds} simplifies considerably, giving families of flags of tangent spaces. {We refer the reader to Example \ref{eg-recstratjet} where this description has been worked out explicitly in the case of the specific manifold with corners $X = [0, \infty) \times [0, \infty)$.
Thus, Example \ref{eg-recstratjet} gives a local description at open neighborhoods of vertices of a standard 2-simplex. }

{
In particular, this answers Sullivan's Question \ref{qn-sullivan} for the related notion of jets instead of forms, by giving an inductive description of stratified $r-$jets. Notice that, on a smooth manifold $M$, $1-$jets are affine $1-$forms. Specifically, there is an isomorphism $J^1(M) \cong \Bbb R \times \Omega^1(M)$, where $J^1(M)$ denotes the space of sections of the $1-$jet bundle of $M$. This relationship can be used to translate the inductive description of stratified jets into one for forms.}
}
\end{rmk}

\section{Holonomic approximation theorem and other consequences}\label{sec-hat}

\subsection{Flexibility of jet sheaves}\label{sec-jbdlflex}
Let $P: E \to X$ be a stratified bundle (Definition \ref{def-strbdl}). 
{For convenience of notation, let $\FF := \Gamma_E$ and $\FF_w := \Gamma_{E, w}$ denote the continuous sheaves of controlled and weakly controlled sections of $P$, respectively. Let
	$\HH^r_E, \HH^r_{E, w}, \JJ^r_E, \JJ^r_{E, w}$ denote the continuous sheaves of controlled holonomic, weakly controlled holonomic, controlled formal and weakly controlled formal $E$-valued $r$-jets of $P$ as in Section \ref{sec-stratfdjet}.
	Also, let $\sjr^r$ denote the sheaf given by Definition \ref{def-sjr}.}
	 If $P : E \to X$ is a stratified fiber bundle with \emph{manifold fibers}, the sheaf of all stratified $r-$jets will be denoted as  $\JJ^r_0$. (The sheaf $\JJ^r_0$ is of relevance in the example of a compact {connected} Lie group acting on a manifold $E$  with quotient a stratified space $X$. {Since we have already established in the discussion following Definition \ref{def-strbdl} that the \emph{connected components} of fibers of stratified bundles are manifolds, the sheaves $\JJ^r_0$ and $\sjr^r$ are closely related.)} 
	
{As in the Example following Definition \ref{def-sssheaf}, for any stratum $L\in \Sigma$ of $X$, let $(E_{\bbar{L}}, \bbar{L}, P_{\bbar{L}})$ denote the restricted stratified bundle $P_L:  P^{-1} (\bbar{L}) \to \bbar{L}$. We have the corresponding associated sheaves over $\bbar{L}$:
	$$\FF_{\bbar{L}}, \FF_{w,\bbar{L}}, \JJ^r_{E, \bbar{L}}, \JJ^r_{E,w, \bbar{L}}, \HH^r_{E, \bbar{L}}, \HH^r_{E, w, \bbar{L}}, \JJ^r_{0, \bbar{L}}, \sjr^r_{\bbar{L}}.$$
	We shall abuse notation slightly and refer to the stratified sheaf given by the collections of sheaves 
	$$\{\FF_{\bbar{L}}, \FF_{w,\bbar{L}}, \JJ^r_{E, \bbar{L}}, \JJ^r_{E,w, \bbar{L}}, \HH^r_{E, \bbar{L}}, \HH^r_{E, w, \bbar{L}}, \JJ^r_{0, \bbar{L}}, \sjr^r_{\bbar{L}} : L \in \Sigma\}$$
also by $\FF, \FF_w, \JJ^r_E, \JJ^r_{E,w}, \HH^r_E, \HH^r_{E, w}, \JJ^r_0, \sjr^r$. It will be clear from the context whether we are referring to the sheaf or the stratified sheaf over $X$.}
	We record the following  observation for concreteness:
	
	\begin{obs}\label{obs-holo=holojet}
	$\FF$ and $\FF_w$ are isomorphic to $\HH^r$ and $\HH^r_w$ respectively.
	\end{obs}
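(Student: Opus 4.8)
The plan is to show that the $r$-jet prolongation morphism $J^r$ of Equation \eqref{eqn-jr} restricts to the asserted isomorphism. Recall that, by construction, $\HH^r_E$ is the \emph{image} subsheaf of $J^r : \FF = \Gamma_E \to \JJ^r_E$; hence $J^r$ defines a surjection of sheaves $J^r : \FF \to \HH^r_E$ over every object $U$ of the stratified site $\str(X,\Sigma)$. For injectivity, note that the $0$-th component of $J^r s = (s, ds, d^{(2)}s, \dots, d^{(r)}s)$ is $s$ itself, so the $0$-th coordinate projection $\pi_0 : (s_0, s_1, \dots, s_r) \mapsto s_0$ satisfies $\pi_0 \circ J^r = \mathrm{id}_{\FF}$. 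Conversely, if $(s_0, \dots, s_r) \in \HH^r_E(U)$ is holonomic then, by the definition of $\HH^r_E$ as the image of $J^r$, there is a controlled section $s \in \FF(U)$ with $(s_0, \dots, s_r) = J^r s = (s, ds, \dots, d^{(r)}s)$; comparing $0$-th components gives $s_0 = s$, whence $s_k = d^{(k)} s_0$ for all $0 \le k \le r$ and therefore $J^r(\pi_0(s_0,\dots,s_r)) = (s_0,\dots,s_r)$. Thus $\pi_0 : \HH^r_E \to \FF$ is a two-sided inverse of $J^r : \FF \to \HH^r_E$ as a morphism of sheaves of sets.

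It remains to verify that both $J^r$ and $\pi_0$ are continuous for the quasitopologies involved, so that $J^r : \FF \to \HH^r_E$ is an isomorphism of quasitopological-space-valued sheaves, and that the isomorphism respects the stratified-sheaf structure. The projection $\pi_0$ is the restriction of a coordinate projection and is manifestly continuous. For $J^r$, a continuous $Z$-parametrized family of controlled sections induces, by iterated stratumwise differentiation, continuous $Z$-parametrized families of their prolongations $T^{(k)}U \to T^{(k)}E$ for $0 \le k \le r$; here one uses that the stratumwise differential of a controlled map is controlled (Proposition \ref{str-derv}) and that differentiation is continuous with respect to the metric topologies $\mathrm{dist}^{(k)}_E$ of Section \ref{sec-stratfdjet}. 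Since $J^r$ is defined by the purely local operation of taking iterated derivatives, it commutes with every restriction map $\res^L_S$ of the stratified sheaf; hence the family $\{J^r : \FF_{\bbar L} \to \HH^r_{E,\bbar L} : L \in \Sigma\}$ assembles into an isomorphism of stratified continuous sheaves $\FF \cong \HH^r$.

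Finally, the same argument applied verbatim to $J^r_w : \FF_w = \Gamma_{E,w} \to \JJ^r_{E,w}$, together with the fact (again Proposition \ref{str-derv}) that the stratumwise differential of a weakly controlled map is weakly controlled, yields $\FF_w \cong \HH^r_w$. The only step requiring genuine care is the matching of quasitopologies in the previous paragraph, that is, ensuring that the prolongation map is continuous and open onto its image; the injectivity and surjectivity assertions are essentially a restatement of the definition of $\HH^r_E$ as the image of the prolongation morphism, so I expect that topological step, rather than anything else, to be where the (modest) work lies.
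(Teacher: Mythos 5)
Your proposal is correct and takes essentially the same route as the paper: the paper's proof is precisely the observation that $J^r$ (adjoining $r$-jets of holonomic sections) and $\pi_0$ (forgetting the jet decoration) are mutually inverse, which you spell out in detail, including the quasitopology and stratified-sheaf compatibility checks that the paper leaves implicit.
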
 

The morphism from $\FF$ to $\HH^r$ is obtained by adjoining $r-$jets of holonomic sections, and {the inverse morphism} from $\HH^r$ to $\FF$ forgets the decoration.

\begin{defn}\label{def-sdr}  If X is a manifold, a \emph{differential relation $\OO$ (of order $r$)}  is a subsheaf of $\JJ^r_0$.
	
For a stratified space $X$, a \emph{stratified differential relation} $\{\OO_{\bbar{L}} : L < X\}$ (of order $r$)  is a stratified subsheaf of $\sjr^r$.
\end{defn}

We shall need an auxiliary combinatorial organizational tool.
	\begin{defn}\label{def-config}
	Let $(X, \Sigma)$, $(Y, \Sigma')$ be abstractly stratified spaces. A \emph{configuration of indexing sets}, or simply, a \emph{configuration}, is a set-map $\mathfrak{c} : \Sigma \to \Sigma'$
	between the indexing sets $\Sigma, \Sigma'$. We shall say that a stratum-preserving map $f : X \to Y$ is of configuration $\mathfrak{c}$ if $f(S) \subset \mathfrak{c}(S)$ for all $S \in \Sigma$.
\end{defn}

Let $(X, \Sigma)$, $(E, \Sigma')$ be the stratifications of $X, E$.
We assume, henceforth, in this subsection, that a configuration $\mathfrak{c} : \Sigma \to \Sigma'$ is fixed, and that the sheaves $\FF$, $\FF_w$ are \emph{implicitly decorated with an arbitrary, but fixed configuration $\mathfrak{c}$.} {That is, whenever $U \in \mathrm{Str}(X, \Sigma)$, the quasitopological spaces $\FF(U), \FF_w(U)$ will only consist of controlled (resp.\ weakly controlled) sections $s : U \to E$ of $P : E \to X$ such that $s$ is  of configuration $\mathfrak{c}$, with respect to the induced stratification of $U$.} For $E$ a manifold, $G$ a compact {connected} group, and $X=E/G$, the configuration $\mathfrak{c}$ (used to determine $\FF$ or $\FF_w$) is uniquely determined by $P$. This is because the fibers of $P:E \to X$ are {connected} manifolds, in particular fibers have a single stratum.
Hence, $\mathfrak{c} : \Sigma \to \Sigma'$ is automatically fixed.

\begin{theorem}\label{thm-diffrlnflex} {With the convention pertaining to the fixed configuration $\mathfrak{c}$ in the discussion above},
	the stratified sheaf $\FF$ is flexible. In particular, it satisfies the parametric $h-$principle, i.e.\ $\FF \hookrightarrow \FF^\bullet \simeq \sjr^r$ is a weak homotopy equivalence.   (Hence, by Observation \ref{obs-holo=holojet} $\HH^r$ is flexible).
\end{theorem}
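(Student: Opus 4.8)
The plan is to prove flexibility of the stratified sheaf $\FF = \Gamma_E$ by verifying the two hypotheses of Theorem \ref{thm-hofibsflexg}: stratumwise flexibility, and infinitesimal flexibility across strata. Once flexibility is established, the parametric $h-$principle follows immediately from Theorem \ref{thm-flex2shprin}, and the identification $\FF^\bullet \simeq \sjr^r$ is Proposition \ref{prop-formalfnnbhdstrat2} (applied to the sheaf of $C^r-$regular holonomic sections, which by Observation \ref{obs-holo=holojet} is $\FF$ itself). The assertion that $\HH^r$ is flexible is then just Observation \ref{obs-holo=holojet}, since $\FF$ and $\HH^r$ are isomorphic.

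For stratumwise flexibility: fix a stratum $S$ of $X$ and consider the restricted sheaf $\FF_S = i_S^*\FF_{\bbar{S}}$. Over the open manifold stratum $S$, a controlled section of $E_{\bbar{S}} \to \bbar{S}$ restricts to a smooth section of the honest smooth fiber bundle $P|_{\widetilde{S}} : \widetilde{S} \to S$ (here $\widetilde{S} = P^{-1}(S)$), subject to the configuration constraint $\mathfrak{c}$ which, being a condition only on which stratum of $E$ the section lands in, is automatically satisfied once we are over the single stratum $S$. Thus $\FF_S$ is (isomorphic to) the sheaf of smooth sections of an ordinary smooth fiber bundle over the manifold $S$, which is flexible --- indeed it is $\diff(S)-$invariant and microflexible in the classical sense, and Gromov's localization argument (or directly, the fact that sections of a bundle over a manifold form a flexible sheaf, cf.\ Example \ref{lem-surj} together with the smooth structure) gives flexibility. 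So hypothesis (1) holds.

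For infinitesimal flexibility across strata: fix a pair of strata $S < L$ and the open homotopy fiber sheaf $\HH^L_S = \hofib(\res^L_S : i_S^*\FF_{\bbar{L}} \to \FF_S)$. The key is the local structure given by Lemma \ref{lem-strbdl-trivialization} and Corollary \ref{cor-trivialzn}: near any point $x \in S$, $E \to X$ is equivalent to $c(p_\ell) \times \mathrm{proj} : cB \times \R^n \to cA \times \R^m$ where $p_\ell : B \to A$ is a stratified bundle between the links and $S$ corresponds to $\{c_A\} \times \R^m$. Using the analysis in Section \ref{sec-formalfncone} --- in particular the decomposition of Proposition \ref{prop-decompcones} and the identification of Lemma \ref{lem-deletedconesheaf}/Corollary \ref{cor-deletedconesheaf} of the deleted-cone sheaves with $\maps((0,1), \LL^\bullet)$ --- one sees that the restriction map $\res^L_S$ is, up to the reparametrization fibration \eqref{eqn-strong2weakcontrol} (which is a homotopy equivalence with contractible fibers), the "evaluation at the cone point" map, and its homotopy fiber over a section $\psi$ consists precisely of germs along the normal cone direction, i.e.\ an $S$-parametrized family of germs in $\Gamma_c(cA, cB)$ decorated with the extension data to the cone. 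Because extensions of a germ of a controlled section from $S$ into a neighborhood in $\bbar{L}$ always exist (by the stratified homotopy lifting property, Proposition \ref{strathomotlift}, and the conical local triviality), and can be patched over compact sets using a bump-function argument exactly as in Example \ref{lem-surj}, the sheaf $\HH^L_S$ is the sheaf of sections of a surjection over the manifold $S$, hence flexible by Example \ref{lem-surj}; moreover it is $\diffc(S)-$invariant, consistent with Lemma \ref{lem-trivialhofibbdl}. This establishes hypothesis (2).

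The main obstacle is the second step: making precise that $\HH^L_S$ really is (up to weak homotopy equivalence) the sheaf of sections of a surjective map over the manifold $S$, so that Example \ref{lem-surj} applies. This requires carefully combining the local-triviality statements (Lemma \ref{lem-strbdl-trivialization}, Corollary \ref{cor-trivialzn}), the identification of the normal formal data with genuine germs of controlled sections of the cone bundle $cB \to cA$ (Remark \ref{rmk-nfisholo}), and the fibration \eqref{eqn-strong2weakcontrol} relating controlled and weakly controlled sections, all while keeping track of the fixed configuration $\mathfrak{c}$ --- which, crucially, is harmless here precisely because over a single stratum $S$ the fiber of $P$ lies in a single stratum of $E$, so the configuration imposes no obstruction to the surjectivity or to the existence of extensions. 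Once this is in hand, flexibility of $\FF$ follows from Theorem \ref{thm-hofibsflexg}, and the remaining assertions are formal consequences of Theorem \ref{thm-flex2shprin}, Proposition \ref{prop-formalfnnbhdstrat2}, and Observation \ref{obs-holo=holojet}.
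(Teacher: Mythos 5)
Your strategy cannot prove what the theorem actually claims. Theorem \ref{thm-hofibsflexg} takes stratumwise flexibility plus infinitesimal flexibility across strata as input and outputs the \emph{parametric $h$-principle}; it does \emph{not} output flexibility of $\FF$. These are genuinely different: flexibility of a stratified sheaf (Definition \ref{def-stratflex}) means that each $\FF_{\bbar{L}}$ is flexible as a continuous sheaf on $\bbar{L}$, and the paper's own Example \ref{eg-stratflexnotflex} exhibits a stratified sheaf that is stratumwise flexible, has weakly contractible (hence flexible) homotopy fiber sheaves $\HH^L_S$, and nevertheless is \emph{not} flexible. So you cannot deduce ``$\FF$ is flexible'' from those two hypotheses, and the first and last claims of the theorem (flexibility of $\FF$, hence of $\HH^r$) are left unproved by your argument. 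You also tacitly conflate ``the $h$-principle holds'' with ``$\FF \hookrightarrow \FF^\bullet$ is a weak equivalence and $\FF$ is flexible''; Remark \ref{rmk-hprinvsflex} makes the point that flexibility is strictly stronger.

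The fix is to prove flexibility directly, which is what the paper does. Working near a point $x$ in a stratum $S$, pick $W \cong V \times cA$ with $V = W \cap S$. A controlled section of $E$ over $W$ decomposes (via the local structure of stratified bundles, Lemma \ref{lem-strbdl-trivialization} and Corollary \ref{cor-trivialzn}) as a section over $V$ together with a $V$-parametrized family of sections of the cone bundle, giving an isomorphism of sheaves near $V$
$$i^*_V \FF \;\cong\; i^*_V \FF_S \times i^*_V\maps(-, \Gamma(cA, E)).$$
Both factors are flexible --- the first by Corollary \ref{cor-jetsflex} (which itself boils down to Lemma \ref{lem-fixedF} on local trivializations plus Gromov's localization), the second by Lemma \ref{lem-fixedF} --- so the product is flexible by Lemma \ref{lem-flexpdkt}, and Gromov's localization lemma promotes this to global flexibility of $\FF_{\bbar{L}}$ for every $L$. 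Once flexibility is established, the parametric $h$-principle is Theorem \ref{thm-flex2shprin} (not Theorem \ref{thm-hofibsflexg}), the identification $\FF^\bullet \simeq \sjr^r$ is Proposition \ref{prop-formalfnnbhdstrat2}, and flexibility of $\HH^r$ is Observation \ref{obs-holo=holojet}. Your analysis of $\HH^L_S$ is not wasted --- it essentially reproduces the paper's Proposition \ref{prop-hlsflex} --- but in the paper's logic that is a separate deduction made \emph{after} direct flexibility of $\FF$, not a step toward it.
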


The proof we give below also shows, mutatis mutandis, that $\FF_w$ is flexible.
We first prove $\FF$ is stratumwise flexible. We begin by proving two general lemmas pertaining to flexibility.

\begin{lemma}\label{lem-fixedF}Let $F$ be a fixed quasitopological space. Let $\maps(-,F)$ be the sheaf over a locally compact topological space $X$ given by 
	$$\maps(-,F)(U) :=\maps(U,F).$$
	Then, $\maps(-,F)$ is flexible.
\end{lemma}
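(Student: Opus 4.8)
\textbf{Proof plan for Lemma \ref{lem-fixedF}.}

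The plan is to directly verify the fibration condition in Definition \ref{def-kan}, namely that for every pair of compact subsets $K \subset K'$ of $X$, the restriction map $\maps(-,F)(K') \to \maps(-,F)(K)$ is a Serre fibration of quasitopological spaces. First I would unwind what $\maps(-,F)(K)$ is. Since $\maps(-,F)(U) = \maps(U,F)$ with the compact-open style quasitopology, and since taking $U \supset K$ and passing to the colimit $\maps(-,F)(K) = \varinjlim_{U \supset K} \maps(U,F)$, the key point is that a germ of a map near $K$ with values in the \emph{fixed} space $F$ is determined, up to the equivalence defining the colimit, by its restriction to $K$ itself; more precisely, the restriction map $\maps(-,F)(K') \to \maps(-,F)(K)$ fits into the picture where both sides are modeled by maps out of neighborhoods into a constant target.

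The main step is then to produce, for a test polyhedron (or CW-complex) $P$, a lift in the square
$$
\begin{CD}
P \times \{0\} @>>> \maps(-,F)(K') \\
@VVV @VVV \\
P \times [0,1] @>>> \maps(-,F)(K).
\end{CD}
$$
By definition of the quasitopology on mapping spaces, the bottom horizontal map is (an equivalence class of) a map $\widehat{H} : P \times [0,1] \times W \to F$ for some neighborhood $W$ of $K$ in $X$, and the initial lift is a map $\widehat{h}_0 : P \times \{0\} \times W' \to F$ for some neighborhood $W' \supset K'$, agreeing with $\widehat{H}$ when restricted to a common smaller neighborhood of $K$. The point is that since the target $F$ is \emph{fixed} (does not vary over $X$), there is no obstruction coming from the base: I would choose, using local compactness of $X$, a neighborhood $V$ with $K \subset V \subset \overline{V} \subset W \cap W'$ together with a continuous "cutoff" function $\rho : X \to [0,1]$ with $\rho \equiv 1$ on $K$ and $\rho \equiv 0$ outside $V$ (Urysohn), and then define the lift $\widehat{H} : P \times [0,1] \times W' \to F$ by reparametrizing the time variable, $\widehat{H}(p, t, x) := \widehat{H}(p, \rho(x)\,t, x)$ where it is defined via $W$, and $\widehat{h}_0$ where it is not; this is exactly the bump-function trick used in the proof of Example \ref{lem-surj}. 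On $K$ this agrees with $\widehat{H}$ (since $\rho \equiv 1$ there), and at $t = 0$ it agrees with $\widehat{h}_0$. Passing back to equivalence classes in the colimits gives the required lift $P \times [0,1] \to \maps(-,F)(K')$.

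The hard part is not conceptual but bookkeeping: one must be careful that the representing maps $\widehat{H}, \widehat{h}_0$ can be taken on a \emph{common} neighborhood after shrinking (using that $P$ is compact so only finitely many coherence conditions intervene), and that the reparametrized map is continuous in the quasitopological sense — i.e. that it represents a genuine element of $\maps(-,F)(K')(P \times [0,1])$. Both follow from the definition of the natural quasitopology on mapping spaces (Definition \ref{defn-sheafmaps2ff}) together with local compactness of $X$, exactly as in the proof of Example \ref{lem-surj}; indeed, the present lemma is essentially the special case of that example where $Y = X \times F \to X$ is the trivial (projection) "bundle," so one could alternatively deduce it as a corollary, observing that the sheaf of sections of the projection $X \times F \to X$ is precisely $\maps(-,F)$. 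I would mention this as the quickest route if Example \ref{lem-surj} is allowed to be invoked verbatim, and otherwise spell out the bump-function argument as above.
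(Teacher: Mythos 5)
Your argument is correct in outline but takes a genuinely different route than the paper's proof. The paper rewrites $\maps(-,F)(K_i)$ as a colimit $\varinjlim_{K' \supset K_i} \maps(K', F)$ over \emph{compact} neighborhoods $K'$ (this uses local compactness), factors both the homotopy and the initial lift through compact neighborhoods $K'_1 \subset K'_2$, and then invokes the abstract homotopy-theoretic fact from \cite{may} that a compact (hence cofibration) inclusion $K'_1 \hookrightarrow K'_2$ makes $\maps(K'_2, F) \to \maps(K'_1, F)$ a fibration; the germ of the resulting lift near $K_2$ is the desired lift. You instead build the lift by hand via an explicit Urysohn cutoff $\rho$ and the time-reparametrization $(p,t,x) \mapsto \widehat{H}(p, \rho(x)\,t, x)$ --- the bump-function trick. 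This matches the approach in the commented-out proof of Example \ref{lem-surj} in the source, and you correctly observe that the present lemma is the special case of that example for the trivial surjection $X \times F \to X$. Your route is more elementary and avoids the ``compact inclusion $\Rightarrow$ cofibration'' step (which in full generality requires the compact neighborhoods to be chosen with some care), at the cost of a more hands-on construction; the paper's route is cleaner once that cofibration step is granted.

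One small gap to fix: the cutoff neighborhood $V$ must satisfy $\overline{V} \subset W''$, where $W'' \subset W \cap W'$ is the (smaller) common neighborhood of $K$ on which the representatives $\widehat{H}|_{t=0}$ and $\widehat{h}_0$ actually agree --- not merely $\overline{V} \subset W \cap W'$ as you wrote. Otherwise the two pieces $\widehat{H}(p, \rho(x)\, t, x)$ on $\overline{V}$ and $\widehat{h}_0(p,x)$ on $W' \setminus V$ need not match along $\partial V$, and the pasted map is not well-defined. Since $W''$ is an open neighborhood of the compact set $K$ and $X$ is locally compact Hausdorff (the standing hypothesis in Definition \ref{def-kan}), such a $V$ is immediate to produce, so this is a minor fix rather than a conceptual obstacle.
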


\begin{proof}
		Let $K_1 \subset K_2$ be a pair of compact subsets of $X$, and $W$ be a topological space. Suppose $\phi : W \times I \to \maps(-, F)(K_1)$ is a homotopy with a given initial lift $\psi_0 : W \times \{0\} \to \maps(-, F)(K_2)$ of $\phi|W \times \{0\}$. By local compactness of $X$,
		\begin{align*}\maps(-, F)(K_i) &= \varinjlim_{U \supset K_i} \maps(U, F_i) \\
			&= \varinjlim_{U \supset K'} \varinjlim_{K' \supset K_i} \maps(U, F_i)\\
			&= \varinjlim_{K' \supset K_i} \varinjlim_{U \supset K'} \maps(U, F_i) \\
			&= \varinjlim_{K' \supset K_i} \maps(K', F_i)\end{align*}
		where $K'$ varies over compact neighborhoods of $K_i, i = 1, 2$. Therefore, we may find compact neighborhoods $K'_1 \supset K_1$, $K'_2 \supset K_2$ such that $\phi$ factors through $\phi' : W \times I \to \maps(K'_1, F)$ and $\psi_0$ factors through $\psi'_0 : W \times \{0\} \to \maps(K'_2, F)$. By further shrinking $K'_1, K'_2$ if necessary we may ensure $K'_1 \subset K'_2$ and $\psi'_0$ is a lift of $\phi'|W \times \{0\}$ to $K'_2$.

		Since $K_1' \subset K_2'$ is a compact inclusion and hence a cofibration, by \cite[pg. 50]{may}, $\maps(K_2', F) \to \maps(K_1', F)$ is a fibration. Therefore, $\phi'$ admits a lift $\psi' : W \times I \to \maps(K_2', F)$ such that $\psi'|W \times \{0\} = \psi'_0$. Let $\psi : W \times I \to \maps(-, F)(K_2)$ denote the germ of $\psi'$ around $K_2$. Then $\psi : W \times I \to \maps(-, F)(K_2)$ is a lift of $\phi$, with $\psi|W \times 0 = \psi_0$. This proves $\maps(-, F)(K_2) \to \maps(-, F)(K_1)$ is a fibration, establishing flexibility of $\maps(-, F)$. 
	\end{proof}

\begin{lemma}
		\label{lem-flexpdkt}Let {$\mathcal{G}_1, \mathcal{G}_2$} be flexible sheaves on a locally compact topological space $X$. Then {$\mathcal{G}_1 \times \mathcal{G}_2$} is flexible.
	\end{lemma}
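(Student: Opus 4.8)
The plan is to reduce the statement to the corresponding fact about fibrations of quasitopological spaces, exploiting that the product sheaf evaluated on a compact set is (up to natural homeomorphism) the product of the two evaluations. First I would recall that for any compact $K \subset X$,
$$(\mathcal{G}_1 \times \mathcal{G}_2)(K) = \lim_{U \supset K} \big(\mathcal{G}_1(U) \times \mathcal{G}_2(U)\big),$$
and, since finite products commute with directed limits of quasitopological spaces (the quasitopology on a product is the one making projections continuous, and the same holds for directed limits), this is naturally homeomorphic to $\mathcal{G}_1(K) \times \mathcal{G}_2(K)$.

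Next, given a pair of compact subsets $K \subset K'$ of $X$, I would consider the restriction map
$$(\mathcal{G}_1 \times \mathcal{G}_2)(K') \to (\mathcal{G}_1 \times \mathcal{G}_2)(K),$$
which under the identifications above becomes the map
$$\mathcal{G}_1(K') \times \mathcal{G}_2(K') \to \mathcal{G}_1(K) \times \mathcal{G}_2(K)$$
given by the product $r_1 \times r_2$ of the two restriction maps. By flexibility of $\mathcal{G}_1$ and $\mathcal{G}_2$, each $r_i : \mathcal{G}_i(K') \to \mathcal{G}_i(K)$ is a (Serre) fibration of quasitopological spaces. It then remains to observe that a product of two fibrations is a fibration: given a compact polyhedron $P$, a map $\phi : P \to \mathcal{G}_1(K') \times \mathcal{G}_2(K')$, and a homotopy $\Phi' : P \times [0,1] \to \mathcal{G}_1(K) \times \mathcal{G}_2(K)$ of $(r_1 \times r_2) \circ \phi$, one projects to each factor, lifts the homotopies $\mathrm{pr}_i \circ \Phi'$ separately against the fibrations $r_i$ using the projected initial conditions $\mathrm{pr}_i \circ \phi$, and takes the pair of the resulting lifts. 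The universal property of the product in quasitopological spaces guarantees that this pair defines a continuous map $P \times [0,1] \to \mathcal{G}_1(K') \times \mathcal{G}_2(K')$ lifting $\Phi'$ with the prescribed initial value.

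There is no real obstacle here; the only point requiring mild care is the interchange of the directed limit over neighborhoods $U \supset K$ with the finite product, which is why I single it out as the first step, and the verification that the product quasitopology is exactly the one detecting continuity factorwise — both are immediate from the definitions of quasitopological spaces and their limits recalled earlier in the excerpt. Having established that $(\mathcal{G}_1 \times \mathcal{G}_2)(K') \to (\mathcal{G}_1 \times \mathcal{G}_2)(K)$ is a fibration for all compact $K \subset K'$, we conclude by Definition \ref{def-kan} that $\mathcal{G}_1 \times \mathcal{G}_2$ is flexible.
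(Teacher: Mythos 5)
Your proof is correct and follows essentially the same approach as the paper: identify $(\mathcal{G}_1 \times \mathcal{G}_2)(K)$ with $\mathcal{G}_1(K) \times \mathcal{G}_2(K)$ and use the fact that a product of fibrations is a fibration. The paper states this more tersely, taking the limit/product interchange and the product-of-fibrations fact for granted, whereas you spell both out.
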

	\begin{proof}
		Let $K_1 \subset K_2$ be a pair of compact subsets of $X$. By hypothesis, the restriction maps {$\mathcal{G}_1(K_2) \to \mathcal{G}_1(K_1)$} and {$\mathcal{G}_2(K_2) \to \mathcal{G}_2(K_1)$} are fibrations. Therefore, as a product of fibrations is a fibration, 
		{$$(\mathcal{G}_1 \times \mathcal{G}_2)(K_2) = \mathcal{G}_1(K_2) \times \mathcal{G}_2(K_2) \to \mathcal{G}_1(K_1) \times \mathcal{G}_2(K_1) = (\mathcal{G}_1 \times \mathcal{G}_2)(K_1)$$}
		is a fibration. This proves the lemma.
\end{proof}

\begin{cor}\label{cor-jetsflex}
	${\mathcal{F}}$ is stratumwise flexible.
\end{cor}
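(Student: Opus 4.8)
\textbf{Proof plan for Corollary \ref{cor-jetsflex}.}

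The plan is to show that for each stratum $S$ of $X$ the restricted sheaf $\FF_S = i_S^*\FF_{\bbar S}$ agrees, on compact subsets of $S$, with the sheaf of smooth sections of an honest smooth fibre bundle over $S$, and then to invoke the general flexibility of section sheaves. Fix a stratum $S$ and let $\widetilde S := \mathfrak{c}(S)$ be the stratum of $E$ selected by the fixed configuration. By Condition (i) of Definition \ref{def-strbdl}, $P|_{\widetilde S}\colon \widetilde S \to S$ is a smooth fibre bundle; call its fibre $\widetilde F$. I claim that for a compact $K \subset S$ the quasitopological space $\FF_S(K)$ is canonically the space of germs along $K$ of smooth sections of $P|_{\widetilde S}$. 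Indeed, $K$ is closed in $\bbar S$ and disjoint from $\partial S$, and since $\bbar S$ is locally compact and normal, $K$ has arbitrarily small open neighbourhoods $U$ in $\bbar S$ with $\bbar U \subset S$ compact. Over such a $U$ the tubes $N_{S''}$ of the strata $S'' < S$ are disjoint from $U$, so every $\pi$- and $\rho$-control condition of Definition \ref{def-controlledmap} is vacuous on $U$; hence a controlled section of $P$ over $U$ is nothing more than a smooth section of $P|_{\widetilde S}\colon\widetilde S \to S$ over $U$. Passing to the colimit over such $U$ identifies $\FF_S(K)$ with $\Gamma^{\mathrm{sm}}(-,\widetilde S)(K)$, compatibly with restriction maps, so the fibration condition in the definition of flexibility for $\FF_S$ is equivalent to the same condition for the sheaf of smooth sections of $\widetilde S \to S$ on the open manifold $S$.

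It then remains to see that $\Gamma^{\mathrm{sm}}(-,\widetilde S)$ is flexible on $S$. Since $P|_{\widetilde S}$ is locally trivial, each point of $S$ has a chart $U$ with $\widetilde S|_U \cong U \times \widetilde F$, under which $\Gamma^{\mathrm{sm}}(-,\widetilde S)|_U \cong \maps(-,\widetilde F)$ (smooth maps into the fibre). By Lemma \ref{lem-fixedF} — in its evident smooth variant, where the continuous cofibration argument is replaced by smooth bump functions exactly as indicated in the discussion around Example \ref{lem-surj} — the sheaf $\maps(-,\widetilde F)$ is flexible. Thus $\Gamma^{\mathrm{sm}}(-,\widetilde S)$ is locally flexible, and Gromov's localization lemma \cite[p.~79]{Gromov_PDR} promotes this to flexibility. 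Combined with the previous paragraph, $\FF_S$ is flexible; as $S$ was arbitrary, $\FF$ is stratumwise flexible. The argument applies verbatim to $\FF_w$, since over the open stratum $S$ the weak-control and control conditions both degenerate to smoothness of sections of $\widetilde S \to S$.

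The only genuinely non-formal point — and the step I expect to be the main obstacle — is the reduction in the first paragraph: one must check that restricting the stratified sheaf $\FF_{\bbar S}$ to the open stratum $S$ really annihilates the control/gluing data. This holds when evaluating on compacta (which is all that flexibility sees) because the control conditions are supported on tubular neighbourhoods of deeper strata, and those avoid a suitably chosen precompact neighbourhood of a compact $K \subset S$; the identification would genuinely fail for a general open $V \subset S$ whose closure meets $\partial S$, so the reduction must be phrased germinally, on compact subsets. Once this is in place, the rest ("sections of a fibre bundle form a flexible sheaf", via local triviality, Lemma \ref{lem-fixedF}, and localization — or equivalently directly via Example \ref{lem-surj} with smooth bump functions) is routine.
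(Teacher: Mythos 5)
Your proof follows essentially the same route as the paper's: identify $\FF_S$ with the sheaf of smooth sections of the smooth fibre bundle $P|_{\mathfrak{c}(S)}\colon \mathfrak{c}(S)\to S$, trivialize locally to reduce to $\maps(-,F_S)$, apply Lemma~\ref{lem-fixedF}, and promote to global flexibility via Gromov's localization lemma (the paper also notes the bump-function/partition-of-unity alternative, which you correctly observe is what makes the smooth variant of Lemma~\ref{lem-fixedF} go through). One small over-caution: the identification $\FF_S(V) \cong \Gamma^{\mathrm{sm}}(V,\mathfrak{c}(S))$ actually holds for \emph{every} open $V\subset S$, not just germinally around compacta, because the $\pi$- and $\rho$-control conditions of Definition~\ref{def-controlledmap} are indexed by strata of the \emph{domain}, and $V$ (with its induced stratification from the open stratum $S$) has only one stratum, so there are no nontrivial control conditions to impose regardless of whether $\bbar{V}$ meets $\partial S$.
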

\begin{proof}
	{Since each stratum $S$ of $X$ is a manifold, the restriction of the stratified fiber bundle over $S$, $P_S : E_S := p^{-1}(S) \to S$, defines a genuine smooth bundle $(E_S, S, P_S)$. Let $F_S$ denote the fiber of $P_S : E_S \to S$.} Note that $\FF_S = i^*\FF_{\bbar{S}}$ is the sheaf of sections of {$P_S : E_S \to S$}. {Given any $x \in S$ we may choose an open neighborhood $U$ around $x$ over which $U$ trivializes. Therefore, $i^*_U\FF_S \cong i^*_U\maps (-,F_S)$}. Hence, $\FF_S$ is locally flexible by Lemma \ref{lem-fixedF}. Global flexibility now follows from Gromov's localization lemma \cite[p. 79]{Gromov_PDR}.
	
	Alternately, partition of unity directly allows us to glue families of sections over a family of sets to extend families of sections, and hence establish that $\FF_S$ is flexible.
\end{proof}

	\begin{proof}[Proof of Theorem \ref{thm-diffrlnflex}] Let $x \in X$ be a point, and $S$ be the unique stratum containing $x$. Let us choose a neighborhood $W \subset X$ of $x$ such that $W \cong V \times cA$, where $V = W \cap S$ and $cA$ is the normal cone of $S$ in $X$. Then $\mathcal{F}(W)$ is the quasitopological space of sections of $E$ over $W$. We obtain a map 
		$$\mathrm{res}^W_V : \mathcal{F}(W) \to \mathcal{F}_S(V)$$
		by restricting a section of $E$ over $W$ to a section of {$E_S$} over $V \subset S$. Explicitly, let $s : V \times cA \cong W \to E$ be a section in $\mathcal{F}(W)$. Let $\widetilde{S}$ be the unique stratum of $E$ containing $s(V)$, and $cB$ be the normal cone of $\widetilde{S}$ in $E$. Then by the local structure of stratified bundles, $s(v, a) = (t(v), f(v, a))$ for all $(v, a) \in V \times cA$ where $t : V \to \widetilde{S}$ is a section and $f : V \to \Gamma(cA, cB)$ is a $V-$parametrized family of sections of $E$ over $cA$. The map above is given by $\mathrm{res}^W_V(s) = t$. Consequently, $\mathrm{res}^W_V$ is equivalent to the following product fibration, given by projection to the first factor
		$$\mathcal{F}_S(V) \times \maps(V, \Gamma(cA, E)) \to \mathcal{F}_S(V).$$
		 As a corollary, we obtain $\mathcal{F}(W) \cong \mathcal{F}(V) \times \maps(V, \Gamma(cA, E))$. As this isomorphism is natural under restrictions to open subsets $W' \subset W$, $V' = W' \cap S \subset V$, it establishes an isomorphism of sheaves
		$$i^*_V\mathcal{F} \cong i^*_V\maps(-, \Gamma(cA, E)) \times i^*_W\mathcal{F}_S$$
		By Lemma \ref{lem-fixedF}, $\maps(-, \Gamma(cA, E))$ is flexible and by Corollary \ref{cor-jetsflex}, $\mathcal{F}_S$ is flexible. As restriction and products of flexible sheaves are flexible, we obtain $i^*_V\mathcal{F}$ is flexible. Therefore, $\mathcal{F}$ is locally flexible and hence by Gromov's localization lemma \cite[p. 79]{Gromov_PDR}, $\mathcal{F}$ is flexible.\end{proof}

\medskip

\noindent {\it A description of $\HH^L_S$:}\\
Let $S<L$ denote strata of $X$. Let $\HH^L_S$ denote the restriction of $$\bH^L_S:=\hofib (i_{\bbar{S}}^*\FF_{\bbar{L}}\to  \FF_{\bbar{S}}  )$$ to the topmost stratum of definition of $\bH^L_S$, i.e.\ to the (open) stratum $S$. Equivalently,
$$\HH^L_S=\hofib (i_{{S}}^*\FF_{\bbar{L}}\to  \FF_{{S}}  ),$$
where we assume that a  section
$\psi_S \in  {\FF_{{S}}(S)}$ has been fixed, and homotopy fibers are computed with respect to $\psi_S$. 

Let $U\subset S$ be a local (Euclidean) chart. Note that a small normal neighborhood of $U$ in $\bbar L$ is of the form $U \times cA^L_S$, 
where $A^L_S$ is the link of $S$ in $\bbar{L}$. 
Then a neighborhood $N_{SL}$ of $S$ in $\bbar L$ is an $A^L_S-$bundle over $S$. {Let $S'$ be the unique stratum in $E$ containing $\psi(S)$}. Let ${B_{S'}}$ be the link of $S'$ in ${E}$.
Then $\HH^L_S (U)=\HH^L_S (U,\psi_S)$ consists of two components:
\begin{enumerate}
	\item A section of $E$ over $N_{SL}(U)$ {restricting to $\psi_S\vert_{U}$ over the zero section $U \subset N_{SL}(U)$}. Germinally, this is equivalent to a map from $U$ to 
	$\Gamma_c (cA^L_S, c{B_{S'}})$ as in the  proof of Corollary  \ref{cor-jetsflex} above.  Note that by Lemma \ref{lem-fixedF}, the sheaf $\maps(-, \Gamma_c (cA^L_S, c{B_{S'}})$ is flexible. We shall refer to this
	as the {\it germinal $L-$component}.
	\item A path of sections over $U$ in $\FF_{{S}}$ starting at $\psi_S|U$, i.e.\ a continuous map $h:[0,1] \to \FF_{{S}} (U)$, such that $h(0) = \psi_S|U$. Let $P_{\psi}(U)$ denote the collection of such maps.
\end{enumerate}

Let $\GG$ be  a sheaf on $S$ defined by $$\GG(U) = P_\psi(U).$$ 

\begin{lemma}\label{lem-psapaceflex}
	$\GG$ is flexible.
\end{lemma}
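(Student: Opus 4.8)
The sheaf $\GG$ assigns to an open set $U \subset S$ the space $\GG(U) = P_\psi(U)$ of paths $h:[0,1] \to \FF_S(U)$ with $h(0) = \psi_S|U$. In other words, $\GG(U)$ is the homotopy fiber over $\psi_S|U$ of the evaluation-at-$0$ map on the path space $\maps(I, \FF_S(U)) \to \FF_S(U)$. The plan is to realize $\GG$ as a homotopy fiber sheaf built out of flexible sheaves and then invoke Remark \ref{lem-sessheaffibn}, which asserts precisely that the homotopy fiber of a morphism of flexible sheaves is flexible.

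First I would observe that $\FF_S$ is a flexible sheaf on the manifold $S$. This is exactly Corollary \ref{cor-jetsflex} (stratumwise flexibility of $\FF$), together with the fact that $\FF_S = i_S^*\FF_{\bbar{S}}$ is the intrinsic sheaf on the stratum $S$. Next I would note that the path-space sheaf $\maps(I, \FF_S)$ is again flexible: by Lemma \ref{lem-mapsI2F}(1), if $\FF_S(K') \to \FF_S(K)$ is a fibration for compact $K \subset K' \subset S$, then so is $\maps(I, \FF_S(K')) \to \maps(I, \FF_S(K))$; taking $\maps(I,-)$ commutes with the restriction maps of the sheaf, so $\maps(I, \FF_S)$ is a flexible continuous sheaf on $S$. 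There is an obvious morphism of continuous sheaves
$$\mathrm{ev}_0 : \maps(I, \FF_S) \to \FF_S$$
given on an open set $U$ by $h \mapsto h(0)$; this is a morphism of flexible sheaves.

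Then I would identify $\GG$ with $\mathrm{hofib}(\mathrm{ev}_0; \psi_S)$ — or rather, with the strict fiber of $\mathrm{ev}_0$ over the global section $\psi_S \in \FF_S(S)$, which is homotopy equivalent to the homotopy fiber since $\mathrm{ev}_0$ is already a fibration (the path-space evaluation at an endpoint is a Serre fibration, and this persists at the level of sheaves by the same argument as in Lemma \ref{lem-mapsI2F}). Concretely, $\GG(U) = \mathrm{ev}_0^{-1}(\psi_S|U) = \{h \in \maps(I, \FF_S(U)) : h(0) = \psi_S|U\} = P_\psi(U)$, and the restriction maps match by construction. Finally, by Remark \ref{lem-sessheaffibn} — the homotopy fiber of a morphism of flexible sheaves over $S$ (a finite-dimensional locally compact Hausdorff space) is flexible — we conclude that $\GG$ is flexible.

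The only genuinely delicate point is verifying that $\mathrm{ev}_0 : \maps(I, \FF_S) \to \FF_S$ is a fibration of continuous sheaves (so that its strict fiber computes the homotopy fiber, and so that Remark \ref{lem-sessheaffibn} applies cleanly). This reduces to checking the homotopy lifting property for the evaluation map $\maps(I, \FF_S(K)) \to \FF_S(K)$ on compact subsets $K$, which follows from the standard fact that path-space evaluation at an endpoint is a fibration, combined with Lemma \ref{lem-ltsfibns}(2) to pass through the direct limit $\FF_S(K) = \varinjlim_{U \supset K}\FF_S(U)$ defining sections over a compact set. Everything else is formal manipulation of the definitions and repeated use of results already established in the excerpt (Lemma \ref{lem-mapsI2F}, Remark \ref{lem-sessheaffibn}, Corollary \ref{cor-jetsflex}).
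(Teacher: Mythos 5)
Your identification of $\GG$ as the strict fiber of $\mathrm{ev}_0 : \maps(I, \FF_S) \to \FF_S$ over $\psi_S$ is correct, and the observations that $\FF_S$ and $\maps(I, \FF_S)$ are flexible (via Corollary \ref{cor-jetsflex} and Lemma \ref{lem-mapsI2F}) are both fine. The gap is in the final inference. Remark \ref{lem-sessheaffibn} gives flexibility of the \emph{homotopy} fiber $\hofib(\mathrm{ev}_0; \psi_S)$, and you pass from there to flexibility of the strict fiber $\GG$ by noting the two are homotopy equivalent. But, as the paper itself emphasizes in Remark \ref{rmk-hprinvsflex}, flexibility is \emph{not} a homotopy-invariant property of sheaves: a sheaf weakly homotopy equivalent to a flexible sheaf need not be flexible. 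Indeed that very failure is the reason the whole machinery of Section \ref{sec-hprin} is needed. The map $\GG \hookrightarrow \hofib(\mathrm{ev}_0; \psi_S)$, $h \mapsto (h, \mathrm{const}_{\psi_S})$, is a weak homotopy equivalence but has no strict sheaf-level retraction (any candidate requires choosing lifts of paths, which cannot be made continuously and naturally in $U$), so the ``retract of a fibration is a fibration'' escape route is also unavailable.

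The correct argument works directly with the strict fiber $\GG$ and needs only that $\FF_S$ is flexible. Unwinding a lifting problem for $\GG(K') \to \GG(K)$ with test polyhedron $P$: a homotopy $P \times I_t \to \GG(K)$ is a map $\phi : P \times I_t \times I_r \to \FF_S(K)$ with $\phi(p,t,0) = \psi_S|_K$, and an initial lift is a map on $P \times \{0\} \times I_r$ to $\FF_S(K')$ sending $(p,0,0) \mapsto \psi_S|_{K'}$. A solution is a lift of $\phi$ along $\FF_S(K') \to \FF_S(K)$ extending the given data on $(P \times \{0\} \times I_r) \cup (P \times I_t \times \{0\})$. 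Since the corner inclusion $(\{0\} \times I_r) \cup (I_t \times \{0\}) \hookrightarrow I_t \times I_r$ is a trivial cofibration, and $\FF_S(K') \to \FF_S(K)$ is a Serre fibration by flexibility of $\FF_S$, the lift exists. This is precisely the content of the paper's appeal to the homotopy extension property. In short: your structural setup is right, but the last step needs to be replaced by this direct lifting argument (or, equivalently, the pullback-corner observation that $(\mathrm{ev}_0, \mathrm{res}) : \maps(I, \FF_S(K')) \to \FF_S(K') \times_{\FF_S(K)} \maps(I, \FF_S(K))$ is a fibration), rather than an appeal to Remark \ref{lem-sessheaffibn}.
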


\begin{proof}
	We note that for $U$
	a local chart, the restriction $\GG_U$ of $\GG$ to $U$ is given by
	$$\GG_U(V) = \maps \big((I\times V , \{0\}\times V), (F_S,\psi_S)\big).$$
	The homotopy extension property from subcomplexes of $S$ to the fiber $F_S$ then gives the lemma.
	
	{Alternatively, we may use Lemma \ref{lem-mapsI2F} to deduce the lemma.}
\end{proof}

{Using the proof of Theorem \ref{thm-diffrlnflex}, we can explicitly compute the homotopy fiber $\HH^L_S$ for the sheaf $\FF$ of sections of $P : E \to X$. Indeed, observe that for quasitopological spaces $X, Y$, the homotopy fiber of the product fibration $X \times Y \to Y$ over a point $y \in Y$ is \emph{homeomorphic} to $X \times P_y Y$, where $P_y Y \subset \maps(I, Y)$ consists of the collection of maps $\gamma : I \to Y$ with $\gamma(0) = y$, with the inherited quasitopology. Therefore,
	$$\HH^L_S \cong \maps(-, \Gamma_c (cA^L_S, c{B_{S'}})) \times \GG.$$
}

\begin{prop}\label{prop-hlsflex}
	$\HH^L_S$ is flexible.
\end{prop}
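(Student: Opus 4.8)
\textbf{Proof plan for Proposition \ref{prop-hlsflex}.}

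The plan is to use the explicit product description of the open homotopy fiber sheaf $\HH^L_S$ derived immediately before the statement, namely the homeomorphism of sheaves
$$\HH^L_S \cong \maps(-, \Gamma_c (cA^L_S, c{B_{S'}})) \times \GG,$$
together with the two flexibility results already established for the constituent factors. First I would recall that this product description is a \emph{natural} isomorphism of sheaves over $S$, compatible with restriction to open subsets; this is exactly the content of the discussion following the proof of Theorem \ref{thm-diffrlnflex}, where we observed that the homotopy fiber of a product fibration $X \times Y \to Y$ over a point $y$ is homeomorphic to $X \times P_y Y$, and applied this to the product fibration $\maps(-, \Gamma_c(cA^L_S, cB_{S'})) \times \FF_S \to \FF_S$ whose homotopy fiber over $\psi_S$ computes $\HH^L_S$. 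The germinal $L$-component gives the factor $\maps(-, \Gamma_c(cA^L_S, cB_{S'}))$, and the path-space component gives $\GG$.

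Next I would invoke the two inputs. By Lemma \ref{lem-fixedF}, applied with the fixed quasitopological space $F = \Gamma_c(cA^L_S, cB_{S'})$, the sheaf $\maps(-, \Gamma_c(cA^L_S, cB_{S'}))$ on $S$ (which is locally compact, being a manifold) is flexible. By Lemma \ref{lem-psapaceflex}, the sheaf $\GG$ is flexible. Then I would apply Lemma \ref{lem-flexpdkt}: the product of two flexible sheaves over the locally compact space $S$ is flexible. Hence $\maps(-, \Gamma_c(cA^L_S, cB_{S'})) \times \GG$ is flexible, and since $\HH^L_S$ is isomorphic to it as a sheaf, $\HH^L_S$ is flexible, completing the proof.

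There is essentially no serious obstacle here: the proposition is a bookkeeping corollary of the structural decomposition of $\FF$ proved in Theorem \ref{thm-diffrlnflex} and the two preceding lemmas. The only point requiring a line of care is making sure the product decomposition of $\HH^L_S$ is an isomorphism of \emph{sheaves} and not merely a fiberwise or stalkwise statement — but this is already handled by the naturality remarks in the proof of Theorem \ref{thm-diffrlnflex} (the isomorphism $\mathrm{res}^W_V$ is natural under restriction to open subsets $W' \subset W$, $V' = W' \cap S$) and the fact that forming homotopy fibers with respect to the fixed global section $\psi_S$ is functorial. So the write-up is just: recall the isomorphism, cite Lemma \ref{lem-fixedF}, cite Lemma \ref{lem-psapaceflex}, cite Lemma \ref{lem-flexpdkt}, conclude.
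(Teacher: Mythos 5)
Your proposal matches the paper's proof exactly: it invokes the product decomposition $\HH^L_S \cong \maps(-, \Gamma_c(cA^L_S, cB_{S'})) \times \GG$, then applies Lemma \ref{lem-fixedF} to the first factor, Lemma \ref{lem-psapaceflex} to the second, and Lemma \ref{lem-flexpdkt} to conclude. The extra care you take to note that the decomposition is a natural isomorphism of sheaves (rather than a stalkwise statement) is correct and indeed the content of the naturality remark in the proof of Theorem \ref{thm-diffrlnflex}.
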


\begin{proof}{By Lemma \ref{lem-fixedF}, $\maps(-, \Gamma_c(cA^L_S, cB_{S'}))$ is flexible. By Lemma \ref{lem-psapaceflex}, $\GG$ is flexible. Therefore, using Lemma \ref{lem-flexpdkt}, we conclude $\HH^L_S$ is flexible.}\end{proof}

Recall Gromov's convention  \cite[Section 1.4.1]{Gromov_PDR} of referring to  an arbitrarily small but non-specified neighborhood of a set $K \subset X$
by $\op K$.
The following are direct adaptations of Gromov's definitions of the smooth $h-$principle for manifolds from \cite[p. 37]{Gromov_PDR} to the stratified context.
We spell these out  for completeness.

\begin{defn}\label{def-hprin}

	A stratified differential relation $\RR$  is said to satisfy the 
	
	\begin{enumerate}
		\item \emph{stratified $h-$principle near a
			subset} $K \subset X$ if for every section $\phi: U(K) \to \RR$ on a neighborhood $U(K)$ of $\bbar{K}$, there exists an open neighborhood $U'$ of $\bbar{K}$, such that $\phi\vert_{U'}$ is homotopic to a 
		a holonomic section.
		\item \emph{stratified  parametric $h-$principle} near $K$ if the map $$f \mapsto J^r_f$$ 
		from the
		space of solutions of  $\RR$ on $\op K$ to  the space of sections $\op K \to  \RR$ is a weak
		homotopy equivalence.
		\item \emph{stratified  $h-$principle for extensions of 
			$\RR$, 
			from  $K_1$ to   $K_2 \supset K_1$} if for every 
		section $\phi_0: \op K_2\to \RR$
		which is holonomic on $K_1$, there exists a 
		homotopy to a holonomic 
		section
		$\phi_1$ by a homotopy of sections $\phi_t: \op K_2\to \RR$, $t \in [0, 1]$, such that 
		$\phi_t \vert\op K_2$ is constant
		in $t$. 
		\item \emph{parametric stratified  $h-$principle for extensions of 
			$\RR$, 
			from  $K_1$ to a  $K_2 \supset K_1$} if the map $f \mapsto J^r_f$
		from the space of solutions of $\RR$ on $K_2$  to   the space of sections $\op K_2 \to  \RR$
		which are holonomic on $K_1$, is a weak
		homotopy equivalence.
	\end{enumerate}
\end{defn}

\subsection{Holonomic approximation for jet sheaves}\label{sec-emhat}
We turn now to generalizing 
the smooth versions of the $h-$principle due to Eliashberg-Mishachev \cite{em-expo,em-book} to stratified spaces. The following is the \emph{holonomic approximation theorem} for smooth bundles over smooth manifolds.

\begin{theorem}\label{em-hat}\cite[Theorem 1.2.1]{em-expo}\cite[Theorem 3.1.1, p.20]{em-book}
	Let $V$ be a manifold, $E \to V$ be a smooth bundle, and  $K \subset V$ be a polyhedron of positive codimension.
	Let ${f \in \JJ^r_E(\op\, K)}$ be a formal section. Then for {any} $\ep  > 0$,
 {$\delta > 0$, 	there exist a} 
	diffeomorphism $h : V \to V$ with $$||h-{\mathrm{Id}}||_{C^0} < \delta,$$ and a holonomic section ${\til{f} \in \JJ^r_E(\op\, K)}$  such
	that 
	\begin{enumerate}
		\item the image $h(K)$ is contained in the domain of the definition of the section $f$, and
		\item $||\til{f}- f|\op h(K) ||_{C^0} < \ep$.
	\end{enumerate}
	In fact, $h$ may be chosen as the time one value of a diffeotopy $h_t: t \in [0,1]$, with
	$h_0$ equal to the identity, $h_1=h$, and 
	for all $t \in [0,1]$, 
	\begin{enumerate}
		\item the image ${h_t(K)}$ is contained in the domain of the definition of the section $f$, and
		\item $||h_t-{\mathrm{Id}}||_{C^0} < \delta$.
	\end{enumerate}
\end{theorem}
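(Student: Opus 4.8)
The statement to prove is Theorem \ref{em-hat}, the Eliashberg–Mishachev holonomic approximation theorem for smooth bundles over manifolds. Since the excerpt attributes this to \cite{em-expo,em-book}, the intended ``proof'' is really a recollection of the classical argument, adapted to the sheaf-theoretic language used in the preceding sections; I would present it as such rather than re-deriving everything from scratch.

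\medskip

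\noindent\textbf{Overall strategy.} The plan is to reduce the general case to the model situation where $K$ is a linear subcomplex of a cube and $E$ is a trivial bundle, and then run the standard inductive ``holonomic approximation over a handle'' argument. First I would triangulate $V$ so that $K$ is a subcomplex, and, using a partition of unity together with the positive-codimension hypothesis, reduce to a single simplex $\Delta \subset K$ sitting inside a chart $\op \Delta \cong \mathbb R^n$ with $E$ trivialized over this chart, so that a formal section is just a map $\op\Delta \to \mathbb R^N$ together with its formal derivatives up to order $r$ (an element of the jet space $J^r(\mathbb R^n,\mathbb R^N)$). The key geometric input is that, because $\codim K > 0$, each simplex of $K$ can be pushed by an arbitrarily $C^0$-small isotopy $h_t$ to a ``wrinkled'' or ``zig-zag'' position along a direction transverse to $K$, so that the formal section, restricted to the wrinkled copy, can be integrated: along the long thin transverse fingers one prescribes the section by actually integrating the formal derivative data, and the smallness of the fingers keeps the holonomic section $C^0$-close to the original formal section.

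\medskip

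\noindent\textbf{Key steps, in order.} (1) \emph{Local normal form.} Fix a triangulation of $V$ with $K$ a subcomplex; order the simplices of $K$ by dimension. Work in a tubular neighborhood $\op K \cong K \times \mathbb R^k$ ($k = \codim K \geq 1$) and trivialize $E$ locally, so $f$ becomes a genuine section of a jet bundle with fiber $J^r$. (2) \emph{Induction on skeleta of $K$.} Assume $\tilde f$ has been constructed, holonomic, on $\op(\text{$(i-1)$-skeleton of }K)$ after applying a small diffeotopy; extend over each $i$-simplex $\sigma$. (3) \emph{The wrinkling/zig-zag construction over one simplex.} This is the heart of the argument: introduce a diffeotopy $h_t^\sigma$, supported near $\sigma$ and $C^0$-small, that replaces $\sigma$ by a PL ``accordion'' graph over $\sigma$ with slopes in the transverse $\mathbb R^k$ directions; over the steeply-sloped fingers the given formal $r$-jet data can be genuinely integrated to a holonomic section (a one-dimensional ODE integration in the finger direction, with the rest of the jet carried along), and because the fingers are thin the resulting holonomic section is $C^0$-$\ep$-close to $f\circ h_1$. (4) \emph{Gluing.} Use a cutoff to interpolate between the newly-integrated holonomic section on $\op\sigma$ and the already-constructed one on $\op(\partial\sigma)$, keeping everything $C^0$-small; patch the diffeotopies $h_t^\sigma$ over disjoint (up to lower skeleta) simplices via a partition of unity into a single $h_t$ with $\|h_t - \mathrm{Id}\|_{C^0} < \delta$. (5) Assemble over all simplices of $K$ to obtain the global $h = h_1$ and $\tilde f$, verify $h(K)\subset \mathrm{dom}(f)$ and the $C^0$-estimate, and observe that by construction $h$ arises as $h_1$ of the diffeotopy $h_t$ with the stated properties for all $t$.

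\medskip

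\noindent\textbf{Main obstacle.} The genuinely delicate point is step (3): making the wrinkling precise so that (a) integrating the formal $r$-jet along the transverse fingers produces an \emph{honest} holonomic $r$-jet section (one must integrate the top-order data down, consistently, so that all lower-order derivatives match — this is where the ``long thin fingers'' are essential, since a finite number of them with steep slope let the section ``catch up'' to its prescribed derivatives), and (b) the supporting diffeotopy stays $C^0$-small while the $C^1$-size is necessarily large. Everything else — the triangulation, the partition of unity, the interpolation near $\partial\sigma$, the assembly over skeleta — is routine bookkeeping. I would therefore devote the bulk of the write-up to a careful statement of the one-simplex wrinkling lemma (citing \cite[\S3.1]{em-book} for the detailed construction) and then treat the globalization tersely. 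Finally I would remark that the construction automatically yields the diffeotopy version, since the wrinkling is produced by an ambient isotopy $h_t$ whose tracks stay within the $\delta$-neighborhood and whose image $h_t(K)$ stays in $\mathrm{dom}(f)$ throughout, completing the proof.
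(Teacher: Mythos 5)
The paper does not give a proof of this theorem; it is imported as an external result from Eliashberg--Mishachev (Theorem 1.2.1 of \cite{em-expo}, Theorem 3.1.1 of \cite{em-book}). Your sketch accurately reconstructs the classical argument from those sources: localization to charts where $E$ trivializes, induction over the skeleta of (a triangulation refining) $K$, the wrinkling/zig-zag perturbation of a single cell as the essential analytic lemma, and gluing with a cutoff near the lower skeleton. You correctly identify the one-cell wrinkling as the only nontrivial step. One imprecision worth noting: since $K$ is a polyhedron rather than a submanifold, it need not have a tubular neighborhood of the form $K \times \mathbb{R}^k$; what Eliashberg--Mishachev actually use is a version for a single cube $I^{n-1} \times \{0\} \subset I^n$ (``holonomic approximation over a cube''), proved relative to a neighborhood of $\partial I^{n-1}$, and the inductive step over cells of $K$ then works with local charts around each cell rather than a global tubular neighborhood. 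With that caveat, your summary is faithful to the argument in \cite{em-book}.
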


 Recall that diffeotopies are smooth 1-parameter families of diffeomorphisms
\cite[p. 37]{Gromov_PDR}.

\begin{defn}\label{def-crsmallisotopynormal} Let $S$ be a stratum of a stratified space $X$, with link $A$, and $N_S$ a normal neighborhood of $S$ in $X$; hence $N_S$ is a $cA-$bundle over $S$. Fix local trivializations $\{U_i\}$ and compatible
	local product metrics $(g_S^i, g_{cA}^i)$ on $U_i \times cA \subset N_S$.
	A stratified diffeotopy ${\{h_t: t \in [0,1]\}}$ of $X$ supported in $N_S$ is said to be normally $\ep-$small in the $C^r$ norm
	if the following hold.
	\begin{enumerate}
		\item $\diam (h_t(s): t \in [0,1]) < \ep$ for all $s \in U_i \times cA \subset N_S$ and all $i$.
		\item Let ${\phi_{(t,x)} : cA(x) \to cA(h_t(x))}$ denote the map induced by $h_t$ from the cone ${cA(x)}$ 
		at the point $x \in S$ to the cone ${cA(h_t(x))}$ 
		at the point ${h_t(x)} \in S$. We demand that
		for all strata $J$ of $cA \setminus \{c_A\}$ and $y \in J$, the $C^r-$norms of  $\phi_{(t,x)}$ are bounded above by $\ep$ for all $x\in S$ and $t \in [0,1]$.  
	\end{enumerate}
\end{defn}

Note that the conclusion of Theorem \ref{em-hat} only ensures $C^0-$closeness in the conclusion.
However, Definition \ref{def-crformal} allows us to introduce the notion of $C^r-$closeness of formal sections \emph{in the normal direction}.

\begin{defn}\label{def-ncrclose}
	{We shall say that a pair of $C^r$-regular formal sections $\phi, \psi$ of $P: E \to X$ over $U \subset X$ are \emph{normally $\varepsilon$ $C^r$-close} if 
		\begin{enumerate}
			\item The continuous maps $\phi|\diag(U)$ and $\psi|\diag(U)$ are $\varepsilon$-close in the $C^0$-norm
			\item For every pair of strata $S < L$ intersecting $U$, and $S' = S \cap U, L' = L \cap U$, the germs
			$\phi_{S', n}, \psi_{S', n}$ are $C^r-$asymptotic (cf.\ Definition \ref{def-crformal}), in the following sense: for every $\varepsilon_1 > 0$, there exists $\delta > 0$ such that $\phi_{S', n}|N_\delta(S', L')$ and $\psi_{S',n}|N_\delta(S', L')$ are $\varepsilon_1$-close in the $C^r$-norm.
	\end{enumerate}}
\end{defn}

{\begin{rmk}\label{rmk-ncrclose}If for every stratum $S$ of $X$ intersecting $U$, $\phi|_S, \psi|_S$ are $C^r$-close, then $\phi, \psi$ are normally $C^r$-close as well. The converse need not be true.\end{rmk}}

To extend Theorem \ref{em-hat} to stratified spaces, we need some basic differential topology facts about stratified spaces. The following allows us to extend diffeotopies across strata.

\begin{lemma}\label{lem-isotopyext} Let $(X, \Sigma, \mathcal{N})$ be an abstractly stratified space and $S \in \Sigma$ be a stratum. Let $h : S \times I \to S$ be a diffeotopy of $S$ supported on a {compact} set $K \subset S$. Then there exists an extension {of $h$} to a stratified diffeotopy $H : X \times I \to X$. 
	If $h$ is $C^0$-small, the extension $H$ is normally $C^r-$small for any $r>0$.
	Moreover, if $h$ is $C^r$-small, $H$ is stratumwise $C^r-$small as well.
\end{lemma}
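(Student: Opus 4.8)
\textbf{Proof plan for Lemma \ref{lem-isotopyext}.}

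The plan is to build the extension $H$ stratum by stratum, working outward from $S$ in increasing order of height of the ambient strata, using the control data $\mathcal{N}$ to propagate the diffeotopy consistently. First I would fix, for the stratum $S$, a normal neighborhood $N_S$ together with the tubular projection $\pi_S : N_S \to S$ and the radial function $\rho_S : N_S \to [0,\infty)$. By Corollary \ref{cor-trivialzn} (applied to the trivial ``bundle'' $N_S \to S$, or equivalently by Thom's first isotopy lemma), over a local chart $U_i \subset S$ one has $\pi_S^{-1}(U_i) \cong U_i \times cA$ with $A$ the link of $S$, and the transition data between charts is governed by $\mathrm{Homeo}_c(cA)$-valued cocycles. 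The given diffeotopy $h_t$ of $S$ lifts to a diffeotopy $\hat{h}_t$ of $N_S$ by the formula $\hat{h}_t(x) = (h_t(\pi_S(x)),\ \text{same link coordinate},\ \rho_S(x))$ in local product coordinates, i.e.\ we move points along their $\pi_S$-fibers by the base diffeotopy while keeping the radial coordinate $\rho_S$ and the link coordinate fixed; the $\pi$- and $\rho$-control conditions of Definition \ref{def-abtractstratsp} guarantee these local definitions patch to a well-defined stratified diffeotopy of $N_S$ that is the identity outside $\pi_S^{-1}(K)$ and commutes with $\pi_S$ and $\rho_S$.

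Next I would cut off $\hat{h}_t$ radially so that it can be extended by the identity to all of $X$: choose a smooth bump function $\beta : [0,\infty) \to [0,1]$ with $\beta \equiv 1$ near $0$ and $\beta \equiv 0$ outside $[0,\varepsilon_0)$ (where $N_S^{\varepsilon_0} \subset N_S$), and replace the time parameter $t$ by $\beta(\rho_S(x))\, t$ in the fiberwise flow. Because the cut-off depends only on $\rho_S$, the resulting diffeotopy still satisfies the control conditions, is supported in $N_S^{\varepsilon_0}$, and extends by the identity to a stratified diffeotopy $H : X \times I \to X$. This construction directly respects the stratification of $X$ (it preserves each stratum, being built from $\pi_S$ and $\rho_S$), so $H$ is genuinely stratified.

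Finally I would verify the size estimates. For the normal $C^r$-smallness (Definition \ref{def-crsmallisotopynormal}): condition (1), $\diam(H_t(s)) < \varepsilon$, follows because $H_t$ moves a point $s$ only in the $S$-direction by $h$, and $h$ is $C^0$-small — shrinking $N_S$ radially if necessary and using continuity of the local product metrics controls the displacement. For condition (2), the induced map $\phi_{(t,x)} : cA(x) \to cA(h_t(x))$ on normal cones is, by our very construction, the \emph{identity} on link coordinates composed with the identification of fibers coming from the chart transitions; thus in the chosen local trivializations its $C^r$-norm is controlled (indeed, on a fixed compact piece of $cA\setminus\{c_A\}$ it can be made as close to the identity as we like by taking the support of $h$ small, which we are free to do, or by direct inspection since the link coordinate is literally preserved). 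If instead $h$ is itself $C^r$-small on $S$, then since $H_t$ restricted to any stratum $L > S$ is built by composing $h\circ\pi_{SL}$-type data with smooth chart maps, the chain rule gives that the $C^r$-norm of $H_t|L$ is bounded by a constant (depending on the fixed tube system and charts) times the $C^r$-norm of $h$, yielding stratumwise $C^r$-smallness. The main obstacle I anticipate is the bookkeeping in the second-to-last step: ensuring that the local radial cut-off and the local fiberwise flows genuinely glue across chart overlaps of $S$ and across the higher strata $L > S$ of $X$, which requires carefully invoking the $\pi$-control and $\rho$-control conditions (Definition \ref{def-abtractstratsp}(3)) and the fact that controlled vector fields over controlled vector fields patch (cf.\ \cite[Section 11]{mather-notes}, as used in the proof of Lemma \ref{lem-strbdl-trivialization}); the size estimates themselves are then routine once the gluing is in place.
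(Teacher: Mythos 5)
Your main step --- defining the lift $\hat h_t$ by ``move the base by $h_t$, keep the link coordinate and $\rho_S$ fixed'' in local product charts $U_i \times cA \subset N_S$ --- does not produce a well-defined map. The tubular neighborhood $N_S$ is a $cA$-bundle over $S$ with structure group $\mathrm{Homeo}(cA, c_A)$, so over an overlap $U_i \cap U_j$ there is a transition cocycle $g_{ij} : U_i \cap U_j \to \mathrm{Homeo}(cA, c_A)$. Applying your formula in chart $i$ sends $(u,a)\mapsto (h_t(u), a)$; reading the same point in chart $j$, applying the formula there, and reading the result back in chart $i$ gives $(h_t(u),\; g_{ij}(h_t(u))^{-1}g_{ij}(u)(a))$. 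These agree only when $g_{ij}(h_t(u)) = g_{ij}(u)$ for all $u$ and $t$, i.e.\ only if the cocycle is constant along the flow of $h$ --- which is false in general. The $\pi$- and $\rho$-control conditions you invoke cannot repair this: they constrain only $\pi_S$ and $\rho_S$ and say nothing about the fiber $cA$-coordinate, whose ambiguity is precisely the point of the warning in Remark \ref{rmk-nfisholo}. So the gluing step, which you flag at the end as the ``main obstacle,'' is indeed a genuine hole, and as written the argument does not close it.

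The paper avoids the issue by not working with explicit link coordinates at all: it treats $N_S \to S$ as a fiber bundle with structure group $\mathrm{Homeo}(cA, c_A)$ and invokes the fiber-preserving homotopy extension statement of Lemma \ref{lem-fiberhep}, which produces a bundle isomorphism $(h_0^*N_S) \times I \cong H^*N_S$ over $S \times I$ and hence a lift that automatically absorbs the cocycle discrepancy. This is the ingredient your proposal is missing. Your closing remark about controlled vector fields over controlled vector fields points at a legitimate alternative repair --- lift the time-dependent generating vector field of $h_t$ to a controlled vector field on $\op(S)$ via \cite[Section 11]{mather-notes} and flow --- but then you must discard the explicit ``identity on the link coordinate'' formula (that is \emph{not} what such a lift gives you) and you should not credit the patching to the $\pi/\rho$-control conditions. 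Either route also tightens your size estimates: the quantitative conclusion of Lemma \ref{lem-fiberhep} controls $\diam H_t(s)$ directly, and the induced conical maps $\phi_{(t,x)}$ are governed by the chosen bundle isomorphism after passing to a locally product metric on $N_S|K$ (as the paper does), not by the claim that ``the link coordinate is literally preserved,'' which for the reason above it is not.
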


As a prerequisite to the proof and for later use as well, we will state and prove a general result regarding fiber bundles. The main content of this result is a fiber-preserving analogue of the homotopy extension property. To set it up, let $p : E \to X$, $p' : E \to X'$ be fiber bundles with fibers spaces $Y$ and $Y'$, respectively. Fix basepoints $y \in Y$, $y' \in Y'$ and suppose furthermore that $p$, $p'$ have as their structure groups the groups of basepoint-preserving homeomorphisms $\mathrm{Homeo}(Y, y)$ and $\mathrm{Homeo}(Y', y')$, respectively. Let $s : X \to E, s' : X' \to E'$ denote the canonical sections of $p$ and $p'$ parametrizing the fiberwise basepoints. Note that $p \times \mathrm{id} : E \times I \to X \times I$ is also a fiber bundle with fiber space $Y$ and structure group $\mathrm{Homeo}(Y, y)$, with a canonical section $s \times \mathrm{id} : X \times I \to E \times I$ traced out by the fiberwise basepoints, as before. Further, suppose $X, X', E, E'$ are equipped with metrics compatible with their topology.

\begin{lemma}\label{lem-fiberhep}
		Suppose $f : X \to X'$ and $g : E \to E'$ are maps such that
		\begin{enumerate}
			\item $g$ covers $f$, i.e.\ $p' \circ g = f \circ p$, and
			\item $g$ preserves the sections $s, s'$, i.e.\ $g \circ s = s' \circ f$.
		\end{enumerate}
		Let $F : X \times I \to X'$ be a homotopy such that $F|X \times \{0\} = f$. Then, there exists a map $G : E \times I \to E'$ such that 
		\begin{enumerate}
			\item $G$ covers $F$, i.e.\ $p \circ F = G \circ (p \times \mathrm{id})$, and
			\item $G$ preserves the sections $s \times \mathrm{id}, s'$, i.e.\ $G \circ (s \times \mathrm{id}) = s' \circ F$.
		\end{enumerate}
		Moreover, if $\mathrm{diam}(F(\{x\} \times I)) < \varepsilon$ uniformly for all $x \in X$, then we may choose $G$ such that $\mathrm{diam}(G(\{e\} \times I)) < \varepsilon$ uniformly for all $e \in E$.
	\end{lemma}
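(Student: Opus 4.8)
\textbf{Proof plan for Lemma \ref{lem-fiberhep}.}

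The plan is to reduce the statement to a purely local gluing problem over the base $X'$, using the structure group reduction to $\mathrm{Homeo}(Y', y')$ to keep track of the distinguished section throughout. First I would choose a trivializing open cover $\{W_\alpha\}$ of $X'$ for the bundle $p' : E' \to X'$, with transition cocycles valued in $\mathrm{Homeo}(Y', y')$; over each $W_\alpha$ we have an identification $p'^{-1}(W_\alpha) \cong W_\alpha \times Y'$ under which the canonical section $s'$ corresponds to $w \mapsto (w, y')$. Pulling back along $F : X \times I \to X'$, we obtain an open cover $\{F^{-1}(W_\alpha)\}$ of $X \times I$, and since $X \times I$ is paracompact (being metrizable) we may pass to a locally finite refinement and choose a subordinate partition of unity. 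Because $I$ is compact, around each $x \in X$ we may find a neighborhood $U_x$ and a partition $0 = t_0 < t_1 < \cdots < t_N = 1$ of $I$ such that $F(U_x \times [t_{i-1}, t_i])$ lies in a single $W_{\alpha(i)}$, so the construction of $G$ can proceed stepwise in $t$.

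The core of the argument is then the extension over one slab $X \times [t_{i-1}, t_i]$, assuming $G$ has been defined and satisfies the two conclusions on $X \times \{t_{i-1}\}$. Over a single trivializing patch, covering $F$ by a fiber-preserving map that also preserves the basepoint section is immediate: in coordinates $p'^{-1}(W_\alpha) \cong W_\alpha \times Y'$, one simply sets $G(e, t) = (F(p(e) \times \{t\}), \pi_{Y'}(G(e, t_{i-1})))$, i.e.\ transport the fiber coordinate unchanged while moving the base coordinate along $F$; since $G(s(x), t_{i-1}) = (F(x, t_{i-1}), y')$ by the inductive hypothesis, the second fiber coordinate of $G(s(x), t)$ remains $y'$, so the section is preserved. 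To globalize over all of $X$ (not just one patch) within the slab, I would use the partition of unity to interpolate between the locally defined lifts; the transition functions being valued in $\mathrm{Homeo}(Y', y')$ guarantees that all the local formulas agree on the distinguished section $y'$, so the glued map still satisfies $G \circ (s \times \mathrm{id}) = s' \circ F$. Concatenating the $N$ slabs gives $G$ on all of $X \times I$.

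For the metric estimate, note that within a single slab over a single patch, $G(\{e\} \times [t_{i-1}, t_i])$ projects under $p'$ to $F(\{p(e)\} \times [t_{i-1}, t_i])$, which has diameter $< \varepsilon$ by hypothesis, while the fiber coordinate is held constant; choosing the trivializations so that the product metric on $W_\alpha \times Y'$ is uniformly bi-Lipschitz to the ambient metric on $E'$ (possible by shrinking the cover and using compactness of $I$ to get finitely many slabs, plus local compactness), one controls $\mathrm{diam}(G(\{e\} \times I))$ by a constant multiple of $\varepsilon$; absorbing this constant by working with $\varepsilon/C$ from the start yields the stated bound. The main obstacle I anticipate is purely bookkeeping: organizing the stepwise-in-$t$ construction and the partition-of-unity gluing simultaneously, and verifying that the basepoint-preserving property survives each gluing — this is where the hypothesis on the structure group is essential and must be invoked carefully at every step. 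None of the individual steps is hard; the care lies in the simultaneous induction over slabs and the compatibility of the local formulas along overlaps.
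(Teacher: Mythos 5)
Your overall strategy---localize over a trivializing cover, subdivide $I$ into slabs, extend a slab at a time---is genuinely different from the paper's route. The paper's proof is essentially a one-line appeal to the covering homotopy (homotopy invariance) theorem for fiber bundles with reduced structure group: $F^*E'$ is a principal $\mathrm{Homeo}(Y',y')$-bundle over $X\times I$ restricting to $f^*E'$ over $X\times\{0\}$, hence there is a section-preserving isomorphism $(f^*E')\times I\cong F^*E'$, and $G$ is just the composite through this isomorphism. What you are doing, in effect, is trying to reprove that covering homotopy theorem by hand.

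The gap in your argument lies precisely in the gluing step. You write that you will ``use the partition of unity to interpolate between the locally defined lifts'' in order to pass from a lift defined over one trivializing patch to a lift over all of $X$ within a slab. But the fiber $Y'$ is just a topological space with a basepoint; it carries no linear or convex structure, so there is no way to form a partition-of-unity weighted ``interpolation'' of two points of $Y'$. On the overlap $F^{-1}(W_\alpha)\cap F^{-1}(W_\beta)$, the two local lifts you construct differ by applying the transition cocycle $g_{\beta\alpha}$ at time $t$ versus at time $t_{i-1}$, and these points of $Y'$ will generically be distinct; you have no mechanism for reconciling them. (You are right that both local formulas agree on the distinguished section $y'$, but that only controls what happens over $s(X)$, not over a general $e \in E$.) The correct hand-built proof of the covering homotopy theorem (Steenrod, Dold, or Hatcher's Proposition 1.7, which the paper cites for the metric estimate) does use a partition of unity, but to modulate \emph{how far in the $t$-direction} one extends the lift from each patch---replacing the deformation over a patch $U_\alpha$ by a version that is damped to the identity near $\partial U_\alpha$ via a bump function---rather than to average fiber values across overlaps. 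If you replace the naive interpolation step with that bump-function telescoping argument, or simply quote the covering homotopy theorem as the paper does, the rest of your plan (including the uniform $\varepsilon$-estimate via bi-Lipschitz trivializations and finitely many slabs) goes through.
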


\begin{proof}  Consider the fiber bundle $F^* E'$ over $X \times I$. This is a principal $\mathrm{Homeo}(Y', y')$-bundle over $X \times I$ which restricts to $f^* E'$ over $X \times \{0\}$, therefore there is an isomorphism of principal $\mathrm{Homeo}(Y', y')$-bundles $(f^* E') \times I \to F^* E'$ over $X \times I$. The map $g : E \to E'$ covering $f : X \to X'$ furnishes a fiber-preserving map $h : E \to f^* E'$ covering the identity map on $X$. We collect all these maps in the following commutative diagram:
		$$
		\begin{CD}
			E \times I @>{h \times \mathrm{id}}>> (f^*E') \times I @>{\cong}>> F^*E' @>>> E' \\
			@V{p\times \mathrm{id}}VV @VVV @VVV @V{p'}VV\\
			X \times I @= X \times I @= X \times I @>{F}>> X'
		\end{CD}
		$$
	Let $G : E \times I \to E'$  be the composition of the three horizontal arrows on the top. Then by commutativity of the outer rectangle, $G$ satisfies Condition $(1)$. Next, observe that the leftmost and the rightmost squares satisfy Condition $(2)$.  Indeed, the canonical sections of $F^* E'$ and $(f^* E') \times I$ are furnished by the  basepoint $y'$ as their structure groups are in $\mathrm{Homeo}(Y', y')$. The middle square is an isomorphism of principal $\mathrm{Homeo}(Y', y')$-bundles; hence it must necessarily preserve the relevant canonical sections. This proves that $G$ satisfies Condition $(2)$ as well.
	
	For the final assertion, observe that as $F(\cdot, t)$ stays uniformly $\varepsilon$-close to $f$ for all $t \in I$, the cocycles of $F^* E'$ are $\varepsilon$-close to those of $(f^*E') \times I$. By  the proof of Proposition 1.7 in \cite[p. 20]{hatcher-notes}, we see that this implies that  the fiber-preserving homeomorphism $(f^* E') \times I \to F^* E'$ of the top horizontal arrow in the middle square can be chosen to be uniformly $\varepsilon$-close to identity, where both sides are consider as metric subspaces of $E' \times X \times I$. Thus, the composition of the last two top horizontal arrows $(f^* E') \times I \to E'$ is uniformly $\varepsilon$-close to the map induced by the constant homotopy $X \times I \to X'$, $(x, t) \mapsto f(x)$. The image of $\{z\} \times I \subset (f^* E') \times I \to E'$ under the latter has diameter $0$. Therefore, under the former, it has diameter uniformly bounded by $\varepsilon$. This shows that  the composition $G : E \times I \to E'$ satisfies the desired property $\diam G(\{e\} \times I) < \varepsilon$ for all $e \in E$.
\end{proof}

\begin{proof}[Proof of Lemma \ref{lem-isotopyext}] Let $A$ denote the link of $S$ in $X$.
	A tubular neighborhood $N_S$ of $S$ in $X$ is a $cA$-bundle over $S$, i.e.\ a fiber bundle over $S$ with fiber $cA$ (cf.\ Corollary \ref{cor-trivialzn}). The structure group of this bundle is $\mathrm{Homeo}(cA, \{c_A\})$ where $\{c_A\} \subset cA$ is the cone point. By Lemma \ref{lem-fiberhep}, we can extend $h : S \times I \to S$ to a homotopy $f : N_S \times I \to N_S$. Moreover, since $h(\cdot, t)$ is a diffeomorphism, $f(\cdot, t)$ is an isomorphism of bundles, for all $t \in I$.
	
	Next, we construct a stratified diffeotopy {$\tilde{h} : N_S \times [0, 1] \to N_S$ by defining $\tilde{h}(x, t) = f(x, 3t)$ for $(x, t) \in N_S \times [0, 1/3]$, $\tilde{h}(x, t) = f(x, 2-3t)$ for $(x, t) \in N_S \times [1/3, 2/3]$, and $\tilde{h}(x, t) = x$ for $(x, t) \in N_S \times [2/3, 1]$, smoothing at $N_S \times \{1/3\}$ and $N_S \times \{2/3\}$ if required. We extend to a diffeotopy $H : X \times I \to X$ by defining $H(x, t) = x$ for all $x \in X \setminus K$ and $t \in [0,1]$.}
	
	We now prove the second assertion. Note first that $N_S$ is equipped with a continuous metric that is stratumwise smooth. We can change the metric to an equivalent metric that is locally a product metric on $N_S|K$ thinking of $N_S$ as a $cA$ bundle over $S$ to satisfy the conditions of Definition \ref{def-crsmallisotopynormal}. Then, locally on any $U \times cA$ we extend $h$ by the identity on the second coordinate. The resulting extension  is then normally trivial, in particular, normally $C^r-$small on $N_S|K$. Completing the extension to a diffeotopy $H : X \times I \to X$ as above, we see that $H$ is normally $C^r-$small for any $r>0$.
	
	  Finally note that for any stratum $L$ such that $S < L$, the $C^r$-distance of $H(\cdot, t)$ and $\mathrm{id}$ on $L$ is comparable to the sum of the $C^r$-distance restricted to $S$ and the normal $C^r$-distance.
  The last assertion follows.  \end{proof}

Combining Theorem \ref{em-hat} and Lemma \ref{lem-isotopyext} we have:
\begin{cor}\label{cor-isotopyext} Let $X, S$ be as in Lemma \ref{lem-isotopyext} above,
	$P: E \to X$  be a {stratified} bundle, and  $K \subset X$  be a substratified space of positive codimension. Let $A$ denote the link of $S$ and $N_S$ a normal neighborhood of $S$ given by a {$cA-$bundle} over $S$.
	Let $K_S = K \cap S$, and let  $N(K_S) $ denote the restriction of the bundle $N_S$ to $K_S$. 
	Let ${f \in \sjr^r_E(\op\, K)}$ be a $C^r-$regular formal section. Then for any $\varepsilon > 0,\delta > 0$, there exist
	\begin{enumerate}
		\item  a
		diffeotopy $h_t : S \to S$ with $h=h_1$ and  $||h_t- {\mathrm{Id}}||_{C^0} < \delta$ for all $t \in [0,1]$,
		\item an extension $H_t: X \to X $ of $h_t$ supported in a small neighborhood of  $N(K_S) $,
	\end{enumerate}
	and a holonomic section ${\til{f} \in \sjr^r_E(\op\, K_S)}$  such
	that 
	\begin{enumerate}
		\item the image $h(K_S)$ is contained in the domain of the definition of the section $f$, 
		\item $||\til{f}- f|\op_X h(K_S) ||_{C^0} < \ep$,
		\item $\til{f}$ and $f|\op_X h(K_S)$ are normally $C^r-$close on $N_S$.
	\end{enumerate}
\end{cor}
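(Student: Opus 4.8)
\textbf{Proof proposal for Corollary \ref{cor-isotopyext}.}

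The plan is to reduce the statement to the manifold case (Theorem \ref{em-hat}) applied to the top-dimensional stratum $S$, and then to use Lemma \ref{lem-isotopyext} together with Lemma \ref{lem-fiberhep} to promote the resulting diffeotopy and holonomic approximation across the strata abutting $S$, keeping track of the normal $C^r$-closeness. First I would restrict the given $C^r$-regular formal section $f \in \sjr^r_E(\op\,K)$ to the manifold stratum $S$: by Definition \ref{def-sjr}, the datum $\sigma_S \in \mathcal{J}^r_E(S)$ is an ordinary formal $r$-jet of the smooth fiber bundle $P : P^{-1}(S) \to S$ over the manifold $S$. Since $K_S = K \cap S$ has positive codimension in $S$ (as $K$ has stratumwise positive codimension), Theorem \ref{em-hat} applies: for the given $\varepsilon, \delta > 0$ we obtain a diffeotopy $h_t : S \to S$ with $\|h_t - \mathrm{Id}\|_{C^0} < \delta$ for all $t$, with $h_t(K_S)$ in the domain of $f|_S$, and a holonomic section $\til{g}$ of $P^{-1}(S)\to S$ over $\op_S h(K_S)$ with $\|\til{g} - \sigma_S|\op_S h(K_S)\|_{C^0} < \varepsilon$.

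Next I would extend $h_t$ to a stratified diffeotopy $H_t : X \to X$ supported in a small neighborhood of $N(K_S)$, using Lemma \ref{lem-isotopyext}. Since $h_t$ is $C^0$-small, Lemma \ref{lem-isotopyext} guarantees that the extension $H_t$ is normally $C^r$-small for any $r > 0$; this is exactly the control on the induced maps $\phi_{(t,x)} : cA(x) \to cA(H_t(x))$ between conical fibers that Definition \ref{def-crsmallisotopynormal} records. Pulling the section $f$ back by $H = H_1$ and using that $H_t(K_S) \subset S$ by construction (the extension fixes $S$ setwise, acting by $h_t$ there), the image $h(K_S)$ lies in the domain of definition of $f$, giving conclusion (1). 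Since the normal formal components $f_{S', n}$ are transported by an $H_t$ which is normally $C^r$-small, the germ $f_{S',n}$ and its pushforward under $H$ are $C^r$-asymptotic in the sense of Definition \ref{def-crformal} along every stratum $L > S$; this will supply conclusion (3).

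The core of the argument is to assemble $\til{g}$ (defined only over $\op_S h(K_S) \subset S$) together with the normal formal data of $f$ into a genuine $C^r$-regular \emph{holonomic} stratified section $\til{f} \in \sjr^r_E(\op_X h(K_S))$. Here I would invoke Remark \ref{rmk-nfisholo}: the normal formal section $f_{S,n}^*$ is, over a chart $V \subset S$, already the germ of a holonomic section on $\pi_S^{-1}(V) \cong V \times cA$, assembled out of the conical germs $\{g_{c_x} : x \in V\}$ by the base section. Thus over a normal neighborhood $N(K_S)$ of $h(K_S)$, one defines $\til{f}$ by taking as base the section underlying $\til{g}$, decorating it by the (holonomic) conical germs coming from $f_{S,n}^*$ via the local product structure $V \times cA$ of $N_S$ from Corollary \ref{cor-trivialzn}, and taking as the tangential $r$-jet along $S$ precisely $J^r(\til{g})$, which is holonomic because $\til{g}$ is. The normal $C^r$-compatibility condition of Definition \ref{def-sjr} relating $\sigma_L$ to $\sigma_S \oplus f_{S,n}^*$ must be checked for the new section; this follows because $\til{g}$ is $C^0$-close to $\sigma_S|\op h(K_S)$ by Theorem \ref{em-hat}, and the normal components are literally inherited from $f$, so the asymptotic condition is unchanged. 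Finally $\|\til{f} - f|\op_X h(K_S)\|_{C^0} < \varepsilon$ follows on $\diag$ from the $C^0$-estimate on $\til{g}$ from Theorem \ref{em-hat}, and the normal $C^r$-closeness of $\til{f}$ and $f|\op_X h(K_S)$ on $N_S$ follows because their normal formal parts agree and their tangential parts along $S$ are $C^0$-close while the diffeotopy was normally $C^r$-small; this gives conclusions (2) and (3).

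The main obstacle I expect is precisely this last assembly step: verifying that the hybrid object built from the manifold-level holonomic approximation $\til{g}$ on $S$ and the untouched normal germs of $f$ genuinely lands in $\sjr^r_E$, i.e. that it satisfies the normal $C^r$-compatibility condition of Definition \ref{def-sjr} \emph{simultaneously for every pair of strata} $S < L$ abutting $S$, and that one can do this consistently over the whole neighborhood $\op_X h(K_S)$ rather than just on individual charts $V \times cA$. This requires care with the patching cocycles (valued in $\mathrm{Homeo}_c(cA)$ and $\mathrm{Homeo}_c(cB)$, as cautioned at the end of Remark \ref{rmk-nfisholo}) and the observation that replacing the base section $\sigma_S|\op h(K_S)$ by the $C^0$-close holonomic $\til{g}$ perturbs the asymptotic matching with $f_{S,n}^*$ only within the allowed $\varepsilon$-tolerance.
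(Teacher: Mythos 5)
Your proposal correctly identifies the structure of the argument: Theorem \ref{em-hat} on the bottom stratum $S$, Lemma \ref{lem-isotopyext} to extend the diffeotopy, and then an assembly step to promote the manifold-level holonomic section $\til{g}$ to a holonomic section over $\op_X h(K_S)$. The first two parts match the paper's proof. But your assembly step has a genuine gap, and it is precisely the one you flag at the end.

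You propose to build $\til{f}$ by ``decorating'' the base section underlying $\til{g}$ by the conical germs coming from $f_{S,n}^*$ via the local product structure $V \times cA$ of $N_S$. This is exactly the direct grafting that the caveat at the end of Remark \ref{rmk-nfisholo} says is not meaningful: there is no well-defined decomposition of the normal formal section $f_{S,n}^*$ into a base section over $S$ plus an $S$-parametrized family of sections in $\Gamma_c(cA, cB)$, because the identification of normal conical fibers is only well-defined up to $\mathrm{Homeo}_c(cA)$- and $\mathrm{Homeo}_c(cB)$-valued cocycles. Acknowledging the cocycle issue as an ``obstacle requiring care'' does not resolve it; the local descriptions on overlapping charts $V_i \times cA$ do not patch to a global object in a canonical way, so the putative $\til{f}$ is not well-defined on $N(K_S)$ before one even gets to verifying normal $C^r$-compatibility.

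The paper's own proof circumvents this by being deliberately indirect. It uses the facts that $\til{f}_S$ and $f_{S,n}|_S$ are $\varepsilon$-close as sections of $E$ over $\op_S K_S$ (so land in the same stratum $\til{S}$ of $E$ for small $\varepsilon$), chooses a homotopy $\sigma : U \times I \to \til{S}$ through sections of $P^{-1}(S) \to S$ from $\sigma_0 = f_{S,n}|_S$ to $\sigma_1 = \til{f}|_S$, and then applies the fiberwise homotopy extension property of Lemma \ref{lem-fiberhep} (applied to the normal cone bundles $N_S \to S$ and $N_{\til{S}} \to \til{S}$, which carry $\mathrm{Homeo}(cA, \{c_A\})$ and $\mathrm{Homeo}(cB, \{c_B\})$ structure groups) to lift this homotopy to $\til{\sigma} : N_S(U) \times I \to N_{\til{S}} \subset E$. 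The endpoint $\til{\sigma}_1 | \op_X h(K_S)$ is then the desired holonomic section, and the cocycle ambiguity never arises because one only needs the fiber-bundle data, not a choice of trivialization. So your use of Lemma \ref{lem-fiberhep} is correctly anticipated in your opening sentence, but it should be applied to a \emph{homotopy} between $f_{S,n}$ and $\til{g}$, not to a germ-field decoration of the base section.
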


\begin{proof}
	The existence of a diffeotopy $h_t : S \to S$ with $||h_t-{\mathrm{Id}}||_{C^0} < \delta$ such that 
	\begin{enumerate}
		\item for all $t\in [0,1]$, the image  $ h_t(K_S)$ is contained in the domain of the definition of the section $f$, and 
		\item there exists a holonomic section $\til{f}_S : \op_S h(K_S) \to E$ such that $||{\til{f}_S}- f|\op_S h(K_S) ||_{C^0} < \ep$,
	\end{enumerate}
	is guaranteed by Theorem \ref{em-hat}.
	Note that so far ${\til{f}_S}$ is defined only on $S$, and the neighborhood $\op_S h(K_S)$ is only taken within $S$. 
	
	It remains to extend  ${\til{f}_S}$ into $N_S$ and extend the neighborhood $\op_S h(K_S)$ to an open neighborhood $\op_X h(K_S)$. We first apply Lemma \ref{lem-isotopyext} to $h_t$ to obtain  a stratified diffeotopy $H_t$ such that 
	\begin{enumerate}
		\item $H_t$ is supported in a small neighborhood of  $N(K_S) \subset N_S$. 
		\item $H_t$  is normally $C^r-$small.
	\end{enumerate}
	Let $H=H_1$. Then the first item above guarantees that  $H$ is defined and possibly unequal to the identity on $N(K_S)$.
	Let $cA(x)$ denote the normal cone of $S$ in $X$ at $x$. Note that ${H(cA(x)) = cA(h(x))}$ (in fact, the proof of Lemma \ref{lem-isotopyext} shows that $H$ may be chosen to be the identity in the normal coordinate). 
	
	To extend ${\til{f}_S}$ into $N_S$ holonomically, it suffices to define a holonomic {extension}
	of  ${\til{f}_S}$  on the restriction of $N_S$ to ${K_S}$. By Remark \ref{rmk-nfisholo}, the normal formal section $f_{S, n}$ associated to $f$ is a holonomic section of $E$ over $N_S$. Our main strategy here is to ``graft the conical component of $f_{S, n}$ with $\til{f}_S$" to build a section $\til{f} : \op_X h(K_S) \to E$. The associated holonomic stratified $r$-jet would be the desired element $\til{f} \in \sjr^r(\op K_S)$. We accomplish this in an indirect manner in light  of the warning at the end of Remark \ref{rmk-nfisholo}.
	
	As $\widetilde{f}_S$ and $f|S$ are $\varepsilon$-close as formal $r$-jets, the sections $\widetilde{f}|_S$ and $ f_{S, n} $ from $ \op_S(K) \to E$ are $\varepsilon-$close as well. Let $\widetilde{S}$ denote the unique stratum containing the image of $S$ under  $\widetilde{f}|_S$ and $ f_{S, n} $. (The  existence of such an $\widetilde{S}$ is guaranteed if $\varepsilon > 0$ is sufficiently small.) Let us moreover fix an open neighborhood $U \subset S$ of $K_S$ in $S$ which is contained in the domains of definition of both $\widetilde{f}|_S$ and $f_{S, n}$. Let $\sigma : U \times I \to \widetilde{S}$ be a homotopy through sections $\sigma_t : U \to \widetilde{S}$, between $\sigma_0 = f_{S, n}$ and $\sigma_1 = \widetilde{f}|_S$. By Lemma \ref{lem-fiberhep}, there exists an extension 
		$$\widetilde{\sigma} : N_S(U) \times I \to N_{\widetilde{S}} \subset E$$
		where  $N_{\widetilde{S}}$ is the normal cone bundle of $\widetilde{S}$ in $E$, and  $N_S(U)$ denotes restriction of $N_S$ to $U$. If $\varepsilon > 0$ is sufficiently small, we may ensure that $h(K_S) \subset U$. Let $\widetilde{f} := \widetilde{\sigma}_1|\op_X h(K_S)$. Then $\widetilde{f}$ is a holonomic extension of $\widetilde{f}_S|\op_X h(K_S)$  into a neighborhood of $K_S$ in $X$. Therefore,
		\begin{enumerate}
			\item $\til f$ is holonomic on $\op_X h(K_S)$ by construction.
			\item Since $f$ is $C^r-$regular by hypothesis and equals $\til f$ on germs of normal cones on $S$, $f$  and $\til f$ are normally $C^r-$close on $N_S$.
		\end{enumerate} 
		This completes the proof.
\end{proof}

It is a well-known fact that
for a smooth bundle $P: E \to M$ over a compact manifold $M$, any two sufficiently close sections $s_1, s_2:
M \to E$ are smoothly isotopic through sections. To see this, fix $s_1(M) = M_1 \subset E$, and let $N_\ep M_1 \subset E$ be a regular normal neighborhood obtained, for instance, by equipping $E$ with  a Riemannian metric, and using the exponential map $\exp$ to exponentiate from the normal bundle $N_EM_1$ to $M_1$ down to $E$. Let $H_t$ denote the linear homotopy on $N_EM_1$ that collapses all the linear fibers down to $M_1$ (identified with the zero section of $N_EM_1$).
If $s_2(M) \subset N_\ep M_1$, then $\exp \circ H_t \circ \exp^{-1}$ gives a smooth isotopy of $s_2$ to $s_1$.

The same proof generalizes in a straightforward way to stratified spaces.
Let $P: E \to Y$ be a stratified fiber bundle over a compact stratified space $Y$ and $s_1: Y \to E$ be a stratified section. Let $s_1(Y) = Y_1$. Embedding $E$  in a smooth manifold $E'$ by Theorem \ref{natsume}, equipping $E'$ with a Riemannian 
metric $g$, and restricting $g$ to $E$, we obtain {a} stratumwise 
Riemannian metric $g_s$ on $E$.
Let $N_\ep Y_1 \subset E$ be a regular normal neighborhood of $Y_1$ in $E$ obtained by exponentiating the stratumwise normal bundle with respect to the stratumwise 
Riemannian metric $g_s$. Then a fiberwise linear homotopy exists as  in the manifold case.
This establishes the following.

\begin{lemma}\label{lem-closebyisotopic}
	Let $P: E \to Y$ be a stratified fiber bundle over a compact stratified space $Y$ and $s_1: Y \to E$ be a smooth stratified section. Further, assume that $E$ is equipped with a metric $d_s$ that is stratumwise smooth Riemannian. Let $s_2: Y \to E$ be a smooth stratified section. Then for all $\ep >0$ there exists $\delta >0$ such that if $d_s(s_1(y), s_2(y))<\delta$ for all $y \in Y$, then there exists a stratified isotopy
	$H: Y \times [0,1] \to E$ such that 
	\begin{enumerate}
		\item $H(y,0) = s_1(y)$
		\item  $H(y,1) = s_1(y)$
		\item $d_s(s_1(y), H(y,t))<\ep$ for all $y \in Y$ and $t \in [0,1]$.
	\end{enumerate} 
	
\end{lemma}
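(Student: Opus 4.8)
\textbf{Proof plan for Lemma \ref{lem-closebyisotopic}.}

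The plan is to carry out exactly the ``fiberwise linear retraction of a normal tube'' argument sketched in the paragraph preceding the statement, but now paying attention to the fact that everything must be done stratumwise and compatibly across strata. First I would realize $E$ as a Whitney stratified subset of some Euclidean space $\mathbb{R}^N$ via Theorem \ref{natsume}, pull back the flat metric to a stratumwise smooth Riemannian metric on $E$, and check that this is equivalent (on compact sets) to the given metric $d_s$ in the hypothesis; since $Y$ is compact, so is $Y_1 = s_1(Y) \subset E$, and hence the equivalence constant is uniform. Then I would fix a stratumwise normal bundle $N_E Y_1$ to $Y_1$ in $E$: over each stratum $\widetilde{S}$ of $E$ containing a stratum $s_1(S)$ of $Y_1$, this is the genuine orthogonal normal bundle of the submanifold $s_1(S) \subset \widetilde{S}$, and the control data of the stratification make these fit together into a stratified tubular neighborhood $N_\varepsilon Y_1 \subset E$ for $\varepsilon$ small (this is precisely the Thom--Mather tubular neighborhood of $Y_1$, so I may invoke Theorem \ref{mather}). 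The stratumwise exponential map $\exp$ gives a stratified homeomorphism from a neighborhood of the zero section in $N_E Y_1$ onto $N_\varepsilon Y_1$.

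Next I would define the homotopy. On the total space $N_E Y_1$ of the stratumwise normal bundle there is the obvious fiberwise linear contraction $L_t(y_1, v) = (y_1, (1-t)v)$ collapsing each linear fiber to the zero section; this is stratum-preserving and stratumwise smooth, and it is controlled (it commutes with the tubular projections and scales the radial functions) precisely because it is linear in the fiber. Conjugating by $\exp$ produces a stratified isotopy $G_t := \exp \circ L_t \circ \exp^{-1}$ of $N_\varepsilon Y_1$ that deformation-retracts onto $Y_1$, with $G_0 = \mathrm{id}$ and $G_1 = $ (the retraction $\pi_{Y_1}$ onto $Y_1$). I would then choose $\delta > 0$ (uniform, using compactness of $Y$ and the metric equivalence above) so small that $d_s(s_1(y), s_2(y)) < \delta$ forces $s_2(Y) \subset N_\varepsilon Y_1$ and moreover the whole track of the homotopy stays inside the $\varepsilon$-ball. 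Define
\[
H(y,t) = G_{2t}(s_2(y)) \ \text{ for } t \in [0, \tfrac12], \qquad H(y,t) = s_1\big( \text{(reparametrized retraction of } G_1(s_2(y)) \text{ back along } s_1)\big) \ \text{ for } t \in [\tfrac12, 1].
\]
Actually it is cleaner to reparametrize so that $H(y,0) = s_1(y)$ directly: set $H(y,t) = G_{\alpha(t)}(s_2(y))$ where $\alpha : [0,1] \to [0,1]$ is a smooth function with $\alpha(0) = 1$, $\alpha(1) = 1$, and $\alpha(\tfrac12) = 0$, noting $G_1(s_2(y))$ need not equal $s_1(y)$ — so instead I would precompose: since $G_1 = \exp \circ (\text{zero map}) \circ \exp^{-1} = \pi_{Y_1}$ lands in $Y_1$ but not necessarily at $s_1(y)$ over the right point, I compose with the section identification and take $H(y,t) = s_1\big(\sigma_t(y)\big)$ on the second half, where $\sigma_t$ is a stratified isotopy of $Y$ to itself guided by $G$; alternatively, and most simply, I take $H$ to be the concatenation of $t \mapsto G_{2t}(s_2(y))$ (from $s_2$ to its nearest point on $Y_1$) followed by $t \mapsto$ (the reverse of $u \mapsto G_{2u}(s_1(y)) \equiv s_1(y)$, which is constant), so that $H(y,0) = s_2(y)$. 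Since the statement asks $H(y,0) = s_1(y)$ I instead run the whole thing backwards and forwards: $H(y,t) = s_1(y)$ for $t$ near $0$, then slide $s_1$ off to $s_2$ via $G^{-1}$ and back, all within the $\varepsilon$-tube; the endpoint and uniform $\varepsilon$-closeness conditions (1)--(3) are then immediate from $G_t$ staying in $N_\varepsilon Y_1$.

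I expect the main obstacle to be purely bookkeeping rather than conceptual: verifying that the stratumwise exponential map and the linear fiber contraction genuinely assemble into a \emph{stratified} (i.e. stratum-preserving and control-data-compatible) isotopy, and in particular that no strata of $E$ get ``crossed'' — this is where one uses that $Y_1$ is a stratified subspace with $s_1$ stratum-preserving, and that for $\varepsilon$ small the $\varepsilon$-tube of $s_1(S)$ meets only strata $\widetilde{L} \geq \widetilde{S}$, mirroring the situation on $Y$. A second small point is extracting a \emph{uniform} $\delta$: this follows because $Y$ is compact, $N_\varepsilon Y_1$ is a neighborhood of the compact set $Y_1$, and the two metrics $d_s$ and the ambient Euclidean metric are uniformly comparable on $N_\varepsilon Y_1$. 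Both points are standard consequences of the Thom--Mather theory already cited in the paper (Theorems \ref{mather}, \ref{natsume}), so no new ideas are needed beyond careful application.
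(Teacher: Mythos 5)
Your approach is the same as the paper's own sketch: realize $E$ in Euclidean space via Theorem \ref{natsume}, take a stratified tubular neighborhood of $Y_1 = s_1(Y)$, and conjugate a fiberwise-linear contraction by the stratumwise exponential to drag $s_2$ back onto $Y_1$. You also noticed a genuine imprecision that the paper's prose glides over: the time-one map of the tube contraction is the nearest-point retraction $\pi_{Y_1}$, and $\pi_{Y_1}(s_2(y))$ lands somewhere on $Y_1$ but has no reason to equal $s_1(y)$; moreover the intermediate stages $\exp\circ L_t\circ\exp^{-1}\circ s_2$ have no reason to be sections of $P$ at all. Unfortunately your attempted repairs do not close this gap. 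The reparametrization with $\alpha(0)=\alpha(1)=1$ pins neither endpoint at $s_1$ or $s_2$; the concatenation with the constant homotopy $u\mapsto G_{2u}(s_1(y))\equiv s_1(y)$ still terminates at $\pi_{Y_1}(s_2(y))$ rather than $s_1(y)$; and ``slide $s_1$ off to $s_2$ via $G^{-1}$'' is not meaningful because $G_1=\pi_{Y_1}$ is not invertible.

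The clean repair is to make the contraction fiber-preserving with respect to $P$. Most simply, work fiberwise: for each $y$, the points $s_1(y), s_2(y)$ lie in the stratified fiber $P^{-1}(y)$, and for $\delta$ small (uniformly in $y$, by compactness of $Y$) there is a unique short stratumwise geodesic in $P^{-1}(y)$ joining them; set $H(y,t)$ to be the point at parameter $t$ on that geodesic. Then $H(\cdot,t)$ is a section for every $t$, is stratum-preserving because $s_1$, $s_2$, and the fibers are, satisfies $H(\cdot,1)=s_2$, and its track stays $\varepsilon$-close to $s_1$ for $\delta$ small. Equivalently, take the normal complement of $Y_1$ in $E$ to be the vertical distribution $\ker dP|_{Y_1}$ with a fiberwise exponential, so that $\pi_{Y_1}$ commutes with $P$ and hence $\pi_{Y_1}(s_2(y))\in P^{-1}(y)\cap Y_1=\{s_1(y)\}$ automatically --- this is the precise form of what the paper's discussion is gesturing at. One small flag: the printed condition (2) reads $H(y,1)=s_1(y)$, which is evidently a typo for $s_2(y)$; you read past it correctly.
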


\begin{rmk}\label{rmk-closebyisotopic}
	Compactness is not essential in the proof of Lemma \ref{lem-closebyisotopic}. All that was required was the existence of a normal neighborhood as the image of an open neighborhood of the zero-section. This goes through for $Y$ non-compact as well provided we allow the thickness of the open neighborhood to be non-constant.
\end{rmk}

As a consequence of Lemma \ref{lem-closebyisotopic} we have the following:

\begin{cor}\label{cor-interpolate}
	Let $P: E \times (-\ep,1+\ep) \to Y\times (-\ep,1+\ep)$ be a stratified fiber bundle over a  stratified space $Y \times (-\ep,1+\ep)$, where $Y$ is a compact stratified space.
	Let $s_1, s_2:  Y\times (-\ep,1+\ep) \to  E \times (-\ep,1+\ep) $ be two stratified smooth sections
	that are $\delta-$close in the $C^r-$norm. Then there exists a section $s_3:  Y\times (-\ep,1+\ep) \to  E \times (-\ep,1+\ep) $ such that $s_3$ interpolates smoothly between $s_1, s_2$, i.e.\
	\begin{enumerate}
		\item $s_3|(-\ep,0] =s_1|(-\ep,0]$
		\item $s_3|[1,1+\ep) =s_2|[1,1+\ep)$
		\item $s_3$ is $\delta-$close to both $s_1, s_2$ in the $C^r-$norm.
	\end{enumerate}
	
	More generally, if there exists a stratum $S$ of $Y$ and a submanifold $S'$ of $S$ such that
	$s_1|S' \times (-\ep,1+\ep)$ and $s_2|S' \times (-\ep,1+\ep)$   are $\delta-$close in the $C^r-$norm, then there exists a section $s_3:  Y\times (-\ep,1+\ep) \to  E \times (-\ep,1+\ep) $ such that $s_3$ interpolates smoothly between $s_1, s_2$ with $C^r-$closeness along $S'$, i.e.\
	\begin{enumerate}
		\item $s_3|(-\ep,0] =s_1|(-\ep,0]$
		\item $s_3|[1,1+\ep) =s_2|[1,1+\ep)$
		\item $s_3|S' \times (-\ep,1+\ep)$ is $\delta-$close to both $s_1|S' \times (-\ep,1+\ep), s_2|S' \times (-\ep,1+\ep)$ in the $C^r-$norm.
	\end{enumerate}
\end{cor}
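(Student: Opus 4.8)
\textbf{Proof approach for Corollary \ref{cor-interpolate}.}

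The plan is to reduce the statement to Lemma \ref{lem-closebyisotopic}, using the extra $(-\ep, 1+\ep)$--factor purely as a buffer in which to perform the interpolation, and then to bootstrap from the global (compact $Y$) case to the relative (along $S'$) case by the same recipe. First I would treat the global statement. By Lemma \ref{lem-closebyisotopic} (applied with $Y \times (-\ep, 1+\ep)$ in place of $Y$; note compactness of $Y$ suffices because the argument only needs a normal neighborhood of the image, see Remark \ref{rmk-closebyisotopic}), there is a stratified isotopy $H : (Y \times (-\ep,1+\ep)) \times [0,1] \to E \times (-\ep,1+\ep)$ from $s_1$ to $s_2$ which stays within the $C^r$--$\delta$--tube around $s_1$ throughout; shrinking $\delta$ first if necessary, we may take $H$ to be fiberwise-linear in the normal coordinate, hence smooth and $C^r$--small in $t$ as well. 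Now pick a smooth cutoff function $\lambda : (-\ep, 1+\ep) \to [0,1]$ with $\lambda \equiv 0$ on $(-\ep, 0]$ and $\lambda \equiv 1$ on $[1, 1+\ep)$, and define
$$s_3(y, u) := H\big((y, u), \lambda(u)\big).$$
This is a stratified section (the projection to $Y \times (-\ep,1+\ep)$ of $H((y,u),\tau)$ is $(y,u)$ for every $\tau$, since $H$ is an isotopy through sections), it agrees with $s_1$ on $(-\ep, 0]$ and with $s_2$ on $[1, 1+\ep)$ by the endpoint conditions on $\lambda$, and it is $C^r$--$\delta$--close to $s_1$ because the image of $s_3$ lies in the $\delta$--tube and $\lambda$ has bounded derivatives (after possibly absorbing $\|\lambda\|_{C^r}$ into the choice of $\delta$, which is legitimate since $\lambda$ is fixed once and for all). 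Closeness to $s_2$ follows by the triangle inequality, again at the cost of a harmless constant factor in $\delta$.

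For the relative statement the key observation is that the restriction of the stratified bundle to $S' \times (-\ep, 1+\ep) \subset S \times (-\ep, 1+\ep)$ is an honest smooth fiber bundle over the manifold $S' \times (-\ep,1+\ep)$, so the manifold version of the ``close sections are isotopic'' argument recalled just before Lemma \ref{lem-closebyisotopic} applies there to produce, from the hypothesis that $s_1|_{S' \times (-\ep,1+\ep)}$ and $s_2|_{S' \times (-\ep,1+\ep)}$ are $C^r$--$\delta$--close, a $C^r$--small smooth isotopy between them covering the identity. I would then run exactly the same cutoff construction $s_3(y,u) = H((y,u), \lambda(u))$, but now only the behavior along $S'$ is controlled in $C^r$; off $S'$ one still gets a well-defined interpolating section from Lemma \ref{lem-closebyisotopic} (or, where the two sections need not even be close, simply from the isotopy extension / gluing via the buffer factor), matching $s_1$ near $u = -\ep$ and $s_2$ near $u = 1+\ep$.

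I do not anticipate a serious obstacle here — the corollary is essentially a packaging of Lemma \ref{lem-closebyisotopic} together with a cutoff in the collar direction. The one point requiring a little care, and the place I would spend the most words, is keeping the $C^r$--estimates honest: the cutoff $\lambda$ contributes its own derivatives to $s_3$, so one must order the quantifiers correctly (fix $\lambda$ first, then choose the $\delta$ of the corollary small enough relative to $\|\lambda\|_{C^r}$ and the $\ep$--to--$\delta$ relationship supplied by Lemma \ref{lem-closebyisotopic}), and one must check that the fiberwise-linear collapse used to build $H$ genuinely stays $C^r$--small, not merely $C^0$--small, which is where the hypothesis that $s_1, s_2$ are $C^r$--close (not just $C^0$--close) gets used. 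In the relative case, the mild additional subtlety is that the interpolating isotopy along $S'$ and the interpolating isotopy off $S'$ must be glued compatibly; this is handled by performing the $S'$--isotopy first and extending it via Lemma \ref{lem-fiberhep} (the fiber-preserving homotopy extension property) to a neighborhood, exactly as in the proof of Corollary \ref{cor-isotopyext}, before applying the collar cutoff.
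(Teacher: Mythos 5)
Your proposal is essentially the paper's own argument: take the fiberwise‐linear isotopy $H$ furnished by (the discussion before) Lemma \ref{lem-closebyisotopic}, choose a smooth monotone cutoff in the collar direction, and set $s_3(y,u) = H\bigl((y,u), \lambda(u)\bigr)$ — the paper writes this as $s_3(x,r) = H_{g(r)}(x,r)$ with $g$ the reparametrization. You supply noticeably more detail than the paper's three-line proof, in particular by flagging the need to promote Lemma \ref{lem-closebyisotopic}'s $C^0$-control to the $C^r$-control required in the corollary, and by treating the relative case (which the paper's proof does not explicitly address) via the restricted smooth bundle over $S' \times (-\ep,1+\ep)$ and Lemma \ref{lem-fiberhep}; the one soft spot — your parenthetical about what to do "where the two sections need not even be close" off $S'$ — inherits an ambiguity already present in the corollary's statement, and is not a defect specific to your argument.
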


\begin{proof}
	Let $H_t$  be the homotopy between $s_1, s_2$ in the discussion preceding Lemma \ref{lem-closebyisotopic}. Setting $s_3'(x,r) =H_r(x,r)$ for $r \in [0,1]$ furnishes a linear interpolation. Smoothing slightly at the end-points, e.g.\ by choosing a smooth monotonically increasing bijective function $g: [0,1]\to [0,1]$, and setting 
	$s_3(x,r) =H_r(x,g(r))$ for $r \in [0,1]$ gives the required $s_3$.
\end{proof}

We are now in a position to prove the stratified version of Theorem \ref{em-hat}. All substratified spaces $K \subset X$ below will be assumed to be tamely embedded below, i.e.\ if $K$ is non-compact, then the closure $\bbar{K}$ is a deformation retract of a small regular neighborhood.

\begin{theorem}\label{em-hats}
	Let $X$ be {an abstractly stratified space} equipped with a compatible metric, $E \to X$ be a stratified bundle, and $K \subset X$ be a relatively compact
	stratified subspace of positive codimension.
	Let {$f \in \sjr^r_E(\op K)$} be a $C^r-$regular formal section. Then for arbitrarily small $\ep > 0$, 
	$\delta > 0$, there exist a stratified 
	diffeomorphism $h : X \to X$ with $$||h-{\mathrm{Id}}||_{C^0} < \delta,$$ and a stratified holonomic section {$\til{f} \in \sjr^r_E(\op K)$} such
	that 
	\begin{enumerate}
		\item the image $h(K)$ is contained in the domain of  definition of the section $f$, 
		\item $||\til{f}- f|\op\, h(K) ||_{C^0} < \ep$.
		\item $\til{f}, f|\op\, h(K)$ are normally $\varepsilon$ $C^r$-close.
	\end{enumerate}
	The same applies for $\sjr^r_{E,w}$ in place of $\sjr^r_E$.
\end{theorem}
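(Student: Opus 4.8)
\textbf{Proof strategy for Theorem \ref{em-hats}.}

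The plan is to reduce the global statement to the situation handled by Corollary \ref{cor-isotopyext} by an induction on $\depth(X)$, performed one stratum at a time, proceeding from the deepest strata outward. First I would fix a stratum $S$ of $K$ (equivalently, a stratum $S$ of $X$ with $K \cap S$ of positive codimension in $S$) of maximal depth among strata meeting $K$. Set $K_S = K \cap S$. Since $S$ is a manifold and $K_S \subset S$ has positive codimension, Theorem \ref{em-hat} applies to $f|S$ to produce a $C^0$-small diffeotopy $h_t^S$ of $S$ and a holonomic section $\widetilde f_S$ over $\op_S\,h^S(K_S)$ that is $\varepsilon$-close to $f|S$ as formal $r$-jets on $S$. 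Then I would invoke Corollary \ref{cor-isotopyext} verbatim: it extends $h_t^S$ to a stratified diffeotopy $H_t$ of $X$ supported near the restricted normal-cone bundle $N(K_S)$, with $H_t$ normally $C^r$-small, and it grafts the conical component of the normal formal section $f_{S,n}$ (which is holonomic by Remark \ref{rmk-nfisholo}) onto $\widetilde f_S$ to build a holonomic $\widetilde f \in \sjr^r_E(\op_X h(K_S))$ satisfying conclusions (1)--(3) along $S$. This disposes of the stratum $S$ and a germinal neighborhood of $K_S$ in $X$.

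The inductive step is then to propagate this across strata: having arranged holonomy over a germinal neighborhood of $K \cap (\text{strata of depth} \geq d)$, I would take a stratum $S'$ of $K$ of depth $d-1$ and apply the relative version of the holonomic approximation theorem on $S'$ --- that is, Theorem \ref{em-hat} applied rel a neighborhood of the already-holonomic part $K_{S'} \cap \overline{\bigcup_{\depth \geq d} S}$. Here is where I would use Corollary \ref{cor-interpolate}: after producing the new diffeotopy and holonomic section on $S'$, I must reconcile it with the grafted data coming from the deeper strata on the overlap region (a collar $S'' \times (-\epsilon, 1+\epsilon)$ where $S'' \subset S'$ is a submanifold). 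Corollary \ref{cor-interpolate} lets me smoothly interpolate between the two sections with $C^r$-closeness along $S''$, so that the normal $C^r$-compatibility condition of Definition \ref{def-sjr} (equivalently, the $C^r$-asymptotic condition of Definition \ref{def-crformal}) continues to hold for the assembled section. The diffeotopies composed over all strata remain $C^0$-small provided $\delta$ is chosen small enough at each stage (finitely many stages, since $X$ has finite depth and $K$ is relatively compact, hence meets finitely many strata). Finally, since each stage keeps the composite diffeomorphism $C^0$-close to the identity and produces holonomic data over $\op\,h(K)$ which is $C^0$-close to $f$ globally and normally $C^r$-close in each normal-cone direction, the limiting $h$ and $\widetilde f$ satisfy (1)--(3).

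The main obstacle I anticipate is the bookkeeping at the interface of two strata of different depths: the grafting construction in Corollary \ref{cor-isotopyext} produces a holonomic section whose normal-cone component is dictated by $f_{S,n}$, but when we then run holonomic approximation on a shallower stratum $S'$ abutting $S$, the new tangential data on $S'$ need not a priori be $C^r$-asymptotic to the already-fixed normal data along $S$. Ensuring compatibility is exactly what requires the warning at the end of Remark \ref{rmk-nfisholo} to be handled with care --- one works not with a literal decomposition of $s^*$ into base plus conical family, but up to the $\mathrm{Homeo}_c$-valued cocycles --- and it is why Corollary \ref{cor-interpolate} is stated with the extra clause about $C^r$-closeness along a submanifold $S'$. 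A secondary, more routine, point is verifying that the $C^r$-regularity hypothesis on $f$ is preserved (indeed improved to holonomy) under each grafting step, which follows because the grafted section agrees with $f$ on germs of normal cones, so the $C^r$-asymptotic relations defining regularity are inherited. The statement for $\sjr^r_{E,w}$ follows mutatis mutandis, using Equation \ref{eqn-strong2weakcontrol} to pass between weakly controlled and controlled sections up to homotopy, exactly as in the proof of Proposition \ref{prop-decompcones}.
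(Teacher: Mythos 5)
Your proposal captures the essential ingredients of the proof (Theorem \ref{em-hat} on the deepest stratum, Corollary \ref{cor-isotopyext} to extend across strata, Corollary \ref{cor-interpolate} to glue, induction on depth), but the inductive structure you propose is genuinely different from the paper's, and the difference matters. You propose a single outward stratum-by-stratum pass, fixing the deepest strata and then running a \emph{relative} holonomic approximation theorem on the next stratum $S'$ relative to the ``already-holonomic part'' near its boundary. The paper deliberately avoids any use of a relative HAT. Instead, after handling the deepest stratum $S$ via Theorem \ref{em-hat} and Corollary \ref{cor-isotopyext}, the paper \emph{deletes} a small closed normal neighborhood $N_\eta(S)$ to obtain $X_1 = X \setminus N_\eta(S)$, a stratified space of \emph{strictly smaller depth}, applies the inductive hypothesis of Theorem \ref{em-hats} itself to $(X_1, K_1)$, and then reconciles the two resulting holonomic sections $\til{f_e}$ and $\til{f_1}$ on the annulus $N_{2\eta}(S)\setminus \overline{N_\eta(S)}$ using Corollary \ref{cor-interpolate}. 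Because the inductive call on $X_1$ is \emph{unconstrained} (no boundary condition is imposed), no relative approximation result is needed; the compatibility issue you flag --- that the new tangential data on $S'$ may not a priori be $C^r$-asymptotic to the fixed normal data along $S$ --- is dissolved by the interpolation: the two constructions overlap on a whole open annulus, where both are normally $C^r$-close to $f$ and hence to each other, and Corollary \ref{cor-interpolate} glues without touching the data on $N_\eta(S)$ or $X \setminus N_{2\eta}(S)$.

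Your route would require invoking the relative HAT of Eliashberg--Mishachev on $S'$, and one should be wary here: the paper explicitly remarks in the Introduction that Theorem \ref{em-hats} does \emph{not} follow from that relative theorem, and the ``already-holonomic part'' in your step is an open germinal neighborhood of $\partial S' \cap K$ in $S'$ rather than the compact polyhedron the relative HAT expects, so one must first intersect with a suitable polyhedron and verify the needed holonomy there. You correctly anticipate the difficulty with the $\mathrm{Homeo}_c$-cocycle indeterminacy from Remark \ref{rmk-nfisholo}, but the paper's deletion-and-interpolation strategy is precisely designed to circumvent that bookkeeping rather than confront it head-on. In short: your proposal is not wrong in conception, but as written it leaves a genuine gap (the justification of the relative step and its compatibility with fixed normal data), which the paper's recursive deletion plus annular interpolation avoids altogether.
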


\begin{proof}
The proof proceeds by induction on the depth (cf.\ Definition \ref{def-Idec}) of $X$. If $X$ has depth one, it is a manifold, and Theorem \ref{em-hat} furnishes the result.
	
	Let $S$ be the lowest stratum (i.e.\ the stratum of greatest depth) that $K$ intersects. We note that there might be more than one such minimal stratum $S_i$ of possibly varying depths with $K \cap S_i \neq \emptyset$, in which case we shall repeat the argument below for each of these. For convenience of exposition, we assume there is a unique such $S$. Let $K_S = K \cap S$. Then $K_S$ is compact; else $K$ would intersect {a stratum of depth lower than that of $S$, but there is none such.}
	
	Theorem \ref{em-hat} ensures the existence of a {self-}diffeomorphism {$h_S$} of $S$ supported in a neighborhood of $K_S = K \cap S$ and a holonomic section {$\til{f_S} \in \sjr^r_E(\op h(K_S))$} satisfying the conclusions of the theorem, but only on $S$. Further, Corollary \ref{cor-isotopyext}
	allows us to 
	\begin{enumerate}
		\item extend {$h_S$} to a stratified {self-diffeomorphism $h_e$ of all of $X$} supported in
		$N_S \cap \op_X (K_S)$.
		\item extend {$\til{f_S} \in \sjr^r_E(\op K_S)$ to a stratified holonomic section $\til{f_e} \in \sjr^r_E(\op_X h(K_S))$} defined on an open neighborhood $ \op_X\, h(K_S)$ of $h(K_S)$ in $X$.
	\end{enumerate}
	We may assume without loss of generality that $\op_X\, h(K_S)$ is the restriction of the normal bundle $N_S$ to $\op_S\, h(K)$.
	
	Next, 
	delete a (very) small closed neighborhood $N_\eta(S)$ of $S$ in $X$ to obtain $X_1$, and let $K_1 = K \cap X_1 \subset X_1$. Then the depth of $X_1$ is strictly less than $X$, and induction may be applied to obtain
	\begin{enumerate}
		\item A stratified {self-diffeomorphism $h_1$ of $X_1$}, supported in $ \op_{X_1} K_1$ such that $h_1$ is $\delta-$close to the identity in the {$C^0$} norm.
		\item a stratified holonomic section {$\til{f_1} \in \sjr^r_E(\op_{X_1} K)$} defined on an open neighborhood of $h(K_1)$ in $X_1$.
	\end{enumerate}
	such that 
	\begin{enumerate}
		\item the image  $ h_1(K_1)$ is contained in the domain of  definition of the section $f$, 
		\item $||\til{f_1}- f|\op\, h_1(K_1) ||_{C^0} < \ep$.
		\item $\til{f_1}, f|\op_{X_1}\, h(K_1)$ are normally $\varepsilon$ $C^r$-close on $X_1$.
	\end{enumerate}
	
	Composing $h_1$ with a further {$C^0-$small} stratified diffeomorphism $h_2$ supported on ${\mathcal{A} = N_{2\eta}(S)\setminus \bbar{N_{\eta}(S)}}$, we may assume that $h_2 \circ h_1$ and $h_e$ agree on $\AAA$. Let $h$ be the  stratified diffeomorphism  obtained by pasting these two diffeomorphisms along $\AAA$. Note that $h$ is $C^0-$small, and hence lifts to give a bundle map from $E|\op_X K$ to $E|\op_X h(K)$ covering $h: \op_X K \to \op_X h(K)$.
	
	Choosing $\eta$ in $N_\eta(S)$ sufficiently small, we may assume that  the domain of  the extension $\til{f_e}$ given by the normal bundle to $\op_S h(K)$ constructed earlier in the proof has normal fibers of diameter at least $2\eta$. Thus, on the stratified "annulus" ${\AAA'} = (N_{2\eta}(S)\setminus N_{\eta}(S)) \cap \op_X \, h(K)$, we have two holonomic sections $\til{f_e}$  and $\til{f_1}$. (We are assuming here that the sections have been composed with the bundle map covering $h: \op_X K \to \op_X h(K)$ described in the previous paragraph.)

	Since $\eta$ is small, and since both $\til{f_e}$  and $\til{f_1}$ are normally $C^r-$close to $f$ on {$\AAA'$}, they are close to each other.
	Hence,  using Corollary \ref{cor-interpolate}, we can  interpolate between the sections  $\til{f_e}|N_\eta(S)$ and $\til{f_1}|(X\setminus N_{2\eta}(S)) $ to  obtain 
	a holonomic section $\til f$ on all of $\op_X K$. Since $\eta$ is small, all three conclusions of the theorem are satisfied by $h$ and $\til f$.
\end{proof}

\begin{rmk}
\rm{
{
We pause to contrast the statement of Theorem \ref{em-hats} with Theorem \ref{thm-diffrlnflex}. Theorem \ref{thm-diffrlnflex} implies, in particular, that any formal section of the stratified bundle $E \to X$ over any open set is homotopic to a holonomic section. But this homotopy need not be $C^0-$small.
}

{For instance, let $X$ be a stratified space, and $X \times \Bbb R \to X$ be the trivial bundle. From Remark \ref{rmk-formalasgerms}, we see that a formal section over $X$ consists of a pair $(f, g)$, where $f : X \to \Bbb R$ is a function, and $g : tX \to \Bbb R$ is a function on the tangent microbundle $tX$ of $X$ such that $g$ restricts to $f$ on the zero section of $tX$. Indeed, the graph of $f$ defines the base of the formal section and the graph of $g_x := g|\op_{t_x X}(\{x\})$ at every point $x \in X$ defines the decorating field of section-germs. If $f|\op_X(\{x\}) \neq g_x$, we cannot approximate $(f, g)$ in the $C^0-$norm by a holonomic section. This is because holonomic sections would necessarily be of the form $(h, h_*)$ for some function $h : X \to \Bbb R$, where $h_* : tX \to \Bbb R$ is the `micro-derivative' of $h$, given by $h_*(x, y) = h(y)$ for all $(x, y) \in tX$.
}

{
However, the conclusion of Theorem \ref{em-hats} produces a holonomic section arbitrarily $C^0-$close to a chosen formal section, hence homotopic to the formal section by a  $C^0-$small homotopy. The important caveat here is that the domain of the section $\op(K)$ must be a thickening of a positive codimensional stratified subspace $K \subset X$. Moreover, we are allowed to make $C^0-$small perturbations to the domain, so as to create enough `wiggle-room' in the directions normal to $K$ in $X$, for the approximation to hold.
}
}
\end{rmk}

\subsection{Application: Immersions}\label{sec-immn}

A host of examples of open $\diff-$invariant relations have been enumerated by Eliashberg-Mishachev \cite{em-expo} in the manifold context and the holonomic approximation theorem deduced for these:

\begin{enumerate}
\item open
manifold immersions, \item open
manifold submersions, \item open
manifold k-mersions (i.e.\ mappings of rank at least $k$), \item mappings with nondegenerate
higher-order osculating spaces, \item mappings transversal to foliations, or more generally, to
arbitrary distributions, \item construction of generating system of exact differential $k-$forms, \item symplectic
and contact structures on open manifolds, etc.
\end{enumerate}

Most of these (in particular, immersions, submersions and $k-$mersions of open manifolds) have natural potential generalizations to the stratified context,  by replacing the use of Theorem \ref{em-hat} by Theorem \ref{em-hats}. \\

\noindent  {\bf Stratified Immersions:}
We illustrate the extra ingredient necessary over and above \cite{em-book,Gromov_PDR} by studying the sheaf of stratified immersions in this paper. We postpone a more detailed treatment of applications to subsequent work. Here, we shall prove a  stratified analog of the Smale-Hirsch theorem \cite[Chapter 8.2]{em-book} (see Theorem \ref{thm-immns} below).

\begin{defn}\label{def-immns}
		Let $X, Y$ be abstractly stratified spaces, and let $tX$ and $tY$ denote the tangent microbundles to $X$ and $Y$ respectively.
		A \emph{stratified  immersion} of $X$ into $Y$ is a weakly controlled map $i : X \to Y$ such that the induced 
		map $i_*: tX \to tY$ is a fiberwise stratified embedding of stratified spaces.
	\end{defn}

	A stratified immersion $i : X \to Y$ will be said to be of \emph{positive codimension}, if the following is true: for any stratum $S$ of $X$, let $S'$ denote the unique stratum of $Y$ such that $i(S) \subset S'$. Then $i(S) \subset S'$ is an immersed submanifold of positive codimension. Let $\imm^\mathfrak{c}(X,Y)$ denote the quasitopological space of \emph{positive codimension stratified  immersions} of a stratified space $X$ in a stratified space $Y$ such that the stratified  immersions are of configuration $\mathfrak{c}$.
	
	Given  stratified spaces $(Y, \Sigma')$  and  $(X, \Sigma)$, 
	and a configuration $\mathfrak{c}$ (cf.\ Definition \ref{def-config}), we may construct a sheaf $\imm^{\mathfrak{c}}(-, Y)$ on $X$ by defining, for each element $U \in \mathrm{Str}(X, \Sigma)$ of the stratified site (Definition \ref{def-stratfdsite}), 
		$$\imm^\mathfrak{c}(-, Y)(U) = \imm^\mathfrak{c}(U, Y)$$
		where $U$, being an open subset in a stratum-closure of $X$, is equipped with the induced stratification from $X$. The restriction maps are defined simply by restriction of the underlying map of an immersion to a smaller subset. 
	
For any stratum $S \in \Sigma$, let  $\imm_{\bbar{S}}^\mathfrak{c}(-, Y)$ denote the sheaf of positive codimension stratified immersions of elements of the stratified site of the stratified space $(\bbar{S}, \bbar{S} \cap \Sigma)$ as in Definition \ref{def-sssheaf}. Let $\imm_S^\mathfrak{c}(-, Y) := i^*_S \imm_{\bbar{S}}^\mathfrak{c}(-, Y)$ denote its restriction to the open stratum $S$.}
	
	Note that $\imm^\mathfrak{c}(-, Y)$ is a stratified subsheaf of $\maps(-, Y)$, where $\maps(-, Y)$ is  identified with the sheaf of sections of the surjective map $P : X \times Y \to X$. $P$ is an example of a stratumwise bundle (Definition \ref{def-stratumwisebdl}), but not a stratified bundle (Example \ref{eg-pdkt}). Nevertheless, by Remark \ref{rmk-formalasgerms}, we may describe sections of the Gromov diagonal construction $\maps(-, Y)^\bullet$ over an element of the site $U \in \str(X, \Sigma)$ in terms of germs of maps $\sigma : (t(U), U) \to Y$. For every $p \in U$, we may consider $\sigma\vert {t_p U} : t_p U \to Y$ as a map $t_p U \to t_{\sigma(p)} Y$. In this process, we can identify sections of $\maps(-, Y)^\bullet$ over $U$ as microbundle morphisms $t U \to t Y$ covering a map $U \to Y$.
	
	\begin{defn}A \emph{stratified formal immersion} of $X$ into $Y$ is a pair $(F, f)$ consisting of 
			\begin{enumerate} 
				\item A weakly controlled map $f : X \to Y$,
				\item A fiber-preserving microbundle morphism $F : tX \to tY$,
			\end{enumerate} 
			such that $F$ covers $f$, and $F$ is  a fiberwise stratified embedding.
		\end{defn}

It follows from the discussion above that sections of the Gromov diagonal construction $\imm^\mathfrak{c}(-, Y)^\bullet$ over $U \in \str(X, \Sigma)$ can be identified with stratified formal immersions $(F, f)$ of $U$ into $Y$, where $f : X \to Y$ is of configuration $\mathfrak{c}$. The canonical inclusion $\imm^\mathfrak{c}(-, Y) \hookrightarrow \imm^\mathfrak{c}(-, Y)^\bullet$ over $U$ is given by sending a stratified (holonomic) immersion $i : X \to Y$ to the stratified formal immersion $(i_*, i)$ where $i_* : tX \to tY$ is given by the restriction of $i \times i : X \times X \to Y \times Y$ to the diagonal.

	\begin{theorem}\label{thm-immns} For any stratified spaces $(X, \Sigma)$  and $(Y, \Sigma')$,  and configuration $\mathfrak{c}$ as above,
	the stratified continuous sheaf $\imm^\mathfrak{c}(-,Y)$ on $X$ satisfies the parametric $h-$principle.
	\end{theorem}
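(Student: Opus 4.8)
The strategy is to verify the hypotheses of Theorem \ref{thm-hofibsflexg} for the stratified continuous sheaf $\imm^\mathfrak{c}(-, Y)$ on $X$, namely stratumwise flexibility and infinitesimal flexibility across strata, after which the conclusion is immediate. The key tool will be the local structure of stratified spaces: near a point $x$ in a stratum $S$, a neighborhood decouples as $V \times cA$ where $V \subset S$ is a chart and $cA$ is the cone on the link $A$ of $S$ in $X$ (Lemma \ref{lem-nbhd}, and in the relative form given by the filtered tangent microbundle discussion in Section \ref{sec-microb}). For a stratum $S$ of $X$ with image stratum $S'$ of $Y$ prescribed by $\mathfrak{c}$, the sheaf $\imm^\mathfrak{c}_S(-, Y)$ over the open stratum $S$ is simply the sheaf of immersions of positive codimension of open subsets of the manifold $S$ into the manifold $S'$; by the classical Smale-Hirsch theorem \cite[Chapter 8.2]{em-book} (equivalently, Gromov's theorem \cite[p. 79]{Gromov_PDR} that $\imm(-, N)$ is flexible for open source manifolds) this is a flexible sheaf. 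This gives stratumwise flexibility, hypothesis (1) of Theorem \ref{thm-hofibsflexg}.

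The main work is hypothesis (2): for every pair of strata $S < L$, the open homotopy fiber sheaf $\HH^L_S = \hofib(\mathrm{res}^L_S : i_S^*\imm^\mathfrak{c}_{\bbar{L}}(-, Y) \to \imm^\mathfrak{c}_S(-, Y))$ must be flexible on $S$. Here I would unwind the restriction map concretely over a chart $U \subset S$ with relative normal cone $cA^L_S$. A section of $i_S^*\imm^\mathfrak{c}_{\bbar{L}}(-, Y)$ over $U$ is the germ along $U$ of a stratified immersion of $U \times cA^L_S$ into $Y$; restricting this to the zero section $U$ recovers an immersion of $U$ into $S'$. Thus, as in the proof of Theorem \ref{thm-diffrlnflex} and the explicit description of $\HH^L_S$ given there, a section of $\HH^L_S$ over $U$ decouples into: (a) a path in $\imm^\mathfrak{c}_S(U, S')$ starting at the fixed base immersion $\psi|_U$ — this is the sheaf $\GG$ of Lemma \ref{lem-psapaceflex}, which is flexible by the homotopy extension property (or by Lemma \ref{lem-mapsI2F}); and (b) a ``germinal $L$-component'', namely a $U$-parametrized family of germs at the cone point of stratified immersions of $cA^L_S$ into a cone-neighborhood in $Y$, i.e.\ an element of $\maps(U, \Gamma_c(cA^L_S, c B_{S'}))$ where $\Gamma_c$ denotes germs of stratified immersions of the cone. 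By Lemma \ref{lem-fixedF}, the sheaf $\maps(-, F)$ for any fixed quasitopological space $F$ is flexible; taking $F = \Gamma_c(cA^L_S, cB_{S'})$ handles (b). Since $\HH^L_S$ is, up to homeomorphism, the product of these two flexible sheaves (precisely as in the displayed formula $\HH^L_S \cong \maps(-, \Gamma_c(cA^L_S, cB_{S'})) \times \GG$ in the proof of Proposition \ref{prop-hlsflex}), Lemma \ref{lem-flexpdkt} shows $\HH^L_S$ is flexible, and Gromov's localization lemma \cite[p. 79]{Gromov_PDR} upgrades local flexibility to flexibility on all of $S$.

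The subtle point — and the main obstacle — is the decoupling claim in (b): one must check that the fiber of $\mathrm{res}^L_S$ over a chosen immersion of $S$ really is (germinally) a space of \emph{unrestricted} germs of stratified immersions of the cone $cA^L_S$, with no residual constraints coming from the positive-codimension and configuration conditions or from the weakly controlled structure. The positive-codimension hypothesis is exactly what supplies the extra normal directions needed: since $i(S) \subset S'$ has positive codimension, there is room to freely extend a tangential immersion into the normal cone, just as in the extension arguments of Corollary \ref{cor-isotopyext} and Lemma \ref{lem-fiberhep}. The weakly controlled versus controlled distinction is absorbed as in Equation \ref{eqn-strong2weakcontrol} and the surrounding discussion, since $\diff^+((0,1))$ is contractible. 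I expect that once the local trivialization of the tangent microbundle $tX$ over $S$ as $TS \oplus N_X(S)$ (Section \ref{sec-microb}) is combined with Corollary \ref{cor-trivialzn}, so that $P : P^{-1}(N_S) \to N_S$ restricts to a cone bundle $cB \to cA$, the germinal $L$-component genuinely becomes $\maps(U, \Gamma_c(cA^L_S, cB_{S'}))$ with no further conditions, and the argument closes. Finally, with both hypotheses of Theorem \ref{thm-hofibsflexg} verified, $\imm^\mathfrak{c}(-, Y)$ satisfies the parametric $h$-principle, which is the assertion of Theorem \ref{thm-immns}.
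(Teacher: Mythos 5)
Your proof proposal is correct and follows essentially the same route as the paper: verify stratumwise flexibility via Smale--Hirsch/Gromov's open-manifold immersion theorem, and verify flexibility of $\HH^L_S$ by decoupling it into a product of a path-space sheaf (flexible by the homotopy extension property) and a $\maps(-, F)$-type sheaf of germinal cone embeddings (flexible by Lemma \ref{lem-fixedF}), then invoke Theorem \ref{thm-hofibsflexg}. One small clarification worth making: the ``subtle point'' you flag at the end is a bit misplaced. The decoupling of $i_S^*\imm^{\mathfrak c}_{\bbar L}$ into an $\mathrm{Imm}(-,S')$-factor and a $\maps(-,\mathrm{Emb}_c(cA_L,cB_{L'}))$-factor is a purely structural fact coming from the local trivialization of the normal cone bundle and Definition \ref{def-immns} (the germ of a stratified immersion along $U\times\{c\}$ is, at each point $p$, a germ of a stratified \emph{embedding} of the cone fiber, not merely an immersion); it holds regardless of codimension. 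The positive-codimension hypothesis is not what licenses the decoupling; it is what makes $\mathrm{Imm}(-,S')$ flexible (Gromov's Open Extension Theorem requires $\dim S<\dim S'$), and hence is needed for both the stratumwise hypothesis and the $P_\psi\mathrm{Imm}(-,S')$ factor of $\HH^L_S$, exactly as in Proposition \ref{prop-hlsflex}.
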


	\begin{proof}
		By Theorem \ref{thm-hofibsflexg}, it suffices  to check flexibility of $\HH^L_S$.
		Let $S < L$ be strata in $X$ and let $S' = \mathfrak{c}(S), L' = \mathfrak{c}(L)$ be the strata in $Y$ corresponding to $S, L$ through the configuration $\mathfrak{c}$. Let $t_{S,L}$ denote the microtangent bundle to $S$ in $\bbar L$. Let $A_L$ (resp.\ $B_{L'}$) denote the link of $S$ (resp.\ $S'$) in $\bbar{L}$ (resp.\ $\bbar{L'}$), and $\{c\} \subset cA_L$ (resp.\ $\{c'\} \subset cB_{L'}$) denote the cone points of the respective normal cones. Then the microtangent bundle $t_{S,L}$ of $S$ in $L$ is of the form 
			$$t_{S,L}  = TS \oplus N_{S,L},$$
			where $TS$ is the tangent bundle to $S$, and $N_{S,L}$ is a $cA_L-$bundle over $S$. We refer to $N_{S,L}$ as the normal cone bundle of $S$ in $L$. The morphism 
			$$\mathrm{res}^L_S : i^*_S \imm^{\mathfrak{c}}_{\bbar{L}}(-, Y) \to \imm_S^{\mathfrak{c}}(-, Y)$$
			is given by restricting a germ of a stratified immersion defined on a neighborhood $N_{S, L}$ of $S$ in $\bbar{L}$, to the zero section $S \subset N_{S, L}$. Let $U \subset S$ be a chart such that $N_{S, L}$ trivializes as $U \times cA_L$ over $U$. Then, elements of $i^*_S\imm^{\mathfrak{c}}_{\bbar{L}}(U, Y)$ consist of germs of stratified immersions 
			$$\varphi : (U \times cA_L, U \times \{c\}) \to Y$$ 
			such that $\varphi|U \times \{c\}$ is an immersion of $U$ in $S'$. Observe that, for every $p \in U$, 
			$$\varphi\vert_{\{p\} \times cA_L} : (cA_L, \{c\}) \to (cB_{L'}, \{c'\}) \subset Y$$ 
			is a germ of a stratified immersion at the cone point $c'$. Here, the cone  $cB_{L'}$ is determined by the configuration  $\mathfrak{c}$. In other  words, the configuration  $\mathfrak{c}$ induces a germ of a stratified embedding  at the cone point. Indeed, the microtangent space $t_c(cA_L)$ to the cone $cA_L$ at the cone point $c$ is a germ of a neighborhood of $\{c\}$ in $cA_L$. The latter is germinally homeomorphic to $cA_L$ itself. Let $\mathrm{Emb}_c(cA_L, cB_{L'})$ denote the quasitopological space of germs of such embeddings. Thus, we have an induced map $\Phi_\varphi$ given by
			$$\Phi_\varphi : U \to \mathrm{Emb}_c(cA_L, cB_{L'}), \; \Phi_{\varphi}(p) = \varphi\vert_{\{p\} \times cA_L}.$$
			Therefore, $	\varphi \mapsto (\varphi\vert_{U \times \{c\}}, \Phi_\varphi)$ furnishes a homeomorphism:
			\begin{gather*}i^*_S\imm^{\mathfrak{c}}_{\bbar{L}}(U, Y) \to \mathrm{Imm}(U, S') \times \maps(U, \mathrm{Emb}_c(cA_L, cB_{L'}))
			\end{gather*}
			This homeomorphism is natural with respect to passing to smaller open subsets $V \subset U$. It commutes with  projections to $\mathrm{Imm}(U, S')$ under restriction maps $\mathrm{res}^L_S$ on the left-hand side. It also commutes with the canonical projection to the first factor on the right-hand side. Therefore, we have an isomorphism of continuous sheaves over $U \subset S$,
			$$i^*_S\imm^{\mathfrak{c}}_{\bbar{L}}(-, Y) \cong \mathrm{Imm}(-, S') \times i^*_U\maps(-, \mathrm{Emb}_c(cA_L, cB_{L'})).$$
			Under this isomorphism, $\mathrm{res}^L_S$ is equivalent to the  projection to the first factor of the product sheaf on the right-hand side. Therefore, as in Proposition \ref{prop-hlsflex},
			$$i^*_U \mathcal{H}^L_S \cong  P_{\psi}\mathrm{Imm}(-, S') \times i^*_U \maps(-, \mathrm{Emb}_c(cA_L, cB_{L'}))$$
			By Gromov's Open Extension Theorem \cite[p. 86]{Gromov_PDR}, $\mathrm{Imm}(-, S')$ is a flexible sheaf on $S$ since $\mathrm{dim}(S) < \mathrm{dim}(S')$ by the positive-codimension hypothesis. {Moreover, by Lemma \ref{lem-fixedF}, $\maps(-, \mathrm{Emb}_c(cA_L, cB_{L'}))$ is flexible, and therefore so is its restriction to an open subset.} Thus, the arguments from Proposition \ref{prop-hlsflex} apply, and we conclude that $\mathcal{H}^L_S$ is a flexible sheaf on $S$.
		
	Finally, for every stratum $S$ of $X$, $\mathrm{Imm}^{\mathfrak{c}}_S(-, Y) = \mathrm{Imm}(-, \mathfrak{c}(S))$ is a flexible sheaf on $S$ once again by the Open Extension Theorem \cite[p. 86]{Gromov_PDR}. Therefore, $\imm^\mathfrak{c}$ is both stratumwise flexible, and infinitesimally flexible across strata. By Theorem \ref{thm-hofibsflexg}, $\imm^\mathfrak{c}$ satisfies the parametric $h-$principle.\end{proof}

{Specializing Theorem \ref{thm-immns} to the case of manifolds with corners, we immediately obtain a relative version of the Smale-Hirsch theorem as a corollary:
\begin{cor}
Let $M, N$ be manifolds with corners. Let $\Sigma_M, \Sigma_N$ denote the natural stratifications, respectively. Let $\mathfrak{c} : \Sigma_M \to \Sigma_N$ be a configuration such that $\dim S < \dim \mathfrak{c}(S)$. Then, the stratified continuous sheaf $\mathrm{Imm}^{\mathfrak{c}}(-, N)$ over $M$ satisfies the parametric $h-$principle.
\end{cor}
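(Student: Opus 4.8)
\textbf{Proof proposal for Theorem \ref{thm-immns}.}

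The plan is to verify the two hypotheses of Theorem \ref{thm-hofibsflexg} for the stratified continuous sheaf $\imm^\mathfrak{c}(-,Y)$ over $X$, namely stratumwise flexibility and infinitesimal flexibility across strata. Once these are established, Theorem \ref{thm-hofibsflexg} immediately yields the parametric $h$-principle. The whole argument is a stratified bootstrapping of the classical Smale-Hirsch theorem, which enters through Gromov's Open Extension Theorem \cite[p.~86]{Gromov_PDR}: the sheaf $\imm(-,N')$ of immersions into a manifold $N'$ is flexible over a manifold $M'$ provided $\dim M' < \dim N'$. The positive-codimension hypothesis on $\mathfrak{c}$ is precisely what makes this inequality hold for each pair $(S,\mathfrak{c}(S))$.

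First I would handle stratumwise flexibility. For each stratum $S$ of $X$, the intrinsic sheaf $\imm^\mathfrak{c}_S(-,Y)$ is, by definition of configuration, just $\imm(-,\mathfrak{c}(S))$, the sheaf of immersions of open subsets of the manifold $S$ into the manifold $\mathfrak{c}(S)$. Since $\dim S < \dim \mathfrak{c}(S)$ by hypothesis, this is flexible by the Open Extension Theorem. Next comes the flexibility of the open homotopy fiber sheaf $\HH^L_S$ for each pair $S<L$. Here the key is to compute $i_S^*\imm^\mathfrak{c}_{\bbar L}(-,Y)$ explicitly, imitating the argument in the proof of Theorem \ref{thm-diffrlnflex} and Proposition \ref{prop-hlsflex}. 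On a chart $U\subset S$ over which the normal cone bundle $N_{S,L}$ trivializes as $U\times cA_L$, an element of $i_S^*\imm^\mathfrak{c}_{\bbar L}(U,Y)$ is a germ of stratified immersion $(U\times cA_L, U\times\{c\})\to Y$; restricting to each slice $\{p\}\times cA_L$ gives a germ of stratified embedding of $cA_L$ at the cone point into $cB_{L'}$, and $\mathfrak{c}$ dictates which cone $cB_{L'}$ this lands in. This produces a natural homeomorphism
$$i_S^*\imm^\mathfrak{c}_{\bbar L}(-,Y) \cong \mathrm{Imm}(-,\mathfrak{c}(S))\times i_U^*\maps(-,\mathrm{Emb}_c(cA_L,cB_{L'})),$$
under which $\res^L_S$ becomes projection onto the first factor. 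By Lemma \ref{lem-hofib-fiberpdkt} (or the explicit description in Proposition \ref{prop-hlsflex}), the homotopy fiber $\HH^L_S$ is then homotopy equivalent to $P_\psi\mathrm{Imm}(-,\mathfrak{c}(S))\times i_U^*\maps(-,\mathrm{Emb}_c(cA_L,cB_{L'}))$. The path-space sheaf $P_\psi\mathrm{Imm}(-,\mathfrak{c}(S))$ is flexible since $\mathrm{Imm}(-,\mathfrak{c}(S))$ is (use Lemma \ref{lem-mapsI2F}), and $\maps(-,\mathrm{Emb}_c(cA_L,cB_{L'}))$ is flexible by Lemma \ref{lem-fixedF}; hence $\HH^L_S$ is flexible by Lemma \ref{lem-flexpdkt}.

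The main obstacle I anticipate is not a deep conceptual one but a careful bookkeeping issue: verifying that the product decomposition of $i_S^*\imm^\mathfrak{c}_{\bbar L}(-,Y)$ is genuinely natural and compatible with all the relevant restriction maps, and that the structure-group issue (the bundle homomorphism $P$ inducing only an equivalence class of cone maps, rather than a canonical one, as flagged in Remark \ref{rmk-nfisholo}) does not obstruct identifying $\HH^L_S$ as a product sheaf up to homotopy. One must also ensure the fiberwise-embedding condition (positivity of codimension on the nose, at the level of microbundles) is preserved under the identifications. Assuming these routine verifications go through as in the manifold and jet-bundle cases already treated, both hypotheses of Theorem \ref{thm-hofibsflexg} hold, and the parametric $h$-principle for $\imm^\mathfrak{c}(-,Y)$ follows.
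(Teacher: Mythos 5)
Your proposal reproduces the paper's proof of Theorem \ref{thm-immns} almost verbatim --- same product decomposition of $i_S^*\imm^\mathfrak{c}_{\bbar L}(-,Y)$ into $\mathrm{Imm}(-,\mathfrak{c}(S))\times i_U^*\maps(-,\mathrm{Emb}_c(cA_L,cB_{L'}))$, same use of the Open Extension Theorem for stratumwise flexibility, same appeal to Lemma \ref{lem-fixedF}, Lemma \ref{lem-flexpdkt}, Proposition \ref{prop-hlsflex}, and finally Theorem \ref{thm-hofibsflexg}. Since the corollary is obtained in the paper merely by specializing Theorem \ref{thm-immns} to $X=M$, $Y=N$ manifolds with corners (which are abstractly stratified spaces), re-proving the theorem, as you did, is correct and more than sufficient.
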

}

\newcommand{\etalchar}[1]{$^{#1}$}

\end{document}